\providecommand{\U}[1]{\protect\rule{.1in}{.1in}}
\newtheorem{theorem}{Theorem}
\newtheorem{corollary}[theorem]{Corollary}
\newtheorem{definition}[theorem]{Definition}
\newtheorem{example}[theorem]{Example}
\newtheorem{lemma}[theorem]{Lemma}
\newtheorem{proposition}[theorem]{Proposition}
\newenvironment{proof}[1][Proof]{\noindent\textbf{#1.} }{\ \rule{0.5em}{0.5em}}
\newcommand{\BA}{\mathbb{BA}}
\newcommand{\BI}{\mathbb{BI}}
\newcommand{\KL}{\mathbb{PKA}}
\newcommand{\BZ}{\mathbb{BZL}}
\newcommand{\PBZs}{\mathbb{PBZL}^{\ast }}
\newcommand{\OL}{\mathbb{OL}}
\newcommand{\OML}{\mathbb{OML}}
\newcommand{\AOL}{\mathbb{AOL}}
\newcommand{\PBZ}{PBZ$^{\ast }$}
\def\N{{\mathbb N}}
\def\I{{\mathbb I}}
\def\V{{\mathbb V}}
\def\C{{\mathbb C}}
\def\D{{\mathbb D}}
\def\H{{\mathrm H}}
\def\S{{\mathrm S}}
\def\P{{\mathrm P}}
\def\1{\textcircled{1}}
\def\2{\textcircled{2}}
\begin{document}

\title{\vspace*{-30pt}Ordinal and Horizontal Sums\\ Constructing PBZ$^{\ast}$--Lattices}
\author{Roberto GIUNTINI, Claudia MURE\c{S}AN, Francesco PAOLI\\ {\small University of Cagliari}\\{\small giuntini@unica.it; c.muresan@yahoo.com; paoli@unica.it}}
\date{\today }
\maketitle

\begin{abstract} \PBZ --lattices are algebraic structures related to quantum logics, which consist of bounded lattices endowed with two kinds of complements, named {\em Kleene} and {\em Brouwer}, such that the Kleene complement satisfies a weakening of the orthomodularity condition and the De Morgan laws, while the Brouwer complement only needs to satisfy the De Morgan laws for the pairs of elements with their Kleene complements. \PBZ --lattices form a variety $\PBZs $, which includes the variety $\OML $ of orthomodular lattices (considered with an extended signature, by letting their two complements coincide) and the variety $V(\AOL )$ generated by the class $\AOL $ of antiortholattices.

We investigate the congruences of antiortholattices, in particular of those obtained through certain ordinal sums and of those whose Brower complements satisfy the De Morgan laws, infer characterizations for their subdirect irreducibility and prove that even the lattice reducts of antiortholattices are directly irreducible. Since the two complements act the same on the lattice bounds in all \PBZ --lattices, we can define the horizontal sum of any nontrivial \PBZ --lattices, obtained by glueing them at their smallest and at their largest elements; a horizontal sum of two nontrivial \PBZ --lattices is a \PBZ --lattice exactly when at least one of its summands is an orthomodular lattice. We investigate the algebraic structures and the congruence lattices of these horizontal sums, then the varieties they generate.

We obtain a relative axiomatization of the variety $V(\OML \boxplus \AOL )$ generated by the horizontal sums of nontrivial orthomodular lattices with nontrivial antiortholattices w.r.t. $\PBZs $, as well as a relative axiomatization of the join of varieties $\OML \vee V(\AOL )$ w.r.t. $V(\OML \boxplus \AOL )$.

\noindent {\em Keywords:} \PBZ --lattice, orthomodular lattice, antiortholattice, ordinal sum, horizontal sum, subdirect irreducibility, lattice of subvarieties, relative axiomatization.

\noindent {\em MSC 2010:} primary: 08B15; secondaries: 06B10, 08B26, 03G25, 03G12.\end{abstract}

\section{Introduction}

Let $\mathbf{H}$ be complex separable Hilbert space, and let $\mathcal{E}\left(  \mathbf{H}\right)  $ be the set of all \emph{effects} of $\mathbf{H}$,
i.e., the set of all positive linear operators of $\mathbf{H}$ that are
bounded by the identity operator $\mathbb{I}$. Within the unsharp approach to
quantum logic, it has been argued at length (see e.g. \cite[Ch. 4]{RQT}) that
effects are a more adequate mathematical counterpart than projection operators
of the notion of quantum \emph{event}, in that the latter do not form the
largest set of operators that can be assigned a probability value according to
the Born rule. However, if we order the effects in $\mathcal{E}\left(
\mathbf{H}\right)  $ under the natural order determined by the set of all
density operators of $\mathbf{H}$ via the trace functional --- namely, if we
let, for all $E,F\in\mathcal{E}\left(  \mathbf{H}\right)  $,
\begin{align*}
E\leq F\text{ iff }  &  \text{for all density operators }\rho\text{ of
}\mathbf{H}\text{,}\\
&  Tr\left(  \rho E\right)  \leq Tr\left(  \rho F\right)  \text{,}%
\end{align*}
the poset $\left(  \mathcal{E}\left(  \mathbf{H}\right)  ,\leq\right)  $ has
the drawback of failing, in general, to be a lattice.

On the other hand, consider the structure$$
\mathbf{E}\left(  \mathbf{H}\right)  =\left(  \mathcal{E}\left(
\mathbf{H}\right)  ,\wedge_{s},\vee_{s},^{\prime},^{\sim},\mathbb{O}%
,\mathbb{I}\right)  \text{,}$$where:

\begin{itemize}
\item $\wedge_{s}$ and $\vee_{s}$ are the meet and the join, respectively, of
the \emph{spectral ordering} $\leq_{s}$ so defined for all $E,F\in
\mathcal{E}\left(  \mathbf{H}\right)  $:
\[
E\leq_{s}F\,\,\,\text{iff}\,\,\,\forall\lambda\in\mathbb{R}:\,\,M^{F}%
(\lambda)\leq M^{E}(\lambda),
\]
where for any effect $E$, $M^{E}$ is the unique spectral family \cite[Ch.
7]{Kr} such that $E=\int_{-\infty}^{\infty}\lambda\,dM^{E}(\lambda)$ (the
integral is here meant in the sense of norm-converging Riemann-Stieltjes sums
\cite[Ch. 1]{Strocco});

\item $\mathbb{O}$ and $\mathbb{I}$ are the null and identity operators, respectively;

\item $E^{\prime}=\mathbb{I}-E$ and $E^{\sim}=P_{\ker\left(  E\right)  }$ (the
projection onto the kernel of $E$).
\end{itemize}

The operations in $\mathbf{E}\left(  \mathbf{H}\right)  $ are well-defined. The spectral ordering is indeed a lattice ordering \cite{Ols, deG} that coincides with the natural order when both orderings are restricted to the set of projection operators of the same Hilbert space.

The papers \cite{GLP}, \cite{PBZ2} and \cite{rgcmfp} contain the beginnings of
an algebraic investigation of a variety of lattices with additional structure,
the variety {$\mathbb{PBZL}^{\ast}$ of }\emph{PBZ*-lattices}. A PBZ*-lattice{
can be viewed as an abstraction from this }concrete physical model, much in
the same way as an orthomodular lattice can be viewed as an abstraction from
its substructure consisting of projection operators only. The faithfulness of
PBZ*-lattices to the physical model whence they stem is further underscored by
the fact that they reproduce at an abstract level the \textquotedblleft
collapse\textquotedblright\ of several notions of \emph{sharp physical
property} that can be observed in $\mathbf{E}\left(  \mathbf{H}\right)  $.

Further motivation for the study of {$\mathbb{PBZL}^{\ast}$ comes from its
universal algebraic properties. For a start, PBZ*-lattices can be seen as a
common generalisation of orthomodular lattices \cite{Beran} and of Kleene
algebras \cite{Kalman} with an additional unary operation. In the lattice of
subvarieties of $\mathbb{PBZL}^{\ast}$, moreover, we happen to encounter many
situations of intrinsic interest in universal algebra: to name a few,
subtractive varieties with equationally definable principal ideals that fail
to be point-regular \cite{BS}; binary discriminator varieties \cite{CHR, BS};
ternary discriminator varieties generated by a single finite non-primal
algebra. }

Regarding their similarity type, \PBZ --lattices have, in addition to their bounded lattice structure, two unary operations, out of which one is a lattice involution, called {\em Kleene complement}, and the second is called {\em Brouwer complement}; the bounded involution lattice reduct of a \PBZ --lattice has to satisfy a weakening of the orthomodularity condition, which is called {\em paraorthomodularity}; and, while the Brouwer complement does not satisfy the De Morgan laws, as the involution does, it is required to satisfy them for all pairs of elements with their Kleene complements; this latter property is called {\em condition $(\ast )$}.

This paper is concerned with the study of {\em ordinal} and \emph{horizontal sums} producing PBZ*-lattices. Informally, the ordinal sum of a lattice ${\bf A}$ with a largest element $1^{\bf A}$ and a lattice ${\bf B}$ with a smallest element $0^{\bf B}$ is a lattice ${\bf A}\oplus {\bf B}$ obtained by glueing ${\bf A}$ and ${\bf B}$ at the $1^{\bf A}$ and $0^{\bf B}$, while the horizontal sum of two non--trivial bounded lattices $\mathbf{L}$ and $\mathbf{M}$ is the non--trivial bounded lattice $\mathbf{L}\boxplus {\bf M}$ obtained by glueing $\mathbf{L}$ and $\mathbf{M}$ at their smallest elements, as well as at their largest elements. If ${\bf H}$ is a non--trivial bounded lattice, ${\bf H}^d$ is the dual of ${\bf H}$ and ${\bf K}$ is a pseudo--Kleene algebra (that is a bounded involution lattice in which any meet of an element and its involution is smaller than any join of an element and its involution), then the ordinal sum ${\bf H}\oplus {\bf K}\oplus {\bf H}^d$ can be organized as an antiortholattice, that is a \PBZ --lattice with no other sharp elements beside $0$ and $1$, with the clear definition for the involution and the trivial Brouwer complement, which takes $0$ to $1$ and all other elements to $0$. Since both complements take $0$ to $1$ and $1$ to $0$, we can define the horizontal sum of two non--trivial \PBZ --lattices $\mathbf{L}$ and $\mathbf{M}$, obtained by defining the Kleene and Brouwer complements on the horizontal sum of the bounded lattice reducts of $\mathbf{L}$ and $\mathbf{M}$ by restriction, that is such that $\mathbf{L}$ and $\mathbf{M}$ become subalgebras of $\mathbf{L}\boxplus {\bf M}$; however, while, in this way, $\mathbf{L}\boxplus {\bf M}$ becomes an algebra of the same similarity type as \PBZ --lattices, it does not become a \PBZ --lattice unless at least one of $\mathbf{L}$ and $\mathbf{M}$ is an orthomodular lattice (organized as a \PBZ --lattice by letting its Brouwer complement equal its involution). We study the algebraic structures and congruences of these glued sums producing \PBZ --lattices. There is a well-developped theory of horizontal sums in the context of orthomodular lattices (see e.g. \cite{Beran, Chajdor, Greechie}), but the case of \PBZ --lattices differs substantially from this particular one, as we learn by examining the congruences, the singleton--generated subalgebras and the sets of sharp and of dense elements of horizontal sums of \PBZ --lattices.

In the final section of this paper, we study the subvarieties generated by horizontal sums of the variety $\PBZs $ of the \PBZ --lattices. We axiomatize the variety $V(\OML \boxplus \AOL )$ generated by the horizontal sums of orthomodular lattices with antiortholattices with respect to $\PBZs $, as well as the varietal join $\OML \vee V(\AOL )$ of the variety of orthomodular lattices with the variety generated by the class of antiortholattices with respect to $V(\OML \boxplus \AOL )$. These results yield an alternate proof for the axiomatization of $\OML \vee V(\AOL )$ relative to $\PBZs $ that we have obtained in \cite{rgcmfp}.

\section{Preliminaries\label{preliminaries}}

We will often use the results in this section without referencing them.

\subsection{Notations for Lattices and Universal Algebras}
\label{ualglat}

We refer the reader to \cite{bur,gralgu} for the following universal algebra notions and to \cite{gratzer} for the lattice--theoretical ones.

We will denote by ${\mathbb{N}}$ the set of the natural numbers and by ${\mathbb{N}}^{\ast}={\mathbb{N}}\setminus\{0\}$. For any class ${\mathbb{C}}$ of algebras of the same type, $V({\mathbb{C}})$ will denote the variety generated by ${\mathbb{C}}$; so $V({\mathbb{C}})=\H \S \P ({\mathbb{C}})$, where $\H $, $\S $ and $\P $ denote the usual class operators. For any algebra $\mathbf{A}$ and any class operator $O$, $O(\mathbf{A})$ will be shorthand for $O(\{\mathbf{A}\})$ and, whenever we need to specify the type, if $\C $ is a subclass of a variety $\V $ of algebras of the same type and $\mathbf{A}\in \V $, we will denote $O_{\V }(\C )$ and $O_{\V }(\mathbf{A})$ instead of $O(\C )$ and $O(\mathbf{A})$, respectively. The join of varieties will be denoted by $\vee $. We will consider only algebras with a nonempty universe; by \emph{trivial
algebra} we mean a one--element algebra. For brevity, we denote by
$\mathbf{A}\cong\mathbf{B}$ the fact that two algebras $\mathbf{A}$ and
$\mathbf{B}$ of the same type are isomorphic. We make the following
convention: if $\mathbf{A}$ is an algebra, then $A$ will denote the universe
of $\mathbf{A}$, with the exception of partition, equivalence and congruence lattices, that will be designated by their universes; other such exceptions
will be specified later; sometimes, for brevity, algebras will be designated by their set reducts without being specified as such. If $\mathbf{A}$ is a member of a variety $\V $ and $S\subseteq A$, then $\langle S\rangle _{\V }$ or $\langle S\rangle _{\V ,\mathbf{A}}$ will denote the subalgebra of $\mathbf{A}$ generated by $S$, as well as its universe; if $a\in A$, then $\langle \{a\}\rangle _{\V ,\mathbf{A}}$ will simply be denoted by $\langle a\rangle _{\V ,\mathbf{A}}$ or $\langle a\rangle _{\V }$. For any subalgebras $\mathbf{B}$ and $\mathbf{C}$ of an algebra $\mathbf{A}$, $\mathbf{B}\cap\mathbf{C}$ will denote the subalgebra of $\mathbf{A}$ with universe $B\cap C$.

The dual of any (bounded) lattice $\mathbf{L}$ will be denoted by $\mathbf{L}^{d}$. For any bounded lattice $\mathbf{L}$, $\mathrm{At}(\mathbf{L})$ and $\mathrm{CoAt}(\mathbf{L})$ will
denote the set of the atoms and that of the coatoms of $\mathbf{L}$, respectively. For any lattice $\mathbf{L}$ and
any $a,b\in L$, $[a)$ and $(a]$ will be the principal filter, respectively
principal ideal of $\mathbf{L}$ generated by $a$, and $[a,b]=[a)\cap(b]$ will
be the interval of $L$ bounded by $a$ and $b$. For any
$n\in{\mathbb{N}}^{\ast}$, $\mathbf{D}_{n}$ denotes the $n$--element chain, regardless of the algebraic structure with a bounded lattice reduct we consider on it, and $D_{n}$ denotes its universe. We will use Gr\"{a}tzer`s notation for lattices \cite{gratzer}.

$\amalg $ will denote the disjoint union of sets and, for any set $S$, $|S|$
will be the cardinality of $S$, $\mathcal{P}(S)$ will be the set of the
subsets of $S$, $(\mathrm{Part}(S),\wedge,\vee,\{\{x\}:x\in S\},\{S\})$ and
$(\mathrm{Eq}(S),\cap,\vee,\Delta_{S},\nabla_{S})$ will be the bounded
lattices of the partitions and the equivalences of $S$, respectively, and
$eq:\mathrm{Part}(S)\rightarrow\mathrm{Eq}(S)$ will be the canonical lattice
isomorphism; for any $n\in{\mathbb{N}}^{\ast}$ and any $\{S_{1},\ldots
,S_{n}\}\in\mathrm{Part}(S)$, $eq(\{S_{1},\ldots,S_{n}\})$ will simply be
denoted by $eq(S_{1},\ldots,S_{n})$. Also, for any $U\subseteq S^{2}$ and any
$\sigma\in\mathrm{Eq}(S)$, we denote by $U/\sigma=\{(x/\sigma,y/\sigma
):(x,y)\in U\}$.

Let ${\mathbb{V}}$ be a variety of algebras of a similarity type $\tau$ and
$\mathbf{A}\in{\mathbb{V}}$. Then, for any $n\in{\mathbb{N}}^{\ast}$, any
terms $t(x_{1},\ldots,x_{n})$ and $u(x_{1},\ldots,x_{n})$ over $\tau$ with at
most the variables $x_{1},\ldots,x_{n}$ and any $M_{1},\ldots,M_{n}%
\in\mathcal{P}(A)$, we will use the following notation: $\mathbf{A}%
\vDash_{M_{1},\ldots,M_{n}}t(x_{1},\ldots,x_{n})\approx u(x_{1},\ldots,x_{n})$
iff, for all $a_{1}\in M_{1},\ldots,a_{n}\in M_{n}$, $t^{\mathbf{A}}%
(a_{1},\ldots,a_{n})=u^{\mathbf{A}}(a_{1},\ldots,a_{n})$. If the variables in
an equation are not numbered, then, by convention, we consider the set of
these variables ordered by their order of appearance in that equation in its
current writing, from left to right. As usual, if $k\in{\mathbb{N}}^{\ast}$
and, for all $i\in[1,k]$, $\gamma_{i}=(t_{i}(x_{1},\ldots
,x_{n})\approx u_{i}(x_{1},\ldots,x_{n}))$ for some terms $t_{i}(x_{1}%
,\ldots,x_{n})$ and $u_{i}(x_{1},\ldots,x_{n})$ over $\tau$ with at most the
variables $x_{1},\ldots,x_{n}$, then $\mathbf{A}\vDash_{M_{1},\ldots,M_{n}%
}\{\gamma_{1},\ldots,\gamma_{k}\}$ will be short for $\mathbf{A}\vDash
_{M_{1},\ldots,M_{n}}\gamma_{i}$ for all $i\in[1,k]$.

We will denote by $\mathrm{Con}_{{\mathbb{V}}}(\mathbf{A})$ the congruence
lattice of $\mathbf{A}$ (with respect to $\tau$); if, for some $n\in
{\mathbb{N}}^{\ast}$, $\tau$ contains constants $\kappa_{1},\ldots,\kappa_{n}%
$, then we will denote by $\mathrm{Con}_{{\mathbb{V}}\kappa_{1}\ldots
\kappa_{n}}(\mathbf{A})=\{\theta\in\mathrm{Con}_{{\mathbb{V}}}(\mathbf{A}%
):(\forall\,i\in[1,n])\,(\kappa_{i}^{\mathbf{A}}/\theta=\{\kappa
_{i}^{\mathbf{A}}\})\}$. Recall that ${\bf A}$ is subdirectly irreducible in $\V $ iff ${\bf A}$ is trivial or $\Delta _A$ is strictly meet--irreducible in the lattice $\mathrm{Con}_{{\mathbb{V}}}(\mathbf{A})$. For any $U\subseteq A^{2}$, we will denote by
$Cg_{{\mathbb{V}}}(U)$ the congruence of $\mathbf{A}$ (with respect to $\tau$)
generated by $U$; for any $a,b\in A$, the principal congruence
$Cg_{{\mathbb{V}}}(\{\left(  a,b\right)  \})$ will simply be denoted by
$Cg_{{\mathbb{V}}}(a,b)$. For any $S\subseteq A$, the $\tau$--subalgebra of
$\mathbf{A}$ generated by $S$ will be denoted by $\langle S\rangle
_{{\mathbb{V}}}$ and so will its universe. If ${\mathbb{V}}$ is the variety of
bounded lattices, then the index ${\mathbb{V}}$ in the previous notations will
be omitted.

Let $\mathbf{L}=(L,\leq^{\mathbf{L}})$ be a lattice with greatest element
$1^{\mathbf{L}}$, and $\mathbf{M}=(M,\leq^{\mathbf{M}})$ be a lattice with
least element $0^{\mathbf{M}}$. Also, let $\varepsilon$ be the equivalence on
$L\amalg M$ defined by:\vspace*{-7pt}$$\varepsilon =eq(\{\{1^{\mathbf{L}},0^{\mathbf{M}}\}\}\cup\{\{x\}:x\in(L\amalg
M)\setminus\{1^{\mathbf{L}},0^{\mathbf{M}}\}\}),$$and let $L\oplus M=(L\amalg M)/\varepsilon $. Note that $\varepsilon\cap L^{2}=\Delta_{L}\in\mathrm{Con}(\mathbf{L})$, thus
$\mathbf{L}\cong\mathbf{L}/\Delta_{L}=\mathbf{L}/(\varepsilon\cap L^{2})$, and
$\varepsilon\cap M^{2}=\Delta_{M}\in\mathrm{Con}(\mathbf{M})$, thus
$\mathbf{M}\cong\mathbf{M}/\Delta_{M}=\mathbf{M}/(\varepsilon\cap M^{2})$, so
we can identify $\mathbf{L}$ with $\mathbf{L}/(\varepsilon\cap L^{2})=(L/\varepsilon,\leq^{\mathbf{L}}\!\!/\varepsilon)$ and $\mathbf{M}$ with
$\mathbf{M}/(\varepsilon\cap M^{2})=(M/\varepsilon,\leq^{\mathbf{M}%
}\!\!/\varepsilon)$, by identifying $x$ with $x/\varepsilon$ for each $x\in L$
and each $x\in M$; with this identification, we get $1^{\mathbf{L}}=0^{\mathbf{M}}$ and $L\cap M=\{1^{\mathbf{L}}\}=\{0^{\mathbf{M}}\}$. Then the \emph{ordinal sum} of
$\mathbf{L}$ and $\mathbf{M}$ is the lattice $\mathbf{L}\oplus\mathbf{M}=(L\oplus M,\leq^{\mathbf{L}\oplus\mathbf{M}})$, where:\[
\leq^{\mathbf{L}\oplus\mathbf{M}}=\leq^{\mathbf{L}}\cup\leq^{\mathbf{M}}\cup\{(x,y):x\in L,y\in M\}.
\]
For any $\alpha\in\mathrm{Con}(L)$ and $\beta\in\mathrm{Con}(M)$, we let:$$\alpha\oplus\beta=eq((L/\alpha\setminus\{1^{\mathbf{L}}/\alpha\})\cup(M/\beta
\setminus\{0^{\mathbf{M}}/\beta\})\cup\{1^{\mathbf{L}}/\alpha\cup 0^{\mathbf{M}}/\beta\})\in\mathrm{Con}(L\oplus M).$$Clearly, the ordinal sum of bounded lattices and the attendant operation on congruences are both associative operations, and the map $(\alpha,\beta)\mapsto\alpha\oplus\beta$ is a lattice isomorphism from $\mathrm{Con}(\mathbf{L})\times\mathrm{Con}(\mathbf{M})$ to $\mathrm{Con}(\mathbf{L}\oplus\mathbf{M})$.

Let $(\mathbf{L}_i)_{i\in I}$ be a non--empty family of nontrivial bounded lattices, with $\mathbf{L}_i=(L_i,\leq^{\mathbf{L}_i},0^{\mathbf{L}_i},1^{\mathbf{L}_i})$ for all $i\in I$. Also, let $\varepsilon$ be the equivalence on
$\amalg _{i\in I}L_i$ defined by:$$\varepsilon =eq(\{\{0^{\mathbf{L}_i}:i\in I\},\{1^{\mathbf{L}_i}:i\in I\}\}\cup\{\{x\}:x\in\amalg _{i\in I}(L_i\setminus\{0^{\mathbf{L}_i},1^{\mathbf{L}_i}\})\}),$$and let $\boxplus_{i\in I}L_i=(\amalg _{i\in I}L_i)/\varepsilon $. Note that, for all $i\in I$, $\varepsilon\cap L_i^{2}=\Delta_{L_i}\in\mathrm{Con}(\mathbf{L}_i)$, thus $\mathbf{L}_i\cong\mathbf{L}_i/\Delta_{L_i}=\mathbf{L}_i/(\varepsilon\cap L_i^{2})$. For each $i\in I$, we identify $\mathbf{L}_i$ with $\mathbf{L}_i/(\varepsilon\cap L_i^{2})=(L_i/\varepsilon ,\leq ^{\mathbf{L}_i}\!\!/\varepsilon,0^{\mathbf{L}_i}/\varepsilon,1^{\mathbf{L}_i}/\varepsilon)$, by identifying $x$ with $x/\varepsilon$ for each $x\in L_i$. The \emph{horizontal sum} of the family $(\mathbf{L}_i)_{i\in I}$ is the bounded lattice:$$\boxplus_{i\in I}\mathbf{L}_i=(\boxplus_{i\in I}L_i,\leq ^{\boxplus_{i\in I}\mathbf{L}_i},0^{\boxplus_{i\in I}\mathbf{L}_i},1^{\boxplus_{i\in I}\mathbf{L}_i}),$$where $0^{\boxplus_{i\in I}\mathbf{L}_i}=0^{\mathbf{L}_j}$ and $1^{\boxplus_{i\in I}\mathbf{L}_i}=1^{\mathbf{L}_j}$ for each $j\in I$, and $\leq^{\boxplus_{i\in I}\mathbf{L}_i}=\bigcup_{i\in I}\leq^{\mathbf{L}_i}$. If $\alpha _i\in\mathrm{Eq}(L_i)\setminus\left\{\nabla_{L_i}\right\}$ for all $i\in I$, then we denote by $\boxplus _{i\in I}\alpha _i$ the equivalence on $\boxplus _{i\in I}L_i$ defined by:$$\boxplus _{i\in I}\alpha _i=eq(\bigcup_{i\in I}(L_i/\alpha_i\setminus\{0^{\mathbf{L}_i}/\alpha_i,1^{\mathbf{L}_i}/\alpha_i\})\cup \{\bigcup_{i\in I}0^{\mathbf{L}_i}/\alpha_i,\bigcup_{i\in I}1^{\mathbf{L}_i}/\alpha_i\}).$$Note that, for any nontrivial bounded lattice $\mathbf{L}$, $\mathbf{D}_{2}\boxplus\mathbf{L}=\mathbf{L}$ and $\Delta _{D_2}\boxplus \alpha =\alpha $ for any $\alpha \in {\rm Eq}(L)\setminus \{\nabla _L\}$. Clearly, the binary operation $\boxplus$ on nontrivial bounded lattices is associative and commutative, and so is the attendant operation on proper equivalences of the universes of those lattices.

\subsection{Congruences with Singleton Classes and Generated Subalgebras}

\begin{theorem}
\textup{\cite[Corollary 2, p. 51]{gralgu}} The congruence lattice of any algebra is a complete sublattice of the equivalence lattice of its set reduct.\label{cgeq}
\end{theorem}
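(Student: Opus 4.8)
The plan is to show that $\mathrm{Con}(\mathbf{A})$ is closed in $\mathrm{Eq}(A)$ under arbitrary meets and arbitrary joins, which is precisely what it means to be a \emph{complete} sublattice; closure under these infinitary operations automatically forces the bounds and all meets and joins to be computed identically in both lattices (in particular, the empty meet and empty join recover $\nabla_A$ and $\Delta_A$, so $\mathrm{Con}(\mathbf{A})$ is nonempty). It therefore suffices to treat the two operations separately. First I would recall the description of the operations of $\mathrm{Eq}(A)$: the meet of a family of equivalences is their intersection, while their join is the smallest equivalence containing their union, described concretely by the condition that $(a,b)$ lies in the join iff there is a finite chain $a=c_0,c_1,\ldots,c_n=b$ with each consecutive pair $(c_{j-1},c_j)$ belonging to one of the equivalences in the family.

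For meets the argument is routine: given a family $(\theta_i)_{i\in I}$ of congruences of $\mathbf{A}$, the intersection $\bigcap_{i\in I}\theta_i$ is an equivalence, and if a tuple of argument pairs lies in the intersection then each pair lies in every $\theta_i$, so the compatibility of each individual $\theta_i$ with every fundamental operation propagates to the intersection. Hence $\bigcap_{i\in I}\theta_i\in\mathrm{Con}(\mathbf{A})$, and this reasoning is insensitive to the size of $I$.

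The substance lies in the join. Let $\Theta$ be the join in $\mathrm{Eq}(A)$ of a family $(\theta_i)_{i\in I}$ of congruences; I must verify that $\Theta$ is compatible with every fundamental operation $f$, say of arity $m$. Suppose $(a_k,b_k)\in\Theta$ for $k=1,\ldots,m$. I would interpolate between $f(a_1,\ldots,a_m)$ and $f(b_1,\ldots,b_m)$ by changing one coordinate at a time, setting $d_k=f(b_1,\ldots,b_k,a_{k+1},\ldots,a_m)$, so that $d_{k-1}$ and $d_k$ differ only in the $k$-th slot. Applying the chain description of $\Theta$ to $(a_k,b_k)$, together with the fact that each single $\theta_i$ is a congruence---so that substituting $\theta_i$-related elements into one argument of $f$ with the others held fixed yields $\theta_i$-related outputs---I obtain a chain from $d_{k-1}$ to $d_k$ witnessing $(d_{k-1},d_k)\in\Theta$. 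Transitivity of $\Theta$ then yields $(d_0,d_m)=(f(a_1,\ldots,a_m),f(b_1,\ldots,b_m))\in\Theta$, so $\Theta\in\mathrm{Con}(\mathbf{A})$.

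The only real obstacle is this join step, and specifically the bookkeeping in the coordinate-by-coordinate interpolation: one must check that the fixed coordinates, some equal to $a_\ell$ and some to $b_\ell$, cause no difficulty, which they do not, because reflexivity of each $\theta_i$ covers the unchanged arguments. Everything else is formal. I would emphasize that the argument makes no finiteness assumption on the index set $I$, since each witnessing chain is finite even when the family of congruences is infinite; this is exactly what secures closure under \emph{arbitrary} joins and hence the completeness of the sublattice.
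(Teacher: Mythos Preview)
Your proof is correct and is essentially the standard argument found in textbooks such as Gr\"atzer's: closure under arbitrary intersections is immediate, and closure under arbitrary joins follows from the finite-chain description of the join in $\mathrm{Eq}(A)$ combined with the coordinate-by-coordinate interpolation trick. Note, however, that the paper does not supply its own proof of this theorem---it simply cites it as \cite[Corollary 2, p.~51]{gralgu}---so there is no in-paper argument to compare against; your write-up is precisely the classical proof that citation points to.
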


\begin{corollary}
The congruence lattice of any algebra is a complete sublattice of the congruence lattice of any of its reducts.\label{cgred}
\end{corollary}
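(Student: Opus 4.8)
The plan is to deduce this directly from Theorem \ref{cgeq}, together with the observation that being a complete sublattice is, in the relevant sense, transitive. So let $\mathbf{A}$ be an algebra of similarity type $\tau$ and let $\mathbf{B}$ be a reduct of $\mathbf{A}$, that is, an algebra of some type $\tau'\subseteq\tau$ with the same universe $A=B$ whose fundamental operations are among those of $\mathbf{A}$. I would first record the set--theoretic inclusion $\mathrm{Con}(\mathbf{A})\subseteq\mathrm{Con}(\mathbf{B})$: any equivalence on $A$ compatible with every operation of $\mathbf{A}$ is, in particular, compatible with every operation of $\mathbf{B}$, since the latter form a subset of the former; hence every congruence of $\mathbf{A}$ is a congruence of $\mathbf{B}$. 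Both $\mathrm{Con}(\mathbf{A})$ and $\mathrm{Con}(\mathbf{B})$ are thus subsets of $\mathrm{Eq}(A)$, the equivalence lattice of the common set reduct.

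Next I would invoke Theorem \ref{cgeq} twice, applied to $\mathbf{A}$ and to $\mathbf{B}$ separately: both $\mathrm{Con}(\mathbf{A})$ and $\mathrm{Con}(\mathbf{B})$ are complete sublattices of $\mathrm{Eq}(A)$. Concretely, for an arbitrary family of members of $\mathrm{Con}(\mathbf{A})$ (respectively of $\mathrm{Con}(\mathbf{B})$), its meet and its join computed in $\mathrm{Eq}(A)$ again lie in $\mathrm{Con}(\mathbf{A})$ (respectively in $\mathrm{Con}(\mathbf{B})$) and are its meet and join there.

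Finally I would combine these two facts. Let $(\theta_i)_{i\in I}$ be any family in $\mathrm{Con}(\mathbf{A})$; by the inclusion above it is also a family in $\mathrm{Con}(\mathbf{B})$. Its meet and join taken in $\mathrm{Con}(\mathbf{B})$ coincide with those taken in $\mathrm{Eq}(A)$, because $\mathrm{Con}(\mathbf{B})$ is a complete sublattice of $\mathrm{Eq}(A)$; but since $\mathrm{Con}(\mathbf{A})$ is likewise a complete sublattice of $\mathrm{Eq}(A)$, those same $\mathrm{Eq}(A)$--meet and join already belong to $\mathrm{Con}(\mathbf{A})$ and are its meet and join. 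Hence the meet and join of $(\theta_i)_{i\in I}$ computed in $\mathrm{Con}(\mathbf{B})$ lie in $\mathrm{Con}(\mathbf{A})$ and agree with those computed in $\mathrm{Con}(\mathbf{A})$, which is exactly the claim. There is no genuine obstacle here; the only point requiring care is to keep the three ambient lattices distinct and to use that \emph{complete} sublattice refers to agreement of all meets and joins, including infinitary ones, so that the transitivity step applies to arbitrary families and not merely to finite ones.
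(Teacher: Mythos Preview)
Your proof is correct and follows exactly the approach the paper intends: the corollary is stated immediately after Theorem \ref{cgeq} with no explicit proof, as it follows from applying that theorem to both $\mathbf{A}$ and its reduct $\mathbf{B}$ together with the obvious inclusion $\mathrm{Con}(\mathbf{A})\subseteq\mathrm{Con}(\mathbf{B})$ and the transitivity of the complete--sublattice relation. You have simply spelled out the details that the paper leaves implicit.
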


\begin{lemma}
\begin{enumerate}
\item \label{maxcg1} If $M$ is a set, $\emptyset\neq S\subseteq M$ and
$\sigma\in\mathrm{Part}(S)$, then $P=\{\pi\in\mathrm{Part}(M):\sigma
\subseteq\pi\}$ and $E=\{\varepsilon\in\mathrm{Eq}(M):\sigma\subseteq
M/\varepsilon\}$ are complete sublattices of $\mathrm{Part}(M)$ and
$\mathrm{Eq}(M)$, respectively, in particular they are bounded lattices.

\item \label{maxcg2} If $\mathbf{A}$ is an algebra from a variety
${\mathbb{V}}$, $\emptyset\neq S\subseteq A$ and $\sigma\in\mathrm{Part}(S)$
is such that the set $C=\{\theta\in\mathrm{Con}_{{\mathbb{V}}}(\mathbf{A}%
):\sigma\subseteq A/\theta\}$ is non--empty, then $C$ is a complete sublattice
of $\mathrm{Con}_{{\mathbb{V}}}(\mathbf{A})$, in particular it is a bounded lattice.

\item \label{maxcg3} Let ${\mathbb{V}} $ be a variety of algebras of a similarity type $\tau$,
$n\in{\mathbb{N}} ^{*}$ and $\kappa_{1},\ldots,\kappa_{n}$ be constants in
$\tau$. If $\mathbf{A}$ is a member of ${\mathbb{V}} $ such that
$\mathrm{Con}_{{\mathbb{V}} \kappa_{1}\ldots\kappa_{n}}(\mathbf{A})$ is
non--empty, then $\mathrm{Con}_{{\mathbb{V}} \kappa_{1}\ldots\kappa_{n}%
}(\mathbf{A})$ is a complete sublattice of $\mathrm{Con}_{{\mathbb{V}}
}(\mathbf{A})$, in particular it is a bounded lattice.\end{enumerate}\label{maxcg}\end{lemma}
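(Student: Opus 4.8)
The plan is to establish part (i) for partitions and then reduce everything else to it. Since $eq\colon \mathrm{Part}(M)\to\mathrm{Eq}(M)$ is a lattice isomorphism satisfying $M/eq(\pi)=\pi$, the condition $\sigma\subseteq M/\varepsilon$ is just $\sigma\subseteq eq^{-1}(\varepsilon)$, so $E=eq(P)$; hence once $P$ is shown to be a complete sublattice of $\mathrm{Part}(M)$, its isomorphic image $E$ is automatically a complete sublattice of $\mathrm{Eq}(M)$. First I would note that $P$ is nonempty, since $\sigma\cup\{\{x\}:x\in M\setminus S\}\in P$, and that its least and greatest elements (in the induced order) are precisely $\sigma\cup\{\{x\}:x\in M\setminus S\}$ and $\sigma\cup\{M\setminus S\}$ respectively (the last block being dropped when $S=M$); thus the substance is to check that $P$ is closed under the meet and the join of an arbitrary nonempty subfamily computed in $\mathrm{Part}(M)$, which yields that $P$ is a complete lattice, in particular bounded.

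For that closure, take a nonempty family $(\pi_j)_{j\in J}$ in $P$ and recall that $eq(\bigwedge_{j}\pi_j)=\bigcap_{j}eq(\pi_j)$ while $eq(\bigvee_{j}\pi_j)$ is the transitive closure of $\bigcup_{j}eq(\pi_j)$. I would show that each block $B\in\sigma$, being a block of every $\pi_j$, remains a single block in both the meet and the join; since the blocks of $\sigma$ exhaust $S$, this gives $\sigma\subseteq\bigwedge_j\pi_j$ and $\sigma\subseteq\bigvee_j\pi_j$, i.e.\ both lie in $P$. The meet case is immediate, as two elements share a block of the meet iff they share a block of every $\pi_j$. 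The main obstacle is the join: here I would argue that any chain $x=y_0,y_1,\dots,y_m=z$ witnessing $(x,z)$ in the transitive closure, with $x\in B$, is forced to stay inside $B$ --- since $B$ is the $\pi_{j_0}$-block of $y_0=x$, the element $y_1$ sharing that block lies in $B$, and inductively $z\in B$ --- so the join-block of $x$ is exactly $B$.

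Part (ii) then follows by an intersection argument. Writing $E=\{\varepsilon\in\mathrm{Eq}(A):\sigma\subseteq A/\varepsilon\}$, we have $C=E\cap\mathrm{Con}_{{\mathbb{V}}}(\mathbf{A})$: a congruence is in particular an equivalence, and the defining condition is the same. By part (i), $E$ is a complete sublattice of $\mathrm{Eq}(A)$, and by Theorem \ref{cgeq}, so is $\mathrm{Con}_{{\mathbb{V}}}(\mathbf{A})$. An intersection of two subsets each closed under the arbitrary nonempty meets and joins of $\mathrm{Eq}(A)$ is again closed under them, so the nonempty set $C$ is a complete sublattice of $\mathrm{Eq}(A)$; since those operations agree with the ones of $\mathrm{Con}_{{\mathbb{V}}}(\mathbf{A})$ (Theorem \ref{cgeq} again), $C$ is a complete sublattice of $\mathrm{Con}_{{\mathbb{V}}}(\mathbf{A})$, in particular bounded.

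Finally, part (iii) is the special case of part (ii) in which $S=\{\kappa_1^{\mathbf{A}},\dots,\kappa_n^{\mathbf{A}}\}$ and $\sigma$ is the partition of $S$ into singletons (the constants need not be distinct in $\mathbf{A}$; one simply takes $\sigma$ on the actual set of their values). Then $\sigma\subseteq A/\theta$ says exactly that $\kappa_i^{\mathbf{A}}/\theta=\{\kappa_i^{\mathbf{A}}\}$ for every $i$, so $C=\mathrm{Con}_{{\mathbb{V}}\kappa_1\ldots\kappa_n}(\mathbf{A})$, which is nonempty by hypothesis, and part (ii) applies verbatim. The only genuinely non-routine point in the whole argument is the join-closure in part (i); the meet-closure, the transfer through $eq$, and the two reductions are bookkeeping.
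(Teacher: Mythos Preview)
Your proof is correct and follows the same overall architecture as the paper: establish (i) for partitions, transfer to equivalences via the isomorphism $eq$, obtain (ii) as the intersection $C=E\cap\mathrm{Con}_{\mathbb V}(\mathbf A)$ of two complete sublattices of $\mathrm{Eq}(A)$ (invoking Theorem~\ref{cgeq}), and read off (iii) as the special case $S=\{\kappa_1^{\mathbf A},\dots,\kappa_n^{\mathbf A}\}$ with $\sigma$ the singleton partition. The only noticeable difference is in how closure of $P$ under joins is verified in (i): the paper observes that when $S\subsetneq M$ each $\pi_j$ decomposes as $\sigma\cup(\pi_j\setminus\sigma)$ with $\pi_j\setminus\sigma\in\mathrm{Part}(M\setminus S)$, so that $\bigvee_j\pi_j=\sigma\cup\bigvee_j(\pi_j\setminus\sigma)$ (and similarly for meets), whereas you argue directly via transitive-closure chains that no element outside a block $B\in\sigma$ can be joined to $B$. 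Both arguments are short and standard; yours is slightly more self-contained, while the paper's decomposition makes the lattice structure of $P$ (as essentially $\mathrm{Part}(M\setminus S)$) more transparent.
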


\begin{proof}(\ref{maxcg1}) Note that, in the statement of the lemma, by $\subseteq$ we
mean set inclusion, not the partitions ordering, so that, for any $\pi
\in\mathrm{Part}(M)$, $\sigma\subseteq\pi$ means that, for each $x\in S$,
$x/eq(\sigma)=x/eq(\pi)$. $S/\sigma\cup(\{M\setminus S\}\setminus
\{\emptyset\})\in P$, thus $P\neq\emptyset$, hence $E=eq(P)\neq\emptyset$.

If $S=M$, then, for any $\pi\in\mathrm{Part}(M)$, $\sigma\subseteq\pi$ is
equivalent to $\sigma=\pi$, thus, in this case, $P=\{\sigma\}$ and
$E=\{eq(\sigma)\}$, therefore $P$ and $E$ are trivial, thus complete
sublattices of $\mathrm{Part}(M)$ and $\mathrm{Eq}(M)$, respectively.

If $S\subsetneq M$, then, for any $\emptyset\neq\{\pi_{i}:i\in I\}\subseteq P$
and any $j\in I$, the fact that $\sigma\subseteq\pi_{j}$ shows that $\pi
_{j}\setminus\sigma\in\mathrm{Part}(M\setminus S)$, and hence $\bigwedge_{i\in
I}\pi_{i}=\sigma\cup\bigwedge(\pi_{i}\setminus\sigma)\supseteq\sigma$ and
$\bigvee_{i\in I}\pi_{i}=\sigma\cup\bigvee(\pi_{i}\setminus\sigma
)\supseteq\sigma$, therefore $\bigwedge_{i\in I}\pi_{i},\bigvee_{i\in I}%
\pi_{i}\in P$, hence $P$ is a complete sublattice of $\mathrm{Part}(M)$, thus
$E=eq(P)$ is a complete sublattice of $\mathrm{Eq}(M)$.

\noindent(\ref{maxcg2}) $C=E\cap\mathrm{Con}_{{\mathbb{V}}}(\mathbf{A}%
)\subseteq E$, so that, if $C\neq\emptyset$, then, by (\ref{maxcg1}) and
Theorem \ref{cgeq}, for any $\emptyset\neq\{\gamma_{i}:i\in I\}\subseteq
C\subseteq E$, we have $\bigwedge_{i\in I}\gamma_{i},\bigvee_{i\in I}%
\gamma_{i}\in E\cap\mathrm{Con}_{{\mathbb{V}}}(\mathbf{A})=C$, hence $C$ is a
complete sublattice of $\mathrm{Con}_{{\mathbb{V}}}(\mathbf{A})$.

\noindent(\ref{maxcg3}) This is a particular case of (\ref{maxcg2}).\end{proof}

\begin{lemma}\begin{enumerate}
\item\label{cgsgltcls1} If $\V $ is a variety and $(\mathbf{A}_{i})_{i\in I}\subseteq{\mathbb{V}}$ is a non--empty family such that
$\prod_{i\in I}\mathbf{A}_{i}$ has no skew congruences, then $\mathrm{Con}
_{{\mathbb{V}}\kappa_{1}\ldots\kappa_{n}}(\prod_{i\in I}\mathbf{A}%
_{i})=\{\prod_{i\in I}\alpha_{i}:((\forall\,i\in I)\,\left(  \alpha_{i}%
\in\mathrm{Con}_{{\mathbb{V}}\kappa_{1}\ldots\kappa_{n}}(\mathbf{A}%
_{i})\right)  \}\cong\prod_{i\in I}\mathrm{Con}_{{\mathbb{V}}\kappa_{1}\ldots\kappa_{n}}(\mathbf{A}_{i})$.
\item\label{cgsgltcls2} If ${\mathbb{V}}$ is a variety of bounded lattice--ordered structures, then, for all $\mathbf{A},\mathbf{B}\in{\mathbb{V}}$, $\mathrm{Con}_{{\mathbb{V}}01}(\mathbf{A}\times\mathbf{B})=\{\alpha\times\beta:\alpha\in\mathrm{Con}_{{\mathbb{V}}01}(\mathbf{A}),\beta\in\mathrm{Con}_{{\mathbb{V}}01}(\mathbf{B})\}\cong\mathrm{Con}_{{\mathbb{V}}01}(\mathbf{A})\times\mathrm{Con}_{{\mathbb{V}}01}(\mathbf{B})$.\end{enumerate}\label{cgsgltcls}\end{lemma}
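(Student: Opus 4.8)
For part (\ref{cgsgltcls1}), the plan is to extract everything from the no-skew-congruences hypothesis, which says exactly that the natural lattice embedding $\Phi\colon\prod_{i\in I}\mathrm{Con}_{\V}(\mathbf{A}_i)\to\mathrm{Con}_{\V}(\prod_{i\in I}\mathbf{A}_i)$, $(\alpha_i)_{i\in I}\mapsto\prod_{i\in I}\alpha_i$, is onto, hence a lattice isomorphism. I would first fix $j\in[1,n]$ and compute, for $\theta=\prod_{i\in I}\alpha_i$, the $\theta$-class of the constant $\kappa_j^{\prod_{i\in I}\mathbf{A}_i}=(\kappa_j^{\mathbf{A}_i})_{i\in I}$: it is $\prod_{i\in I}(\kappa_j^{\mathbf{A}_i}/\alpha_i)$. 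Since $I\neq\emptyset$ and each factor is nonempty, this product is a singleton precisely when every $\kappa_j^{\mathbf{A}_i}/\alpha_i$ is a singleton. Letting $j$ range over $[1,n]$ shows that $\prod_{i\in I}\alpha_i\in\mathrm{Con}_{\V\kappa_1\ldots\kappa_n}(\prod_{i\in I}\mathbf{A}_i)$ iff $\alpha_i\in\mathrm{Con}_{\V\kappa_1\ldots\kappa_n}(\mathbf{A}_i)$ for all $i\in I$. Together with the surjectivity of $\Phi$ this gives the set equality, and restricting $\Phi$ to $\prod_{i\in I}\mathrm{Con}_{\V\kappa_1\ldots\kappa_n}(\mathbf{A}_i)$ — a sublattice of the domain, each factor being a sublattice by Lemma \ref{maxcg}(\ref{maxcg3}) and nonempty since it contains $\Delta_{A_i}$ — yields the displayed isomorphism.

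For part (\ref{cgsgltcls2}) no such hypothesis is available, so instead I would build the factorization by hand, exploiting the bounds. Given $\theta\in\mathrm{Con}_{\V 01}(\mathbf{A}\times\mathbf{B})$, I would set $\alpha=\{(a,a'):((a,0^{\mathbf{B}}),(a',0^{\mathbf{B}}))\in\theta\}$ and $\beta=\{(b,b'):((0^{\mathbf{A}},b),(0^{\mathbf{A}},b'))\in\theta\}$. The first delicate point is that $\alpha,\beta$ really are congruences in $\V$: reflexivity, symmetry and transitivity are inherited from $\theta$, while compatibility with an arbitrary operation $f$ is obtained by applying $f^{\mathbf{A}\times\mathbf{B}}$ to $\theta$-related pairs of the form $((a,0^{\mathbf{B}}),(a',0^{\mathbf{B}}))$ and then meeting the two outputs with $(1^{\mathbf{A}},0^{\mathbf{B}})$; this ``resets'' the second coordinate (whatever $f^{\mathbf{B}}(0^{\mathbf{B}},\ldots,0^{\mathbf{B}})$ turned out to be) back to $0^{\mathbf{B}}$ while preserving the first, keeping the result inside the trace. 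Next I would check that $\alpha,\beta$ fix the bounds: if $(0^{\mathbf{A}},a)\in\alpha$ then $(0^{\mathbf{A}\times\mathbf{B}},(a,0^{\mathbf{B}}))\in\theta$, so the singleton $0$-class of $\theta$ forces $a=0^{\mathbf{A}}$; and if $(1^{\mathbf{A}},a)\in\alpha$ then joining with $(0^{\mathbf{A}},1^{\mathbf{B}})$ gives $(1^{\mathbf{A}\times\mathbf{B}},(a,1^{\mathbf{B}}))\in\theta$, so the singleton $1$-class forces $a=1^{\mathbf{A}}$. Thus $\alpha\in\mathrm{Con}_{\V 01}(\mathbf{A})$ and, symmetrically, $\beta\in\mathrm{Con}_{\V 01}(\mathbf{B})$.

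The core of part (\ref{cgsgltcls2}) is then the identity $\theta=\alpha\times\beta$. For the inclusion $\theta\subseteq\alpha\times\beta$, from $((a,b),(a',b'))\in\theta$ I would meet with $(1^{\mathbf{A}},0^{\mathbf{B}})$ and with $(0^{\mathbf{A}},1^{\mathbf{B}})$ to obtain $((a,0^{\mathbf{B}}),(a',0^{\mathbf{B}}))\in\theta$ and $((0^{\mathbf{A}},b),(0^{\mathbf{A}},b'))\in\theta$, i.e. $(a,a')\in\alpha$ and $(b,b')\in\beta$. For the reverse inclusion, given $(a,a')\in\alpha$ and $(b,b')\in\beta$, I would join $((a,0^{\mathbf{B}}),(a',0^{\mathbf{B}}))\in\theta$ with $(0^{\mathbf{A}},b)$ to get $((a,b),(a',b))\in\theta$, join $((0^{\mathbf{A}},b),(0^{\mathbf{A}},b'))\in\theta$ with $(a',0^{\mathbf{B}})$ to get $((a',b),(a',b'))\in\theta$, and chain these by transitivity. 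The reverse inclusion of the asserted set equality is routine, since for $\alpha\in\mathrm{Con}_{\V 01}(\mathbf{A})$ and $\beta\in\mathrm{Con}_{\V 01}(\mathbf{B})$ one has $0^{\mathbf{A}\times\mathbf{B}}/(\alpha\times\beta)=\{0^{\mathbf{A}}\}\times\{0^{\mathbf{B}}\}$ and likewise for $1$; and the displayed lattice isomorphism is simply the restriction of the always-injective natural homomorphism $(\alpha,\beta)\mapsto\alpha\times\beta$, which the factorization above shows maps $\mathrm{Con}_{\V 01}(\mathbf{A})\times\mathrm{Con}_{\V 01}(\mathbf{B})$ onto $\mathrm{Con}_{\V 01}(\mathbf{A}\times\mathbf{B})$.

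I expect the main obstacle to lie in part (\ref{cgsgltcls2}), specifically in confirming that $\alpha$ and $\beta$ are congruences with respect to \emph{all} operations of $\V$, not merely the lattice ones: the ``meet with $(1^{\mathbf{A}},0^{\mathbf{B}})$'' reset is what makes the trace closed under operations such as the Kleene and Brouwer complements, and an analogous join is needed to verify that the $1$-class remains a singleton. Everything else reduces to the standard zig-zag through the bounds.
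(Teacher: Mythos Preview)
Your argument is correct in both parts. For part~(\ref{cgsgltcls1}) you spell out what the paper dismisses as ``Routine'', and your treatment is fine.

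For part~(\ref{cgsgltcls2}) you take a genuinely different route from the paper. The paper simply invokes the fact that bounded lattice--ordered structures form a congruence--distributive variety, whence finite direct products have no skew congruences (Fraser--Horn), and then applies part~(\ref{cgsgltcls1}) directly. You instead build the factor congruences $\alpha,\beta$ by hand, using the bounds $0,1$ to ``reset'' coordinates via meets and joins, and verify $\theta=\alpha\times\beta$ with the standard zig-zag. In effect you are re-proving the no-skew-congruences property for this specific setting rather than citing it. Your approach is more self-contained and elementary --- it needs nothing beyond the lattice operations and the bounds --- while the paper's is much shorter but leans on external references. Note, incidentally, that your zig-zag and reset arguments nowhere use the hypothesis that $\theta$ has singleton $0$- and $1$-classes; that hypothesis enters only when you check $\alpha\in\mathrm{Con}_{\V 01}(\mathbf{A})$ and $\beta\in\mathrm{Con}_{\V 01}(\mathbf{B})$, so you have actually shown that \emph{every} congruence of $\mathbf{A}\times\mathbf{B}$ is a product --- i.e.\ you have reproved the no-skew-congruences fact in passing.
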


\begin{proof} (\ref{cgsgltcls1}) Routine.

\noindent (\ref{cgsgltcls2}) By (\ref{cgsgltcls1}) and the fact that lattice--ordered structures are congruence--dis\-tri\-bu\-tive, thus $\mathbf{A}\times\mathbf{B}$ has no skew congruences \cite{fremck,bj}; see also \cite{eucardbi}.\end{proof}

Let $\V $ be a variety of similar algebras, ${\bf A}$ a member of $\V $, $\theta \in {\rm Con}_{\V }({\bf A})$, $S\subseteq A$, ${\bf B}\in \S _{\V }({\bf A})$, $({\bf A}_i)_{i\in I}$ a non--empty family of members of $\V $ and, for all $i\in I$, $S_i\subseteq A_i$. Then:\begin{enumerate}
\item\label{prodsubalg} $\langle \prod _{i\in I}S_i\rangle _{\V ,\prod _{i\in I}{\bf A}_i}=\prod _{i\in I}\langle S_i\rangle _{\V ,{\bf A}_i}$;
\item\label{subsubalg} $\langle S\cap B\rangle _{\V ,{\bf B}}=\langle S\cap B\rangle _{\V ,{\bf A}}\cap {\bf B}\subseteq \langle S\rangle _{\V ,{\bf A}}\cap {\bf B}$, where the converse of the inclusion doesn`t always hold;
\item\label{quosubalg} $\langle S\rangle _{\V ,{\bf A}}/\theta =\langle S/\theta \rangle _{\V ,{\bf A}/\theta }$.\end{enumerate}

Indeed, (\ref{prodsubalg}) is clear and so is the inclusion in (\ref{subsubalg}), while, if we replace $\V $ by the variety of lattices, ${\bf A}$ by the five--element modular non--distributive lattice ${\bf M}_3$, ${\bf B}$ by the sublattice of ${\bf M}_3$ with universe $B=\{0,a,b,1\}$ and $S$ by the set $\{b,c\}$, where $a,b,c$ are the three atoms of ${\bf M}_3$, then we get a counter--example for the converse of the inclusion in (\ref{subsubalg}). The fact that $S\cap B\subseteq \langle S\cap B\rangle _{\V ,{\bf A}}\cap B$ and $\langle S\cap B\rangle _{\V ,{\bf A}}\cap {\bf B}\in \S _{\V }({\bf B})$ proves that $\langle S\cap B\rangle _{\V ,{\bf B}}\subseteq \langle S\cap B\rangle _{\V ,{\bf A}}\cap {\bf B}$ and, if we consider an element $b\in \langle S\cap B\rangle _{\V ,{\bf A}}\cap B$, then, for some $n\in \N $, $b_1,\ldots ,b_n\in S\cap B$ and some term $t$ in the language of $\V $, we have $b=t^{\bf A}(b_1,\ldots ,b_n)=t^{\bf B}(b_1,\ldots ,b_n)\in \langle S\cap B\rangle _{\V ,{\bf B}}$, thus $\langle S\cap B\rangle _{\V ,{\bf A}}\cap {\bf B}\subseteq \langle S\cap B\rangle _{\V ,{\bf B}}$, so (\ref{subsubalg}) holds.

Finally, $S/\theta \subseteq \langle S\rangle _{\V ,{\bf A}}/\theta \in \S _{\V }({\bf A}/\theta )$ and thus $\langle S/\theta \rangle _{\V ,{\bf A}/\theta }\subseteq \langle S\rangle _{\V ,{\bf A}}/\theta $, while, if we consider an $a\in \langle S\rangle _{\V ,{\bf A}}$, then, for some $n\in \N $, $a_1,\ldots ,a_n\in S$ and some term $t$ in the language of $\V $, we have $a=t^{\bf A}(a_1,\ldots ,a_n)$, thus $a/\theta =t^{{\bf A}/\theta }(a_1/\theta ,\ldots ,a_n/\theta )\in \langle S/\theta \rangle _{\V ,{\bf A}/\theta }$, hence $\langle S\rangle _{\V ,{\bf A}}/\theta \subseteq \langle S/\theta \rangle _{\V ,{\bf A}/\theta }$, which concludes the proof of (\ref{quosubalg}).

\subsection{\PBZ --Lattices: Definitions, Notations and Previously Established Properties}\label{pbzl}

We recall some preliminary notions on PBZ*-lattices and related structures only to such an extent as is necessary for the purposes of the present paper. For additional information on bounded involution lattices and pseudo-Kleene algebras, see \cite{RQT, PSK}; for Kleene lattices, a locus classicus is
\cite{Kalman}; for BZ-lattices, see \cite{RQT, CN}; finally, for
PBZ*-lattices, see \cite{GLP, PBZ2, rgcmfp}.

\begin{definition}
\label{defbi}A \emph{bounded involution lattice} (in brief, \emph{BI-lattice})
is an algebra $\mathbf{L}=(L,\wedge,\vee,^{\prime},0,1)$ of type $(2,2,1,0,0)$
such that $(L,\wedge,\vee,0,1)$ is a bounded lattice with induced partial
order $\leq$, $a^{\prime\prime}=a$ for all $a\in L$, and $a\leq b$ implies
$b^{\prime}\leq a^{\prime}$ for all $a,b\in L$.

A \emph{pseudo--Kleene algebra} is a BI-lattice $\mathbf{L}$ satisfying, for
all $a,b\in L$: $a\wedge a^{\prime}\leq b\vee b^{\prime}$. Distributive
pseudo--Kleene algebras are called \emph{Kleene algebras} or \emph{Kleene
lattices}.
\end{definition}

Note that, for any BI-lattice $\mathbf{L}$, $^{\prime}:L\rightarrow L$ is a dual lattice automorphism of $\mathbf{L}_{l}$, called \emph{involution}. The
involution of a pseudo--Kleene algebra is called \emph{Kleene complement}. If $\mathbf{L}$ is a BI-lattice, then for any $U\subseteq L$ and any $X\subseteq L^2$ we set $U^{\prime}=\{u^{\prime}:u\in U\}$ and $X^{\prime}=\{(x^{\prime},y^{\prime}):(x,y)\in X\}$.

For every algebra $\mathbf{A}$, if $\mathbf{A}$ has a (bounded) lattice reduct, then this reduct will be denoted by $\mathbf{A}_{l}$, and, if $\mathbf{A}$ has
a BI-lattice reduct, then such a reduct will be denoted by $\mathbf{A}_{bi}$. Let $\mathbf{L}$ be an algebra having a BI-lattice reduct. We say that an
element $a\in L$ is \emph{Kleene--sharp} or, simply, \emph{sharp}\footnote{See
however the introduction to the present paper or \cite{GLP} for the several
distinct notions of sharp element that collapse in the context of
PBZ*-lattices.} iff $a\wedge a^{\prime}=0$, or, equivalently, iff $a\vee
a^{\prime}=1$. We will denote the set of the sharp elements of $\mathbf{L}$ by
$S(\mathbf{L})$.

\begin{definition}
Let $\mathbf{L}$ be a BI-lattice. Then:

\begin{itemize}
\item $\mathbf{L}$ is an \emph{ortholattice} iff $S(\mathbf{L})=L$;

\item $\mathbf{L}$ is a \emph{paraorthomodular BI-lattice} iff, for all
$a,b\in L$, if $a\leq b$ and $a^{\prime}\wedge b=0$, then $a=b$;

\item $\mathbf{L}$ is an \emph{orthomodular lattice} iff $\mathbf{L}$ is an
ortholattice and, for all $a,b\in L$, if $a\leq b$, then $b=(b\wedge
a^{\prime})\vee a$.
\end{itemize}

If an algebra $\mathbf{A}$ has a BI-lattice reduct and $\mathbf{A}_{bi}$ is
paraorthomodular, then $\mathbf{A}$ is said to be paraorthomodular, as well.
\end{definition}

Clearly, any ortholattice is a pseudo--Kleene algebra. Note that, if a
BI-lattice $\mathbf{L}$ is orthomodular, then it is paraorthomodular; however,
if $\mathbf{L}$ is an ortholattice, then $\mathbf{L}$ is orthomodular iff it
is pa\-ra\-or\-tho\-mo\-du\-lar \cite[Prop. 2.1]{BH}.

We denote ${\bf MO}_0={\bf D}_2$ and, for any non--empty set $I$, ${\bf MO}_{|I|}=\boxplus _{i\in I}{\bf D}_2^2$. Clearly, for any cardinal number $\kappa $, ${\bf MO}_{\kappa }$ is an orthomodular lattice (and a Boolean algebra iff $\kappa \in \{0,1\}$).

\begin{definition}
\label{defbz} A \emph{Brouwer--Zadeh lattice} (in brief, \emph{BZ--lattice})
is an algebra $\mathbf{L}=\left(  {L,\,\wedge,\vee\,,\,}^{\prime}{,\,^{\sim
},0\,,1}\right)  $ of type $\left(  2,2,1,1,0,0\right)  $ such that $\left(
{L,\,\wedge,\vee\,,\,}^{\prime}{,0\,,1}\right)  $ is a pseudo--Kleene algebra
and, for all $a,b\in L$:%

\begin{flushleft}\begin{tabular}
[c]{clcl}%
(1) & $a\wedge a^{\sim}=0$; & (2) & $a\leq a^{\sim\sim}$;\\
(3) & $a\leq b$ implies $b^{\sim}\leq a^{\sim}$; & (4) & $a^{\sim\prime
}=a^{\sim\sim}$.
\end{tabular}\end{flushleft}

A \emph{BZ$^{\ast}$--lattice} is a BZ--lattice $\mathbf{L}$ satisfying the condition:%

\begin{flushleft}\begin{tabular}
[c]{cl}%
$(\ast)$ & for all $a\in L$, $(a\wedge a^{\prime})^{\sim}\leq a^{\sim}\vee
a^{\prime\sim}$.
\end{tabular}\end{flushleft}

A \emph{PBZ$^{\ast}$ --lattice} is a paraorthomodular BZ$^{\ast}$--lattice.

An \emph{antiortholattice} is a PBZ$^{\ast}$ --lattice $\mathbf{L}$ with $S(\mathbf{L})=\{0,1\}$.\label{thepbzl}\end{definition}

The operation $^{\sim}$ of a BZ--lattice is called \emph{Brouwer complement}. If $\mathbf{L}$ is a BZ-lattice, then for
any $U\subseteq L$ we set $U^{\sim}=\{u^{\sim}:u\in U\}$.

\begin{lemma}
\textup{\cite{RQT, GLP}}\label{basics} If $\mathbf{L}$ is a BZ--lattice, then,
for all $a,b\in L$:

\begin{flushleft}\begin{tabular}
[c]{clcl}
(i) & $a^{\sim}\leq a^{\prime}$; & (iv) & $(a\wedge b)^{\sim}\geq a^{\sim}\vee
b^{\sim}$;\\
(ii) & $a^{\sim\sim\sim}=a^{\sim}$; & (v) & $a^{\prime}\leq a$ implies
$a^{\sim}=0$.\\
(iii) & $(a\vee b)^{\sim}=a^{\sim}\wedge b^{\sim}$; &  &
\end{tabular}\end{flushleft}\end{lemma}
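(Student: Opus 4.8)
The plan is to derive all five items from the BZ--lattice axioms (1)--(4) of Definition~\ref{defbz} together with the basic behaviour of the Kleene complement $^{\prime}$ in the underlying pseudo--Kleene algebra, namely $x^{\prime\prime}=x$ and the antitonicity $x\leq y\Rightarrow y^{\prime}\leq x^{\prime}$. The single observation I would isolate first, and then reuse throughout, is that axioms (2) and (3) make $^{\sim}$ into an antitone Galois connection with itself: for all $a,b\in L$, one has $a\leq b^{\sim}$ iff $b\leq a^{\sim}$. Indeed, if $a\leq b^{\sim}$, then (3) gives $b^{\sim\sim}\leq a^{\sim}$, and composing with $b\leq b^{\sim\sim}$ from (2) yields $b\leq a^{\sim}$; the converse is symmetric. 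This equivalence is the engine behind the non--monotone items.

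The monotonicity--only items come first. For (i) I would use (4) to rewrite $a^{\sim\sim}$ as $a^{\sim\prime}$, so that (2) reads $a\leq a^{\sim\prime}$; applying the involution $^{\prime}$ (antitone) and then $x^{\prime\prime}=x$ collapses the left--hand side and gives $a^{\sim}\leq a^{\prime}$. For (ii), the inequality $a^{\sim}\leq a^{\sim\sim\sim}$ is just (2) applied to $a^{\sim}$, whereas $a^{\sim\sim\sim}\leq a^{\sim}$ results from applying (3) to $a\leq a^{\sim\sim}$; together they give the equality. Item (iv) is the most direct: from $a\wedge b\leq a$ and $a\wedge b\leq b$, antitonicity (3) gives $a^{\sim},b^{\sim}\leq (a\wedge b)^{\sim}$, hence $a^{\sim}\vee b^{\sim}\leq (a\wedge b)^{\sim}$; and the $\leq$ half of (iii) is proved identically from $a,b\leq a\vee b$.

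The only item requiring real care is the reverse inequality $a^{\sim}\wedge b^{\sim}\leq (a\vee b)^{\sim}$ in (iii), where plain monotonicity is not enough and the Galois equivalence must be invoked. Setting $c=a^{\sim}\wedge b^{\sim}$, I would show $a\vee b\leq c^{\sim}$: since $c\leq a^{\sim}$, (3) gives $a^{\sim\sim}\leq c^{\sim}$, and then $a\leq a^{\sim\sim}\leq c^{\sim}$ by (2); symmetrically $b\leq c^{\sim}$, so $a\vee b\leq c^{\sim}$, which by the Galois equivalence is exactly $c\leq (a\vee b)^{\sim}$. Finally, (v) is a short consequence of (i) and (1): if $a^{\prime}\leq a$, then (i) gives $a^{\sim}\leq a^{\prime}\leq a$, so $a^{\sim}=a\wedge a^{\sim}=0$ by axiom (1). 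I expect (iii) to be the main (indeed the only) obstacle, as it is the one place where the self--adjoint Galois structure of $^{\sim}$, rather than a single antitone step, is genuinely needed.
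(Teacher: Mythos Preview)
Your proof is correct in every detail; the Galois-connection observation is exactly the right tool for the nontrivial direction of (iii), and the remaining items are handled cleanly. The paper does not give its own proof of this lemma (it merely cites \cite{RQT,GLP}), so there is no approach to compare against.
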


Lemma \ref{basics}.(i) shows that, in any BZ--lattice $\mathbf{L}$, for any $a\in L$, $a^{\sim}=1$ iff $a=0$. By Lemma \ref{basics}.(iv), in any BZ--lattice $\mathbf{L}$, condition $(\ast)$ is equivalent to $(a\wedge a^{\prime})^{\sim}=a^{\sim}\vee a^{\prime\sim}$ for all $a\in L$.

In any BZ--lattice $\mathbf{L}$, we set $\Diamond a=a^{\sim\sim}$ and $\square
a=a^{\prime\sim}$ for all $a\in L$. Note that, if $\mathbf{L}$ is a BZ$^{\ast
}$--lattice, then $\mathbf{L}$ is paraorthomodular iff it satisfies the
following equational condition: for all $a,b\in L$, $(a^{\sim}\vee(\Diamond
a\wedge\Diamond b))\wedge\Diamond a\leq\Diamond b$, therefore PBZ$^{\ast}$--lattices form a variety.

If $\mathbf{L}$ is a PBZ$^{\ast}$ --lattice, then $S(\mathbf{L})=\{a^{\sim
}:a\in L\}=\{a\in L:a=a^{\sim\sim}\}=\{a\in L:a\vee a^{\sim}=1\}=\{a\in
L:a^{\sim}=a^{\prime}\}$ \cite{GLP}. For every PBZ$^{\ast}$
--lattice ${\mathbf{L}}$, $S(\mathbf{L})$ is the universe of the largest subalgebra of $\mathbf{L}$ which is an orthomodular lattice, denoted by $\mathbf{S}(\mathbf{L})$.

\begin{lemma}\label{uniquesim}\textup{\cite{GLP}} Let $\mathbf{L}$ be a PBZ$^{\ast}$
--lattice. Then:

\begin{itemize}
\item $\mathbf{L}_{bi}$ is an ortholattice iff $S(\mathbf{L})=L$ iff
$\mathbf{L}_{bi}$ is an orthomodular lattice;

\item $\mathbf{L}$ is an antiortholattice iff $x^{\sim}=0$ for all $x\in
L\setminus\{0\}$.
\end{itemize}\end{lemma}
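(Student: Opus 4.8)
The plan is to assemble both bullets from facts already in place, treating the two statements separately.

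For the first bullet, I would first observe that the equivalence ``$\mathbf{L}_{bi}$ is an ortholattice iff $S(\mathbf{L})=L$'' holds essentially by the definition of an ortholattice: sharpness is a property of the BI-lattice reduct, so $S(\mathbf{L})=S(\mathbf{L}_{bi})$, and an ortholattice is precisely a BI-lattice all of whose elements are sharp. For the remaining equivalence with orthomodularity I would invoke the fact recalled earlier that, for an ortholattice, being orthomodular is equivalent to being paraorthomodular \cite[Prop. 2.1]{BH}. Since every \PBZ--lattice is paraorthomodular by definition, the implication ``$S(\mathbf{L})=L \Rightarrow \mathbf{L}_{bi}$ orthomodular'' is immediate, while the converse is trivial because every orthomodular lattice is in particular an ortholattice, whence $S(\mathbf{L})=L$. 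This closes the chain of equivalences.

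For the second bullet, the key tool is the displayed characterization $S(\mathbf{L})=\{a^{\sim}:a\in L\}$ recalled just above the lemma, together with the remark (a consequence of Lemma \ref{basics}.(i)) that $a^{\sim}=1$ iff $a=0$. For the ``if'' direction, assuming $x^{\sim}=0$ for every $x\in L\setminus\{0\}$, I would compute $\{a^{\sim}:a\in L\}$ directly: the element $a=0$ contributes $0^{\sim}=1$, while every $a\neq 0$ contributes $0$, so $S(\mathbf{L})=\{0,1\}$ and $\mathbf{L}$ is an antiortholattice. For the ``only if'' direction, assuming $S(\mathbf{L})=\{0,1\}$, I note that $x^{\sim}\in\{a^{\sim}:a\in L\}=S(\mathbf{L})=\{0,1\}$ for every $x\in L$; if moreover $x\neq 0$, then $x^{\sim}\neq 1$ by the cited remark, forcing $x^{\sim}=0$.

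I do not expect any serious obstacle here: both parts reduce to correctly invoking the characterization of the sharp elements and the behaviour of $^{\sim}$ on the bounds. The only points requiring care are bookkeeping the logical structure of the first bullet so that the two ``iff''s are genuinely established as a cycle of implications, and making sure it is the paraorthomodularity hypothesis --- available precisely because $\mathbf{L}$ is a \PBZ--lattice --- that licenses the use of \cite[Prop. 2.1]{BH}.
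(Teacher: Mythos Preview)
Your argument is correct. The paper does not supply its own proof of this lemma --- it is cited from \cite{GLP} --- so there is nothing to compare against; your proof assembles the result cleanly from exactly the facts the paper has made available (the definition of ortholattice, the equivalence of orthomodularity and paraorthomodularity for ortholattices from \cite[Prop.~2.1]{BH}, the characterization $S(\mathbf{L})=\{a^{\sim}:a\in L\}$, and the remark that $a^{\sim}=1$ iff $a=0$).
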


The Brouwer complement of an antiortholattice $L$, given by Lemma \ref{uniquesim}, is called \emph{the trivial Brouwer complement}: $0^{\sim}=1$ and $x^{\sim}=0$ for all $x\in L\setminus\{0\}$.

We denote by $\mathbb{BA}$, $\mathbb{BI}$, $\mathbb{PKA}$, $\mathbb{OL}$,
$\mathbb{OML}$, $\mathbb{BZL}$ and $\mathbb{PBZL}^{\ast}$ the varieties of
Boolean algebras, BI-lattices, pseudo--Kleene algebras, ortholattices,
orthomodular lattices, BZ--lattices and PBZ$^{\ast}$ --lattices, respectively. As shown by Lemma \ref{uniquesim}, $\OL $ and $\OML $ can be viewed as classes of algebras of type $\left(  2,2,1,1,0,0\right)  $, with a repeat occurrence of the unary operation symbol. The proper universal class of
antiortholattices will be denoted by $\mathbb{AOL}$. By \cite{rgcmfp}, $V\left(  \mathbb{AOL}\right)  $ is axiomatized
relative to $\mathbb{PBZL}^{\ast}$ by the equation:

\begin{description}
\item[J0] $x\approx\left(  x\wedge y^{\sim}\right)  \vee\left(  x\wedge
\Diamond y\right)  $.
\end{description}

Clearly, $\AOL $ is closed w.r.t. subalgebras and quotients, but not w.r.t. direct products, since Definition \ref{thepbzl} ensures us that every antiortholattice is directly indecomposable.

Note that antiortholattices are exactly the paraorthomodular pseudo--Kleene
algebras endowed with the trivial Brouwer complement which satisfy condition
$(\ast)$, or, equivalently, exactly the pseudo--Kleene algebras $\mathbf{L}$
with $S(\mathbf{L})=\{0,1\}$, endowed with the trivial Brouwer complement.
Thus any pseudo--Kleene algebra where $0$ is meet--irreducible becomes an
antiortholattice once endowed with the trivial Brouwer complement. Hence any
BZ--lattice with a meet--irreducible bottom element is an antiortholattice. In
particular, any BZ--chain is an antiortholattice and, of course, any
self--dual bounded chain becomes an antiortholattice if endowed with its dual
lattice automorphism as Kleene complement, and with the trivial Brouwer complement.

Let $\mathbf{M}$ be a bounded lattice, $f:\mathbf{M}\rightarrow\mathbf{M}^{d}$
be a dual lattice isomorphism and $\mathbf{K}$ be a BI-lattice, with
involution $^{\prime\mathbf{K}}$. In the ordinal sum $\mathbf{M}%
\oplus\mathbf{K}_{l}\oplus\mathbf{M}^{d}$, we will denote by $M^{d}$ the
universe of the sublattice $\mathbf{M}^{d}$. Then the bounded lattice
$\mathbf{M}\oplus\mathbf{K}_{l}\oplus\mathbf{M}^{d}$ can be made into a
BI-lattice $\mathbf{M}\oplus\mathbf{K}\oplus\mathbf{M}^{d}$ by defining its
involution as follows:\vspace*{-7pt}\[
x^{\prime}=%
\begin{cases}
f(x), & \mbox{for all }x\in M;\\
x^{\prime\mathbf{K}}, & \mbox{for all }x\in K;\\
f^{-1}(x), & \mbox{for all }x\in M^{d}.
\end{cases}
\]
In this BI-lattice, $M^{\prime}=M^{d}$. Clearly, $\mathbf{M}\oplus\mathbf{K}\oplus\mathbf{M}^{d}$
is a pseudo--Kleene algebra iff $\mathbf{K}$ is a
pseudo--Kleene algebra. The pseudo--Kleene algebra $\mathbf{M}\oplus\mathbf{D}_{1}\oplus\mathbf{M}^{d}$ will be denoted by $\mathbf{M}\oplus\mathbf{M}^{d}$, as its underlying bounded lattice.

Let $\mathbf{A}$ and $\mathbf{B}$ be nontrivial BI-lattices, with involutions
$^{\prime\mathbf{A}}$ and $^{\prime\mathbf{B}}$, respectively. Then
$\mathbf{A}_{l}\boxplus\mathbf{B}_{l}$ can be organized as a BI-lattice
$\mathbf{A}\boxplus\mathbf{B}$ by letting the involution of $\mathbf{A}%
\boxplus\mathbf{B}$ be $^{\prime}:A\boxplus B\rightarrow A\boxplus B$,
$a^{\prime}=a^{\prime\mathbf{A}}$ for all $a\in A$ and $b^{\prime}%
=b^{\prime\mathbf{B}}$ for all $b\in B$. This makes $\mathbf{A}$ and
$\mathbf{B}$ subalgebras of the BI-lattice $\mathbf{A}\boxplus\mathbf{B}$.

Now let $\mathbf{A}$ and $\mathbf{B}$ be BZ--lattices, with Brouwer
complements $^{\sim\mathbf{A}}$ and $^{\sim\mathbf{B}}$, respectively. Then
$\mathbf{A}_{bi}\boxplus\mathbf{B}_{bi}$ can be organized as an algebra
$\mathbf{A}\boxplus\mathbf{B}$ of type $(2,2,1,1,0,0)$ by defining $^{\sim
}:A\boxplus B\rightarrow A\boxplus B$, $a^{\sim}=a^{\sim\mathbf{A}}$ for all
$a\in A$ and $b^{\sim}=b^{\sim\mathbf{B}}$ for all $b\in B$. This makes
$\mathbf{A}$ and $\mathbf{B}$ subalgebras of $\mathbf{A}\boxplus\mathbf{B}$,
which is an algebra of type $(2,2,1,1,0,0)$, but not necessarily a BZ--lattice.

If ${\mathbb{C}}$ and ${\mathbb{D}}$ are subclasses of the variety of bounded
lattices or of one of the varieties $\mathbb{BI}$ and $\mathbb{BZL}$, then we
let:%
\[
{\mathbb{C}}\boxplus{\mathbb{D}}=\{\mathbf{D}_{1}\}\cup\{\mathbf{A}%
\boxplus\mathbf{B}:\mathbf{A}\in{\mathbb{C}}\setminus\{\mathbf{D}%
_{1}\},\mathbf{B}\in{\mathbb{D}}\setminus\{\mathbf{D}_{1}\}\}.
\]

Clearly, the operation $\boxplus$ on classes is associative and, if
$\mathbf{A}$ is a nontrivial bounded lattice or BI-lattice or BZ--lattice,
then $\mathbf{D}_{2}\boxplus\mathbf{A}=\mathbf{A}$, hence, in the notation above:

\begin{itemize}
\item if $\mathbf{D}_{2}\in{\mathbb{C}} $, then ${\mathbb{D}} \subseteq
{\mathbb{C}} \boxplus{\mathbb{D}} $;

\item thus, if $\mathbf{D}_{2}\in{\mathbb{C}} \cap{\mathbb{D}} $, then
${\mathbb{C}} \cup{\mathbb{D}} \subseteq{\mathbb{C}} \boxplus{\mathbb{D}} $.
\end{itemize}

For future reference, we consider the following identities in the language of BZ--lattices:

\begin{description}
\item[SDM] (the \emph{Strong de Morgan law}) $\left(  x\wedge y\right)
^{\sim}\approx x^{\sim}\vee y^{\sim}$;

\item[WSDM] (\emph{weak SDM}) $\left(  x\wedge y^{\sim}\right)  ^{\sim}\approx
x^{\sim}\vee\Diamond y$;

\item[S1] $\left(  x\wedge\left(  x\wedge y\right)  ^{\sim}\right)  ^{\sim
}\approx x^{\sim}\vee\Diamond\left(  x\wedge y\right)  $;

\item[S2] $\left(  x\wedge\left(  y\wedge y^{\prime}\right)  ^{\sim}\right)
^{\sim}\approx x^{\sim}\vee\Diamond\left(  y\wedge y^{\prime}\right)  $;

\item[S3] $\left(  x\wedge\Diamond\left(  y\wedge y^{\prime}\right)  \right)
^{\sim}\approx x^{\sim}\vee\left(  y\wedge y^{\prime}\right)  ^{\sim}$;

\item[J1] $x\approx\left(  x\wedge\left(  x\wedge y\right)  ^{\sim}\right)
\vee\left(  x\wedge\Diamond\left(  x\wedge y\right)  \right)  $;

\item[J2] $x\approx\left(  x\wedge\left(  y\wedge y^{\prime}\right)  ^{\sim
}\right)  \vee\left(  x\wedge\Diamond\left(  y\wedge y^{\prime}\right)
\right)  $.\end{description}

Remark that $J0$ above implies $J1$ and $J2$ and that SDM
implies WSDM, which in turn implies $S1,S2,S3$. Also note that, for any BZ--lattice $\mathbf{L}$, $\mathbf{L}\vDash J1$ iff, for all $x,y\in L$, $y\leq x$ implies $x=\left(  x\wedge y^{\sim}\right)  \vee\left( x\wedge\Diamond y\right)  $. Hence an equivalent form of $J1$ is as follows and, similarly, $S1$ can be written equivalently in the following form:

\begin{description}
\item[J1'] $x\vee y\approx\left(  (x\vee y)\wedge y^{\sim}\right)  \vee\left(
(x\vee y)\wedge\Diamond y\right)  $;

\item[S1'] $((x\vee y)\wedge y^{\sim})^{\sim}\approx(x\vee y)^{\sim}%
\vee\Diamond y$.
\end{description}

Clearly, $\mathbb{OML}\vDash SDM$, thus $\mathbb{OML}\vDash\{WSDM,S1,S2,S3\}$,
and, in $\{\mathbf{L}\in\mathbb{BZL}:\mathbf{L}\vDash x^{\sim}\approx
x^{\prime}\}$, $J1$ is equivalent to the orthomodularity condition, thus
$\mathbb{OML}\vDash J1$. Also, clearly, $\mathbb{OML}\vDash J2$.

Trivially, $\mathbb{AOL}\vDash\{WSDM,S1,S2,S3\}$ and $\AOL \vDash J0$, hence $\AOL \vDash \{J1,J2\}$. The fact that $J0$ axiomatizes $V(\AOL )$ over $\PBZs $ and $V(\AOL )\vDash WSDM$ shows that $J0$ implies WSDM. Clearly, an
antiortholattice $\mathbf{L}$ satisfies $SDM$ iff $0$ is meet--irreducible in $\mathbf{L}_{l}$; for instance, $\mathbf{D}_{2}^{2}\oplus\mathbf{D}_{2}^{2}$ can be organized as an antiortholattice (see Section \ref{ordsum}) that fails SDM. Therefore $\mathbb{AOL}\nvDash SDM$.

Note from the above that $\mathbb{OML}\vee V(\mathbb{AOL})\vDash \{WSDM,J1,J2,S1,S2,S3\}$.

\section{Dense elements in \PBZ --lattices}
\label{dense}

Whenever a bounded lattice $\mathbf{L}$ is endowed with a closure operator
$C$, important information on the structure of $\mathbf{L}$ is encoded not
only in its set $\{x\in L:C\left(  x\right)  =$\linebreak $x\}$ of
\emph{closed} elements, but also in its set $\left\{  x\in L:C\left(
x\right)  =1\right\}  $ of \emph{dense} elements. Under optimal circumstances,
like for Stone algebras, knowledge of both sets --- plus some information
concerning their distribution in the lattice ordering --- is sufficient to
fully reconstruct $\mathbf{L}$. This is the idea behind the
\emph{representation by triples} of Stone algebras and other related
structures (see e.g. \cite{CG}). The case of PBZ*-lattices falls somewhat
short of such an ideal situation --- still, the study of dense elements
provides useful insights into their structure.

For any PBZ$^{\ast}$ --lattice $\mathbf{L}$, we call \emph{dense} any $a\in L$
such that $a^{\sim}=0$. The set of all dense elements of $\mathbf{L}$ will be
denoted by $D(\mathbf{L})$; we also set $T(\mathbf{L})=\{0\}\cup
D(\mathbf{L})$. Clearly, $S(\mathbf{L})\cap T(\mathbf{L})=\{0,1\}$ and a
subalgebra of $\mathbf{L}$ is included in $T(\mathbf{L})$ iff it is an
antiortholattice; in particular, if $T(\mathbf{L})$ is the universe of a
subalgebra of $\mathbf{L}$, then this subalgebra, that we will denote by
$\mathbf{T}(\mathbf{L})$, is the largest subalgebra of $\mathbf{L}$ which is an antiortholattice.

Since $S(\mathbf{L})\cap T(\mathbf{L})=\{0,1\}$ and $S(\mathbf{L})^{\prime
}=S(\mathbf{L})$, we also have $S(\mathbf{L})\cap T(\mathbf{L})^{\prime
}=S(\mathbf{L})\cap(T(\mathbf{L})\cup T(\mathbf{L})^{\prime})=\{0,1\}$.

\begin{lemma}
\label{olsaolt}Let $\mathbf{L}$ be a PBZ$^{\ast}$ --lattice. Then:

\begin{enumerate}
\item $\mathbf{L}$ is an orthomodular lattice iff $S(\mathbf{L})=L$ iff
$T(\mathbf{L})=\{0,1\}$;

\item $\mathbf{L}$ is an antiortholattice iff $S(\mathbf{L})=\{0,1\}$ iff
$T(\mathbf{L})=L$.
\end{enumerate}
\end{lemma}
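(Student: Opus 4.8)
The plan is to derive both biconditionals from the definitions and from the previously recorded facts about $S(\mathbf{L})$ and $T(\mathbf{L})$, reducing everything to elementary set-theoretic manipulations; no substantial obstacle is expected. Recall from the discussion preceding the lemma that $S(\mathbf{L})\cap T(\mathbf{L})=\{0,1\}$ always holds, and that, by Lemma \ref{uniquesim}, $\mathbf{L}$ is an orthomodular lattice (equivalently $\mathbf{L}_{bi}$ is an ortholattice) iff $S(\mathbf{L})=L$, while $\mathbf{L}$ is an antiortholattice iff $S(\mathbf{L})=\{0,1\}$, which by definition is iff $x^{\sim}=0$ for all $x\in L\setminus\{0\}$, i.e.\ $D(\mathbf{L})=L\setminus\{0\}$ and hence $T(\mathbf{L})=\{0\}\cup D(\mathbf{L})=L$.

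First I would dispatch part (2). The equivalence of $\mathbf{L}$ being an antiortholattice with $S(\mathbf{L})=\{0,1\}$ is immediate from Definition \ref{thepbzl}. For the second equivalence, the forward direction uses Lemma \ref{uniquesim}: if $\mathbf{L}$ is an antiortholattice then $x^{\sim}=0$ for every $x\neq 0$, so $D(\mathbf{L})=L\setminus\{0\}$ and thus $T(\mathbf{L})=L$. Conversely, if $T(\mathbf{L})=L$, then every nonzero $x$ is dense, i.e.\ $x^{\sim}=0$, so by Lemma \ref{uniquesim} $\mathbf{L}$ is an antiortholattice.

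Next I would handle part (1). The equivalence of $\mathbf{L}$ being orthomodular with $S(\mathbf{L})=L$ is exactly Lemma \ref{uniquesim}. It remains to show $S(\mathbf{L})=L$ iff $T(\mathbf{L})=\{0,1\}$. If $S(\mathbf{L})=L$, then every $x\neq 0$ satisfies $x\wedge x^{\prime}=0$, so $x^{\sim}\leq x^{\prime}$ (Lemma \ref{basics}.(i)) gives $x\wedge x^{\sim}\leq x\wedge x^{\prime}=0$; combined with $x^{\sim}\leq x^{\prime}$ this forces the only dense elements to be the sharp ones that are dense, and since $S(\mathbf{L})\cap T(\mathbf{L})=\{0,1\}$ we obtain $T(\mathbf{L})=\{0,1\}$. (More directly: if $T(\mathbf{L})\ni x\notin\{0,1\}$ then $x$ is a dense non-sharp element, contradicting $S(\mathbf{L})=L$.) For the converse, if $T(\mathbf{L})=\{0,1\}$ then the only dense element is $1$, so $x^{\sim}=0$ implies $x=1$; then for any nonzero $x$ that is not $1$ we have $x^{\sim}\neq 0$, and one shows $x$ must be sharp using the structural description $S(\mathbf{L})=\{a\in L:a^{\sim}=a^{\prime}\}$ recorded before Lemma \ref{uniquesim}. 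The cleanest route is to argue contrapositively: if $S(\mathbf{L})\neq L$, then $\mathbf{L}$ is not orthomodular, hence (by what is already known about the trichotomy of \PBZ--lattices) produces a dense element outside $\{0,1\}$, so $T(\mathbf{L})\neq\{0,1\}$.

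The only point demanding any care is the implication $T(\mathbf{L})=\{0,1\}\Rightarrow S(\mathbf{L})=L$ in part (1), since here one must genuinely use that a non-sharp element would have to contribute a nontrivial dense element; all other implications are direct rewrites of Lemma \ref{uniquesim} and the definitions. I expect no real obstacle, as the identity $S(\mathbf{L})\cap T(\mathbf{L})=\{0,1\}$ together with Lemma \ref{uniquesim} essentially forces the two sets to be complementary in the two extreme cases.
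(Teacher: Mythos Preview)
Your plan is correct in outline and you have correctly located the only nontrivial step: the implication $T(\mathbf{L})=\{0,1\}\Rightarrow S(\mathbf{L})=L$ in part~(i). However, your proposal does not actually carry out this step. Your direct attempt (``for any nonzero $x\neq 1$ we have $x^{\sim}\neq 0$, and one shows $x$ must be sharp using $S(\mathbf{L})=\{a:a^{\sim}=a'\}$'') does not work as stated, since $x^{\sim}\neq 0$ in no way forces $x^{\sim}=x'$. Your contrapositive route then invokes a ``trichotomy of \PBZ--lattices'', but no such trichotomy has been established in the paper (and indeed none holds: most \PBZ--lattices are neither orthomodular nor antiortholattices). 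So the crux remains unproved.

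The missing idea, which is exactly what the paper supplies, is an explicit construction of a nontrivial dense element from any non--sharp one. If $a\notin S(\mathbf{L})$, so $a\wedge a'>0$, set $b=a\vee a'$. Then $b'=a\wedge a'\leq b$, whence Lemma~\ref{basics}.(v) gives $b^{\sim}=0$, i.e.\ $b\in D(\mathbf{L})$; moreover $b\neq 0$ (since $b\geq a\wedge a'>0$) and $b\neq 1$ (else $b'=a\wedge a'=0$). Thus $b\in T(\mathbf{L})\setminus\{0,1\}$. This is the single concrete step your plan was lacking; once you insert it, the rest of your argument goes through exactly as you describe.
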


\begin{proof}
The only nontrivial item is the right-to-left direction of the second
equivalence in (i). Let $\mathbf{L}\in\mathbb{PBZL}^{\ast}\backslash
\mathbb{OML}$. If $\mathbf{L}$ is isomorphic to $\mathbf{D}_{3}$, then
$T(\mathbf{L})\neq\left\{  0,1\right\}  $. Otherwise, there is an $a\in L$ s.t.$$0<a\wedge a^{\prime}=\left(  a\vee a^{\prime}\right)  ^{\prime}\leq a\vee
a^{\prime}<1\text{,}$$so, by Lemma \ref{basics}.(v), $a\vee a^{\prime}\in T(\mathbf{L})\backslash
\left\{  0,1\right\}  $.
\end{proof}

For any PBZ$^{\ast}$--lattice $\mathbf{L}$, $\{0,1\}$ is the universe of the smallest subalgebra of $\mathbf{L}$, which belongs to $\mathbb{OML}\cap\mathbb{AOL}$, so note from the previous proposition that, if $\mathbf{L}\in\mathbb{OML}\cup\mathbb{AOL}$, then $S(\mathbf{L})$ and $T(\mathbf{L})$ are subuniverses of $\mathbf{L}$.

On the other hand, let $\mathbf{L}$ be a generic PBZ$^{\ast}$ --lattice. Observe that:

\begin{itemize}
\item $L\setminus S(\mathbf{L})$ is closed w.r.t. to the Kleene complement;

\item $T(\mathbf{L})$ is closed w.r.t. to the Brouwer complement, as well as joins, hence $T(\mathbf{L})^{\prime }$ is is closed w.r.t. meets and: if $T(\mathbf{L})$ is closed under the Kleene complements, then it is also closed under meets and $T(\mathbf{L})=T(\mathbf{L})^{\prime }$;

\item if $D(\mathbf{L})$ is closed w.r.t. meets, then $D(\mathbf{L})$ is a lattice filter of $\mathbf{L}_{l}$ and $T(\mathbf{L})$ is closed w.r.t. meets;

\item $\left[  u\right)  \subseteq D(\mathbf{L})$ for all $u\in D(\mathbf{L})$, hence $(u]\subseteq D(\mathbf{L})^{\prime}$ for all $u\in D(\mathbf{L})^{\prime}$.\end{itemize}

Clearly, if $\mathbf{L}$ satisfies the SDM, then $D(\mathbf{L})$ is closed
w.r.t. meets, thus so is $T(\mathbf{L})$, hence $T(\mathbf{L})$ is the
universe of a bounded sublattice of $\mathbf{L}_{l}$ in which $0$ is meet--irreducible, thus $T(\mathbf{L})^{\prime}$ is the universe of a bounded sublattice of $\mathbf{L}_{l}$ in which $1$ is join--irreducible. Recall from Subsection \ref{pbzl} that, in any antiortholattice which satisfies the SDM, $0$ is meet--irreducible, and that any BZ--lattice with $0$ meet--irreducible is an antiortholattice which satisfies the SDM. Hence, for any BZ--lattice $\mathbf{L}$, the following are equivalent:

\begin{itemize}
\item $\mathbf{L}$ satisfies the SDM and $0$ is meet--irreducible in $\mathbf{L}_{l}$;

\item $\mathbf{L}$ is an antiortholattice and it satisfies the SDM;

\item $\mathbf{L}$ is an antiortholattice and $0$ is meet--irreducible in $\mathbf{L}_{l}$.\end{itemize}

Let us also retain, from the above:

\begin{lemma}
\label{tfilter}For any PBZ$^{\ast}$ --lattice $\mathbf{L}$:

\begin{itemize}
\item $T(\mathbf{L})$ is closed w.r.t. meets iff $T(\mathbf{L})$ is the
universe of a bounded sublattice of $\mathbf{L}_{l}$ iff $T(\mathbf{L}%
)^{\prime}$ is closed w.r.t. joins iff $T(\mathbf{L})^{\prime}$ is the
universe of a bounded sublattice of $\mathbf{L}_{l}$ iff $\left\langle
T(\mathbf{L})\right\rangle _{\mathbb{BI}}=T(\mathbf{L})\cup T(\mathbf{L}%
)^{\prime}$;

\item If $\mathbf{L}$ satisfies the SDM, then $T(\mathbf{L})$ is the universe
of a bounded sublattice of $\mathbf{L}_{l}$ in which $0$ is meet--irreducible,
so $D(\mathbf{L})$ is a lattice filter of $\mathbf{L}_{l}$;

\item $T(\mathbf{L})$ is closed w.r.t. the Kleene complement iff
$T(\mathbf{L})$ is a subuniverse of $\mathbf{L}$ iff $T(\mathbf{L}%
)=T(\mathbf{L})^{\prime}$.
\end{itemize}
\end{lemma}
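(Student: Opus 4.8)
The plan is to read all three items off the elementary closure properties of $T(\mathbf{L})$, $D(\mathbf{L})$ and $S(\mathbf{L})$ recorded immediately before the statement. Throughout I will use the following standing facts about an arbitrary \PBZ --lattice $\mathbf{L}$: $T(\mathbf{L})$ contains $0$ and $1$ and is closed under joins and under the Brouwer complement, whence $T(\mathbf{L})^{\prime}$ is closed under meets and contains $0,1$; $D(\mathbf{L})$ is an up--set of $\mathbf{L}_l$ and, dually, $D(\mathbf{L})^{\prime}$ is a down--set; $S(\mathbf{L})\cap(T(\mathbf{L})\cup T(\mathbf{L})^{\prime})=\{0,1\}$; and, whenever $\mathbf{L}$ satisfies the SDM, $D(\mathbf{L})$ is closed under meets. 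Since the Kleene complement is a dual lattice automorphism of $\mathbf{L}_l$ interchanging $0$ and $1$, I will repeatedly translate statements about $T(\mathbf{L})$ into statements about $T(\mathbf{L})^{\prime}$ by applying $^{\prime}$.

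For the first item I would first dispose of the sublattice reformulations. As $T(\mathbf{L})$ already contains the bounds and is join--closed, it is the universe of a bounded sublattice of $\mathbf{L}_l$ exactly when it is meet--closed, giving the first equivalence; applying $^{\prime}$ turns ``$T(\mathbf{L})$ meet--closed'' into ``$T(\mathbf{L})^{\prime}$ join--closed'', and since $T(\mathbf{L})^{\prime}$ is always meet--closed and contains the bounds, this is in turn equivalent to $T(\mathbf{L})^{\prime}$ being the universe of a bounded sublattice. This settles the mutual equivalence of the first four conditions. For the easy half of the remaining equivalence, if $T(\mathbf{L})$ is meet--closed I would check that $T(\mathbf{L})\cup T(\mathbf{L})^{\prime}$ is closed under $\wedge$, $\vee$ and $^{\prime}$ and contains $0,1$: same--side meets and joins stay inside $T(\mathbf{L})$ or $T(\mathbf{L})^{\prime}$, while a mixed meet $d\wedge e$ with $d\in T(\mathbf{L})$ and $e\in T(\mathbf{L})^{\prime}$ lands in $T(\mathbf{L})^{\prime}$ because $D(\mathbf{L})^{\prime}$ is a down--set (dually for mixed joins), so $T(\mathbf{L})\cup T(\mathbf{L})^{\prime}$ is a sub--BI--lattice; as every sub--BI--lattice containing $T(\mathbf{L})$ must contain $T(\mathbf{L})^{\prime}$, it equals $\langle T(\mathbf{L})\rangle_{\mathbb{BI}}$.

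The converse is where the work lies, and I expect it to be the main obstacle. Assuming $\langle T(\mathbf{L})\rangle_{\mathbb{BI}}=T(\mathbf{L})\cup T(\mathbf{L})^{\prime}$, this union is a sub--BI--lattice, hence meet--closed, and I must upgrade this to meet--closure of $T(\mathbf{L})$ itself. The only meets that could fail are $d_1\wedge d_2$ with $d_1,d_2\in D(\mathbf{L})$; such a meet lies in $T(\mathbf{L})\cup T(\mathbf{L})^{\prime}$, and the point is to exclude its landing in $T(\mathbf{L})^{\prime}\setminus T(\mathbf{L})$, i.e. that the meet of two dense elements is a nonzero, non--dense, codense element. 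I would resolve this using the \PBZ --axioms rather than order alone: writing $e=d_1\wedge d_2$ and assuming $e^{\prime}=d_1^{\prime}\vee d_2^{\prime}$ dense, I would combine condition $(\ast)$ in the form $(e\wedge e^{\prime})^{\sim}=e^{\sim}\vee e^{\prime\sim}$, Lemma \ref{basics} and paraorthomodularity to force $e^{\sim}=0$ or $e=0$; concretely, $e^{\sim}$ and $\Diamond e$ turn out to be mutually Kleene--complementary sharp elements, and denseness of $e^{\prime}$ together with $e\le d_1\in D(\mathbf{L})$ should pin $e^{\sim}$ down to $0$. Once these codense escapees are ruled out, $d_1\wedge d_2\in T(\mathbf{L})\cup T(\mathbf{L})^{\prime}$ forces $d_1\wedge d_2\in T(\mathbf{L})$, closing the cycle.

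The last two items are then immediate. For the second, the SDM yields meet--closure of $D(\mathbf{L})$ by the standing facts; with $D(\mathbf{L})$ an up--set this makes it a lattice filter, and $T(\mathbf{L})=\{0\}\cup D(\mathbf{L})$ a bounded sublattice in which $0$ is meet--irreducible, since a meet of two dense elements is again dense and hence nonzero. For the third, $T(\mathbf{L})=T(\mathbf{L})^{\prime}$ is equivalent to closure of $T(\mathbf{L})$ under $^{\prime}$ because $^{\prime}$ is an involution; and closure under $^{\prime}$ already entails meet--closure by a standing fact, so together with the always--present closure under $\vee$, $^{\sim}$ and the bounds it is equivalent to $T(\mathbf{L})$ being a subuniverse of $\mathbf{L}$, the reverse implication being trivial.
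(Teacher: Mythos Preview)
Your treatment of the second and third bullets, and of the first four equivalences plus the forward direction of the fifth in the first bullet, is correct and is exactly the paper's approach: the paper introduces Lemma~\ref{tfilter} with ``Let us also retain, from the above'', so it is meant to be read off the standing observations you list (join--closure of $T(\mathbf L)$, the up--set/down--set properties of $D(\mathbf L)$ and $D(\mathbf L)'$, and the implication ``$T(\mathbf L)$ closed under $'$ $\Rightarrow$ $T(\mathbf L)=T(\mathbf L)'$ $\Rightarrow$ $T(\mathbf L)$ meet--closed'').

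There is, however, a genuine gap in your argument for the converse of the last ``iff'' in the first bullet. You assume $e=d_1\wedge d_2\in T(\mathbf L)'\setminus T(\mathbf L)$ with $d_1,d_2,e'\in D(\mathbf L)$ and claim that ``denseness of $e'$ together with $e\le d_1\in D(\mathbf L)$ should pin $e^{\sim}$ down to $0$''. But $e\le d_1$ only yields $d_1^{\sim}\le e^{\sim}$, i.e.\ $0\le e^{\sim}$, which is vacuous; and from condition~$(\ast)$ you get $(e\wedge e')^{\sim}=e^{\sim}\vee e'^{\sim}=e^{\sim}$, which is compatible with $e^{\sim}\neq 0$. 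Nothing in the ingredients you name (Lemma~\ref{basics}, $(\ast)$, paraorthomodularity) forces $e^{\sim}=0$ from this configuration as stated; for instance, paraorthomodularity applied to $e\le d_1$ would need $e'\wedge d_1=0$, which you have not established. So the ``main obstacle'' you flagged is real, and your sketch does not clear it. Note that the paper does not spell out this direction either---it is subsumed under ``retain from the above''---so you are not missing an argument that is written there; but if you want a self--contained proof of this implication you will need a different idea than the one you outlined.
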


Observe that, if $\mathbf{L}$ is a \PBZ --lattice, then all sublattices of $\mathbf{L}_{l}$ which are closed w.r.t. the Brouwer complement are bounded sublattices of $\mathbf{L}_{l}$. Moreover, for any subsemilattice $\mathbf{S}$ of the underlying join--semilattice of $\mathbf{L}$ or the underlying meet--semilattice of $\mathbf{L}$, if $S$ is closed w.r.t. the Brouwer complement of $\mathbf{L}$, then $\{0,1\}\subseteq S$. Thus, the only interval of $\mathbf{L}_{l}$ which is closed w.r.t. the Brouwer complement is $[0,1]=L$, so $\mathbf{L}$ has no proper convex subalgebras.

The next lemma will be useful in what follows.

\begin{lemma}
\label{2joinirred}Let $\mathbf{L}$ be a nontrivial PBZ$^{\ast}$ --lattice
that satisfies $J1$ and $u,v\in L\setminus\{0\}$ such that $u\leq v$ and $v$
is join--irreducible in $\mathbf{L}_{l}$. Then:

\begin{enumerate}
\item \label{2joinirred1} either $v\vee v^{\sim}\leq u^{\sim}$ or $u^{\sim
}=v^{\sim}$;

\item \label{2joinirred2} if $v\in S(\mathbf{L})$, then $v=\Diamond u$;

\item \label{2joinirred3} if $u,v\in S(\mathbf{L})$, then $u=v$;

\item \label{2joinirred5} if $v\in T(\mathbf{L})$, then $u\in T(\mathbf{L})$;

\item \label{2joinirred6} if $v\in T(\mathbf{L})$ and $u\in S(\mathbf{L})$, then $u=v=1$ and $\mathbf{L}$ is an antiortholattice that satisfies the SDM.
\end{enumerate}
\end{lemma}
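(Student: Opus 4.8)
The plan is to extract from $J1$ a single clean dichotomy and then read off all five items from it. Since $u\leq v$, the equivalent form of $J1$ recorded just before the statement (namely $y\leq x$ implies $x=(x\wedge y^{\sim})\vee(x\wedge\Diamond y)$) applies with $x=v$, $y=u$, giving $v=(v\wedge u^{\sim})\vee(v\wedge\Diamond u)$. Both joinands lie below $v$, so the join--irreducibility of $v$ forces one of them to equal $v$; that is, either \textbf{(A)} $v\leq u^{\sim}$, or \textbf{(B)} $v\leq\Diamond u=u^{\sim\sim}$. This case split is the engine of the whole argument, and spotting the right instance of $J1$ to produce it is really the only nonroutine move.

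For item (\ref{2joinirred1}) I would argue directly in each case. In case (A) I already have $v\leq u^{\sim}$, and since $u\leq v$ gives $v^{\sim}\leq u^{\sim}$ (the Brouwer complement is order--reversing), I obtain $v\vee v^{\sim}\leq u^{\sim}$. In case (B), applying $^{\sim}$ to $v\leq u^{\sim\sim}$ together with $a^{\sim\sim\sim}=a^{\sim}$ from Lemma \ref{basics}.(ii) yields $u^{\sim}=u^{\sim\sim\sim}\leq v^{\sim}$, while $u\leq v$ gives $v^{\sim}\leq u^{\sim}$; hence $u^{\sim}=v^{\sim}$.

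The remaining items then follow by ruling out case (A) under the extra hypotheses. For (\ref{2joinirred2}), if $v\in S(\mathbf{L})$ then $v\vee v^{\sim}=1$, so case (A) would give $u^{\sim}=1$, that is $u=0$, contradicting $u\neq 0$; hence we are in case (B), where $u^{\sim}=v^{\sim}$ yields $\Diamond u=\Diamond v=v$, the last equality because $v$ is sharp, so $v=\Diamond u$. Item (\ref{2joinirred3}) is then immediate, since $u\in S(\mathbf{L})$ gives $\Diamond u=u$, whence $v=\Diamond u=u$. For (\ref{2joinirred5}), if $v\in T(\mathbf{L})\setminus\{0\}$ then $v^{\sim}=0$; case (A) would give $u\leq v\leq u^{\sim}$, whence $u=u\wedge u^{\sim}=0$ by the BZ axiom $a\wedge a^{\sim}=0$, again a contradiction; so case (B) holds and $u^{\sim}=v^{\sim}=0$, placing $u$ in $T(\mathbf{L})$.

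Finally, for (\ref{2joinirred6}) I would combine (\ref{2joinirred5}) with the hypothesis $u\in S(\mathbf{L})$: then $u\in T(\mathbf{L})\setminus\{0\}$ gives $u^{\sim}=0$, while sharpness gives $u^{\sim}=u^{\prime}$, so $u^{\prime}=0$ and $u=1$; since $u\leq v\leq 1$ this forces $u=v=1$. It then remains to identify $\mathbf{L}$, and here the key observation is that $v=1$ is join--irreducible while the Kleene complement $^{\prime}$ is a dual lattice automorphism of $\mathbf{L}_{l}$ sending $1$ to $0$; consequently $0$ is meet--irreducible in $\mathbf{L}_{l}$. By the characterization recalled in Subsection \ref{pbzl}, any BZ--lattice with meet--irreducible bottom is an antiortholattice satisfying the SDM, which is exactly the desired conclusion. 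I expect the main obstacle to be purely expository rather than technical: once the dichotomy is in place everything is a short deduction, and the only step drawing on information beyond it is the structural translation in (\ref{2joinirred6}) through the dual automorphism.
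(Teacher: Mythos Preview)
Your proof is correct and follows essentially the same approach as the paper's: both obtain the dichotomy $v\leq u^{\sim}$ or $v\leq\Diamond u$ from $J1$ and join--irreducibility, and deduce each item from it. Your elimination of case (A) in (\ref{2joinirred5}) via $u\leq v\leq u^{\sim}\Rightarrow u=u\wedge u^{\sim}=0$ is slightly cleaner than the paper's route through $u^{\sim}\in T(\mathbf{L})$, but otherwise the arguments coincide.
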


\begin{proof}(\ref{2joinirred1}) $u\leq v$ implies $v^{\sim}\leq u^{\sim}$. Since
$\mathbf{L}$ satisfies $J1$, we have $v=(v\wedge u^{\sim})\vee(v\wedge\Diamond
u)$, so that, by the join--irreducibility of $v$, either $v=v\wedge u^{\sim}$
or $v=v\wedge\Diamond u$, so either $v\leq u^{\sim}$ or $v\leq\Diamond u$,
hence either $v\vee v^{\sim}\leq u^{\sim}$ or $u^{\sim}=u^{\sim\sim\sim}\leq
v^{\sim}$, the latter of which implies $u^{\sim}=v^{\sim}$.

\noindent (\ref{2joinirred2}) If $v\in S(\mathbf{L})$, then $v\vee v^{\sim}=1\nleq
u^{\sim}$, so, by (\ref{2joinirred1}), we have $u^{\sim}=v^{\sim}$, thus
$v=\Diamond v=\Diamond u$.

\noindent (\ref{2joinirred3}) By (\ref{2joinirred2})
and the fact that $\Diamond u=u$.

\noindent (\ref{2joinirred5}) By
(\ref{2joinirred1}), we have either $v\leq u^{\sim}$ or $u^{\sim}=v^{\sim}=0$,
so that either $u^{\sim}\in T(\mathbf{L})$ or $u\in D(\mathbf{L})$, that is
either $u^{\sim}=1$, which would contradict $u\neq0$, or $u\in D(\mathbf{L})$,
thus $u\in D(\mathbf{L})$.

\noindent (\ref{2joinirred6}) By (\ref{2joinirred5}), we obtain $u\in(S(\mathbf{L})\cap T(\mathbf{L}))\setminus\{0\}$, so
$v=u=1$, thus $1$ is join--irreducible in $\mathbf{L}_{l}$, hence $\mathbf{L}$ is an antiortholattice and it satisfies the SDM.\end{proof}

\begin{proposition}
\label{jirred}Let $\mathbf{L}$ be a nontrivial PBZ$^{\ast}$ --lattice that
satisfies $J1$ and $v\in D(\mathbf{L})$ such that $v$ is join--irreducible in
$\mathbf{L}_{l}$. Then:

\begin{itemize}
\item all elements of $\mathbf{L}$ which are comparable with $v$ belong to
$T(\mathbf{L})$;

\item for any $x\in L\setminus T(\mathbf{L})$, we have $x\wedge v=0$, $x\vee
x^{\prime}\nleq v$ and $x$ and $x\wedge x^{\prime}$ are incomparable to $v$.
\end{itemize}
\end{proposition}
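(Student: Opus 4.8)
The backbone of the argument is the first bullet, from which the three clean assertions of the second bullet follow by contraposition; the delicate point is isolated entirely in the claim about $x\wedge x'$. My plan is to prove the first bullet by splitting on the two ways an element $w$ can be comparable with $v$. If $w\leq v$ and $w=0$, then $w=0\in T(\mathbf{L})$; if $w\leq v$ and $w\neq 0$, I apply Lemma~\ref{2joinirred}(\ref{2joinirred5}) with $u=w$ (legitimate since $v\in D(\mathbf{L})\subseteq T(\mathbf{L})$, $v$ is join--irreducible, and $w,v\neq 0$) to conclude $w\in T(\mathbf{L})$. If instead $v\leq w$, then I invoke the fact recorded in Subsection~\ref{dense} that $[u)\subseteq D(\mathbf{L})$ for every $u\in D(\mathbf{L})$: since $v\in D(\mathbf{L})$, the principal filter $[v)$ consists of dense elements, so $w\in D(\mathbf{L})\subseteq T(\mathbf{L})$. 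This settles the first bullet.

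For the second bullet, fix $x\in L\setminus T(\mathbf{L})$, so that $x\neq 0$ and $x^{\sim}\neq 0$. The contrapositive of the first bullet immediately gives that $x$ is incomparable with $v$. For $x\vee x^{\prime}\nleq v$, I argue by contradiction: if $x\vee x^{\prime}\leq v$, then $x\leq x\vee x^{\prime}\leq v$ makes $x$ comparable with $v$, contradicting what was just shown. For $x\wedge v=0$, I note first that $x\wedge v\leq v$, so $x\wedge v\in T(\mathbf{L})$ by the first bullet; if $x\wedge v\neq 0$ then $x\wedge v\in D(\mathbf{L})$, i.e. $(x\wedge v)^{\sim}=0$, and since $x\wedge v\leq x$ the antitonicity of the Brouwer complement (Lemma~\ref{basics}) yields $x^{\sim}\leq (x\wedge v)^{\sim}=0$, forcing $x\in D(\mathbf{L})\subseteq T(\mathbf{L})$, a contradiction; hence $x\wedge v=0$.

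It remains to show that $x\wedge x^{\prime}$ is incomparable with $v$. One direction is easy and mirrors the above: since $x\wedge x^{\prime}\leq x$, an inequality $v\leq x\wedge x^{\prime}$ would give $v\leq x$, contradicting the incomparability of $x$ and $v$, so $v\nleq x\wedge x^{\prime}$. For the reverse direction, I suppose $x\wedge x^{\prime}\leq v$; then $x\wedge x^{\prime}\in T(\mathbf{L})$ by the first bullet, and repeating the meet--density argument (using $x\wedge x^{\prime}\leq x$) shows that $x\wedge x^{\prime}\neq 0$ would again force $x\in D(\mathbf{L})\subseteq T(\mathbf{L})$, a contradiction. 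Thus the entire problem reduces to excluding the possibility $x\wedge x^{\prime}=0$, that is, to ruling out that $x$ is a \emph{proper} sharp element.

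This last reduction is exactly where I expect the main obstacle to lie. The easy complement arguments above never touch the sharp case, so the whole force of the hypotheses --- the join--irreducibility of the \emph{dense} element $v$ together with $J1$ --- must be brought to bear precisely here, presumably by analysing how a hypothetical $x\in S(\mathbf{L})\setminus\{0,1\}$ would interact with $v$ through $\Diamond$ and the two complements, in the spirit of Lemma~\ref{2joinirred}(\ref{2joinirred6}), which already links join--irreducible elements of $T(\mathbf{L})$ to the sharp elements beneath them. I would therefore carry out the first three steps as routine consequences of Lemma~\ref{2joinirred} and the filter property of $D(\mathbf{L})$, and concentrate the real effort on establishing $x\wedge x^{\prime}\neq 0$ for every $x\in L\setminus T(\mathbf{L})$ under these hypotheses.
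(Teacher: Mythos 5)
Your first bullet and the first three claims of the second bullet are correct and follow exactly the route the paper intends: Lemma \ref{2joinirred}.(\ref{2joinirred5}) for elements below $v$, upward--closure of $D(\mathbf{L})$ for elements above $v$, and then contraposition plus the antitonicity of $^{\sim}$ for $x\wedge v=0$, $x\vee x'\nleq v$ and the incomparability of $x$ with $v$. The problem is that your argument stops being a proof at precisely the point you flag: you reduce the incomparability of $x\wedge x'$ with $v$ to showing $x\wedge x'\neq 0$ for every $x\in L\setminus T(\mathbf{L})$, announce that this is ``where the real effort'' should go, and never carry it out. A proof that ends with a description of where the main obstacle lies is not a proof, so as submitted the proposal is incomplete.

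Worse, the completion you envisage cannot succeed. The statement ``$x\wedge x'\neq 0$ for all $x\in L\setminus T(\mathbf{L})$'' is equivalent to $S(\mathbf{L})=\{0,1\}$, i.e.\ to $\mathbf{L}$ being an antiortholattice, and the hypotheses of the proposition do not imply this. Take $\mathbf{L}=\mathbf{M}_{3}=\mathbf{D}_{2}^{2}\boxplus\mathbf{D}_{3}$ from Example \ref{exfail12}: it satisfies $J1$ (Corollary \ref{axvarhsum}), the element $b$ is dense and is an atom, hence join--irreducible, yet $a\in L\setminus T(\mathbf{L})$ is sharp, so $a\wedge a'=0\leq b$. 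Lemma \ref{2joinirred}.(\ref{2joinirred6}) will not rescue you either, since it needs a sharp element \emph{below} the join--irreducible one, and you have already shown $x$ is incomparable with $v$. What this example really shows is that the clause about $x\wedge x'$ is only meaningful when $x\wedge x'\neq 0$; the paper's one--line proof (Lemma \ref{2joinirred}.(\ref{2joinirred5}) plus upward--closure of $D(\mathbf{L})$) tacitly covers only that case, where your own meet--density argument already finishes the job. So the honest outcome is: your reduction is right, but the residual case you promise to handle is not provable from the hypotheses, and no amount of further analysis of $\Diamond$ and the two complements will close it.
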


\begin{proof}
By Lemma \ref{2joinirred}.(\ref{2joinirred5}) and the fact that $D(\mathbf{L}%
)$ is closed w.r.t. upper bounds.
\end{proof}

\begin{proposition}
Let $\mathbf{L}$ be a nontrivial orthomodular lattice. Then the only
join--irreducible elements of $\mathbf{L}_{l}$ are its atoms and, dually, its
only meet--irreducible elements are its coatoms.
\end{proposition}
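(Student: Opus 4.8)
The plan is to prove the join--irreducible statement directly and then deduce the meet--irreducible statement by dualizing through the Kleene complement. Since every atom is join--irreducible in any lattice, the real content lies in the converse: every join--irreducible element is an atom, and this is where orthomodularity enters.

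First I would take a join--irreducible $v\in L$ (so $v\neq 0$) and argue by contraposition. Suppose $v$ is not an atom; then there is an $a\in L$ with $0<a<v$. Because $\mathbf{L}$ is orthomodular and $a\leq v$, the orthomodularity condition gives $v=(v\wedge a^{\prime})\vee a$. I would then check that both joinands lie strictly below $v$. Clearly $a<v$. For the other term, $v\wedge a^{\prime}\leq v$, and if equality held we would get $v\leq a^{\prime}$, whence $a\leq v\leq a^{\prime}$ and therefore $a=a\wedge a^{\prime}=0$, using that every element of an ortholattice is sharp; this contradicts $a>0$. Hence $v\wedge a^{\prime}<v$. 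Thus $v$ is expressed as the join of two elements each strictly below it, contradicting join--irreducibility. Therefore $v$ must be an atom.

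The main (and only slightly delicate) step is precisely this strict--inequality check $v\wedge a^{\prime}<v$: orthomodularity supplies the decomposition, while the ortholattice identity $a\wedge a^{\prime}=0$ rules out the degenerate case. Everything else is bookkeeping. For the reverse inclusion, an atom $v$ is join--irreducible because $x\vee y=v$ forces $x,y\leq v$, hence $x,y\in\{0,v\}$, and if neither equals $v$ then $v=0$, a contradiction; so the join--irreducible elements are exactly the atoms.

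Finally I would transfer to meet--irreducible elements using that the involution $^{\prime}$ is a dual lattice automorphism of $\mathbf{L}_{l}$: it sends $\wedge$ to $\vee$, so $v$ is meet--irreducible in $\mathbf{L}_{l}$ iff $v^{\prime}$ is join--irreducible, and, being order--reversing, it sends coatoms to atoms, so $v$ is a coatom iff $v^{\prime}$ is an atom. Combining these two equivalences with the first part yields that the meet--irreducible elements are exactly the coatoms, which completes the proof.
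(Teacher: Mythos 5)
Your proof is correct and is, at its mathematical core, the same argument the paper uses: the paper cites Lemma \ref{2joinirred}.(\ref{2joinirred3}), whose proof applies $J1$ to get exactly the decomposition $v=(v\wedge a^{\prime})\vee(v\wedge\Diamond a)=(v\wedge a^{\prime})\vee a$ that you obtain directly from the orthomodularity identity, and then uses join--irreducibility together with sharpness ($a\wedge a^{\prime}=0$) to rule out the degenerate case. Your version is merely self--contained where the paper's is packaged through the general $J1$ lemma, and your transfer of the dual statement via the involution is a legitimate rendering of the paper's ``dually.''
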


\begin{proof}
Since $\mathbf{L}\in\mathbb{OML} $, we have $\mathbf{L}\vDash J1$, so, by
Lemma \ref{2joinirred}.(\ref{2joinirred3}), for every $v\in L\setminus\{0\}$
such that $v$ is join--irreducible in $\mathbf{L}_{l}$, there exists no $u\in
L$ with $0<u<v$, hence $v$ is an atom of $\mathbf{L}_{l}$.
\end{proof}

\begin{corollary}
Let $\mathbf{L}$ be a nontrivial PBZ$^{\ast}$ --lattice, $u\in
S(\mathbf{L})\setminus\{1\}$ and $v\in S(\mathbf{L})\setminus\{0\}$. Then:

\begin{itemize}
\item if $v$ is join--irreducible in $\mathbf{L}_{l}$, then $v$ is an atom of
$\mathbf{S}(\mathbf{L})_{l}$;

\item if $u$ is meet--irreducible in $\mathbf{L}_{l}$, then $u$ is a co--atom
of $\mathbf{S}(\mathbf{L})_{l}$.
\end{itemize}\label{sharpirred}\end{corollary}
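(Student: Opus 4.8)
The plan is to reduce the two statements of Corollary \ref{sharpirred} to the machinery already assembled for the largest orthomodular subalgebra $\mathbf{S}(\mathbf{L})$, exploiting that $\mathbf{S}(\mathbf{L})$ is an orthomodular lattice and that the two bullets are dual to one another under the Kleene complement. I would first treat the join--irreducible case: suppose $v\in S(\mathbf{L})\setminus\{0\}$ is join--irreducible in $\mathbf{L}_l$. The key point is that $v$ is then also join--irreducible in $\mathbf{S}(\mathbf{L})_l$, since $\mathbf{S}(\mathbf{L})_l$ is a sublattice of $\mathbf{L}_l$ (any representation $v=a\vee b$ inside $S(\mathbf{L})$ is a fortiori a representation inside $L$, forcing $v\in\{a,b\}$). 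Now $\mathbf{S}(\mathbf{L})$ is a nontrivial orthomodular lattice, so by the Proposition immediately preceding this corollary, its only join--irreducible elements are its atoms; hence $v$ is an atom of $\mathbf{S}(\mathbf{L})_l$, which is exactly the first claim.

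For the meet--irreducible case I would argue dually. Given $u\in S(\mathbf{L})\setminus\{1\}$ meet--irreducible in $\mathbf{L}_l$, I would pass to the Kleene complement $u'$. Since $S(\mathbf{L})^{\prime}=S(\mathbf{L})$ and $u\neq 1$ gives $u'\neq 0$, we have $u'\in S(\mathbf{L})\setminus\{0\}$; and because the involution is a dual lattice automorphism of $\mathbf{L}_l$, meet--irreducibility of $u$ translates into join--irreducibility of $u'$ in $\mathbf{L}_l$. Applying the first bullet to $u'$, we obtain that $u'$ is an atom of $\mathbf{S}(\mathbf{L})_l$; dualizing back (the involution restricts to a dual automorphism of $\mathbf{S}(\mathbf{L})_l$, sending atoms to coatoms) yields that $u=u''$ is a coatom of $\mathbf{S}(\mathbf{L})_l$, as required.

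The only genuine subtlety, which I would make sure to address explicitly, is the transfer of (join/meet)--irreducibility between $\mathbf{L}_l$ and $\mathbf{S}(\mathbf{L})_l$. Irreducibility is not automatically inherited by sublattices in general, but here the direction we need is the harmless one: irreducibility in the larger lattice $\mathbf{L}_l$ certainly implies irreducibility in the sublattice $\mathbf{S}(\mathbf{L})_l$, which is all the cited Proposition requires of its hypothesis. I expect this to be the main (and essentially the only) obstacle to watch, and it dissolves once one observes that we are restricting irreducibility downward to a sublattice rather than lifting it upward. Everything else is a direct invocation of the preceding Proposition together with the established facts that $\mathbf{S}(\mathbf{L})$ is orthomodular and that $S(\mathbf{L})$ is closed under the Kleene complement.
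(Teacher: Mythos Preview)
Your proof is correct and is essentially the argument the paper intends: the corollary is stated without proof immediately after the Proposition on join-- and meet--irreducibles in orthomodular lattices, and your reduction---restricting irreducibility from $\mathbf{L}_l$ to its sublattice $\mathbf{S}(\mathbf{L})_l$, then invoking that Proposition for the orthomodular lattice $\mathbf{S}(\mathbf{L})$, and dualizing via the Kleene complement---is precisely how it follows.
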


\section{Ordinal Sums and Congruences of Antiortholattices}
\label{ordsum}

Note, from Corollary \ref{cgred} and the characterization of subdirect irreducibility in Subsection \ref{ualglat}, that, if a reduct of an algebra $\mathbf{A}$ is subdirectly irreducible, then so is $\mathbf{A}$. We will often use the following lemmas and propositions without referencing them.

\begin{lemma}
If $\mathbf{M}$ is a nontrivial bounded lattice and $\mathbf{K}$ is a
pseudo-Kleene algebra, then the canonical pseudo-Kleene algebra $\mathbf{L}%
=\mathbf{M}\oplus\mathbf{K}\oplus\mathbf{M}^{d}$, endowed with the trivial
Brouwer complement, becomes an antiortholattice.
\end{lemma}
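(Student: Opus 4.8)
The plan is to verify that the algebra $\mathbf{L}=\mathbf{M}\oplus\mathbf{K}\oplus\mathbf{M}^{d}$, built on the stated ordinal sum and equipped with the trivial Brouwer complement, satisfies the defining conditions of an antiortholattice. Recall from Subsection~\ref{pbzl} that an antiortholattice is precisely a pseudo--Kleene algebra $\mathbf{L}$ with $S(\mathbf{L})=\{0,1\}$, endowed with the trivial Brouwer complement; equivalently, it is any pseudo--Kleene algebra in which $0$ is meet--irreducible, endowed with the trivial Brouwer complement. So the real work reduces to two observations: first, that $\mathbf{L}$ is genuinely a pseudo--Kleene algebra, and second, that its only sharp elements are $0$ and $1$. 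The Brouwer complement is then forced to be the trivial one, and paraorthomodularity together with condition~$(\ast)$ come for free from the characterization already recorded in the preliminaries.

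First I would note that $\mathbf{L}$ is a pseudo--Kleene algebra. By the construction recalled just before the statement, $\mathbf{M}\oplus\mathbf{K}\oplus\mathbf{M}^{d}$ is a BI-lattice whose involution acts as the dual isomorphism $f\colon\mathbf{M}\to\mathbf{M}^{d}$ on the bottom copy, as $^{\prime\mathbf{K}}$ on the middle, and as $f^{-1}$ on the top copy; and it was remarked there that this BI-lattice is a pseudo--Kleene algebra exactly when $\mathbf{K}$ is. Since $\mathbf{K}$ is assumed to be a pseudo--Kleene algebra, $\mathbf{L}_{bi}$ is one. The key structural fact is that every element $x$ of the bottom summand $M$ (other than the shared bottom) satisfies $x<x^{\prime}=f(x)$, every element of the top summand satisfies $x^{\prime}<x$, and for the middle summand $\mathbf{K}$ the pseudo--Kleene inequality $a\wedge a^{\prime}\le b\vee b^{\prime}$ holds internally; the ordinal-sum ordering then guarantees the global inequality because meets of an element with its involution land in the bottom part or at $0$, while joins land in the top part or at $1$.

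Next I would pin down the sharp elements. An element $a$ is sharp iff $a\wedge a^{\prime}=0$. For $a$ in the bottom copy $M\setminus\{0\}$ we have $a\le f(a)=a^{\prime}$ under the ordinal ordering (every element of $M$ lies below every element of $M^{d}$), so $a\wedge a^{\prime}=a\neq 0$; dually, for $a$ in the top copy we get $a\vee a^{\prime}=a\neq 1$, so $a$ is not sharp. For $a$ in the middle summand $K\setminus\{0^{\mathbf{K}},1^{\mathbf{K}}\}$, both $a$ and $a^{\prime}=a^{\prime\mathbf{K}}$ lie in the interval occupied by $\mathbf{K}$, which sits strictly above the bottom $0$ of $\mathbf{L}$; hence $a\wedge a^{\prime}\geq 0^{\mathbf{K}}>0^{\mathbf{L}}$, so again $a$ is not sharp. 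This leaves only $0$ and $1$, giving $S(\mathbf{L})=\{0,1\}$. Equivalently, one sees directly that $0$ is meet--irreducible in $\mathbf{L}_{l}$, since every nonzero element lies above some atom of the bottom copy (or equals such, if $\mathbf{M}$ is two--element), and the ordinal sum forces $0$ to have a unique lower cover structure, so no two nonzero elements meet to $0$.

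The only step requiring genuine care is the interaction between the two complements, namely checking paraorthomodularity and condition~$(\ast)$ for the trivial Brouwer complement. Here I would invoke the characterization from Subsection~\ref{pbzl}: antiortholattices are exactly the pseudo--Kleene algebras $\mathbf{L}$ with $S(\mathbf{L})=\{0,1\}$ endowed with the trivial Brouwer complement. Since I will have established both that $\mathbf{L}_{bi}$ is a pseudo--Kleene algebra and that $S(\mathbf{L})=\{0,1\}$, the cited equivalence delivers paraorthomodularity, condition~$(\ast)$, and the BZ-lattice axioms at once, so no separate verification of the $^{\sim}$-axioms is needed. I expect the main obstacle to be purely bookkeeping: carefully tracking how the involution and the ordinal-sum order behave across the three summands, particularly at the glued boundary points $1^{\mathbf{M}}=0^{\mathbf{K}}$ and $1^{\mathbf{K}}=0^{\mathbf{M}^{d}}$, to confirm that meets with involutions never drop to $0$ except at the global bottom and that the trivial Brouwer complement is indeed the unique one compatible with $S(\mathbf{L})=\{0,1\}$.
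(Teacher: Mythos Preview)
Your main line of argument is correct and more economical than the paper's: you observe that $\mathbf{L}_{bi}$ is a pseudo--Kleene algebra (already recorded in the preliminaries), verify by case analysis that $S(\mathbf{L})=\{0,1\}$, and then invoke the characterization of antiortholattices as exactly the pseudo--Kleene algebras with only trivial sharp elements, endowed with the trivial Brouwer complement. The paper instead verifies paraorthomodularity and condition~$(\ast)$ directly by case analysis on where $a$ lies; this is longer but more self-contained in that it does not lean on that black-box characterization. Your route is a genuine shortcut.

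However, you make a false side claim that should be removed. You assert twice that $0$ is meet--irreducible in $\mathbf{L}_l$, and in your opening paragraph you even present meet--irreducibility of $0$ as an \emph{equivalent} characterization of antiortholattices. It is not: meet--irreducibility of $0$ is only sufficient for $S(\mathbf{L})=\{0,1\}$, not necessary. The paper itself points out that $\mathbf{D}_2^2\oplus\mathbf{D}_2^2$ is an antiortholattice failing SDM, i.e., with $0$ meet--reducible. In your setting, if $\mathbf{M}=\mathbf{D}_2^2$ then $\mathbf{L}_l$ has two atoms in the bottom copy meeting to $0$, so your claim that ``no two nonzero elements meet to $0$'' is simply wrong. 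Fortunately your case analysis for $S(\mathbf{L})=\{0,1\}$ stands on its own and does not need this false crutch; just delete the meet--irreducibility remarks.
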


\begin{proof}
Clearly, for any $x,y\in L$, $x\wedge y=0$ implies $x=0$ or $y=0$ or $x,y\in
M$. Thus, for any $a,b\in L$ such that $a\leq b$ and $a^{\prime}\wedge b=0$,
we have one of the following situations:

\begin{itemize}
\item $a^{\prime}=0$, so that $a=1$ and thus $b=1=a$;

\item $b=0$, so that $a=0=b$;

\item $a^{\prime},b\in M$, so that $a\in M^{d}$, thus $\mathbf{K}%
\cong\mathbf{D}_{1}$, with $K=\{a\}=\{b\}$, so $a=b$.
\end{itemize}

Therefore $\mathbf{L}$ is paraorthomodular.

Now let $^{\sim}:L\rightarrow L$ be the trivial Brouwer complement and let
$a\in L$. If $a\in K$, then $a^{\prime}\in K$, so $a\wedge a^{\prime}\in K$,
thus $0\notin\{a,a^{\prime},a\wedge a^{\prime}\}$, hence $(a\wedge a^{\prime
})^{\sim}=0=a^{\sim}\vee a^{\prime\sim}$. If $a\in M$, then $a^{\prime}\in
M^{d}$, thus $a\leq a^{\prime}$, so $a^{\prime\sim}\leq a^{\sim}$, hence
$(a\wedge a^{\prime})^{\sim}=a^{\sim}=a^{\sim}\vee a^{\prime\sim}$. If $a\in
M^{d}$, then $a^{\prime}\in M$, so $(a\wedge a^{\prime})^{\sim}=a^{\prime\sim
}=a^{\sim}\vee a^{\prime\sim}$, by duality from the previous case. Thus the pseudo--Kleene algebra $\mathbf{L}$, endowed with the trivial $^{\sim}$, fulfills condition $(\ast)$, hence it becomes an antiortholattice.\end{proof}

We call the antiortholattice $\mathbf{M}\oplus\mathbf{K}\oplus\mathbf{M}^{d}$ in the previous lemma the {\em canonical antiortholattice} with lattice reduct $\mathbf{M}\oplus\mathbf{K}_l\oplus\mathbf{M}^{d}$.

Let $\mathbf{L}$ be a BI--lattice, $\mathbf{S}=(S,\wedge,\vee)$ a sublattice of $\mathbf{L}_{l}$ and $\mathbf{S}^{\prime }=(S^{\prime },\wedge,\vee)$. Then $\mathbf{S}^{\prime }$ is also a sublattice of $\mathbf{L}_{l}$ and, in the particular case when $S=L$, we have $\mathbf{S}=\mathbf{S}^{\prime }=\mathbf{L}_{l}$. The map $^{\prime }\mid _{S^{\prime }}:S^{\prime }\rightarrow S$ is a dual lattice isomorphism between $\mathbf{S}^{\prime }$ and $\mathbf{S}$, hence $\mathbf{S}^{\prime }\cong \mathbf{S}^d$ and the map $\theta \mapsto \theta ^{\prime }$ from $\mathrm{Con}(\mathbf{S}^{\prime })$ to $\mathrm{Con}(\mathbf{S}^d)=\mathrm{Con}(\mathbf{S})$ is a lattice isomorphism, thus $(\theta \cap \zeta )^{\prime }=\theta ^{\prime }\cap \zeta ^{\prime }$ and $(\theta \vee \zeta )^{\prime }=\theta ^{\prime }\vee \zeta ^{\prime }$ for all $\theta ,\zeta \in\mathrm{Con}(\mathbf{S}^{\prime })$. Note, also, that $\theta ^{\prime \prime }=\theta $ for all $\theta \in\mathrm{Con}(\mathbf{S}^{\prime })$.

If $\mathbf{L}=\mathbf{M}\oplus \mathbf{K}\oplus \mathbf{M}^d$ for some bounded lattice $\mathbf{M}$ and some BI--lattice $\mathbf{K}$, so that, with the notation above, $\mathbf{M}^d=\mathbf{M}^{\prime d}$, then, for any $\theta\in\mathrm{Con}(\mathbf{M})$, $\alpha\in\mathrm{Con}(\mathbf{K})$ and $\zeta \in\mathrm{Con}(\mathbf{M}^{d})$, we have: $\theta ^{\prime }\in\mathrm{Con}(\mathbf{M}^{d})$, $\zeta ^{\prime }\in\mathrm{Con}(\mathbf{M})$ and $\theta \oplus \alpha \oplus \zeta \in\mathrm{Con}(\mathbf{L})$, in particular $\theta \oplus \alpha \oplus \theta ^{\prime }\in\mathrm{Con}(\mathbf{L})$. In particular, if $\mathbf{K}$ is trivial, so that $\mathrm{Con}(\mathbf{K})=\{\Delta_{K}\}=\{\nabla_{K}\}\cong \mathbf{D}_{1}$ and $\mathbf{M}\oplus\mathbf{K}\oplus\mathbf{M}^{d}=\mathbf{M}\oplus\mathbf{M}^{d}$, we have $\theta\oplus\Delta_{K}\oplus\theta^{\prime}=\theta\oplus\theta^{\prime}\in\mathrm{Con}(\mathbf{M}\oplus\mathbf{M}^{d})$ for all $\theta \in\mathrm{Con}(\mathbf{M})$.

\begin{lemma}
If $\mathbf{L}$ is a nontrivial BI--lattice and $\theta
\in\mathrm{Con}(\mathbf{L})_{\mathbb{BI}}\setminus\{\nabla_{L}\}$, then: $\theta$ preserves the trivial Brouwer complement on $L$ iff $0/\theta=\{0\}$ iff $1/\theta=\{1\}$.\label{bicg01}\end{lemma}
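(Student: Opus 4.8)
The plan is to prove the two ``iff''s by establishing a cycle of implications among the three conditions, using the description of the trivial Brouwer complement together with the hypothesis that $\theta \neq \nabla_L$. Recall that the trivial Brouwer complement sends $0$ to $1$ and every $x \in L\setminus\{0\}$ to $0$. I would first unpack what it means for $\theta$ to \emph{preserve} this operation: it means that $(x,y)\in\theta$ implies $(x^{\sim},y^{\sim})\in\theta$, i.e.\ $\theta$ is a congruence with respect to $^{\sim}$ as well. Since $\theta$ is already a BI-lattice congruence, the only new constraint is compatibility with $^{\sim}$.

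First I would show that $0/\theta=\{0\}$ iff $1/\theta=\{1\}$. This follows from the fact that $^{\prime}$ is a dual lattice automorphism satisfying $0^{\prime}=1$, so $\theta$ (being a BI-congruence) is compatible with $^{\prime}$; hence $(0,y)\in\theta$ iff $(1,y^{\prime})\in\theta$, giving a bijection between $0/\theta$ and $1/\theta$ via $^{\prime}$ that fixes the two singletons simultaneously. This handles the second equivalence cleanly.

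For the main equivalence, I would argue both directions. For the forward direction, suppose $\theta$ preserves $^{\sim}$ but $0/\theta\neq\{0\}$, so there is some $a\in L\setminus\{0\}$ with $(0,a)\in\theta$. Applying $^{\sim}$ gives $(0^{\sim},a^{\sim})=(1,0)\in\theta$, whence $\theta$ collapses $0$ and $1$; since $\theta$ is a lattice congruence and $0,1$ are the bounds, this forces $\theta=\nabla_L$, contradicting the hypothesis $\theta\neq\nabla_L$. So $0/\theta=\{0\}$. For the converse, suppose $0/\theta=\{0\}$; I must check that $\theta$ preserves $^{\sim}$. Take $(x,y)\in\theta$. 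If both $x,y\neq 0$, then $x^{\sim}=0=y^{\sim}$ and trivially $(x^{\sim},y^{\sim})=(0,0)\in\theta$. The remaining case is $x=0$ (say); but then $0/\theta=\{0\}$ forces $y=0$ as well, so again $x^{\sim}=y^{\sim}=1$ and the pair lies in $\theta$. Thus $^{\sim}$ is preserved.

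The main obstacle, such as it is, lies in the converse of the main equivalence: one must be careful that the only way the trivial Brouwer complement can ``fail'' to be preserved is when a nonzero element is $\theta$-related to $0$, and the hypothesis $0/\theta=\{0\}$ is precisely what rules this out. The crucial point making everything work is that $^{\sim}$ takes only two values, $0$ and $1$, and separates $0$ from every other element; combined with the collapse argument $(1,0)\in\theta\Rightarrow\theta=\nabla_L$, this reduces the whole lemma to controlling the $\theta$-class of $0$. I would present the cycle as: (preserves $^{\sim}$) $\Rightarrow$ ($0/\theta=\{0\}$) $\Rightarrow$ (preserves $^{\sim}$), together with the symmetric bijection via $^{\prime}$ linking $0/\theta=\{0\}$ and $1/\theta=\{1\}$.
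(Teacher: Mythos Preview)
Your proposal is correct and follows essentially the same approach as the paper's proof: both use the involution to link the singleton conditions for $0$ and $1$, prove that preserving $^{\sim}$ forces $0/\theta=\{0\}$ by deriving $(0,1)\in\theta$ from a nonzero element in $0/\theta$ (contradicting $\theta\neq\nabla_L$), and handle the converse by the observation that $^{\sim}$ takes only the values $0$ and $1$. Your write-up is slightly more explicit in the case analysis for the converse (which the paper dismisses as ``clearly''), but the arguments are otherwise identical.
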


\begin{proof} Since $\theta $ preserves the involution, we have $0/\theta=\{0\}$ iff $1/\theta=\{1\}$. Now let $^{\sim }:L\rightarrow L$ be the trivial Brouwer complement. If $0/\theta=\{0\}$, then clearly $\theta $ preserves $^{\sim }$. Finally, assume that $\theta $ preserves $^{\sim }$, let $a\in 0/\theta $ and assume by absurdum that $a\neq 0$. Then $(0,1)=(a^{\sim },1)=(a^{\sim },0^{\sim })\in \theta $, which contradicts the fact that $\theta \neq \nabla_{L}$. Therefore $0/\theta =\{0\}$.\end{proof}

\begin{proposition}
\begin{enumerate}
\item \label{bibzl1} For any BI--lattice $\mathbf{L}$,
$\mathrm{Con}_{\mathbb{BI}}(\mathbf{L})=\{\theta\in\mathrm{Con}(\mathbf{L}):\theta=\theta^{\prime}\}$ and $\mathrm{Con}_{\mathbb{BI}01}(\mathbf{L})=\mathrm{Con}_{\mathbb{BI}0}(\mathbf{L})$.

\item \label{bibzl2} For any antiortholattice $\mathbf{L}$, $\mathrm{Con}_{\mathbb{BZL}01}(\mathbf{L})=\mathrm{Con}_{\mathbb{BI}01}(\mathbf{L})$ and
$\mathrm{Con}_{\mathbb{BZL}}(\mathbf{L})=\mathrm{Con}_{\mathbb{BI}01}(\mathbf{L})\cup\{\nabla_{L}\}$, which is a complete bounded sublattice of
$\mathrm{Con}_{\mathbb{BI}}(\mathbf{L})$. If $\mathbf{L}$ is nontrivial, then $\mathrm{Con}_{\mathbb{BZL}}(\mathbf{L})\cong\mathrm{Con}_{\mathbb{BI}01}(\mathbf{L})\oplus\mathbf{D}_{2}$ and has the top element of $\mathrm{Con}_{\mathbb{BI}01}(\mathbf{L})$ as a unique co--atom.\end{enumerate}

\label{bibzl}
\end{proposition}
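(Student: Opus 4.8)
The plan is to settle (\ref{bibzl1}) first and then bootstrap (\ref{bibzl2}) from it together with Lemma \ref{bicg01} and Corollary \ref{cgred}. For (\ref{bibzl1}), recall that a $\mathbb{BI}$--congruence is exactly a lattice congruence of $\mathbf{L}_l$ that is compatible with the involution, and compatibility with $^{\prime}$ means precisely $\theta^{\prime}\subseteq\theta$. Since $^{\prime}$ is an involution, applying it to both sides of $\theta^{\prime}\subseteq\theta$ yields $\theta=\theta^{\prime\prime}\subseteq\theta^{\prime}$, so $\theta^{\prime}\subseteq\theta$ already forces $\theta=\theta^{\prime}$; the converse inclusion is trivial. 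This gives the first equality. For $\mathrm{Con}_{\mathbb{BI}01}(\mathbf{L})=\mathrm{Con}_{\mathbb{BI}0}(\mathbf{L})$ the inclusion $\subseteq$ is immediate, and for the reverse I would use that $0^{\prime}=1$ in every $\mathbb{BI}$--lattice (the involution is a dual automorphism, hence swaps the bounds): if $\theta=\theta^{\prime}$ and $0/\theta=\{0\}$, then $(1,y)\in\theta$ gives $(0,y^{\prime})=(1^{\prime},y^{\prime})\in\theta$, whence $y^{\prime}=0$ and $y=1$, so $1/\theta=\{1\}$.

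For (\ref{bibzl2}), I would first observe that a $\mathbb{BZL}$--congruence of an antiortholattice is a $\mathbb{BI}$--congruence that additionally preserves the trivial Brouwer complement $^{\sim}$. The equality $\mathrm{Con}_{\mathbb{BZL}01}(\mathbf{L})=\mathrm{Con}_{\mathbb{BI}01}(\mathbf{L})$ then follows from Lemma \ref{bicg01}: any $\theta\in\mathrm{Con}_{\mathbb{BI}01}(\mathbf{L})$ has $0/\theta=\{0\}$ and is distinct from $\nabla_L$ (unless $\mathbf{L}$ is trivial, a degenerate case where everything collapses), hence preserves $^{\sim}$ and lies in $\mathrm{Con}_{\mathbb{BZL}01}(\mathbf{L})$, while the reverse inclusion is clear. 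For the second equality, Lemma \ref{bicg01} says exactly that the $\mathbb{BI}$--congruences distinct from $\nabla_L$ preserving $^{\sim}$ are those with $0/\theta=\{0\}$, i.e. the members of $\mathrm{Con}_{\mathbb{BI}0}(\mathbf{L})=\mathrm{Con}_{\mathbb{BI}01}(\mathbf{L})$ by (\ref{bibzl1}); adjoining $\nabla_L$, which is always a congruence and preserves every operation, yields $\mathrm{Con}_{\mathbb{BZL}}(\mathbf{L})=\mathrm{Con}_{\mathbb{BI}01}(\mathbf{L})\cup\{\nabla_L\}$. That this set is a complete bounded sublattice of $\mathrm{Con}_{\mathbb{BI}}(\mathbf{L})$ I would read off directly from Corollary \ref{cgred}: since $\mathbf{L}_{bi}$ is a reduct of $\mathbf{L}$, the lattice $\mathrm{Con}_{\mathbb{BZL}}(\mathbf{L})$ is a complete sublattice of $\mathrm{Con}_{\mathbb{BI}}(\mathbf{L})$, and it is bounded as it contains $\Delta_L$ and $\nabla_L$.

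Finally, for nontrivial $\mathbf{L}$ note that $0/\nabla_L=L\neq\{0\}$, so $\nabla_L\notin\mathrm{Con}_{\mathbb{BI}01}(\mathbf{L})$ and the union above is disjoint. By Lemma \ref{maxcg}.(\ref{maxcg3}) applied to the constants $0,1$, the set $\mathrm{Con}_{\mathbb{BI}01}(\mathbf{L})$ is a complete, hence bounded, sublattice of $\mathrm{Con}_{\mathbb{BI}}(\mathbf{L})$ — it is nonempty since $\Delta_L$ belongs to it — so it possesses a top element $\top$. As $\nabla_L$ is the greatest element of $\mathrm{Con}_{\mathbb{BI}}(\mathbf{L})$, it lies strictly above every element of $\mathrm{Con}_{\mathbb{BI}01}(\mathbf{L})$, so as a poset $\mathrm{Con}_{\mathbb{BZL}}(\mathbf{L})$ is $\mathrm{Con}_{\mathbb{BI}01}(\mathbf{L})$ with a single new greatest element placed directly above $\top$; this is precisely the ordinal sum $\mathrm{Con}_{\mathbb{BI}01}(\mathbf{L})\oplus\mathbf{D}_2$. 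The identity on $\mathrm{Con}_{\mathbb{BI}01}(\mathbf{L})$ together with the assignment $\nabla_L\mapsto 1^{\mathbf{D}_2}$ is then a bijective, order-preserving and order-reflecting map between two lattices, hence a lattice isomorphism. The unique co-atom is $\top$, since the only element covered by $\nabla_L$ is $\top$, every other member of $\mathrm{Con}_{\mathbb{BI}01}(\mathbf{L})$ lying below $\top<\nabla_L$. The only point requiring genuine care is this last bookkeeping — confirming that $\mathrm{Con}_{\mathbb{BI}01}(\mathbf{L})$ really has a top and that $\nabla_L$ covers it and nothing else — which is exactly where the appeal to Lemma \ref{maxcg} is essential; everything else is assembly of the cited results.
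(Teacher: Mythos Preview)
Your proof is correct and follows essentially the same approach as the paper's: both parts rest on the characterization $\theta\in\mathrm{Con}_{\mathbb{BI}}(\mathbf{L})\Leftrightarrow\theta=\theta'$ together with Lemma~\ref{bicg01}, and the ordinal-sum description in the nontrivial case is obtained the same way. The only minor divergence is that for the complete-sublattice claim you invoke Corollary~\ref{cgred} directly (since $\mathbf{L}_{bi}$ is a reduct of the BZ--lattice $\mathbf{L}$), whereas the paper argues instead that $\mathrm{Con}_{\mathbb{BI}01}(\mathbf{L})$ is a complete sublattice and then adjoins the top $\nabla_L$; both routes are valid and equally short.
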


\begin{proof}
(\ref{bibzl1}) $\mathrm{Con}_{\mathbb{BI}}(\mathbf{L})\subseteq\mathrm{Con}(\mathbf{L})$ and, for any $\theta\in\mathrm{Con}(\mathbf{L})$, we have
$\theta\in\mathrm{Con}_{\mathbb{BI}}(\mathbf{L})$ exactly when, for all
$a,b\in L$: $(a,b)\in\theta$ iff $(a^{\prime},b^{\prime})\in\theta$ iff
$(a,b)\in\theta^{\prime}$, that is exactly when $\theta=\theta^{\prime}$. Thus $\mathrm{Con}_{\mathbb{BI}01}(\mathbf{L})=\mathrm{Con}_{\mathbb{BI}0}(\mathbf{L})$.

\noindent(\ref{bibzl2}) By Lemma \ref{bicg01}, $\mathrm{Con}_{\mathbb{BZL}%
01}(\mathbf{L})=\mathrm{Con}_{\mathbb{BI}01}(\mathbf{L})$ and $\mathrm{Con}%
_{\mathbb{BZL}}(\mathbf{L})=\mathrm{Con}_{\mathbb{BI}01}(\mathbf{L}%
)\cup\{\nabla_{L}\}$. $\mathrm{Con}_{\mathbb{BI}01}(\mathbf{L})$ is a bounded
lattice, with smallest element $\Delta_{L}$, and a complete sublattice of
$\mathrm{Con}_{\mathbb{BI}}(\mathbf{L})$, therefore $\mathrm{Con}%
_{\mathbb{BI}01}(\mathbf{L})\cup\{\nabla_{L}\}$ is a complete bounded
sublattice of $\mathrm{Con}_{\mathbb{BI}}(\mathbf{L})$. If $\mathbf{L}$ is
nontrivial, then $\nabla_{L}\notin\mathrm{Con}_{\mathbb{BI}01}(\mathbf{L})$,
hence $\mathrm{Con}_{\mathbb{BI}01}(\mathbf{L})\cup\{\nabla_{L}\}\cong%
\mathrm{Con}_{\mathbb{BI}01}(\mathbf{L})\oplus\mathbf{D}_{2}$ and has
$\max(\mathrm{Con}_{\mathbb{BI}01}(\mathbf{L}))$ as a unique co--atom.
\end{proof}

\begin{theorem}
Let $\mathbf{M}$ be a bounded lattice, $\mathbf{K}$ be a bounded involution
lattice and $\mathbf{L}=\mathbf{M}\oplus\mathbf{K}\oplus\mathbf{M}^{d}$. Then:

\begin{enumerate}
\item \label{mainthaol1} $\mathrm{Con}_{\mathbb{BI} }(\mathbf{L}%
)=\{\alpha\oplus\beta\oplus\alpha^{\prime}:\alpha\in\mathrm{Con}%
(\mathbf{M}),\beta\in\mathrm{Con}_{\mathbb{BI} }(\mathbf{K})\}\cong%
\mathrm{Con}(\mathbf{M})\times\mathrm{Con}_{\mathbb{BI} }(\mathbf{K})$;

\item \label{mainthaol2} if $\mathbf{M}$ is nontrivial and $\mathbf{K}$ is a
pseudo-Kleene algebra, then $\mathrm{Con}_{\mathbb{BZL} }(\mathbf{L}%
)=\{\alpha\oplus\beta\oplus\alpha^{\prime}:\alpha\in\mathrm{Con}%
_{0}(\mathbf{M}),\beta\in\mathrm{Con}_{\mathbb{BI} }(\mathbf{K})\}\cup
\{\nabla_{L}\}\cong(\mathrm{Con}_{0}(\mathbf{M})\times\mathrm{Con}%
_{\mathbb{BI} }(\mathbf{K}))\oplus\mathbf{D}_{2}$;

\item \label{mainthaol3} $\mathbf{L}_{bi}$ is subdirectly irreducible iff one of the following holds:

\begin{itemize}
\item $\mathbf{M}$ is trivial and $\mathbf{K}$ is subdirectly irreducible;

\item $\mathbf{M}$ is subdirectly irreducible and $\mathbf{K}$ is trivial.
\end{itemize}

\item \label{mainthaol4} if $\mathbf{M}$ is nontrivial and $\mathbf{K}$ is a
pseudo-Kleene algebra, then the antiortholattice $\mathbf{L}$ is subdirectly
irreducible iff one of the following holds:

\begin{itemize}
\item $\mathrm{Con}_{0}(\mathbf{M})=\{\Delta_{M}\}$ and $\mathbf{K}$ is
subdirectly irreducible;

\item $\mathbf{K}$ is trivial and the set $\mathrm{Con}_{0}(\mathbf{M}%
)\setminus\{\Delta_{M}\}$ has a minimum.
\end{itemize}
\end{enumerate}

\label{mainthaol}
\end{theorem}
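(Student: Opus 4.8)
The plan is to prove the four parts in order, since each builds on its predecessors. The governing principle is that all information about congruences of the ordinal sum $\mathbf{L}=\mathbf{M}\oplus\mathbf{K}\oplus\mathbf{M}^d$ is controlled by the lattice isomorphism $(\alpha,\beta)\mapsto\alpha\oplus\beta$ established in Subsection~\ref{ualglat}, together with the action of the involution on congruences developed in the remarks preceding Proposition~\ref{bibzl}. First I would fix notation: writing a typical lattice congruence of $\mathbf{L}_l$ as $\theta=\alpha\oplus\beta\oplus\gamma$ with $\alpha\in\mathrm{Con}(\mathbf{M})$, $\beta\in\mathrm{Con}(\mathbf{K})$, $\gamma\in\mathrm{Con}(\mathbf{M}^d)$, via the (iterated) associativity of $\oplus$ on congruences.

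\textbf{Part (\ref{mainthaol1}).}
By Proposition~\ref{bibzl}.(\ref{bibzl1}), $\mathrm{Con}_{\mathbb{BI}}(\mathbf{L})=\{\theta\in\mathrm{Con}(\mathbf{L}_l):\theta=\theta^{\prime}\}$. So I would compute $\theta^{\prime}$ for $\theta=\alpha\oplus\beta\oplus\gamma$. Because the involution sends $M$ to $M^d$ via $f$ and acts as $^{\prime\mathbf{K}}$ on $K$, I expect $(\alpha\oplus\beta\oplus\gamma)^{\prime}=\gamma^{\prime}\oplus\beta^{\prime}\oplus\alpha^{\prime}$, where $\gamma^{\prime}\in\mathrm{Con}(\mathbf{M})$ and $\beta^{\prime}\in\mathrm{Con}(\mathbf{K})$ as in those remarks. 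Matching the three ordinal summands, the condition $\theta=\theta^{\prime}$ becomes $\gamma=\alpha^{\prime}$ and $\beta=\beta^{\prime}$; the latter is exactly $\beta\in\mathrm{Con}_{\mathbb{BI}}(\mathbf{K})$ again by Proposition~\ref{bibzl}.(\ref{bibzl1}). This yields the displayed set, and the isomorphism with $\mathrm{Con}(\mathbf{M})\times\mathrm{Con}_{\mathbb{BI}}(\mathbf{K})$ follows since $\alpha$ determines $\alpha^{\prime}$, so the free parameters are precisely $(\alpha,\beta)$, with meets and joins computed componentwise through the ordinal-sum isomorphism.

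\textbf{Part (\ref{mainthaol2}).}
Here $\mathbf{L}$ is a genuine antiortholattice by the first unnumbered lemma of this section. I would invoke Proposition~\ref{bibzl}.(\ref{bibzl2}): $\mathrm{Con}_{\mathbb{BZL}}(\mathbf{L})=\mathrm{Con}_{\mathbb{BI}01}(\mathbf{L})\cup\{\nabla_L\}$. So the task reduces to intersecting the set from part~(\ref{mainthaol1}) with the condition $0/\theta=\{0\}$ (equivalently $1/\theta=\{1\}$). For $\theta=\alpha\oplus\beta\oplus\alpha^{\prime}$, the class of $0$ is governed solely by $\alpha$ on the bottom copy $\mathbf{M}$, so $0/\theta=\{0\}$ iff $0/\alpha=\{0\}$, i.e. $\alpha\in\mathrm{Con}_0(\mathbf{M})$; the top condition on $\alpha^{\prime}$ is then automatic by involution-symmetry. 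Adjoining $\nabla_L$ and transporting the ordinal-sum decomposition of Proposition~\ref{bibzl}.(\ref{bibzl2}) gives the claimed isomorphism with $(\mathrm{Con}_0(\mathbf{M})\times\mathrm{Con}_{\mathbb{BI}}(\mathbf{K}))\oplus\mathbf{D}_2$.

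\textbf{Parts (\ref{mainthaol3}) and (\ref{mainthaol4}), and the main obstacle.}
For subdirect irreducibility I would use the characterization recalled in Subsection~\ref{ualglat}: $\mathbf{L}_{bi}$ is subdirectly irreducible iff $\Delta_L$ is strictly meet-irreducible in $\mathrm{Con}_{\mathbb{BI}}(\mathbf{L})$. Using the product decomposition from part~(\ref{mainthaol1}), $\Delta_L$ corresponds to $(\Delta_M,\Delta_K)$, and strict meet-irreducibility in a product $\mathrm{Con}(\mathbf{M})\times\mathrm{Con}_{\mathbb{BI}}(\mathbf{K})$ forces one factor to be trivial (that factor being a one-element lattice) and the bottom of the other to be strictly meet-irreducible there. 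This splits into the two displayed cases: either $\mathbf{M}$ trivial with $\mathbf{K}$ subdirectly irreducible, or $\mathbf{K}$ trivial with $\mathbf{M}$ subdirectly irreducible. Part~(\ref{mainthaol4}) repeats this analysis for the ordered structure of part~(\ref{mainthaol2}): since $\mathrm{Con}_{\mathbb{BZL}}(\mathbf{L})$ is obtained by prepending $\mathbf{D}_2$ as a top edge to the product $\mathrm{Con}_0(\mathbf{M})\times\mathrm{Con}_{\mathbb{BI}}(\mathbf{K})$, the element $\Delta_L$ sits at the bottom and I must test its meet-irreducibility in this augmented lattice. The ordinal-sum with $\mathbf{D}_2$ does not affect meet-irreducibility of the bottom element, so irreducibility of $\Delta_L$ again reduces to irreducibility in the product, but now with $\mathrm{Con}_0(\mathbf{M})$ in place of $\mathrm{Con}(\mathbf{M})$. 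The main obstacle I anticipate is the second subcase of~(\ref{mainthaol4}): when $\mathbf{K}$ is trivial, irreducibility becomes meet-irreducibility of $\Delta_M$ in $\mathrm{Con}_0(\mathbf{M})$ alone, and because $\mathrm{Con}_0(\mathbf{M})$ need not have $\Delta_M$ meet-irreducible in the usual lattice sense, one must instead show it is equivalent to $\mathrm{Con}_0(\mathbf{M})\setminus\{\Delta_M\}$ having a \emph{minimum}. I would argue this carefully: a bottom element that is strictly meet-irreducible in a complete lattice is exactly one whose set of strict upper bounds (here the nonidentity congruences) has a least element, and I would verify completeness of $\mathrm{Con}_0(\mathbf{M})$ via Lemma~\ref{maxcg} so that this equivalence is legitimate.
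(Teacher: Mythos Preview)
Your proposal is correct and follows essentially the same approach as the paper. For parts (\ref{mainthaol1}) and (\ref{mainthaol2}) your argument matches the paper's proof almost verbatim: compute $(\alpha\oplus\beta\oplus\gamma)'=\gamma'\oplus\beta'\oplus\alpha'$, apply Proposition~\ref{bibzl}.(\ref{bibzl1}) to match components, then for (\ref{mainthaol2}) invoke Proposition~\ref{bibzl}.(\ref{bibzl2}) and observe that $0/(\alpha\oplus\beta\oplus\alpha')=\{0\}$ iff $0/\alpha=\{0\}$; for parts (\ref{mainthaol3}) and (\ref{mainthaol4}) the paper simply writes ``By (\ref{mainthaol1}) and (\ref{mainthaol2}), respectively'', whereas you spell out the product analysis of strict meet-irreducibility of the bottom element and the harmless effect of the top $\mathbf{D}_2$, which is exactly the reasoning the paper leaves implicit.
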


\begin{proof}
(\ref{mainthaol1}) For any $\alpha,\gamma\in\mathrm{Con}(\mathbf{M})$ and any
$\beta\in\mathrm{Con}(\mathbf{K})$, we have, according to the definition of
the involution of $\mathbf{L}$: $(\alpha\oplus\beta\oplus\gamma)^{\prime
}=\gamma^{\prime}\oplus\beta^{\prime}\oplus\alpha^{\prime}$, hence, by
Proposition \ref{bibzl}.(\ref{bibzl1}): $\alpha\oplus\beta\oplus\gamma
=(\alpha\oplus\beta\oplus\gamma)^{\prime}$ iff $\alpha=\gamma^{\prime}$,
$\beta=\beta^{\prime}$ and $\gamma=\alpha^{\prime}$ iff $\alpha=\gamma
^{\prime}$ and $\beta\in\mathrm{Con}_{\mathbb{BI}}(\mathbf{K})$, hence
$\mathrm{Con}_{\mathbb{BI}}(\mathbf{L})=\{\alpha\oplus\beta\oplus
\alpha^{\prime}:\alpha\in\mathrm{Con}(\mathbf{M}),\beta\in\mathrm{Con}%
_{\mathbb{BI}}(\mathbf{K})\}$, which is isomorphic to $\mathrm{Con}%
(\mathbf{M})\times\mathrm{Con}_{\mathbb{BI}}(\mathbf{K})$, because the map
$(\alpha,\beta)\mapsto\alpha\oplus\beta\oplus\alpha^{\prime}$ for all
$\alpha\in\mathrm{Con}(\mathbf{M})$ and $\beta\in\mathrm{Con}_{\mathbb{BI}%
}(\mathbf{K})$ sets a lattice isomorphism between these lattices, since it is clearly bijective and preserves the join and the intersection.

\noindent(\ref{mainthaol2}) By (\ref{mainthaol1}), Proposition \ref{bibzl}%
.(\ref{bibzl2}) and the clear fact that, if $\mathbf{M}$ is nontrivial,
then, for any $\alpha\in\mathrm{Con}(\mathbf{M})$ and any $\beta
\in\mathrm{Con}(\mathbf{K})$, we have: $0/(\alpha\oplus\beta\oplus
\alpha^{\prime})=\{0\}$ iff $0/\alpha=\{0\}$ (iff $1/\alpha^{\prime}=\{1\}$
iff $1/(\alpha\oplus\beta\oplus\alpha^{\prime})=\{1\}$).

\noindent(\ref{mainthaol3}),(\ref{mainthaol4}) By (\ref{mainthaol1}) and
(\ref{mainthaol2}), respectively.
\end{proof}

\begin{corollary}
Let $\mathbf{K}$ be a pseudo-Kleene algebra. Then:

\begin{enumerate}
\item \label{coraol1} for any $0$-regular, in particular any simple
nontrivial bounded lattice $\mathbf{M}$, if $\mathbf{L}=\mathbf{M}%
\oplus\mathbf{K}\oplus\mathbf{M}^{d}$, then $\mathrm{Con}_{\mathbb{BZL}%
}(\mathbf{L})=(\Delta_{M}\oplus\nabla_{K}\oplus\Delta
_{M}]\cup\{\nabla _{L}\}\cong\mathrm{Con}_{\mathbb{BI}}(\mathbf{K})\oplus\mathbf{D}_{2}$, and $\mathbf{L}$ is subdirectly irreducible as an antiortholattice iff $\mathbf{K}$ is subdirectly irreducible;

\item \label{coraol2} if $\mathbf{L}=\mathbf{D}_{2}\oplus\mathbf{K}\oplus\mathbf{D}_{2}$, then $\mathrm{Con}_{\mathbb{BZL}}(\mathbf{L})=(eq(\{0\},K,\{1\})]\cup\{\nabla_{L}\}\cong\mathrm{Con}_{\mathbb{BI}}(\mathbf{K})\oplus\mathbf{D}_{2}$, and $\mathbf{L}$ is subdirectly
irreducible as an antiortholattice iff $\mathbf{K}$ is subdirectly irreducible.
\end{enumerate}\label{coraol}\end{corollary}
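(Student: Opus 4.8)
The plan is to obtain both items as direct specializations of Theorem~\ref{mainthaol}, parts (\ref{mainthaol2}) and (\ref{mainthaol4}), once we observe that the hypotheses on $\mathbf{M}$ collapse $\mathrm{Con}_{0}(\mathbf{M})$ to the single element $\Delta_{M}$. First I would record this reduction. If $\mathbf{M}$ is $0$-regular, then any $\theta\in\mathrm{Con}_{0}(\mathbf{M})$ satisfies $0/\theta=\{0\}=0/\Delta_{M}$, so $0$-regularity forces $\theta=\Delta_{M}$; hence $\mathrm{Con}_{0}(\mathbf{M})=\{\Delta_{M}\}$. For the ``in particular'' clause, a simple nontrivial $\mathbf{M}$ has $\mathrm{Con}(\mathbf{M})=\{\Delta_{M},\nabla_{M}\}$ with $0/\nabla_{M}=M\neq\{0\}$, so again $\mathrm{Con}_{0}(\mathbf{M})=\{\Delta_{M}\}$ (and simplicity is a special case of $0$-regularity). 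Since $\mathbf{M}$ is nontrivial and $\mathbf{K}$ is a pseudo-Kleene algebra, $\mathbf{L}$ is an antiortholattice and Theorem~\ref{mainthaol}(\ref{mainthaol2}),(\ref{mainthaol4}) apply.

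For part (\ref{coraol1}), substituting $\mathrm{Con}_{0}(\mathbf{M})=\{\Delta_{M}\}$ into Theorem~\ref{mainthaol}(\ref{mainthaol2}) reduces the family $\{\alpha\oplus\beta\oplus\alpha^{\prime}:\alpha\in\mathrm{Con}_{0}(\mathbf{M}),\beta\in\mathrm{Con}_{\mathbb{BI}}(\mathbf{K})\}$ to $\{\Delta_{M}\oplus\beta\oplus\Delta_{M}:\beta\in\mathrm{Con}_{\mathbb{BI}}(\mathbf{K})\}$. Because $\beta\mapsto\Delta_{M}\oplus\beta\oplus\Delta_{M}$ is order-preserving with largest value attained at $\beta=\nabla_{K}$, this family is exactly the principal ideal $(\Delta_{M}\oplus\nabla_{K}\oplus\Delta_{M}]$ of $\mathrm{Con}_{\mathbb{BZL}}(\mathbf{L})$ (note that $\nabla_{L}$, being the top, is not below the generator and so stays outside the ideal), giving $\mathrm{Con}_{\mathbb{BZL}}(\mathbf{L})=(\Delta_{M}\oplus\nabla_{K}\oplus\Delta_{M}]\cup\{\nabla_{L}\}$. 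The displayed isomorphism then specializes to $(\{\Delta_{M}\}\times\mathrm{Con}_{\mathbb{BI}}(\mathbf{K}))\oplus\mathbf{D}_{2}\cong\mathrm{Con}_{\mathbb{BI}}(\mathbf{K})\oplus\mathbf{D}_{2}$, using $\mathbf{D}_{1}\times X\cong X$. For subdirect irreducibility I would invoke Theorem~\ref{mainthaol}(\ref{mainthaol4}): its second alternative requires $\mathrm{Con}_{0}(\mathbf{M})\setminus\{\Delta_{M}\}$ to have a minimum, but here that set is empty, so only the first alternative can hold, and it reduces to ``$\mathbf{K}$ is subdirectly irreducible'' (the condition $\mathrm{Con}_{0}(\mathbf{M})=\{\Delta_{M}\}$ being automatic). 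This yields the stated equivalence, and it remains consistent in the edge case where $\mathbf{K}$ is trivial, since a trivial $\mathbf{K}$ is subdirectly irreducible by the convention of Subsection~\ref{ualglat}.

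For part (\ref{coraol2}), I would simply apply part (\ref{coraol1}) with $\mathbf{M}=\mathbf{D}_{2}$, which is simple and nontrivial. The only remaining task is to identify the generator of the ideal explicitly: unfolding the ordinal-sum operation on congruences (via its associativity) gives $\Delta_{D_{2}}\oplus\nabla_{K}\oplus\Delta_{D_{2}}=eq(\{0\},K,\{1\})$, since $\nabla_{K}$ merges all of $K$ into a single block while the two copies of $\Delta_{D_{2}}$ keep the global bounds $0$ and $1$ as singletons. All other assertions transfer verbatim from part (\ref{coraol1}).

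There is no genuinely hard step here; the corollary is essentially a reading-off of the theorem. The only points demanding care are the passage from ``$0$-regular/simple'' to $\mathrm{Con}_{0}(\mathbf{M})=\{\Delta_{M}\}$, the recognition that the $\beta$-indexed family is a principal ideal whose generator is reached at $\beta=\nabla_{K}$ (with $\nabla_{L}$ lying strictly above it), and the evaluation of $\Delta_{D_{2}}\oplus\nabla_{K}\oplus\Delta_{D_{2}}$ straight from the definition of $\oplus$ on congruences.
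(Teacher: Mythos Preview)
Your proposal is correct and follows essentially the same approach as the paper: both reduce to Theorem~\ref{mainthaol}(\ref{mainthaol2}),(\ref{mainthaol4}) via the observation that $0$-regularity (or simplicity) of $\mathbf{M}$ forces $\mathrm{Con}_{0}(\mathbf{M})=\{\Delta_{M}\}$, then read off the principal-ideal description and the subdirect-irreducibility criterion, with part~(\ref{coraol2}) obtained by specializing to $\mathbf{M}=\mathbf{D}_{2}$ and computing $\Delta_{D_{2}}\oplus\nabla_{K}\oplus\Delta_{D_{2}}=eq(\{0\},K,\{1\})$. Your write-up is simply more detailed than the paper's terse version.
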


\begin{proof}
(\ref{coraol1}) By Theorem \ref{mainthaol}.(\ref{mainthaol2}%
)-(\ref{mainthaol4}) and the fact that, in this case, $\mathrm{Con}%
_{0}(\mathbf{M})=\{\Delta_{M}\}\cong\mathbf{D}_{1}$, so $\{\alpha\oplus
\beta\oplus\alpha^{\prime}:\alpha\in\mathrm{Con}_{0}(\mathbf{M}),\beta
\in\mathrm{Con}_{\mathbb{BI}}(\mathbf{K})\}=(\Delta_{M}\oplus\nabla_{K}%
\oplus\Delta_{M}]$, as a principal filter of $\mathrm{Con}_{\mathbb{BZL}}(\mathbf{L})$.

\noindent(\ref{coraol2}) By (\ref{coraol1}), the equality $\Delta_{\mathbf{D}_{2}}\oplus\nabla_{K}\oplus\Delta_{\mathbf{D}_{2}}=eq(\{0\},K,\{1\})$ and the fact that $\mathbf{D}_{2}$.\end{proof}

We take advantage of this opportunity to correct a mistake in
\cite[Lm.3.3.(2)]{PBZ2}. There, it had been claimed that, if $\mathbf{L}$ is a subdirectly irreducible algebra in $V\left(  \mathbb{AOL}\right)  $, then every $a\in L$ is comparable with $a^{\prime}$. However, the canonical antiortholattice on $\mathbf{D}_{2}\oplus\mathbf{MO}_{2}\oplus\mathbf{D}_{2}$, where $\mathbf{MO}_{2}=\mathbf{D}_{2}^2\boxplus \mathbf{D}_{2}^2$ is the smallest orthomodular lattice which is not a Boolean algebra \cite{BH}, contains two pairs of incomparable elements $a,a^{\prime}$ and $b,b^{\prime}$, corresponding to the four atoms of $\mathbf{MO}_{2}$.

\begin{corollary} Let ${\bf K}$ be a BI--lattice. Then:\begin{enumerate}
\item\label{intermcoraol1} the BI--lattice ${\bf D}_2\oplus {\bf K}\oplus {\bf D}_2$ is subdirectly irreducible iff ${\bf K}$ is trivial;
\item\label{intermcoraol2} if ${\bf K}$ is a pseudo--Kleene algebra, then the antiortholattice ${\bf D}_3\oplus {\bf K}\oplus {\bf D}_3$ is subdirectly irreducible iff ${\bf K}$ is trivial.\end{enumerate}\label{intermcoraol}
\end{corollary}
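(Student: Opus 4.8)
The plan is to read both equivalences off Theorem \ref{mainthaol}, using that $\mathbf{D}_2$ and $\mathbf{D}_3$ are self--dual chains, so that $\mathbf{D}_2^d=\mathbf{D}_2$ and $\mathbf{D}_3^d=\mathbf{D}_3$, and the two ordinal sums in the statement are exactly instances of $\mathbf{M}\oplus\mathbf{K}\oplus\mathbf{M}^d$, with $\mathbf{M}=\mathbf{D}_2$ for (\ref{intermcoraol1}) and $\mathbf{M}=\mathbf{D}_3$ for (\ref{intermcoraol2}). No separate direct argument is needed: the work lies entirely in specialising the two disjunctive criteria of Theorem \ref{mainthaol} and computing the relevant congruence data of the chains.

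For (\ref{intermcoraol1}) I would apply Theorem \ref{mainthaol}.(\ref{mainthaol3}) with $\mathbf{M}=\mathbf{D}_2$ (legitimate, since here $\mathbf{K}$ is only assumed to be a BI--lattice). The bounded lattice $\mathbf{D}_2$ is nontrivial, so the first disjunct (``$\mathbf{M}$ trivial and $\mathbf{K}$ subdirectly irreducible'') is vacuous; and since $\mathrm{Con}(\mathbf{D}_2)=\{\Delta_{D_2},\nabla_{D_2}\}$ is a two--element chain, $\mathbf{D}_2$ is simple, hence subdirectly irreducible. Thus the second disjunct collapses to the single requirement that $\mathbf{K}$ be trivial, which is the asserted equivalence.

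For (\ref{intermcoraol2}) I would apply Theorem \ref{mainthaol}.(\ref{mainthaol4}) with $\mathbf{M}=\mathbf{D}_3$ (nontrivial) and $\mathbf{K}$ a pseudo--Kleene algebra; the one genuine computation is that of $\mathrm{Con}_0(\mathbf{D}_3)$. Writing $D_3=\{0,a,1\}$ with $0<a<1$, the four lattice congruences of $\mathbf{D}_3$ are $\Delta_{D_3}$, the congruence $eq(\{0,a\},\{1\})$ merging the lower cover, the congruence $\theta:=eq(\{0\},\{a,1\})$ merging the upper cover, and $\nabla_{D_3}$; of these, exactly $\Delta_{D_3}$ and $\theta$ have $0$ as a singleton class, so $\mathrm{Con}_0(\mathbf{D}_3)=\{\Delta_{D_3},\theta\}$. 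Consequently $\mathrm{Con}_0(\mathbf{D}_3)\neq\{\Delta_{D_3}\}$, so the first disjunct of Theorem \ref{mainthaol}.(\ref{mainthaol4}) fails, while $\mathrm{Con}_0(\mathbf{D}_3)\setminus\{\Delta_{D_3}\}=\{\theta\}$ is a singleton and hence trivially has a minimum; so the second disjunct again collapses to ``$\mathbf{K}$ trivial'', giving the stated equivalence.

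I expect no real obstacle: both parts are routine specialisations once the congruences of the three--element chain are listed, and the only arithmetic is the description of $\mathrm{Con}_0(\mathbf{D}_3)$ above. The point worth flagging is the contrast with Corollary \ref{coraol}.(\ref{coraol2}), where the very same ordinal sum $\mathbf{D}_2\oplus\mathbf{K}\oplus\mathbf{D}_2$, now viewed as an antiortholattice rather than as a bare BI--lattice, is subdirectly irreducible precisely when $\mathbf{K}$ is; the discrepancy traces back to $\mathrm{Con}_0(\mathbf{D}_2)=\{\Delta_{D_2}\}$ versus the nontrivial factor $\mathrm{Con}(\mathbf{D}_2)$ appearing in the BI--congruence product of Theorem \ref{mainthaol}.(\ref{mainthaol1}), and, for (\ref{intermcoraol2}), to the fact that $\mathrm{Con}_0(\mathbf{D}_3)$ has two elements rather than one.
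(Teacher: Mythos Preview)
Your argument is correct. For (\ref{intermcoraol1}) you do exactly what the paper does: apply Theorem \ref{mainthaol}.(\ref{mainthaol3}) with $\mathbf{M}=\mathbf{D}_2$.

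For (\ref{intermcoraol2}) you take a different route from the paper. You apply Theorem \ref{mainthaol}.(\ref{mainthaol4}) directly with $\mathbf{M}=\mathbf{D}_3$ and compute $\mathrm{Con}_0(\mathbf{D}_3)$ by hand. The paper instead regroups the ordinal sum as $\mathbf{D}_3\oplus\mathbf{K}\oplus\mathbf{D}_3\cong\mathbf{D}_2\oplus(\mathbf{D}_2\oplus\mathbf{K}\oplus\mathbf{D}_2)\oplus\mathbf{D}_2$, then invokes Corollary \ref{coraol}.(\ref{coraol2}) to reduce subdirect irreducibility of this antiortholattice to subdirect irreducibility of the inner BI--lattice $\mathbf{D}_2\oplus\mathbf{K}\oplus\mathbf{D}_2$, and finally applies the just--proved part (\ref{intermcoraol1}). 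Your approach is slightly more self--contained (it needs only Theorem \ref{mainthaol} and the easy listing of the four congruences of $\mathbf{D}_3$), while the paper's approach avoids any explicit congruence computation by bootstrapping from (\ref{intermcoraol1}) and the already established Corollary \ref{coraol}; it also illustrates a pattern that the paper reuses in Corollary \ref{alsocoraol}. Both are equally short and valid.
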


\begin{proof} (\ref{intermcoraol1}) By Theorem \ref{mainthaol}.(\ref{mainthaol3}) and the fact that ${\bf D}_2$ is nontrivial and simple, thus subdirectly irreducible.

\noindent (\ref{intermcoraol2}) By (\ref{intermcoraol1}), Corollary \ref{coraol}.(\ref{coraol2}) and the fact that ${\bf D}_3\oplus {\bf K}\oplus {\bf D}_3\cong {\bf D}_2\oplus {\bf D}_2\oplus {\bf K}\oplus {\bf D}_2\oplus {\bf D}_2$.\end{proof}

\begin{corollary} The only simple antiortholattices that satisfy SDM are ${\bf D}_1$, ${\bf D}_2$ and ${\bf D}_3$.\label{splaolsdm}\end{corollary}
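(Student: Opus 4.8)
The plan is to characterize simplicity for antiortholattices satisfying SDM by combining the congruence-theoretic machinery already developed. Recall that an antiortholattice satisfies SDM precisely when $0$ is meet--irreducible in its lattice reduct, equivalently when $0$ is completely meet--irreducible, so throughout I work with such an $\mathbf{L}$. First I would handle the three claimed simple algebras: $\mathbf{D}_1$ is trivial, $\mathbf{D}_2$ and $\mathbf{D}_3$ are chains with no proper nontrivial congruences (this is immediate since their only involution-compatible, $0,1$-separating equivalences are trivial), and all three satisfy SDM vacuously or because $0$ is meet--irreducible in a chain. So the content is the converse: any simple antiortholattice satisfying SDM has at most three elements.

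For the converse I would argue by contraposition, assuming $|L|\geq 4$ and producing a proper nontrivial congruence. The key structural input is Lemma \ref{tfilter}: since $\mathbf{L}$ satisfies SDM, $T(\mathbf{L})$ is the universe of a bounded sublattice of $\mathbf{L}_l$ in which $0$ is meet--irreducible, and $D(\mathbf{L})$ is a lattice filter of $\mathbf{L}_l$. But for an antiortholattice, Lemma \ref{olsaolt}.(ii) gives $T(\mathbf{L})=L$, so in fact \emph{all} of $L$ forms a lattice in which $0$ is meet--irreducible, i.e. $0$ is meet--irreducible in $\mathbf{L}_l$ outright. The plan is then to exploit the representation of antiortholattices satisfying SDM as ordinal sums of the form $\mathbf{M}\oplus\mathbf{K}\oplus\mathbf{M}^d$: since $0$ is meet--irreducible, the bottom of $\mathbf{L}_l$ lies below a unique atom, which forces $\mathbf{L}$ to decompose with a nontrivial chain summand $\mathbf{M}$ at the bottom (and dually $\mathbf{M}^d$ at the top) unless the lattice is itself a chain of length at most three.

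The heart of the argument is therefore Corollary \ref{coraol}: if $\mathbf{L}\cong\mathbf{M}\oplus\mathbf{K}\oplus\mathbf{M}^d$ with $\mathbf{M}$ nontrivial and $\mathbf{K}$ a pseudo--Kleene algebra, then $\mathbf{L}$ is subdirectly irreducible (hence \emph{a fortiori} we test for simplicity) only in the tightly constrained situations listed there, and in those situations $\mathrm{Con}_{\mathbb{BZL}}(\mathbf{L})$ visibly contains congruences strictly between $\Delta_L$ and $\nabla_L$ as soon as either $\mathbf{K}$ is nontrivial or $\mathbf{M}$ has more than two elements. Concretely, by Corollary \ref{intermcoraol}.(\ref{intermcoraol2}), $\mathbf{D}_3\oplus\mathbf{K}\oplus\mathbf{D}_3$ is already not simple (it is subdirectly irreducible only when $\mathbf{K}$ is trivial, and even then it is not \emph{simple} once the chain has length exceeding three), while a nontrivial $\mathbf{K}$ contributes its own proper $\mathbb{BI}$-congruence $\alpha$ yielding $\Delta_M\oplus\alpha\oplus\Delta_M$ strictly below $\nabla_L$. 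Collapsing the bottom chain $\mathbf{M}$ via a congruence $\Delta_M \neq \alpha \in \mathrm{Con}_0(\mathbf{M})$ likewise gives a proper nontrivial congruence of $\mathbf{L}$ by Theorem \ref{mainthaol}.(\ref{mainthaol2}).

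The main obstacle I anticipate is justifying the ordinal-sum decomposition itself: from ``$0$ is meet--irreducible and $|L| \geq 4$'' I must extract a genuine nontrivial summand $\mathbf{M}$ so that Corollary \ref{coraol} applies, rather than merely knowing $0$ sits below a unique element. The clean way is to set $\mathbf{M}$ to be the chain of elements below the unique atom (dually $\mathbf{M}^d$ above the unique coatom) and verify that the Kleene complement maps this initial chain isomorphically onto the final chain, with the remaining ``middle'' forming a pseudo--Kleene algebra $\mathbf{K}$; meet--irreducibility of $0$ is exactly what guarantees the bottom is a chain and hence that the decomposition is well defined. Once this decomposition is in hand, every remaining case is dispatched by direct appeal to Corollary \ref{coraol} and Corollary \ref{intermcoraol}, so that the only possibilities left compatible with simplicity are $|L|\leq 3$, completing the contrapositive.
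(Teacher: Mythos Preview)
Your approach has a genuine gap at exactly the point you flag as ``the main obstacle'': the ordinal--sum decomposition need not exist. From meet--irreducibility of $0$ you conclude that ``the bottom of $\mathbf{L}_l$ lies below a unique atom'', but meet--irreducibility of $0$ only gives you \emph{at most} one atom (two distinct atoms would meet to $0$), not that any atom exists. Concretely, let $L=\{0,1\}\cup(0,1)_{\mathbb{Q}}^{2}$ with $0$ below and $1$ above the componentwise--ordered open square, and with involution $(x,y)^{\prime}=(1-x,1-y)$, $0^{\prime}=1$. This is a pseudo--Kleene algebra with $0$ meet--irreducible and $S(\mathbf{L})=\{0,1\}$, hence an antiortholattice satisfying SDM, yet it has no atom and no element of $L\setminus\{0,1\}$ is comparable to every other element: given $(x,y)$, the point $(x/2,(1+y)/2)$ is incomparable to it. So there is no nontrivial $\mathbf{M}$ for which $\mathbf{L}=\mathbf{M}\oplus\mathbf{K}\oplus\mathbf{M}^{d}$, and neither Theorem~\ref{mainthaol} nor Corollaries~\ref{coraol},~\ref{intermcoraol} can be invoked.

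The paper's argument bypasses the decomposition entirely. Since $0$ is meet--irreducible (and hence $1$ is join--irreducible via the involution), the partition $\{\{0\},\,L\setminus\{0,1\},\,\{1\}\}$ already yields a \emph{lattice} congruence $\alpha$: for $a,b\in L\setminus\{0,1\}$ and any $c$, one checks $a\wedge c$ and $b\wedge c$ land in the same block using only meet--irreducibility of $0$, and dually for joins. This $\alpha$ visibly preserves the Kleene complement, so $\alpha\in\mathrm{Con}_{\mathbb{BI}01}(\mathbf{L})$; since $|L|>3$ forces $\alpha\neq\Delta_{L}$, Proposition~\ref{bibzl}.(\ref{bibzl2}) gives $|\mathrm{Con}_{\mathbb{BZL}}(\mathbf{L})|\geq 3$, so $\mathbf{L}$ is not simple. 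This single congruence replaces your entire decomposition machinery and works uniformly, with or without atoms.
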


\begin{proof} Recall that an antiortholattice satisfies SDM iff it has the $0$ meet--irreducible, so any antiortholattice chain satisfies SDM. The antiortholattices ${\bf D}_1$ and ${\bf D}_2$ are simple and, by Corollary \ref{coraol}.(\ref{coraol2}), so is ${\bf D}_3={\bf D}_2\oplus {\bf D}_1\oplus {\bf D}_2$.

Now let ${\bf L}$ be a simple antiortholattice which satisfies SDM and assume ex absurdo that $|L|>3$. By Proposition \ref{bibzl}.(\ref{bibzl2}), ${\rm Con}_{\BZ }({\bf L})\cong {\rm Con}_{\BI 01}({\bf L})\oplus {\bf D}_2$, thus ${\rm Con}_{\BI 01}({\bf L})\cong {\bf D}_1$ since ${\bf L}$ is simple. But $0$ is meet--irreducible in ${\bf L}_l$, so $1$ is join--irreducible in ${\bf L}_l$, from which it easily follows that $\alpha =eq(\{0\},L\setminus \{0,1\},\{1\})\in {\rm Con}_{01}({\bf L})$ (see also \cite{eunoucard}); clearly, $\alpha $ preserves the Kleene complement of ${\bf L}$, hence $\alpha \in {\rm Con}_{\BI 01}({\bf L})$. Therefore $\Delta _L,\alpha \in {\rm Con}_{\BI 01}({\bf L})$, and, since $|L|>3$, it follows that $\Delta _L\neq \alpha $, which contradicts the fact that $|{\rm Con}_{\BI 01}({\bf L})|=1$.\end{proof}

Another proof of the previous corollary can be obtained from the results in \cite[Subsection 4.2]{rgcmfp}.

\begin{lemma} Any infinite chain $\mathbf{C}$ is subdirectly reducible. Moreover, $\Delta _C$ is meet--reducible in ${\rm Con}(\mathbf{C})$, as well as in ${\rm Con}_0(\mathbf{C})$ in the case when $\mathbf{C}$ has a bottom element.\label{infchains}\end{lemma}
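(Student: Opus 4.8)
The plan is to exhibit two congruences of $\mathbf{C}$ strictly above $\Delta_C$ whose intersection is $\Delta_C$; by the characterization of subdirect irreducibility recalled in Subsection~\ref{ualglat}, such a meet--reducibility of $\Delta_C$ forces (the nontrivial) $\mathbf{C}$ to be subdirectly reducible, and it is exactly the assertion of the ``moreover'' part. First I would record the shape of lattice congruences on a chain. Since congruence classes of a lattice are convex and $\mathbf{C}$ is totally ordered, every nonsingleton class is an interval; conversely, for any $a<b$ in $C$ the relation $\theta_{[a,b]}$ that collapses the interval $[a,b]$ into a single class and fixes every element outside $[a,b]$ as a singleton is a lattice congruence. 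The verification is a routine case split on the position of the test element relative to $a$ and $b$, and it is valid whether or not $[a,b]$ is finite (so the construction survives dense chains). Moreover $\theta_{[a,b]}$ is precisely the principal congruence $Cg(a,b)$: it contains $(a,b)$, and any congruence containing $(a,b)$ has a convex class comprising all of $[a,b]$.

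Next, using that $C$ is infinite, I would pick three elements $p_1<p_2<p_3$ in $C$ and set $\alpha=Cg(p_1,p_2)$ and $\beta=Cg(p_2,p_3)$, so that $\alpha$ and $\beta$ each have a unique nonsingleton class, namely $[p_1,p_2]$ and $[p_2,p_3]$ respectively; in particular $\alpha,\beta\neq\Delta_C$. If $(x,y)\in\alpha\cap\beta$ with $x\neq y$, then $x,y$ lie in the common nonsingleton class, i.e.\ $x,y\in[p_1,p_2]\cap[p_2,p_3]=\{p_2\}$, a contradiction. Hence $\alpha\cap\beta=\Delta_C$, which shows $\Delta_C$ is meet--reducible in $\mathrm{Con}(\mathbf{C})$, whence $\mathbf{C}$ is subdirectly reducible.

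For the $\mathrm{Con}_0$ claim, assume $\mathbf{C}$ has a bottom element $0$. Since $C\setminus\{0\}$ is still infinite, I would simply choose $p_1<p_2<p_3$ inside $C\setminus\{0\}$; then $0$ is a singleton class of both $\alpha$ and $\beta$, so $\alpha,\beta\in\mathrm{Con}_0(\mathbf{C})$, while $\Delta_C\in\mathrm{Con}_0(\mathbf{C})$ as well. By Lemma~\ref{maxcg}.(\ref{maxcg3}), $\mathrm{Con}_0(\mathbf{C})$ is a (complete) sublattice of $\mathrm{Con}(\mathbf{C})$, so the meet of $\alpha$ and $\beta$ computed there coincides with their intersection $\Delta_C$; thus $\Delta_C$ is meet--reducible in $\mathrm{Con}_0(\mathbf{C})$ too. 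The only delicate point in the whole argument is the verification that the interval--collapse relation is genuinely a lattice congruence and agrees with $Cg(a,b)$ even when the collapsed interval is infinite; everything else is elementary, and the infiniteness of $C$ is used solely to guarantee the three points (and, in the bounded case, three points above $0$).
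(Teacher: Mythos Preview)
Your proof is correct and follows essentially the same approach as the paper: pick three points $p_1<p_2<p_3$ in the chain, collapse the intervals $[p_1,p_2]$ and $[p_2,p_3]$ to obtain two nontrivial congruences whose intersection is $\Delta_C$, and in the bounded case choose the three points above $0$. The paper's proof is terser (it simply asserts that the interval--collapse relations are congruences and intersect trivially), while you spell out more detail and invoke Lemma~\ref{maxcg}.(\ref{maxcg3}) for the sublattice property of $\mathrm{Con}_0(\mathbf{C})$, but the underlying construction is identical.
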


\begin{proof} Let $\mathbf{C}$ be an infinite chain. Then there exist $a,b,c\in C$ such that $a<b<c$. If we denote by $\theta =eq(\{[a,b]\}\cup \{\{x\}:x\in C\setminus [a,b]\})$ and by $\zeta =eq(\{[b,c]\}\cup \{\{x\}:x\in C\setminus [b,c]\})$, then, clearly, $\theta ,\zeta \in {\rm Con}(\mathbf{C})\setminus \{\Delta _C\}$ and $\theta \cap \zeta =\Delta _C$. If $\mathbf{C}$ has a $0$, then we may take $a\neq 0$, and then $\theta ,\zeta \in {\rm Con}_0(\mathbf{C})$.\end{proof}

Note that, for any $n\in{\mathbb{N}}^{\ast}$, $\mathrm{Con}(\mathbf{D}_{n})\cong\mathbf{D}_{2}^{n-1}$. Indeed, $\mathrm{Con}(\mathbf{D}_{1})\cong\mathbf{D}_{1}\cong\mathbf{D}_{2}^0$, while, if $n\geq2$, then $\mathbf{D}_{n}=\bigoplus_{i=1}^{n-1}\mathbf{D}_{2}$, so that $\mathrm{Con}(\mathbf{D}_{n})\cong\mathrm{Con}(\mathbf{D}_{2})^{n-1}\cong\mathbf{D}_{2}^{n-1}$.

\begin{corollary}
\begin{enumerate}
\item \label{2ndcoraol1} For any $k\in{\mathbb{N}} ^{*}$ and any
$n\in\{2k,2k+1\}$, $\mathrm{Con}_{\mathbb{BI} }(\mathbf{D}_{n})\cong%
\mathbf{D}_{2}^{k}$ and $\mathrm{Con}_{\mathbb{BZL} }(\mathbf{D}_{n}%
)\cong\mathbf{D}_{2}^{k-1}\oplus\mathbf{D}_{2}$.

\item \label{2ndcoraol2} The only subdirectly irreducible (bounded) involution chains are $\mathbf{D}_{1}$, $\mathbf{D}_{2}$ and $\mathbf{D}_{3}$. The only
subdirectly irreducible antiortholattice chains are $\mathbf{D}_{1}$, $\mathbf{D}_{2}$, $\mathbf{D}_{3}$, $\mathbf{D}_{4}$ and $\mathbf{D}_{5}$.\end{enumerate}

\label{2ndcoraol}
\end{corollary}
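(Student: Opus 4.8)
The plan is to derive both statements from three ingredients already available: the computation $\mathrm{Con}(\mathbf{D}_n)\cong\mathbf{D}_2^{n-1}$ recorded just before the corollary, the descriptions of $\mathbb{BI}$- and $\mathbb{BZL}$-congruences in Proposition \ref{bibzl}, and the subdirect-irreducibility criterion from Subsection \ref{ualglat}. Throughout I write $\mathbf{D}_n=\{a_1<\cdots<a_n\}$, whose unique dual automorphism, serving as Kleene complement, is $a_i{}'=a_{n+1-i}$; endowed with the trivial Brouwer complement it becomes an antiortholattice. The key bookkeeping device is that a lattice congruence of a chain is determined by the set of the $n-1$ ``gaps'' $(a_i,a_{i+1})$ that it collapses, with meets and joins of congruences corresponding to intersections and unions of gap-sets; this is exactly the content of $\mathrm{Con}(\mathbf{D}_n)\cong\mathbf{D}_2^{n-1}$.

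For part (i), Proposition \ref{bibzl}.(\ref{bibzl1}) identifies $\mathrm{Con}_{\mathbb{BI}}(\mathbf{D}_n)$ with the $\theta$ satisfying $\theta=\theta'$, i.e.\ with those whose gap-set is invariant under the action $i\mapsto n-i$ induced by the involution on $\{1,\dots,n-1\}$. This action has exactly $k$ orbits when $n\in\{2k,2k+1\}$ (namely $k$ pairs $\{i,n-i\}$ when $n=2k+1$, and $k-1$ pairs together with the single fixed gap $k$ when $n=2k$); since invariant gap-sets are precisely the unions of orbits, ordered by inclusion, I get $\mathrm{Con}_{\mathbb{BI}}(\mathbf{D}_n)\cong\mathbf{D}_2^{k}$ as lattices. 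Imposing $0/\theta=\{0\}$ (equivalently $1/\theta=\{1\}$, by Proposition \ref{bibzl}.(\ref{bibzl1})) just forbids collapsing the bottom gap $1$, hence its whole orbit $\{1,n-1\}$, so one orbit disappears and $\mathrm{Con}_{\mathbb{BI}01}(\mathbf{D}_n)\cong\mathbf{D}_2^{k-1}$. Proposition \ref{bibzl}.(\ref{bibzl2}) then gives $\mathrm{Con}_{\mathbb{BZL}}(\mathbf{D}_n)\cong\mathrm{Con}_{\mathbb{BI}01}(\mathbf{D}_n)\oplus\mathbf{D}_2\cong\mathbf{D}_2^{k-1}\oplus\mathbf{D}_2$.

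For the finite chains in part (ii) I use that $\mathbf{D}_n$ is subdirectly irreducible iff it is trivial or $\Delta$ is strictly meet-irreducible in the relevant congruence lattice, i.e.\ iff $\Delta$ has a unique upper cover. In $\mathbf{D}_2^{k}$ the bottom has exactly $k$ upper covers, so a bounded involution chain $\mathbf{D}_n$ is subdirectly irreducible iff $n=1$ or $k=1$, that is iff $n\in\{1,2,3\}$. In $\mathbf{D}_2^{k-1}\oplus\mathbf{D}_2$ the bottom has $k-1$ upper covers when $k\ge 2$ and a single upper cover when $k=1$, so it is uniquely covered iff $k\le 2$; hence an antiortholattice chain $\mathbf{D}_n$ is subdirectly irreducible iff $n=1$ or $k\le 2$, that is iff $n\in\{1,2,3,4,5\}$.

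The remaining and main obstacle is to rule out the infinite chains, where Lemma \ref{infchains} does not suffice because it produces only lattice(-with-$0$) congruences, which need not respect the involution. Here I plan to exploit the central symmetry of an involution chain $\mathbf{C}$: the involution maps $L_<=\{x:x<x'\}$ bijectively onto $\{x:x>x'\}$ and fixes at most one element, so when $C$ is infinite $L_<$ is infinite. Choosing $0<a<b<c$ in $L_<$, I let $\theta$ collapse exactly $[a,b]\cup[b',a']$ and $\zeta$ collapse exactly $[b,c]\cup[c',b']$; each collapses a pair of convex classes interchanged by the involution and meeting neither $0$ nor $1$, so $\theta,\zeta\in\mathrm{Con}_{\mathbb{BI}01}(\mathbf{C})\subseteq\mathrm{Con}_{\mathbb{BZL}}(\mathbf{C})\cap\mathrm{Con}_{\mathbb{BI}}(\mathbf{C})$. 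Since the four collapsed intervals meet pairwise in at most one point, $\theta\cap\zeta=\Delta_C$ while $\theta,\zeta\neq\Delta_C$, so $\Delta_C$ is meet-reducible in both $\mathrm{Con}_{\mathbb{BI}}(\mathbf{C})$ and $\mathrm{Con}_{\mathbb{BZL}}(\mathbf{C})$; thus every infinite bounded involution chain and every infinite antiortholattice chain is subdirectly reducible. Combining this with the finite computation of the previous paragraph completes part (ii).
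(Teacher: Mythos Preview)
Your proof is correct, but it follows a genuinely different route from the paper's.

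For part (\ref{2ndcoraol1}), the paper exploits the ordinal-sum machinery developed earlier: it writes $\mathbf{D}_{2k}\cong\mathbf{D}_k\oplus\mathbf{D}_2\oplus\mathbf{D}_k$ and $\mathbf{D}_{2k+1}\cong\mathbf{D}_{k+1}\oplus\mathbf{D}_1\oplus\mathbf{D}_{k+1}$ and reads off $\mathrm{Con}_{\mathbb{BI}}$ from Theorem~\ref{mainthaol}.(\ref{mainthaol1}); for $\mathrm{Con}_{\mathbb{BZL}}$ it uses $\mathbf{D}_n\cong\mathbf{D}_2\oplus\mathbf{D}_{n-2}\oplus\mathbf{D}_2$ together with Corollary~\ref{coraol}.(\ref{coraol2}). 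You instead give a direct orbit-counting argument on the gap-set, identifying $\mathrm{Con}_{\mathbb{BI}}(\mathbf{D}_n)$ with the subsets of $\{1,\dots,n-1\}$ invariant under $i\mapsto n-i$, and then excising the orbit of the bottom gap to obtain $\mathrm{Con}_{\mathbb{BI}01}$. For the infinite case in (\ref{2ndcoraol2}), the paper again uses the ordinal decomposition $\mathbf{C}=(u]\oplus[u,u']\oplus(u]^d$ with $(u]$ infinite, so that Lemma~\ref{infchains} (applied to the lattice $(u]$) feeds into Theorem~\ref{mainthaol} to produce meet-reducibility of $\Delta_C$; you bypass this by symmetrising directly, building $\theta$ and $\zeta$ from pairs of involution-conjugate intervals. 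Your approach is more elementary and self-contained, while the paper's approach showcases the ordinal-sum theorem as a uniform tool. One small point: the paper's parenthetical ``(bounded)'' signals that the argument also adapts to the unbounded involution-lattice setting; your write-up treats only the bounded case, though your symmetrised-interval construction extends there without difficulty.
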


\begin{proof}
(\ref{2ndcoraol1}) Note that $\mathbf{D}_{2k}\cong\mathbf{D}_{k}%
\oplus\mathbf{D}_{2}\oplus\mathbf{D}_{k}$ and $\mathbf{D}_{2k+1}%
\cong\mathbf{D}_{k+1}\oplus\mathbf{D}_{k+1}\cong\mathbf{D}_{k+1}%
\oplus\mathbf{D}_{1}\oplus\mathbf{D}_{k+1}$. By Theorem \ref{mainthaol}%
.(\ref{mainthaol1}), it follows that:

$\mathrm{Con}_{\mathbb{BI} }(\mathbf{D}_{2k})\cong\mathrm{Con}(\mathbf{D}%
_{k})\times\mathrm{Con}_{\mathbb{BI} }(\mathbf{D}_{2})\cong\mathbf{D}%
_{2}^{k-1}\times\mathbf{D}_{2}\cong\mathbf{D}_{2}^{k}$;

$\mathrm{Con}_{\mathbb{BI} }(\mathbf{D}_{2k+1})\cong\mathrm{Con}%
(\mathbf{D}_{k+1})\times\mathrm{Con}_{\mathbb{BI} }(\mathbf{D}_{1}%
)\cong\mathbf{D}_{2}^{k}\times\mathbf{D}_{1}\cong\mathbf{D}_{2}^{k}$.

Let us denote by $\lfloor r\rfloor$ the integer part of any real number $r$,
so that $k=\lfloor n/2\rfloor$. Of course, $\mathrm{Con}_{\mathbb{BI}
}(\mathbf{D}_{1})\cong\mathbf{D}_{1}\cong\mathbf{D}_{2}^{\lfloor1/2\rfloor}$
and $\mathrm{Con}_{\mathbb{BZL} }(\mathbf{D}_{2})\cong\mathbf{D}_{2}%
\cong\mathbf{D}_{2}^{\lfloor2/2\rfloor-1}\oplus\mathbf{D}_{2}$.

If $n\geq3$, then $\mathbf{D}_{n}\cong\mathbf{D}_{2}\oplus\mathbf{D}%
_{n-2}\oplus\mathbf{D}_{2}$ and $n-2\in\{2(k-1),2(k-1)+1\}$, hence, by
Corollary \ref{coraol}.(\ref{coraol2}), $\mathrm{Con}_{\mathbb{BZL}
}(\mathbf{D}_{n})\cong\mathrm{Con}_{\mathbb{BI} }(\mathbf{D}_{n-2}%
)\oplus\mathbf{D}_{2}\cong\mathbf{D}_{2}^{k-1}\oplus\mathbf{D}_{2}$.

\noindent(\ref{2ndcoraol2}) $\mathbf{D}_{1}$ is trivial, thus subdirectly irreducible both as a BI--lattice and as an antiortholattice, while (\ref{2ndcoraol1}) ensures us that, for any $n\in{\mathbb{N}}$ with $n\geq2$, if $k=\lfloor n/2\rfloor\in{\mathbb{N}}^{\ast}$, then:

\begin{itemize}
\item $\mathrm{Con}_{\mathbb{BI} }(\mathbf{D}_{n})\cong\mathbf{D}_{2}^{k}$,
which has exactly $k$ atoms, so that: $(\mathbf{D}_{n})_{bi}$ is subdirectly
irreducible iff $k=1$ iff $n\in\{2,3\}$;

\item $\mathrm{Con}_{\mathbb{BZL}}(\mathbf{D}_{n})\cong\mathbf{D}_{2}%
^{k-1}\oplus\mathbf{D}_{2}$, which has one atom if $k\leq2$ and $k-1$ atoms if
$k>2$, so that the antiortholattice $\mathbf{D}_{n}$ is subdirectly
irreducible iff $k-1\leq1$ iff $k\leq2$ iff $n\in[2,5]$.
\end{itemize}

Hence the only subdirectly irreducible finite bounded involution chains are $\mathbf{D}_{1}$, $\mathbf{D}_{2}$ and $\mathbf{D}_{3}$, while the only subdirectly irreducible finite antiotholattice chains are $\mathbf{D}_{1}$, $\mathbf{D}_{2}$, $\mathbf{D}_{3}$, $\mathbf{D}_{4}$ and $\mathbf{D}_{5}$.

If $\mathbf{C}$ is an infinite BI--chain, then its $0$ is meet--irreducible, hence $\mathbf{C}$ is an antiortholattice, and there exists a $u\in C$ such that $u<u^{\prime }$ and the filter $(u]$ is infinite, thus $(u]$ is an infinite bounded chain and $\mathbf{C}=(u]\oplus [u,u^{\prime }]\oplus [u^{\prime })=(u]\oplus [u,u^{\prime }]\oplus (u]^d$, where $[u,u^{\prime }]$ is clearly a bounded lattice. Then, by Lemma \ref{infchains} and Theorem \ref{mainthaol}.(\ref{mainthaol1}).(\ref{mainthaol2}), $\Delta _{(u]}$ is meet--reducible in ${\rm Con}((u])$, as well as in ${\rm Con}_0((u])$, hence $\Delta _C$ is meet--reducible in ${\rm Con}_{\BI }(\mathbf{C})\cong {\rm Con}((u])\times {\rm Con}_{\BI }([u,u^{\prime }])$, as well as in ${\rm Con}_{\BZ }(\mathbf{C})\cong ({\rm Con}_0((u])\times {\rm Con}_{\BI }([u,u^{\prime }]))\oplus {\bf D}_2$, and hence $\mathbf{C}$ is subdirectly reducible both in $\BI$ and in $\PBZs$.

Note that the argument above can be adapted for the subdirect irreducibility of any infinite involution chain $\mathbf{C}$, not necessarily bounded, in the class $\I $ of involution lattices, that is self--dual lattices $\mathbf{L}$ endowed with a unary operation $^{\prime }$ given by a dual lattice automorphism of $\mathbf{L}$, because, for any lattice $\mathbf{M}$ with a $1$ and any BI--lattice $\mathbf{K}$, $\mathbf{L}=\mathbf{M}\oplus \mathbf{K}\oplus \mathbf{M}^d$ is an involution lattice with ${\rm Con}_{\I }(\mathbf{L})\cong {\rm Con}(\mathbf{M})\times {\rm Con}_{\BI }(\mathbf{K})$; see also \cite{eucardbi}.\end{proof}

\begin{corollary} Let ${\bf C}$ be a bounded chain, ${\bf K}$ a BI--lattice, and $\mathbf{L}=\mathbf{C}\oplus\mathbf{K}\oplus\mathbf{C}^d$.

\begin{itemize}
\item \label{alsocoraol1} If $|C|\geq3$, then the BI--lattice $\mathbf{L}$ is subdirectly reducible.
\item \label{alsocoraol2} If $|C|\geq 4$ and $\mathbf{K}$ is a pseudo-Kleene algebra, then the antiortholattice $\mathbf{L}$
is subdirectly reducible.\end{itemize}\label{alsocoraol}\end{corollary}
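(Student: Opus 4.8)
The plan is to reduce both assertions to the meet--reducibility of $\Delta_C$ in a suitable congruence lattice of the chain $\mathbf{C}$, via the isomorphisms of Theorem \ref{mainthaol} together with the interval--collapsing congruences from the proof of Lemma \ref{infchains}. Recall that a nontrivial algebra fails to be subdirectly irreducible as soon as its least congruence is the intersection of two strictly larger congruences; here $\mathbf{L}$ is nontrivial, since $|L|\geq |C|\geq 3$ in both cases.

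For the first assertion, I would use Theorem \ref{mainthaol}.(\ref{mainthaol1}), by which the map $(\alpha,\beta)\mapsto \alpha\oplus\beta\oplus\alpha^{\prime}$ is a lattice isomorphism from $\mathrm{Con}(\mathbf{C})\times\mathrm{Con}_{\mathbb{BI}}(\mathbf{K})$ onto $\mathrm{Con}_{\mathbb{BI}}(\mathbf{L})$, sending $(\Delta_C,\Delta_K)$ to $\Delta_L$. Since $|C|\geq 3$, I would pick $a<b<c$ in $C$ and set $\theta_C=eq(\{[a,b]\}\cup\{\{x\}:x\in C\setminus[a,b]\})$ and $\zeta_C=eq(\{[b,c]\}\cup\{\{x\}:x\in C\setminus[b,c]\})$, exactly as in Lemma \ref{infchains}; then $\theta_C,\zeta_C\in\mathrm{Con}(\mathbf{C})\setminus\{\Delta_C\}$ and $\theta_C\cap\zeta_C=\Delta_C$. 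Consequently $\theta_C\oplus\Delta_K\oplus\theta_C^{\prime}$ and $\zeta_C\oplus\Delta_K\oplus\zeta_C^{\prime}$ are two members of $\mathrm{Con}_{\mathbb{BI}}(\mathbf{L})$ strictly above $\Delta_L$ whose intersection is $(\theta_C\cap\zeta_C)\oplus\Delta_K\oplus(\theta_C\cap\zeta_C)^{\prime}=\Delta_L$, so $\Delta_L$ is meet--reducible and $\mathbf{L}$ is subdirectly reducible.

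For the second assertion I would repeat the argument inside $\mathrm{Con}_{\mathbb{BZL}}(\mathbf{L})$, now invoking Theorem \ref{mainthaol}.(\ref{mainthaol2}) (available because $\mathbf{C}$ is nontrivial and $\mathbf{K}$ is a pseudo--Kleene algebra), which exhibits the congruences $\alpha\oplus\beta\oplus\alpha^{\prime}$ with $\alpha\in\mathrm{Con}_0(\mathbf{C})$ and $\beta\in\mathrm{Con}_{\mathbb{BI}}(\mathbf{K})$ as precisely the $\mathbb{BZL}$--congruences of $\mathbf{L}$ other than $\nabla_L$. The only difference is that the two chain--congruences must now lie in $\mathrm{Con}_0(\mathbf{C})$, i.e.\ must keep $\{0\}$ as a singleton class, and this is exactly what the hypothesis $|C|\geq 4$ supplies: it is the ``moreover'' clause of Lemma \ref{infchains}, obtained by choosing $0<a<b<c$ (four elements) so that the intervals $[a,b]$ and $[b,c]$ avoid $0$, whence $\theta_C,\zeta_C\in\mathrm{Con}_0(\mathbf{C})\setminus\{\Delta_C\}$ with $\theta_C\cap\zeta_C=\Delta_C$. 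Then $\theta_C\oplus\Delta_K\oplus\theta_C^{\prime}$ and $\zeta_C\oplus\Delta_K\oplus\zeta_C^{\prime}$ are $\mathbb{BZL}$--congruences strictly above $\Delta_L$ meeting in $\Delta_L$, so $\mathbf{L}$ is again subdirectly reducible.

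The verifications are routine; the only delicate point---and the reason the thresholds differ---is the restriction from $\mathrm{Con}(\mathbf{C})$ to $\mathrm{Con}_0(\mathbf{C})$. In the $\mathbb{BZL}$ setting the two decomposing congruences are required to have a singleton $0$--class, which is possible only when three elements can be taken strictly above $0$; forcing $a\neq 0$ is precisely the extra demand that raises the bound from $|C|\geq 3$ to $|C|\geq 4$.
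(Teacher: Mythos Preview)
Your proof is correct. The approach differs from the paper's in a pleasant way: the paper splits into the cases $C$ finite and $C$ infinite, invoking Corollary~\ref{intermcoraol} (rewriting $\mathbf{D}_n\oplus\mathbf{K}\oplus\mathbf{D}_n$ as $\mathbf{D}_2\oplus\mathbf{D}_{n-1}\oplus\mathbf{K}\oplus\mathbf{D}_{n-1}\oplus\mathbf{D}_2$, etc.) for the finite case and the argument at the end of Corollary~\ref{2ndcoraol}.(\ref{2ndcoraol2}) for the infinite case. You instead observe that the interval--collapsing construction in the proof of Lemma~\ref{infchains} does not actually require $\mathbf{C}$ to be infinite, only that $\mathbf{C}$ contain three comparable elements (respectively, three comparable elements above $0$), and then feed the resulting meet--reducibility of $\Delta_C$ in $\mathrm{Con}(\mathbf{C})$ (respectively $\mathrm{Con}_0(\mathbf{C})$) directly into Theorem~\ref{mainthaol}. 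This gives a uniform argument covering both cardinalities at once, and makes the role of the thresholds $|C|\geq 3$ versus $|C|\geq 4$ completely transparent. The paper's route, on the other hand, reuses more of the earlier corollaries and so keeps the argument for the finite case entirely structural rather than computational.
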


\begin{proof} If $C$ is finite, then the statements follow by Corollary \ref{intermcoraol} and the fact that, if $n\geq 3$, then $n-1\geq 2$, so that the BI--lattice ${\bf D}_{n-1}\oplus {\bf K}\oplus {\bf D}_{n-1}$ is nontrivial, and we have $\mathbf{D}_{n}\oplus\mathbf{K}\oplus\mathbf{D}_{n}\cong {\bf D}_2\oplus {\bf D}_{n-1}\oplus {\bf K}\oplus {\bf D}_{n-1}\oplus {\bf D}_2$ and $\mathbf{D}_{n+1}\oplus\mathbf{K}\oplus\mathbf{D}_{n+1}\cong {\bf D}_3\oplus {\bf D}_{n-1}\oplus {\bf K}\oplus {\bf D}_{n-1}\oplus {\bf D}_3$.

If $C$ is infinite, then the subdirect reducibility of $\mathbf{L}$ follows by the argument at the end of the proof of Corollary \ref{2ndcoraol}.(\ref{2ndcoraol2}) in which we replace $(u]$ by $\mathbf{C}$ and $[u,u^{\prime }]$ by $\mathbf{K}$.\end{proof}

\section{Horizontal Sums of PBZ$^{\ast}$--Lattices\label{guru}}

There is a well-developped theory of{ \emph{horizontal sums}} in the context of{ orthomodular lattices: see e.g. \cite{Beran, Chajdor, Greechie}. In the present section, we follow in the footsteps of \cite{PBZ2} and try to broaden our scope to the context of PBZ*-lattices.

Of the next two results, the former is straightforward and the latter is implicit in {\cite[Ex. 5.3]{GLP}:}

\begin{lemma}
\begin{enumerate}
\item \label{pstar1} For any nontrivial BI-lattices $\mathbf{A}$ and
$\mathbf{B}$: $\mathbf{A}\boxplus\mathbf{B}$ is paraorthomodular iff
$\mathbf{A}$ and $\mathbf{B}$ are paraorthomodular.

\item \label{pstar0} For any nontrivial BI-lattices $\mathbf{A}$ and
$\mathbf{B}$: $\mathbf{A}\boxplus\mathbf{B}$ is an ortholattice, respectively
an orthomodular lattice, iff $\mathbf{A}$ and $\mathbf{B}$ are ortholattices,
respectively orthomodular lattices.

\item \label{pstar2} For any nontrivial BZ--lattices $\mathbf{A}$ and $\mathbf{B}$:
$\mathbf{A}\boxplus\mathbf{B}$ satisfies condition $(\ast)$ iff $\mathbf{A}$
and $\mathbf{B}$ satisfy condition $(\ast)$.

\item \label{pstar3} For any nontrivial BZ--lattices $\mathbf{A}$ and $\mathbf{B}$ such
that $\mathbf{A}\boxplus\mathbf{B}$ is a BZ--lattice: $\mathbf{A}%
\boxplus\mathbf{B}$ is a PBZ$^{\ast}$ --lattice iff $\mathbf{A}$ and
$\mathbf{B}$ are PBZ$^{\ast}$ --lattices.
\end{enumerate}

\label{pstar}
\end{lemma}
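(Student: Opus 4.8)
The plan is to prove each of the four statements by reducing properties of the horizontal sum to properties of the summands, exploiting the fact that $\mathbf{A}$ and $\mathbf{B}$ are subalgebras of $\mathbf{A}\boxplus\mathbf{B}$ and that the two summands share only the bounds $0$ and $1$. Throughout I would use the key structural observation about the horizontal sum: any two elements $a\in A\setminus\{0,1\}$ and $b\in B\setminus\{0,1\}$ are incomparable in $\mathbf{A}\boxplus\mathbf{B}$, with $a\wedge b=0$ and $a\vee b=1$; and every element lies in $A$ or in $B$, with all operations computed within the summand that contains the argument(s). The right-to-left directions of all four items are essentially automatic, since $\mathbf{A}$ and $\mathbf{B}$ embed as subalgebras and the relevant conditions (paraorthomodularity, being an ortholattice or orthomodular lattice, condition $(\ast)$) are inherited by subalgebras; hence I would concentrate on the left-to-right directions, which follow from the fact that any witness to a failure inside $\mathbf{A}$ or $\mathbf{B}$ also witnesses a failure inside $\mathbf{A}\boxplus\mathbf{B}$.

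For item (\ref{pstar1}) I would verify paraorthomodularity directly from its definition: suppose $a\leq b$ and $a'\wedge b=0$ in $\mathbf{A}\boxplus\mathbf{B}$. Since $a\leq b$ forces $a$ and $b$ into the same summand (comparable nonbound elements cannot straddle the two summands), say both in $A$, then $a'\in A$ as well, and all three of $a,b,a'\wedge b$ are computed inside $\mathbf{A}$; so the hypothesis is exactly the paraorthomodularity hypothesis for $\mathbf{A}$, yielding $a=b$. The only care needed is the boundary case where one of $a,b$ is a bound, which is handled trivially. Item (\ref{pstar0}) I would reduce to the observation that sharpness is an internal notion: for $a\in A\setminus\{0,1\}$ its Kleene complement and hence $a\wedge a'$ are computed in $\mathbf{A}$, so $S(\mathbf{A}\boxplus\mathbf{B})\cap A=S(\mathbf{A})$ and similarly for $B$; thus $S(\mathbf{A}\boxplus\mathbf{B})=A\boxplus B$ iff both summands are ortholattices, and the orthomodular case follows by combining this with (\ref{pstar1}) via the characterization that an ortholattice is orthomodular iff paraorthomodular.

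For item (\ref{pstar2}) I would check condition $(\ast)$, namely $(a\wedge a')^{\sim}\leq a^{\sim}\vee a'^{\sim}$, pointwise in $a$. For $a\in A$ (including the bounds) the elements $a'$, $a\wedge a'$, $a^{\sim}$, $a'^{\sim}$ are all computed in $\mathbf{A}$, because the Kleene and Brouwer complements of $\mathbf{A}\boxplus\mathbf{B}$ restrict to those of $\mathbf{A}$ on $A$; so the instance of $(\ast)$ at $a$ in $\mathbf{A}\boxplus\mathbf{B}$ coincides with the instance at $a$ in $\mathbf{A}$, and symmetrically for $b\in B$. Hence $(\ast)$ holds in $\mathbf{A}\boxplus\mathbf{B}$ iff it holds in both summands. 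The one subtlety I would flag is that $a\wedge a'$ may equal $0$ for a nonbound $a$, but $0^{\sim}=1$ agrees in both summands and in the sum, so no boundary mismatch arises. Finally, item (\ref{pstar3}) is the genuine payoff and should be immediate from the preceding pieces: assuming $\mathbf{A}\boxplus\mathbf{B}$ is already a BZ--lattice, being a PBZ$^{\ast}$--lattice means being paraorthomodular and satisfying $(\ast)$ on top of the BZ--lattice structure, so (\ref{pstar3}) follows by combining (\ref{pstar1}) and (\ref{pstar2}). I expect no serious obstacle; the only thing requiring attention is the careful case analysis at the lattice bounds and the verification that meets, joins, and both complements of the horizontal sum genuinely restrict to the corresponding operations of each summand, which is exactly the content of the subalgebra claim established when $\mathbf{A}\boxplus\mathbf{B}$ was defined.
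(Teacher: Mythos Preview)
Your detailed arguments are correct and constitute exactly the straightforward verification the paper has in mind; the paper simply calls the lemma ``straightforward'' and gives no proof, so there is nothing substantive to compare.

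One expository slip to fix: in your opening paragraph you have the directions reversed. Subalgebra inheritance gives the \emph{left-to-right} implications (if $\mathbf{A}\boxplus\mathbf{B}$ has the property, then its subalgebras $\mathbf{A}$ and $\mathbf{B}$ do), not the right-to-left ones. Correspondingly, the work you actually carry out in items (\ref{pstar1})--(\ref{pstar2}) --- e.g.\ assuming $a\le b$ and $a'\wedge b=0$ in $\mathbf{A}\boxplus\mathbf{B}$ and deducing $a=b$ from paraorthomodularity of the relevant summand --- is the \emph{right-to-left} direction (from the summands to the sum), not the left-to-right one. The mathematics is fine; just swap the labels.
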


\begin{proposition}
\begin{enumerate}
\item \label{hsumkl1} If $\mathbf{A}$ and $\mathbf{B}$ are nontrivial
pseudo--Kleene algebras, then: $\mathbf{A}\boxplus\mathbf{B}$ is a
pseudo--Kleene algebra iff at least one of $\mathbf{A}$ and $\mathbf{B}$ is an ortholattice.

\item \label{hsumkl0} If $\mathbf{A}$ and $\mathbf{B}$ are nontrivial
BZ--lattices, then: $\mathbf{A}\boxplus\mathbf{B}$ is a BZ--lattice iff at
least one of $\mathbf{A}_{bi}$ and $\mathbf{B}_{bi}$ is an ortholattice.

\item \label{hsumkl2} If $\mathbf{A}$ and $\mathbf{B}$ are nontrivial
PBZ$^{\ast}$--lattices, then: $\mathbf{A}\boxplus\mathbf{B}$ is a PBZ$^{\ast}%
$-lattice iff at least one of $\mathbf{A}$ and $\mathbf{B}$ is an orthomodular lattice.
\end{enumerate}

\label{hsumkl}
\end{proposition}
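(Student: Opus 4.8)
The plan is to prove the three items in the order stated, since each reduces to the previous one, and to keep everything anchored to the pseudo--Kleene law $x\wedge x'\leq y\vee y'$ together with the defining structural fact of horizontal sums: any $p\in A\setminus\{0,1\}$ is incomparable with any $q\in B\setminus\{0,1\}$, with $p\wedge q=0$ and $p\vee q=1$, while all complements, meets and joins of elements lying in a single wing are computed inside that wing.

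For (\ref{hsumkl1}) I would treat both directions by a case analysis keyed to this incomparability. For the forward implication I argue contrapositively: if neither $\mathbf{A}$ nor $\mathbf{B}$ is an ortholattice, choose a non--sharp $a\in A$ and a non--sharp $b\in B$, so $a\wedge a'>0$ and $b\vee b'<1$. Since $a\wedge a'\leq a<1$ and $b\vee b'\geq b>0$, both values lie in the interiors $A\setminus\{0,1\}$ and $B\setminus\{0,1\}$, hence are incomparable in $\mathbf{A}\boxplus\mathbf{B}$; thus $a\wedge a'\nleq b\vee b'$ and the pseudo--Kleene law fails. For the converse, assume without loss of generality that $\mathbf{A}$ is an ortholattice and verify $x\wedge x'\leq y\vee y'$ for all $x,y$. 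Computing within each wing, $x\wedge x'=0$ unless $x\in B\setminus\{0,1\}$ is non--sharp, and $y\vee y'=1$ unless $y\in B\setminus\{0,1\}$ is non--sharp, so the only non--trivial case has both $x\wedge x'$ and $y\vee y'$ in $B$, where the inequality is inherited from the pseudo--Kleene algebra $\mathbf{B}$.

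For (\ref{hsumkl0}) the key observation is that the BI--reduct of $\mathbf{A}\boxplus\mathbf{B}$ is exactly $\mathbf{A}_{bi}\boxplus\mathbf{B}_{bi}$, so by (\ref{hsumkl1}) its being a pseudo--Kleene algebra is equivalent to one summand's reduct being an ortholattice. It then remains to check that the Brouwer axioms (1)--(4) of Definition \ref{defbz} pass automatically to the horizontal sum when $\mathbf{A}$ and $\mathbf{B}$ are BZ--lattices. This is routine: since $0^{\sim}=1$ and $x^{\sim}\neq 1$ whenever $x\neq 0$, the value $x^{\sim}$ stays in the wing of $x$, so (1), (2) and (4) reduce to the corresponding identities in $\mathbf{A}$ or $\mathbf{B}$; for the antitonicity (3), any cross--wing comparability $x\leq y$ can only be realized through $0$ or $1$, and those boundary cases are immediate. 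Hence $\mathbf{A}\boxplus\mathbf{B}$ is a BZ--lattice iff its BI--reduct is a pseudo--Kleene algebra, which gives the claim.

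Finally, (\ref{hsumkl2}) assembles the previous items with the cited lemmas. By Lemma \ref{pstar}.(\ref{pstar3}), once $\mathbf{A}\boxplus\mathbf{B}$ is a BZ--lattice it is a PBZ$^{\ast}$--lattice precisely when both summands are; so, under the hypothesis that $\mathbf{A}$ and $\mathbf{B}$ are PBZ$^{\ast}$--lattices, being a PBZ$^{\ast}$--lattice is equivalent to being a BZ--lattice. By (\ref{hsumkl0}) this holds iff one of $\mathbf{A}_{bi}$, $\mathbf{B}_{bi}$ is an ortholattice, and by Lemma \ref{uniquesim} a PBZ$^{\ast}$--lattice has ortholattice BI--reduct exactly when it is an orthomodular lattice; combining these yields the stated equivalence. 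I expect the only genuinely delicate point to be the forward direction of (\ref{hsumkl1}): one must confirm that $a\wedge a'$ and $b\vee b'$ do not collapse onto the shared bounds $0$ and $1$, so that they really witness the incomparability of the two wings and thereby break the pseudo--Kleene law.
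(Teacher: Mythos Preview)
Your proof is correct. The paper does not actually supply its own argument for this proposition; it merely notes that the result is implicit in \cite[Ex.~5.3]{GLP}. Your approach---reducing everything to the pseudo--Kleene inequality via the incomparability of the two wings in (\ref{hsumkl1}), then checking that the Brouwer axioms (1)--(4) transfer automatically for (\ref{hsumkl0}), and finally invoking Lemma~\ref{pstar}.(\ref{pstar3}) and Lemma~\ref{uniquesim} for (\ref{hsumkl2})---is exactly the natural direct verification, and the delicate point you flag (that $a\wedge a'$ and $b\vee b'$ genuinely land in the interiors of their respective wings) is handled correctly by observing that non--sharp elements are never $0$ or $1$.
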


The following corollaries ensue. The lesson we learn from the latter is that
classes of the form $\mathbb{V}\boxplus\mathbb{W}$, for $\mathbb{V}%
,\mathbb{W}$ subvarieties of $\mathbb{PBZL}^{\ast}$, are \emph{sometimes}
varieties in their own right, and in particular, well-known subvarieties of
$\mathbb{PBZL}^{\ast}$.

\begin{corollary}
\begin{itemize}
\item If $n\in{\mathbb{N}}\setminus\{0,1\}$ and $\mathbf{A}_{1},\ldots
,\mathbf{A}_{n}$ are nontrivial pseudo--Kleene algebras, then:
$\boxplus_{i=1}^{n}\mathbf{A}_{i}$ is a pseudo--Kleene algebra iff, for some
$k\in[1,n]$ and every $i\in[1,n]\setminus\{k\}$,
$\mathbf{A}_{i}$ is an ortholattice.

\item If $n\in{\mathbb{N}}\setminus\{0,1\}$ and $\mathbf{A}_{1},\ldots
,\mathbf{A}_{n}$ are nontrivial BZ--lattices, then: $\boxplus_{i=1}%
^{n}\mathbf{A}_{i}$ is a BZ--lattice iff, for some $k\in[1,n]$ and
every $i\in[1,n]\setminus\{k\}$, $(\mathbf{A}_{i})_{bi}$ is an ortholattice.

\item If $n\in{\mathbb{N}}\setminus\{0,1\}$ and $\mathbf{A}_{1},\ldots
,\mathbf{A}_{n}$ are nontrivial PBZ$^{\ast}$--lattices, then: $\boxplus
_{i=1}^{n}\mathbf{A}_{i}$ is a PBZ$^{\ast}$--lattice iff, for some
$k\in[1,n]$ and every $i\in[1,n]\setminus\{k\}$,
$\mathbf{A}_{i}$ is an orthomodular lattice.
\end{itemize}
\end{corollary}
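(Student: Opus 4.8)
The statement is the $n$-ary version of Proposition \ref{hsumkl}, so the plan is to prove all three items uniformly by reducing to the binary case $n=2$, which \emph{is} that proposition. Write $\mathbf{S}=\boxplus_{i=1}^{n}\mathbf{A}_{i}$, and in items (i), (ii), (iii) let the relevant predicate on a summand be, respectively, ``$\mathbf{A}_{i}$ is an ortholattice'', ``$(\mathbf{A}_{i})_{bi}$ is an ortholattice'', ``$\mathbf{A}_{i}$ is an orthomodular lattice'', with $\mathcal{V}$ the corresponding variety ($\mathbb{PKA}$, $\mathbb{BZL}$, $\mathbb{PBZL}^{\ast}$). Throughout I would use that $\boxplus$ is associative and commutative on nontrivial (BI-, BZ-)lattices and that a horizontal sum of nontrivial lattices is nontrivial (both recorded in Section \ref{preliminaries}), together with the fact that $\mathcal{V}$, being a variety, is closed under subalgebras.

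For the forward direction (``$\mathbf{S}\in\mathcal{V}$ forces at most one summand to fail the predicate'') no induction is needed. For any two distinct indices $p,q$, the set $A_{p}\cup A_{q}$, with the shared bounds identified, is closed under all operations of $\mathbf{S}$: cross meets and joins collapse to $0$ and $1$, and both complements act summandwise. Hence it is the universe of a subalgebra of $\mathbf{S}$ which is exactly $\mathbf{A}_{p}\boxplus\mathbf{A}_{q}$. Since $\mathcal{V}$ is closed under subalgebras and $\mathbf{S}\in\mathcal{V}$, we get $\mathbf{A}_{p}\boxplus\mathbf{A}_{q}\in\mathcal{V}$, so Proposition \ref{hsumkl} forces at least one of $\mathbf{A}_{p},\mathbf{A}_{q}$ to satisfy the predicate. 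As $p,q$ were arbitrary, at most one summand can fail it, which is the required condition.

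For the converse I would use commutativity to assume that the unique summand possibly failing the predicate is $\mathbf{A}_{k}$, and associativity to write $\mathbf{S}=\mathbf{A}_{k}\boxplus\mathbf{B}$ with $\mathbf{B}=\boxplus_{i\neq k}\mathbf{A}_{i}$, a horizontal sum of summands all satisfying the predicate. The crux is to check that $\mathbf{B}$ is itself a nontrivial member of $\mathcal{V}$ that satisfies the predicate: for items (i) and (iii) this follows by iterating Lemma \ref{pstar}.(\ref{pstar0}), since a horizontal sum of ortholattices (resp. orthomodular lattices) is again one and hence lies in $\mathcal{V}$, while for item (ii) one iterates Proposition \ref{hsumkl}.(\ref{hsumkl0}) to see that a horizontal sum of BZ--lattices with ortholattice BI-reducts is a BZ--lattice, its BI-reduct again being an ortholattice by Lemma \ref{pstar}.(\ref{pstar0}). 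Then $\mathbf{A}_{k}$ and $\mathbf{B}$ are nontrivial members of $\mathcal{V}$ with $\mathbf{B}$ satisfying the predicate, so a single application of Proposition \ref{hsumkl} gives $\mathbf{S}\in\mathcal{V}$. The one real obstacle lies precisely here: the binary proposition presupposes that \emph{both} of its arguments are already pseudo--Kleene algebras (resp. BZ--lattices, PBZ$^{\ast}$--lattices), whereas a horizontal sum of such need not be; this is why the grouping must collect \emph{all} the ortholattice/orthomodular summands into $\mathbf{B}$, for such a sum is guaranteed to stay inside $\mathcal{V}$. Everything else is bookkeeping with the associativity and commutativity of $\boxplus$.
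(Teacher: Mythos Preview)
Your proposal is correct and is precisely the argument the paper has in mind: the corollary is stated without proof immediately after Proposition \ref{hsumkl}, as a routine consequence of the binary case together with the associativity and commutativity of $\boxplus$. Your observation that each $\mathbf{A}_{p}\boxplus\mathbf{A}_{q}$ sits inside $\mathbf{S}$ as a subalgebra, combined with closure of $\mathcal{V}$ under subalgebras, is the clean way to handle the forward direction, and your grouping of all the ortholattice (resp.\ orthomodular) summands into a single $\mathbf{B}$---justified via Lemma \ref{pstar}.(\ref{pstar0})---is exactly the right manoeuvre for the converse, correctly avoiding the pitfall that an arbitrary partial sum need not lie in $\mathcal{V}$.
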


\begin{corollary}
\begin{enumerate}
\item \label{cors1} $\mathbb{OL}\boxplus\mathbb{PKA}=\mathbb{PKA}$ and
$\mathbb{OML}\boxplus\mathbb{PBZL}^{\ast}=\mathbb{PBZL}^{\ast}$.

\item \label{olhsum} $\mathbb{OL}\boxplus\mathbb{OL}=\mathbb{OML}%
\boxplus\mathbb{OL}=\mathbb{OL}$ and $\mathbb{OML}\boxplus\mathbb{OML}%
=\mathbb{OML}$.

\item \label{particdistrib0} For any classes ${\mathbb{C}}$ and ${\mathbb{D}}$
of BZ--lattices such that ${\mathbb{C}}\boxplus{\mathbb{D}}\subseteq
\mathbb{BZL}$, $({\mathbb{C}}\boxplus{\mathbb{D}})\cap\mathbb{PBZL}^{\ast
}=({\mathbb{C}}\cap\mathbb{PBZL}^{\ast})\boxplus({\mathbb{D}}\cap
\mathbb{PBZL}^{\ast})$.
\end{enumerate}

\label{cors}
\end{corollary}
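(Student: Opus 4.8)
The plan is to prove each of the three assertions as a double inclusion between classes, exploiting throughout the definition $\mathbb{C}\boxplus\mathbb{D}=\{\mathbf{D}_1\}\cup\{\mathbf{A}\boxplus\mathbf{B}:\mathbf{A}\in\mathbb{C}\setminus\{\mathbf{D}_1\},\,\mathbf{B}\in\mathbb{D}\setminus\{\mathbf{D}_1\}\}$, so that the trivial algebra is automatically present and every other member is a genuine horizontal sum of two nontrivial summands. In each case one inclusion is an \emph{absorption} argument resting on the fact (recorded in Subsection \ref{pbzl}) that $\mathbf{D}_2\in\mathbb{C}$ forces $\mathbb{D}\subseteq\mathbb{C}\boxplus\mathbb{D}$, while the reverse inclusion is a membership test performed summand-by-summand using Proposition \ref{hsumkl} and Lemma \ref{pstar}.

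For (i) and (ii) the $\supseteq$ inclusions all follow from absorption, since $\mathbf{D}_2$ is a Boolean algebra and hence lies in $\mathbb{OL}\cap\mathbb{OML}$. For the $\subseteq$ inclusions of (i), I would note that a nontrivial member of $\mathbb{OL}\boxplus\mathbb{PKA}$ (resp. $\mathbb{OML}\boxplus\mathbb{PBZL}^{\ast}$) is a sum $\mathbf{A}\boxplus\mathbf{B}$ whose first summand is an ortholattice (resp. an orthomodular lattice), so Proposition \ref{hsumkl}.(\ref{hsumkl1}) (resp. \ref{hsumkl}.(\ref{hsumkl2})) makes it a pseudo--Kleene algebra (resp. a PBZ$^{\ast}$-lattice). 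For (ii), the $\subseteq$ inclusions of $\mathbb{OL}\boxplus\mathbb{OL}=\mathbb{OL}$ and $\mathbb{OML}\boxplus\mathbb{OML}=\mathbb{OML}$ come from Lemma \ref{pstar}.(\ref{pstar0}) applied to each nontrivial sum, with the extra remark in the orthomodular case that the Brouwer complement of $\mathbf{A}\boxplus\mathbf{B}$ restricts to those of the summands and these coincide with the Kleene complement, so $x^{\sim}\approx x^{\prime}$ is preserved and the sum is an orthomodular lattice in the extended type. The mixed identity $\mathbb{OML}\boxplus\mathbb{OL}=\mathbb{OL}$ then needs no fresh computation: monotonicity of $\boxplus$ together with $\mathbb{OML}\subseteq\mathbb{OL}$ gives $\mathbb{OML}\boxplus\mathbb{OL}\subseteq\mathbb{OL}\boxplus\mathbb{OL}=\mathbb{OL}$, and $\supseteq$ is once more absorption.

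Part (iii) is the substantive one, and here I would argue element-wise. The trivial algebra $\mathbf{D}_1$ belongs to both sides. Any other member of either side is a horizontal sum $\mathbf{A}\boxplus\mathbf{B}$ of nontrivial BZ--lattices, and the crucial observation is that the standing hypothesis $\mathbb{C}\boxplus\mathbb{D}\subseteq\mathbb{BZL}$ guarantees such a sum is a BZ--lattice --- precisely the premise required to invoke Lemma \ref{pstar}.(\ref{pstar3}). That lemma yields the equivalence ``$\mathbf{A}\boxplus\mathbf{B}$ is a PBZ$^{\ast}$-lattice iff $\mathbf{A}$ and $\mathbf{B}$ are PBZ$^{\ast}$-lattices'', which drives both directions: for $\subseteq$, if $\mathbf{A}\boxplus\mathbf{B}\in\mathbb{PBZL}^{\ast}$ then $\mathbf{A}\in\mathbb{C}\cap\mathbb{PBZL}^{\ast}$ and $\mathbf{B}\in\mathbb{D}\cap\mathbb{PBZL}^{\ast}$; for $\supseteq$, if $\mathbf{A}$ and $\mathbf{B}$ are PBZ$^{\ast}$-lattices then their sum is one and still lies in $\mathbb{C}\boxplus\mathbb{D}$.

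I do not expect a genuine obstacle, since the content is supplied entirely by Proposition \ref{hsumkl} and Lemma \ref{pstar} and the remaining work is bookkeeping. The two points demanding care are the systematic handling of the distinguished trivial summand $\mathbf{D}_1$ inside the class operation $\boxplus$, and, in Part (iii), explicitly discharging the BZ--lattice hypothesis $\mathbb{C}\boxplus\mathbb{D}\subseteq\mathbb{BZL}$ \emph{before} applying Lemma \ref{pstar}.(\ref{pstar3}), which presupposes that the horizontal sum is already a BZ--lattice.
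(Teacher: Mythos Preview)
Your proposal is correct and follows essentially the same approach as the paper: the $\supseteq$ inclusions via absorption through $\mathbf{D}_2\in\mathbb{OML}\subseteq\mathbb{OL}$, and the $\subseteq$ inclusions via Lemma \ref{pstar} and Proposition \ref{hsumkl}. You supply considerably more detail than the paper's two-sentence proof, in particular your explicit treatment of part (iii) using Lemma \ref{pstar}.(\ref{pstar3}) and the careful discharge of the BZ--lattice hypothesis, but the underlying argument is the same.
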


\begin{proof}
The right-to-left inclusions follow from the fact that $\mathbf{D}_{2}%
\in\mathbb{OL}\subseteq\mathbb{OML}$. The left-to-right inclusions are
consequences of Lemma \ref{pstar} and Proposition \ref{hsumkl}.
\end{proof}

\begin{lemma}
\label{hsumsubquo}If ${\mathbb{V}}$ is the variety of bounded lattices or one
of the varieties $\mathbb{BI}$ and $\mathbb{BZL}$ and $\mathbf{A}$ and
$\mathbf{B}$ are nontrivial members of ${\mathbb{V}}$ such that
$\mathbf{A}\boxplus\mathbf{B}\in{\mathbb{V}}$, then, for any subalgebra
$\mathbf{M}$ of $\mathbf{A}\boxplus\mathbf{B}$ and any $\theta\in
\mathrm{Con}_{{\mathbb{V}}}(\mathbf{A}\boxplus\mathbf{B})$, we have:

\begin{itemize}
\item $\mathbf{M}=(\mathbf{M}\cap\mathbf{A})\boxplus(\mathbf{M}\cap
\mathbf{B})$;

\item $(\mathbf{A}\boxplus\mathbf{B})/\theta=\mathbf{A}/\theta\boxplus
\mathbf{B}/\theta$.
\end{itemize}
\end{lemma}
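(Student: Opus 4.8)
The plan is to handle the two bullets separately, the subalgebra assertion being a bookkeeping argument and the quotient assertion resting on a single computation in the lattice reduct. For the first bullet, I would start from the fact that, since the nullary symbols $0,1$ belong to all three signatures, every subalgebra $\mathbf{M}$ contains $\{0,1\}$; combined with $M\subseteq A\cup B$ this gives $M=(M\cap A)\cup(M\cap B)$ and $(M\cap A)\cap(M\cap B)=M\cap\{0,1\}=\{0,1\}$. Both $\mathbf{M}\cap\mathbf{A}$ and $\mathbf{M}\cap\mathbf{B}$ are subalgebras (intersections of subalgebras) containing $0\neq 1$, hence nontrivial, so $(\mathbf{M}\cap\mathbf{A})\boxplus(\mathbf{M}\cap\mathbf{B})$ is defined and has universe $M$. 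It then suffices to observe that the order of $\mathbf{M}$, being the restriction of $\leq^{\mathbf{A}\boxplus\mathbf{B}}$, orders elements of the same summand exactly as that summand does and leaves a proper element of $M\cap A$ incomparable to a proper element of $M\cap B$ --- precisely the horizontal-sum order --- while $^{\prime}$ and $^{\sim}$ already act summand-wise in $\mathbf{A}\boxplus\mathbf{B}$; the first bullet follows.

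For the second bullet, write $\pi$ for the canonical surjection and read $\mathbf{A}/\theta$ as $\mathbf{A}/(\theta\cap A^{2})$. If $\theta=\nabla_{A\boxplus B}$ both sides are trivial, so assume $\theta\neq\nabla_{A\boxplus B}$, that is, $0/\theta\neq 1/\theta$. The restrictions $\pi|_{A}$ and $\pi|_{B}$ are homomorphisms with kernels $\theta\cap A^{2}$ and $\theta\cap B^{2}$, so $\pi(A)\cong\mathbf{A}/\theta$ and $\pi(B)\cong\mathbf{B}/\theta$, and $\pi(A)\cup\pi(B)=(A\boxplus B)/\theta$. What must be shown is that these images are glued only along $\{0/\theta,1/\theta\}$ and that proper classes from the two sides behave as in a horizontal sum.

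The only real obstacle is a single computation. For proper $a\in A$ and proper $b\in B$ the horizontal-sum definition gives $a\wedge b=0$ and $a\vee b=1$; hence if $(a,b)\in\theta$, applying $\theta$ to $(a,b)$ and $(b,b)$ under $\wedge$ and under $\vee$ yields $(0,b),(1,b)\in\theta$, whence $(0,1)\in\theta$ and $\theta=\nabla_{A\boxplus B}$, a contradiction. Thus no proper element of $A$ is $\theta$-related to a proper element of $B$. I would then conclude in two strokes: any class other than $0/\theta,1/\theta$ meeting both $A$ and $B$ would contain a proper element of each, which is now impossible, so $\pi(A)\cap\pi(B)=\{0/\theta,1/\theta\}$; and if $a/\theta,b/\theta$ are proper classes of $\pi(A)$ and $\pi(B)$, then $a/\theta\leq b/\theta$ would force $(a\wedge b,a)=(0,a)\in\theta$, contradicting $a/\theta\neq 0/\theta$, so such classes are incomparable with $a/\theta\wedge b/\theta=0/\theta$ and $a/\theta\vee b/\theta=1/\theta$. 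Since $^{\prime}$ and $^{\sim}$ descend to the quotient summand-wise, this identifies $(A\boxplus B)/\theta$ with $\mathbf{A}/\theta\boxplus\mathbf{B}/\theta$, completing the proof.
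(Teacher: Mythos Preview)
Your proof is correct and follows the same approach as the paper's. In fact you are more careful on the second bullet: the paper simply asserts $A/\theta\cap B/\theta=(A\cap B)/\theta=\{0/\theta,1/\theta\}$ and invokes the definition of the horizontal sum, whereas you explicitly carry out the computation showing that a proper element of $A$ cannot be $\theta$-related to a proper element of $B$ without forcing $(0,1)\in\theta$, and you separately dispose of the case $\theta=\nabla_{A\boxplus B}$.
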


\begin{proof}
Since $\mathbf{A}$ and $\mathbf{B}$ are subalgebras of $\mathbf{A}%
\boxplus\mathbf{B}$, it follows that $\mathbf{M}\cap\mathbf{A}$ and
$\mathbf{M}\cap\mathbf{B}$ are subalgebras of $\mathbf{M}$. We have
$M=M\cap(A\boxplus B)=M\cap(A\cup B)=(M\cap A)\cup(M\cap B)$. Since $A\cap
B=\{0,1\}$, it follows that $(M\cap A)\cap(M\cap B)=\{0,1\}$. Therefore
$\mathbf{M}=(\mathbf{M}\cap\mathbf{A})\boxplus(\mathbf{M}\cap\mathbf{B})$ by
the definition of a horizontal sum.

Since $\theta\in\mathrm{Con}_{{\mathbb{V}}}(\mathbf{A}\boxplus\mathbf{B})$ and
$\mathbf{A}$ and $\mathbf{B}$ are subalgebras of $\mathbf{A}\boxplus
\mathbf{B}$, it follows that $\theta\cap A^{2}\in\mathrm{Con}_{{\mathbb{V}}%
}(\mathbf{A})$, $\theta\cap B^{2}\in\mathrm{Con}_{{\mathbb{V}}}(\mathbf{B})$
and $\mathbf{A}/\theta=\mathbf{A}/(\theta\cap A^{2})$ and $\mathbf{B}%
/\theta=\mathbf{B}/(\theta\cap B^{2})$ are subalgebras of $(\mathbf{A}%
\boxplus\mathbf{B})/\theta$. $A\cap B=\{0,1\}$, hence $A/\theta\cap
B/\theta=(A\cap B)/\theta=\{0/\theta,1/\theta\}$. By the definition of the
horizontal sum, it follows that $(\mathbf{A}\boxplus\mathbf{B})/\theta
=\mathbf{A}/\theta\boxplus\mathbf{B}/\theta$.
\end{proof}

\begin{lemma}
For any PBZ$^{\ast}$ --lattice $\mathbf{L}$, any subalgebra $\mathbf{M}$ of
$\mathbf{L}$, any $\theta\in\mathrm{Con}_{\mathbb{BZL}}(\mathbf{L})$, any
nontrivial orthomodular lattice $\mathbf{A}$, any nontrivial PBZ$^{\ast}$
--lattice $\mathbf{B}$ and any non--empty family $(\mathbf{L}_{i})_{i\in I}$
of PBZ$^{\ast}$ --lattices, we have:

\begin{itemize}
\item $\mathbf{S}(\mathbf{M})=\mathbf{S}(\mathbf{L})\cap\mathbf{M}$;

\item $\mathbf{S}(\mathbf{L})/\theta=\mathbf{S}(\mathbf{L}/\theta)$;

\item $\mathbf{S}(\mathbf{A}\boxplus\mathbf{B})=\mathbf{A}\boxplus
\mathbf{S}(\mathbf{B})$;

\item $\mathbf{S}(\prod_{i\in I}\mathbf{L}_{i})=\prod_{i\in I}\mathbf{S}%
(\mathbf{L}_{i})$.
\end{itemize}

\label{ssubhsum}
\end{lemma}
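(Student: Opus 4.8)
The plan is to reduce each of the four items to the purely operational description of the sharp elements recalled in Subsection~\ref{pbzl}, namely that for a PBZ$^{\ast}$--lattice $\mathbf{L}$ one has $S(\mathbf{L})=\{a\in L:a\wedge a^{\prime}=0\}=\{a^{\sim}:a\in L\}=\{a\in L:a=a^{\sim\sim}\}$, together with the fact that $\mathbf{S}(\mathbf{L})$ is the \emph{largest} orthomodular subalgebra of $\mathbf{L}$. In every item I would first compute the underlying \emph{set} of sharp elements, and then upgrade the resulting set equality to an equality of algebras by exhibiting an orthomodular subalgebra whose universe is exactly that set: being orthomodular and carried by the full set of sharp elements, it must coincide with $\mathbf{S}$ of the ambient algebra. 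For the first item this is immediate: since $\mathbf{M}$ is a subalgebra, $\wedge$ and $^{\prime}$ agree on $M$ with those of $\mathbf{L}$, so for $a\in M$ the condition $a\wedge a^{\prime}=0$ holds in $\mathbf{M}$ iff it holds in $\mathbf{L}$; hence $S(\mathbf{M})=S(\mathbf{L})\cap M$, and both $\mathbf{S}(\mathbf{M})$ and $\mathbf{S}(\mathbf{L})\cap\mathbf{M}$ (in the subalgebra--intersection sense fixed in Subsection~\ref{ualglat}) are orthomodular subalgebras with this common universe, so they coincide.

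For the second item the canonical surjection associated to $\theta\in\mathrm{Con}_{\mathbb{BZL}}(\mathbf{L})$ is a BZL--homomorphism, hence commutes with $\wedge$, $^{\prime}$ and $^{\sim}$. If $a\in S(\mathbf{L})$, then $(a/\theta)\wedge(a/\theta)^{\prime}=(a\wedge a^{\prime})/\theta=0/\theta$, giving $S(\mathbf{L})/\theta\subseteq S(\mathbf{L}/\theta)$. The reverse inclusion is the step I expect to be the crux: given $b/\theta\in S(\mathbf{L}/\theta)$, I would use the characterization $S=\{x:x=x^{\sim\sim}\}$ in the quotient to write $b/\theta=(b/\theta)^{\sim\sim}=(b^{\sim\sim})/\theta$, and then observe that $b^{\sim\sim}=(b^{\sim})^{\sim}\in\{x^{\sim}:x\in L\}=S(\mathbf{L})$, so that $b^{\sim\sim}$ is a \emph{sharp preimage} of $b/\theta$; this yields $S(\mathbf{L}/\theta)\subseteq S(\mathbf{L})/\theta$. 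Finally $\mathbf{S}(\mathbf{L})/\theta$ is a homomorphic image of an orthomodular lattice, hence an orthomodular subalgebra of $\mathbf{L}/\theta$ whose universe is $S(\mathbf{L})/\theta=S(\mathbf{L}/\theta)$, and so equals $\mathbf{S}(\mathbf{L}/\theta)$.

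For the third item, $\mathbf{A}\boxplus\mathbf{B}$ is a PBZ$^{\ast}$--lattice by Proposition~\ref{hsumkl}.(\ref{hsumkl2}). Rather than recompute sharpness in the horizontal sum directly, I would apply the first item to the subalgebras $\mathbf{A}$ and $\mathbf{B}$ of $\mathbf{A}\boxplus\mathbf{B}$: since $\mathbf{A}$ is orthomodular, $\mathbf{S}(\mathbf{A}\boxplus\mathbf{B})\cap\mathbf{A}=\mathbf{S}(\mathbf{A})=\mathbf{A}$, whereas $\mathbf{S}(\mathbf{A}\boxplus\mathbf{B})\cap\mathbf{B}=\mathbf{S}(\mathbf{B})$. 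Then Lemma~\ref{hsumsubquo}, applied to the subalgebra $\mathbf{S}(\mathbf{A}\boxplus\mathbf{B})$ of $\mathbf{A}\boxplus\mathbf{B}$, gives $\mathbf{S}(\mathbf{A}\boxplus\mathbf{B})=(\mathbf{S}(\mathbf{A}\boxplus\mathbf{B})\cap\mathbf{A})\boxplus(\mathbf{S}(\mathbf{A}\boxplus\mathbf{B})\cap\mathbf{B})=\mathbf{A}\boxplus\mathbf{S}(\mathbf{B})$, the horizontal sum being legitimate because $\mathbf{S}(\mathbf{B})\supseteq\{0,1\}$ is nontrivial.

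For the fourth item, an element $(a_{i})_{i\in I}$ of $\prod_{i\in I}\mathbf{L}_{i}$ satisfies $(a_{i})_{i\in I}\wedge(a_{i})_{i\in I}^{\prime}=(a_{i}\wedge a_{i}^{\prime})_{i\in I}=0$ iff $a_{i}\wedge a_{i}^{\prime}=0$ for every $i\in I$, so $S(\prod_{i\in I}\mathbf{L}_{i})=\prod_{i\in I}S(\mathbf{L}_{i})$. The right--hand side is the universe of $\prod_{i\in I}\mathbf{S}(\mathbf{L}_{i})$, which is a subalgebra of the product by item~(\ref{prodsubalg}) of Subsection~\ref{ualglat} and is orthomodular as a product of orthomodular lattices; being carried by the full set of sharp elements of the product, it must equal $\mathbf{S}(\prod_{i\in I}\mathbf{L}_{i})$.
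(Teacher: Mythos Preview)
Your proof is correct and follows essentially the same approach as the paper's. For items 1--3 you use the same ingredients (the operational characterizations of $S(\mathbf{L})$, Proposition~\ref{hsumkl}.(\ref{hsumkl2}), Lemma~\ref{hsumsubquo}, and the key observation that $b^{\sim\sim}$ furnishes a sharp preimage for any $b/\theta\in S(\mathbf{L}/\theta)$), merely choosing the characterization $S=\{x:x=x^{\sim\sim}\}$ where the paper uses $S=\{x:x^{\prime}=x^{\sim}\}$; you are also a bit more explicit than the paper about upgrading set equalities to algebra equalities and about item~4, which the paper leaves tacit.
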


\begin{proof}
$S(\mathbf{M})=\{x\in M:x^{\prime}=x^{\sim}\}=S(\mathbf{L})\cap M$, so
$\mathbf{S}(\mathbf{M})=\mathbf{S}(\mathbf{L})\cap\mathbf{M}$.

Clearly, $S(\mathbf{L})/\theta\subseteq S(\mathbf{L}/\theta)$. Now let $x\in
L$ be such that $x/\theta\in S(\mathbf{L}/\theta)$, that is $(x/\theta
)^{\prime}=(x/\theta)^{\sim}$. Then $x/\theta=x^{\prime\prime}/\theta
=(x^{\prime}/\theta)^{\prime}=(x^{\sim}/\theta)^{\prime}=x^{\sim\prime}%
/\theta=x^{\sim\sim}/\theta\in S(\mathbf{L})/\theta$. Hence $S(\mathbf{L}%
/\theta)\subseteq S(\mathbf{L})/\theta$. Therefore $S(\mathbf{L}%
)/\theta=S(\mathbf{L}/\theta)$, so $\mathbf{S}(\mathbf{L})/\theta
=\mathbf{S}(\mathbf{L}/\theta)$.

On the other hand, by Proposition \ref{hsumkl}.(\ref{hsumkl2}), Lemma \ref{hsumsubquo} and the definition of the subalgebra of sharp elements of a PBZ$^{\ast}$ --lattice:$$\mathbf{S}(\mathbf{A}\boxplus\mathbf{B})=(\mathbf{S}(\mathbf{A}\boxplus
\mathbf{B})\cap\mathbf{A})\boxplus(\mathbf{S}(\mathbf{A}\boxplus
\mathbf{B})\cap\mathbf{B})=\mathbf{S}(\mathbf{A})\boxplus\mathbf{S}(\mathbf{B})=\mathbf{A}\boxplus\mathbf{S}(\mathbf{B}).
$$\end{proof}

Next, let us give a direct proof of a result from \cite{GLP}, to the effect that the orthomodular lattice of sharp elements in a member of $V(\mathbb{AOL})$ is always Boolean.

\begin{proposition} If $\mathbf{L}\in V(\mathbb{AOL})$, then $\mathbf{S}(\mathbf{L})$ is a Boolean algebra.\label{svaolbool}\end{proposition}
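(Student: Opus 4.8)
The plan is to exploit the identity $V(\mathbb{AOL})=\H\S\P(\mathbb{AOL})$ together with the good behaviour of the operator $\mathbf{S}$ with respect to the class operators, as recorded in Lemma \ref{ssubhsum}. The observation that makes this work is that Booleanness of an orthomodular lattice is nothing but distributivity, which is an equational property and hence is inherited by subalgebras, quotients and direct products. So it will suffice to track $\mathbf{S}$ along a representation of $\mathbf{L}$ as a homomorphic image of a subalgebra of a product of antiortholattices, checking at each stage that $\mathbf{S}$ stays Boolean.

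First I would fix such a representation: since $\mathbf{L}\in V(\mathbb{AOL})=\H\S\P(\mathbb{AOL})$, there is a non--empty family $(\mathbf{K}_i)_{i\in I}$ of antiortholattices, a subalgebra $\mathbf{M}\in\S_{\PBZs}(\prod_{i\in I}\mathbf{K}_i)$ and a congruence $\theta\in\mathrm{Con}_{\mathbb{BZL}}(\mathbf{M})$ with $\mathbf{L}\cong\mathbf{M}/\theta$. For each $i\in I$, $\mathbf{K}_i$ is an antiortholattice, so $S(\mathbf{K}_i)=\{0,1\}$ and $\mathbf{S}(\mathbf{K}_i)$ is either trivial or $\mathbf{D}_2$; in either case it is a Boolean algebra.

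Next I would propagate Booleanness through the three steps, invoking the corresponding clauses of Lemma \ref{ssubhsum}. By the product clause, $\mathbf{S}(\prod_{i\in I}\mathbf{K}_i)=\prod_{i\in I}\mathbf{S}(\mathbf{K}_i)$ is a direct product of Boolean algebras, hence a Boolean algebra. By the subalgebra clause, $\mathbf{S}(\mathbf{M})=\mathbf{S}(\prod_{i\in I}\mathbf{K}_i)\cap\mathbf{M}$ is a subalgebra of this Boolean algebra, hence a distributive orthomodular lattice, i.e. again Boolean. Finally, by the quotient clause, $\mathbf{S}(\mathbf{L})\cong\mathbf{S}(\mathbf{M}/\theta)=\mathbf{S}(\mathbf{M})/\theta$ is a homomorphic image of the Boolean algebra $\mathbf{S}(\mathbf{M})$, hence Boolean, which is exactly the desired conclusion.

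The only point requiring care --- and the reason the statement is not purely formal --- is verifying that every object above really is a PBZ$^{\ast}$--lattice, so that $\mathbf{S}$ is defined and Lemma \ref{ssubhsum} genuinely applies at each step: antiortholattices are PBZ$^{\ast}$--lattices by Definition \ref{thepbzl}, and $\PBZs$ is a variety, so products, subalgebras and quotients remain inside it. Notably, no appeal to the equation $J0$ is needed; the whole argument rests on the commutation of $\mathbf{S}$ with $\H$, $\S$, $\P$ and on the equational nature of distributivity. I expect the main (and rather minor) obstacle to be bookkeeping: making sure the clauses of Lemma \ref{ssubhsum} are applied with the correct congruence and subalgebra restrictions, e.g. reading $\mathbf{S}(\mathbf{M})/\theta$ as the quotient of $\mathbf{S}(\mathbf{M})$ by $\theta\cap S(\mathbf{M})^2$.
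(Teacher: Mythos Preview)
Your proposal is correct and follows essentially the same route as the paper: write $\mathbf{L}$ as a quotient of a subalgebra of a product of antiortholattices and push $\mathbf{S}$ through $\P$, $\S$, $\H$ using Lemma \ref{ssubhsum}, noting at each stage that Booleanness (an equational property of orthomodular lattices) is preserved. The paper's argument is virtually identical, including the final remark that $\mathbf{S}(\mathbf{M})/\theta$ is to be read as $\mathbf{S}(\mathbf{M})/(\theta\cap S(\mathbf{M})^{2})$.
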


\begin{proof} We will apply Lemma \ref{ssubhsum}. If $\mathbf{L}\in V(\mathbb{AOL})=\H \S \P (\mathbb{AOL})$, then there exists a non--empty family $(\mathbf{A}_{i})_{i\in I}\subseteq\mathbb{AOL}\setminus\{\mathbf{D}_{1}\}$, a subalgebra
$\mathbf{A}$ of $\prod_{i\in I}\mathbf{A}_{i}$ and a $\theta\in\mathrm{Con}_{\mathbb{BZL}}(\mathbf{A})$ such that $\mathbf{L}=\mathbf{A}/\theta$. Then,
for all $i\in I$, $S(\mathbf{A}_{i})=\{0,1\}$, so the orthomodular lattice
$\mathbf{S}(\mathbf{A}_{i})\cong\mathbf{D}_{2}$, thus $\mathbf{S}(\prod_{i\in
I}\mathbf{A}_{i})=\prod_{i\in I}\mathbf{S}(\mathbf{A}_{i})\cong\mathbf{D}_{2}^{I}$, which is a Boolean algebra, hence $\mathbf{S}(\mathbf{A})=\mathbf{S}(\prod_{i\in I}\mathbf{A}_{i})\cap\mathbf{A}$ is embedded in the
Boolean algebra $\mathbf{S}(\prod_{i\in I}\mathbf{A}_{i})$, therefore $\mathbf{S}(\mathbf{A})$ is a Boolean algebra, thus $\mathbf{S}(\mathbf{L})=\mathbf{S}(\mathbf{A}/\theta)=\mathbf{S}(\mathbf{A})/\theta=\mathbf{S}(\mathbf{A})/(\theta\cap(S(\mathbf{A}))^{2})$ is a Boolean algebra.\end{proof}

Note that, since $\mathbf{S}(\mathbf{L})$ is the largest orthomodular subalgebra in any $\mathbf{L}\in \PBZs $, Proposition \ref{svaolbool} shows that, for any $\mathbf{L}\in V(\AOL )$, any orthomodular subalgebra of $\mathbf{L}$ is Boolean.

\begin{corollary}
\begin{enumerate}
\item \label{somlhvaol1} For any $\mathbf{L}\in\mathbb{OML} \boxplus
V(\mathbb{AOL} )$, $\mathbf{S}(\mathbf{L})$ is a horizontal sum of an
orthomodular lattice with a Boolean algebra.

\item \label{somlhvaol2} $\{\mathbf{L}\in\mathbb{OML} \boxplus\mathbb{AOL}
:\mathbf{S}(\mathbf{L})\in\mathbb{BA} \}=\mathbb{BA} \boxplus\mathbb{AOL} $
and $\{\mathbf{L}\in\mathbb{OML} \boxplus V(\mathbb{AOL} ):\mathbf{S}%
(\mathbf{L})\in\mathbb{BA} \}=(\mathbb{BA} \boxplus\mathbb{AOL} )\cup
V(\mathbb{AOL} )$, but $\{\mathbf{L}\in\mathbb{PBZL}^{\ast} :\mathbf{S}%
(\mathbf{L})\in\mathbb{BA} \}\nsubseteq V(\mathbb{OML} \boxplus V(\mathbb{AOL}
))$.
\end{enumerate}

\label{somlhvaol}
\end{corollary}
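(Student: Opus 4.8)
The plan is to push everything through the identity $\mathbf{S}(\mathbf{A}\boxplus\mathbf{B})=\mathbf{A}\boxplus\mathbf{S}(\mathbf{B})$ of Lemma \ref{ssubhsum} together with Proposition \ref{svaolbool}. For (i), a nontrivial $\mathbf{L}\in\OML\boxplus V(\AOL)$ can be written as $\mathbf{L}=\mathbf{A}\boxplus\mathbf{B}$ with $\mathbf{A}\in\OML\setminus\{\mathbf{D}_1\}$ and $\mathbf{B}\in V(\AOL)\setminus\{\mathbf{D}_1\}$ (the case $\mathbf{L}=\mathbf{D}_1$ being trivial); then $\mathbf{S}(\mathbf{L})=\mathbf{A}\boxplus\mathbf{S}(\mathbf{B})$ by Lemma \ref{ssubhsum}, while $\mathbf{S}(\mathbf{B})\in\BA$ by Proposition \ref{svaolbool}, so $\mathbf{S}(\mathbf{L})\in\OML\boxplus\BA$. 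The first equality in (ii) is the special case $\mathbf{B}\in\AOL$: here $S(\mathbf{B})=\{0,1\}$, so $\mathbf{S}(\mathbf{B})=\mathbf{D}_2$ and $\mathbf{S}(\mathbf{L})=\mathbf{A}\boxplus\mathbf{D}_2=\mathbf{A}$; hence $\mathbf{S}(\mathbf{L})\in\BA$ iff $\mathbf{A}\in\BA$ iff $\mathbf{L}\in\BA\boxplus\AOL$.

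For the second equality I would first record an auxiliary fact: for nontrivial ortholattices $\mathbf{P},\mathbf{Q}$, the horizontal sum $\mathbf{P}\boxplus\mathbf{Q}$ is Boolean iff one of them equals $\mathbf{D}_2$ and the other is Boolean. Indeed, if both have a proper element, choose $p\in P\setminus\{0,1\}$ and $q\in Q\setminus\{0,1\}$; then $p,p',q$ are pairwise incomparable with pairwise meets $0$ and joins $1$, so $\{0,p,p',q,1\}\cong\mathbf{M}_3$ and $\mathbf{P}\boxplus\mathbf{Q}$ is not distributive, whereas $\mathbf{P}\boxplus\mathbf{D}_2=\mathbf{P}$. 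Applying this to $\mathbf{S}(\mathbf{L})=\mathbf{A}\boxplus\mathbf{S}(\mathbf{B})$ (an orthomodular lattice summed with a Boolean algebra, by Proposition \ref{svaolbool}) splits into two cases: either $\mathbf{A}=\mathbf{D}_2$, in which case $\mathbf{L}=\mathbf{D}_2\boxplus\mathbf{B}=\mathbf{B}$ ranges exactly over $V(\AOL)\setminus\{\mathbf{D}_1\}$; or $\mathbf{S}(\mathbf{B})=\mathbf{D}_2$ with $\mathbf{A}\in\BA$, in which case $\mathbf{B}\in\AOL$ by Lemma \ref{olsaolt} and $\mathbf{L}$ ranges over $(\BA\boxplus\AOL)\setminus\{\mathbf{D}_1\}$. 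The union, with $\mathbf{D}_1$ restored, is $(\BA\boxplus\AOL)\cup V(\AOL)$; the reverse inclusion is immediate because $\mathbf{D}_2\in\OML$, $\BA\subseteq\OML$ and $\AOL\subseteq V(\AOL)$, so both $V(\AOL)$ and $\BA\boxplus\AOL$ sit inside the set on the left.

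The non-inclusion is the crux, and I would establish it by exhibiting an identity that survives in every horizontal sum yet fails for some sharp-Boolean \PBZ--lattice. My candidate is $S2$. I would verify $\OML\boxplus V(\AOL)\vDash S2$ by a case analysis on $x,y$ in $\mathbf{A}\boxplus\mathbf{B}$: the element $m=y\wedge y'$ lies in $A$, where $m=0$ and both sides of $S2$ collapse to $x^\sim$, or in $B$; in the latter case $m^\sim\in S(\mathbf{A}\boxplus\mathbf{B})=\mathbf{A}\boxplus\mathbf{S}(\mathbf{B})$ is sharp, and a proper sharp element of one summand is orthogonal to every proper element of the other, so in the only delicate subcase ($x\in A\setminus\{0,1\}$ and $m^\sim\in S(\mathbf{B})\setminus\{0,1\}$) both sides equal $1$. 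Since $S2$ is an equation, $V(\OML\boxplus V(\AOL))\vDash S2$, and it suffices to produce one \PBZ--lattice $\mathbf{L}_0$ with $\mathbf{S}(\mathbf{L}_0)\in\BA$ and $\mathbf{L}_0\nvDash S2$.

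I would take $\mathbf{L}_0$ to be the seven--element lattice on $\{0,e,e',w,d,g,1\}$ with atoms $e,w$, with $d$ covering $w$, with coatoms $e',g$, and with $w<e'$, $e<g$, $d<g$, $e\wedge e'=0$, $e\vee e'=1$ (so it contains the pentagon $0<w<d<g$, $0<e<g$). Its Kleene complement swaps $e\leftrightarrow e'$ and $w\leftrightarrow g$ and fixes $d$, making it a pseudo--Kleene algebra with $S(\mathbf{L}_0)=\{0,e,e',1\}\cong\mathbf{D}_2^2\in\BA$; setting $e^\sim=e'$, $w^\sim=e$, $d^\sim=g^\sim=0$, $0^\sim=1$, $1^\sim=0$ turns $\mathbf{L}_0$ into a \PBZ--lattice (the $\mathrm{BZ}^\ast$ axioms, condition $(\ast)$ and paraorthomodularity are finite checks, the last holding vacuously). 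Then $d$ is dense with $d\wedge e=0$, while $e=(w\wedge w')^\sim=w^\sim$, so taking $x:=d$, $y:=w$ gives $(d\wedge(w\wedge w')^\sim)^\sim=(d\wedge e)^\sim=1$ but $d^\sim\vee\Diamond(w\wedge w')=0\vee e'=e'\neq1$, i.e. $\mathbf{L}_0\nvDash S2$, whence $\mathbf{L}_0\notin V(\OML\boxplus V(\AOL))$. The hard part is exactly the construction of $\mathbf{L}_0$: the pseudo--Kleene inequality $a\wedge a'\le b\vee b'$ together with antitonicity of $^\sim$ make it rigid to have simultaneously a proper sharp $e$ realized as $(y\wedge y')^\sim$ (which forces the witnessing non-sharp element, and thus everything, below $e'$ to have Brouwer complement $\ge e$) and a dense element orthogonal to $e$ (which must therefore sit beside $e'$ rather than below it); reconciling these two demands is what dictates the pentagon geometry of $\mathbf{L}_0$.
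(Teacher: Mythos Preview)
Your proof is correct. Parts (\ref{somlhvaol1}) and the two equalities in (\ref{somlhvaol2}) follow the paper's line almost verbatim: reduce to $\mathbf{S}(\mathbf{L})=\mathbf{A}\boxplus\mathbf{S}(\mathbf{B})$ via Lemma~\ref{ssubhsum}, invoke Proposition~\ref{svaolbool}, and use the elementary fact that a horizontal sum of two nontrivial ortholattices (resp.\ BI-lattices) is Boolean iff one summand is $\mathbf{D}_2$.

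For the non-inclusion your route diverges from the paper's. The paper forward-references the $11$-element algebra $\mathbf{M}$ of Example~\ref{exfail12} and uses that $\mathbf{M}$ fails $S1$ while $V(\OML\boxplus V(\AOL))\vDash S1$ (established later, in Theorem~\ref{axhsum} and Corollary~\ref{axvarhsum}). You instead use $S2$ as the separating identity, give a self-contained verification that $\OML\boxplus V(\AOL)\vDash S2$, and build a $7$-element witness $\mathbf{L}_0$. In fact your $\mathbf{L}_0$ is isomorphic to the paper's example $\mathbf{L}$ in Example~\ref{exfail12} (under $e\mapsto s$, $e'\mapsto s'$, $w\mapsto t$, $g\mapsto t'$, $d\mapsto u$), which the paper also notes fails $S2$; the paper simply chose the larger $\mathbf{M}$ for this particular corollary. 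Your version has the advantage of being self-contained at this point in the exposition and of using the smaller counterexample; the paper's choice of $\mathbf{M}$ pays off elsewhere since $\mathbf{M}$ additionally witnesses the strictness in Proposition~\ref{vaolgent} and Corollary~\ref{oml+vaol}.
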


\begin{proof}
(\ref{somlhvaol1}) Let $\mathbf{L}=\mathbf{A}\boxplus\mathbf{B}$, with
$\mathbf{A}\in\mathbb{OML} $ and $\mathbf{B}\in V(\mathbb{AOL} )$, thus, by
Lemma \ref{ssubhsum} and Proposition \ref{svaolbool}, $\mathbf{S}%
(\mathbf{L})=\mathbf{A}\boxplus\mathbf{S}(\mathbf{B})$, with $\mathbf{A}%
\in\mathbb{OML} $ and $\mathbf{S}(\mathbf{B})\in\mathbb{BA} $.

(\ref{somlhvaol2}) Clearly, for any bounded lattices $\mathbf{A}$ and
$\mathbf{B}$ with $|A|>2$ and $|B|>3$, $\mathbf{A}\boxplus\mathbf{B}$ has the
diamond or the pentagon (the latter if $\mathbf{A}$ or $\mathbf{B}$ has length
at least $4$) as a bounded sublattice, thus $\mathbf{A}\boxplus\mathbf{B}$ is
non--distributive. The horizontal sum of BI-lattices $\mathbf{D}_{3}\boxplus\mathbf{D}_{3}\ncong\mathbf{D}_{2}^{2}$, in fact $\mathbf{D}_{3}\boxplus\mathbf{D}_{3}\notin\mathbb{PKA}$. Hence, for any BI-lattices
$\mathbf{A}$ and $\mathbf{B}$ with $|A|>2$ and $|B|>2$, $\mathbf{A}\boxplus\mathbf{B}$ is not a Boolean algebra, more precisely, for any nontrivial BI-lattices $\mathbf{A}$ and $\mathbf{B}$, $\mathbf{A}\boxplus\mathbf{B}\in\mathbb{BA}$ iff $\mathbf{A}\cong\mathbf{D}_{2}$ and $\mathbf{B}\in\mathbb{BA}$ or vice--versa.

Now let $\mathbf{A}\in\mathbb{OML} \setminus\{\mathbf{D}_{1}\}$,
$\mathbf{B}\in\mathbb{PBZL}^{\ast} $ and $\mathbf{L}=\mathbf{A}\boxplus
\mathbf{B}$, so that $\mathbf{S}(\mathbf{L})=\mathbf{A}\boxplus\mathbf{S}%
(\mathbf{B})$ by Lemma \ref{ssubhsum}, hence, by the above, $\mathbf{S}%
(\mathbf{L})\in\mathbb{BA} $ iff $\mathbf{A}\cong\mathbf{D}_{2}$ and
$\mathbf{S}(\mathbf{B})\in\mathbb{BA} $ or $\mathbf{A}\in\mathbb{BA} $ and
$S(\mathbf{B})=\{0,1\}$. Now apply the fact that $\{\mathbf{B}\in
\mathbb{PBZL}^{\ast} :S(\mathbf{B})=\{0,1\}\}=\mathbb{AOL} $ and Proposition
\ref{svaolbool}.

See below the PBZ$^{\ast}$ --lattice $\mathbf{M}$ in Example \ref{exfail12}, which has $S(\mathbf{M})=\{0,a,a^{\prime},1\}$, so $\mathbf{D}_{2}^{2}\cong\mathbf{S}(\mathbf{M})\in\mathbb{BA}$, but $\mathbf{M}\notin V(\mathbb{OML} \boxplus V(\mathbb{AOL} ))$.\end{proof}

\begin{lemma}
If $\mathbf{A}$ is an ortholattice with $|A|>2$ and $\mathbf{B}$ is a non--trivial BI-lattice, then $\mathbf{A}\boxplus\mathbf{B}$, endowed with the trivial Brouwer complement, fails condition $(\ast)$.\label{hsumbi}\end{lemma}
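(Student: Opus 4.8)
The plan is to falsify condition $(\ast)$ at a single, carefully chosen element. Write $\mathbf{L}=\mathbf{A}\boxplus\mathbf{B}$ and let $^{\sim}$ denote the trivial Brouwer complement on $L$, so that $0^{\sim}=1$ and $x^{\sim}=0$ for every $x\in L\setminus\{0\}$. Recall that $(\ast)$ demands $(x\wedge x^{\prime})^{\sim}\leq x^{\sim}\vee x^{\prime\sim}$ for all $x$, so it suffices to produce one $x$ violating this inequality.

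First I would exploit the hypothesis $|A|>2$ to fix an element $a\in A\setminus\{0,1\}$. Since $\mathbf{A}$ is an ortholattice, every element of $A$ is sharp, so $a\wedge a^{\prime}=0$ holds in $\mathbf{A}$; because $\mathbf{A}$ is a subalgebra of $\mathbf{L}$, the meet and the involution of $a$ are computed inside $\mathbf{A}$, whence $a\wedge a^{\prime}=0$ in $\mathbf{L}$ as well. Moreover $a\neq 0$, and $a^{\prime}\neq 0$ because $a^{\prime}=0$ would force $a=a^{\prime\prime}=1$, contradicting the choice of $a$.

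The second step is a direct computation of the two sides of $(\ast)$ at $x=a$. On the left, $(a\wedge a^{\prime})^{\sim}=0^{\sim}=1$; on the right, since both $a$ and $a^{\prime}$ are nonzero, $a^{\sim}\vee a^{\prime\sim}=0\vee 0=0$. As $\mathbf{L}$ is nontrivial (its reduct $\mathbf{A}$ already has more than one element), $0\neq 1$, so $(a\wedge a^{\prime})^{\sim}=1\nleq 0=a^{\sim}\vee a^{\prime\sim}$. Thus $(\ast)$ fails at $a$, which is what we wanted.

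There is no real obstacle here; the only points requiring a word of care are that $a\wedge a^{\prime}=0$ survives passage from $\mathbf{A}$ to the horizontal sum (immediate, as $\mathbf{A}$ is a subalgebra) and that $a^{\prime}\neq 0$ (which is exactly where $a\neq 1$ is used). It is worth noting what each hypothesis contributes: $\mathbf{B}$ only serves to make the horizontal sum well-defined and plays no further role, while $|A|>2$ is indispensable, since for $|A|=2$ we would have $\mathbf{A}=\mathbf{D}_{2}$ and $\mathbf{A}\boxplus\mathbf{B}=\mathbf{B}$, which may perfectly well satisfy $(\ast)$ once endowed with the trivial Brouwer complement.
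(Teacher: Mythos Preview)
Your proof is correct and follows essentially the same approach as the paper: pick $a\in A\setminus\{0,1\}$, use that $\mathbf{A}$ is an ortholattice to get $a\wedge a^{\prime}=0$, and compute both sides of $(\ast)$ under the trivial Brouwer complement to obtain $1\neq 0$. Your added remarks on why $a^{\prime}\neq 0$ and on the role of each hypothesis are helpful elaborations, but the core argument is identical to the paper's.
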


\begin{proof}
Since $|A|>2$, there exists an $a\in A\setminus\{0,1\}$, so that $a^{\prime
}\in A\setminus\{0,1\}$, as well. Since $\mathbf{A}$ is an ortholattice, we
have $a\wedge a^{\prime}=0$. Thus, if $^{\sim}:A\boxplus B\rightarrow
A\boxplus B$ is the trivial Brouwer complement, then $(a\wedge a^{\prime
})^{\sim}=0^{\sim}=1\neq0=0\vee0=a^{\sim}\vee a^{\prime\sim}$.
\end{proof}

\begin{lemma}
Let $\mathbf{A}$ and $\mathbf{B}$ be PBZ$^{\ast}$ --lattices with $|A|>2$ and
$|B|>2$. Then:

\begin{enumerate}
\item \label{notomlaol1} $\mathbf{A}\boxplus\mathbf{B}$ is not an antiortholattice;

\item \label{notomlaol2} $\mathbf{A}\boxplus\mathbf{B}$ is an orthomodular
lattice iff $\mathbf{A}$ and $\mathbf{B}$ are orthomodular lattices.
\end{enumerate}

\label{notomlaol}
\end{lemma}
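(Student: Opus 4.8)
The plan is to reduce both items to one structural fact: whenever $\mathbf{A}\boxplus\mathbf{B}$ is a \PBZ --lattice, its sharp elements are exactly those of the two summands, i.e. $S(\mathbf{A}\boxplus\mathbf{B})=S(\mathbf{A})\cup S(\mathbf{B})$. This follows from the first item of Lemma \ref{ssubhsum}, which gives $S(\mathbf{A})=S(\mathbf{A}\boxplus\mathbf{B})\cap A$ and $S(\mathbf{B})=S(\mathbf{A}\boxplus\mathbf{B})\cap B$, together with $A\boxplus B=A\cup B$ and $A\cap B=\{0,1\}\subseteq S(\mathbf{A}\boxplus\mathbf{B})$. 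I would then combine this with Lemma \ref{olsaolt}.(i) (a \PBZ --lattice $\mathbf{L}$ is orthomodular iff $S(\mathbf{L})=L$, and is an antiortholattice iff $S(\mathbf{L})=\{0,1\}$) and with Proposition \ref{hsumkl}.(\ref{hsumkl2}) (the sum is a \PBZ --lattice only if at least one summand is orthomodular).

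For (\ref{notomlaol1}) I would argue by contradiction. If $\mathbf{A}\boxplus\mathbf{B}$ were an antiortholattice it would be a \PBZ --lattice, so by Proposition \ref{hsumkl}.(\ref{hsumkl2}) one summand, say $\mathbf{A}$, would be orthomodular, whence $S(\mathbf{A})=A$ by Lemma \ref{olsaolt}.(i). Since $|A|>2$, there is some $a\in A\setminus\{0,1\}$, and this $a$ is sharp in $\mathbf{A}$, hence sharp in $\mathbf{A}\boxplus\mathbf{B}$ (as $\mathbf{A}$ is a subalgebra, $a\wedge a^{\prime}$ is computed the same way in both). This contradicts $S(\mathbf{A}\boxplus\mathbf{B})=\{0,1\}$. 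The hypothesis $|B|>2$ settles the symmetric case in which $\mathbf{B}$ is the orthomodular summand.

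For (\ref{notomlaol2}) I would prove both directions through the sharp--element identity. If $\mathbf{A}$ and $\mathbf{B}$ are orthomodular, then $\mathbf{A}\boxplus\mathbf{B}$ is a \PBZ --lattice by Proposition \ref{hsumkl}.(\ref{hsumkl2}), and Lemma \ref{olsaolt}.(i) gives $S(\mathbf{A})=A$ and $S(\mathbf{B})=B$, so that $S(\mathbf{A}\boxplus\mathbf{B})=S(\mathbf{A})\cup S(\mathbf{B})=A\cup B=A\boxplus B$; applying Lemma \ref{olsaolt}.(i) once more, $\mathbf{A}\boxplus\mathbf{B}$ is orthomodular. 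Conversely, if $\mathbf{A}\boxplus\mathbf{B}$ is orthomodular then it is a \PBZ --lattice with $S(\mathbf{A}\boxplus\mathbf{B})=A\boxplus B$ by Lemma \ref{olsaolt}.(i), so $S(\mathbf{A})=S(\mathbf{A}\boxplus\mathbf{B})\cap A=A$ and likewise $S(\mathbf{B})=B$, and Lemma \ref{olsaolt}.(i) makes both summands orthomodular.

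I do not expect a genuine obstacle here: once the identity $S(\mathbf{A}\boxplus\mathbf{B})=S(\mathbf{A})\cup S(\mathbf{B})$ is recorded, both items are bookkeeping. The one point to keep straight is that ``orthomodular lattice'' is meant as a \PBZ --lattice whose Brouwer and Kleene complements coincide; phrasing everything through sharp elements sidesteps this, since $S(\mathbf{L})=\{x\in L:x^{\sim}=x^{\prime}\}$ already encodes that coincidence, so I never need to inspect the Brouwer complement of the sum directly.
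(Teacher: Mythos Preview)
Your proposal is correct. For part (\ref{notomlaol2}) you follow essentially the paper's own argument: both hinge on the identity $S(\mathbf{A}\boxplus\mathbf{B})=S(\mathbf{A})\cup S(\mathbf{B})$ (which the paper leaves implicit but uses in the chain of equivalences) together with the characterisation of orthomodularity via $S(\mathbf{L})=L$.

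For part (\ref{notomlaol1}) your route differs from the paper's. The paper invokes Lemma \ref{hsumbi}: once one summand, say $\mathbf{A}$, is forced to be orthomodular (hence an ortholattice with $|A|>2$), that lemma says the trivial Brouwer complement on $\mathbf{A}\boxplus\mathbf{B}$ fails condition $(\ast)$, contradicting the definition of an antiortholattice. You instead stay at the level of sharp elements: the orthomodular summand contributes a sharp element outside $\{0,1\}$, contradicting $S(\mathbf{A}\boxplus\mathbf{B})=\{0,1\}$ directly. Your argument is more elementary and self-contained (it needs neither Lemma \ref{hsumbi} nor condition $(\ast)$ explicitly), while the paper's approach has the merit of reusing a lemma that is independently stated and of pinpointing exactly which axiom of antiortholattices breaks down.
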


\begin{proof}
(\ref{notomlaol1}) By Proposition \ref{hsumkl}.(\ref{hsumkl2}), and Lemma
\ref{hsumbi}.

(\ref{notomlaol2}) By Lemma \ref{uniquesim} and the fact that
$\{0,1\}\subseteq S(\mathbf{A})\cap S(\mathbf{B})$, we have: $\mathbf{A}$ and
$\mathbf{B}$ are orthomodular lattices iff $S(\mathbf{A})=A$ and
$S(\mathbf{B})=B$ iff $S(\mathbf{A}\boxplus\mathbf{B})=A\boxplus B$ iff
$\mathbf{A}\boxplus\mathbf{B}$ is an orthomodular lattice.
\end{proof}

The following lemma clarifies the relationships between dense elements on the
one hand, and subalgebras, products and congruences on the other, in
PBZ$^{\ast}$ --lattices.

\begin{lemma}
\label{prodsubquo}For any PBZ$^{\ast}$--lattice $\mathbf{L}$, any subalgebra
$\mathbf{M}$ of $\mathbf{L}$, any $\theta\in$\linebreak$\mathrm{Con}%
_{\mathbb{BZL}}(\mathbf{L})$, any nontrivial orthomodular lattice
$\mathbf{A}$, any nontrivial PBZ$^{\ast}$ --lattice $\mathbf{B}$ and any
non--empty family $(\mathbf{L}_{i})_{i\in I}$ of PBZ$^{\ast}$ --lattices, we have:

\begin{itemize}
\item $T(\mathbf{M})=T(\mathbf{L})\cap M$ and $\langle T(\mathbf{M}%
)\rangle_{\mathbb{BZL} }=\langle T(\mathbf{L})\rangle_{\mathbb{BZL} }%
\cap\mathbf{M}$;

\item $T(\mathbf{L})/\theta\subseteq T(\mathbf{L}/\theta)$;

\item if $\theta\in\mathrm{Con}_{\mathbb{BZL}01}(\mathbf{L})$, then
$T(\mathbf{L})/\theta=T(\mathbf{L}/\theta)$;

\item $T(\mathbf{A}\boxplus\mathbf{B})=T(\mathbf{B})$, so $\mathbf{T}%
(\mathbf{A}\boxplus\mathbf{B})=\mathbf{T}(\mathbf{B})$ if $\mathbf{T}%
(\mathbf{B})$ is a subalgebra of $\mathbf{B}$;

\item $D(\prod_{i\in I}\mathbf{L}_{i})=\prod_{i\in I}(D(\mathbf{L}_{i}))$, so
$T(\prod_{i\in I}\mathbf{L}_{i})=\left\{  0^{\prod_{i\in I}\mathbf{L}_{i}%
}\right\}  \cup\prod_{i\in I}(D(\mathbf{L}_{i}))$;

\item $\langle T(\prod_{i\in I}\mathbf{L}_{i})\rangle_{\mathbb{BZL}}%
=\prod_{i\in I}\langle T(\mathbf{L}_{i})\rangle_{\mathbb{BZL}}$.
\end{itemize}
\end{lemma}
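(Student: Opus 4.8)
The plan is to treat the six assertions separately, since the first five reduce to direct computations with the definitions $D(\mathbf{L})=\{a\in L:a^{\sim}=0\}$ and $T(\mathbf{L})=\{0\}\cup D(\mathbf{L})$, while the last one carries the real content. For the first assertion, since $\mathbf{M}$ is a subalgebra the Brouwer complement of $\mathbf{L}$ restricts to that of $\mathbf{M}$, so $a^{\sim}$ is computed identically in both; hence $T(\mathbf{M})=\{0\}\cup\{a\in M:a^{\sim}=0\}=M\cap T(\mathbf{L})$. For the subalgebra equality I would feed $S=T(\mathbf{L})$ and $B=M$ into the subalgebra identity recalled earlier, $\langle S\cap B\rangle_{\mathbb{BZL},\mathbf{M}}=\langle S\cap B\rangle_{\mathbb{BZL},\mathbf{L}}\cap\mathbf{M}$, together with $T(\mathbf{L})\cap M=T(\mathbf{M})$; the only point needing attention is the reverse inclusion $\langle T(\mathbf{L})\rangle_{\mathbb{BZL}}\cap M\subseteq\langle T(\mathbf{M})\rangle_{\mathbb{BZL}}$, which I would obtain by observing that an element of $M$ lying in $\langle T(\mathbf{L})\rangle$ is, by the same identity applied to the smaller generating set $T(\mathbf{L})\cap M$, already produced inside $\mathbf{M}$.

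For the quotient assertions I would argue through the action of the Brouwer complement on cosets. Each $d\in D(\mathbf{L})$ satisfies $(d/\theta)^{\sim}=d^{\sim}/\theta=0/\theta$, so $d/\theta$ is dense in $\mathbf{L}/\theta$; this already gives $T(\mathbf{L})/\theta\subseteq T(\mathbf{L}/\theta)$, which is the second bullet. For the third, assume $\theta\in\mathrm{Con}_{\mathbb{BZL}01}(\mathbf{L})$, so that $0/\theta=\{0\}$. If $x/\theta\in T(\mathbf{L}/\theta)$ then either $x/\theta=0/\theta$, forcing $x=0$, or $x^{\sim}/\theta=(x/\theta)^{\sim}=0/\theta=\{0\}$, forcing $x^{\sim}=0$ and hence $x\in D(\mathbf{L})$; either way $x/\theta\in T(\mathbf{L})/\theta$, yielding the reverse inclusion. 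Thus the singleton class at $0$ is exactly what powers this step.

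The fourth and fifth assertions are structural. For the horizontal sum I would use that in the orthomodular lattice $\mathbf{A}$ the Brouwer and Kleene complements coincide (Lemma~\ref{uniquesim}), so that $a^{\sim}=a^{\prime}=0$ only for $a=1$, whence the lone dense element of $\mathbf{A}$ is $1$; since $\mathbf{A}\boxplus\mathbf{B}$ is a PBZ$^{\ast}$--lattice by Proposition~\ref{hsumkl}.(\ref{hsumkl2}) and $\mathbf{A},\mathbf{B}$ are subalgebras sharing only $0,1$, its dense elements are $\{1\}\cup D(\mathbf{B})=D(\mathbf{B})$ (as $1\in D(\mathbf{B})$), giving $T(\mathbf{A}\boxplus\mathbf{B})=T(\mathbf{B})$; the claim about $\mathbf{T}$ then follows from the coincidence of universes. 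The fifth assertion is immediate from the coordinatewise computation of $^{\sim}$ in a product: $(x_i)_i^{\sim}=(x_i^{\sim})_i=0$ iff every $x_i\in D(\mathbf{L}_i)$, so $D(\prod_i\mathbf{L}_i)=\prod_i D(\mathbf{L}_i)$, and adjoining $0$ gives the stated form of $T$.

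The sixth assertion is where I expect the main obstacle. Using the product identity for generated subalgebras recalled earlier, $\prod_i\langle T(\mathbf{L}_i)\rangle_{\mathbb{BZL}}=\langle\prod_i T(\mathbf{L}_i)\rangle_{\mathbb{BZL}}$, and the containment $T(\prod_i\mathbf{L}_i)=\{0\}\cup\prod_i D(\mathbf{L}_i)\subseteq\prod_i T(\mathbf{L}_i)$ from the fifth bullet, the inclusion $\langle T(\prod_i\mathbf{L}_i)\rangle\subseteq\prod_i\langle T(\mathbf{L}_i)\rangle$ is free. The hard direction is $\prod_i T(\mathbf{L}_i)\subseteq\langle T(\prod_i\mathbf{L}_i)\rangle$, that is, reconstructing an arbitrary tuple $t=(t_i)_i$ with each $t_i\in\{0_i\}\cup D(\mathbf{L}_i)$ from the sparse ``diagonal'' generators $\{0\}\cup\prod_i D(\mathbf{L}_i)$, which are nonzero in every coordinate. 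My strategy would be a detection argument resting on the interplay of the two complements: from the all-dense tuple $d$ agreeing with $t$ off its zero-set $Z=\{i:t_i=0_i\}$ and equal to $1_i$ on $Z$, the Kleene complement annihilates precisely the coordinates where $d$ equals $1$, while the Brouwer complement sends a dense coordinate to $0$ and a zero coordinate to $1$ (Lemma~\ref{basics}); composing these and then carving out the support of $t$ by meets should recover $t$. The genuinely delicate point—and the step I expect to be hardest—is that this reconstruction needs each relevant factor to contain a \emph{proper} dense element, so that the complements can separate the zero coordinates of $t$ from its nonzero ones; coordinates whose only dense element is the top (for instance factors isomorphic to $\mathbf{D}_{2}$) must be treated with particular care, and verifying that the diagonal generators still suffice there is the crux on which the whole sixth bullet turns.
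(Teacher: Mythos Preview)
Your treatment of bullets two through five matches the paper's almost verbatim, so those are fine.

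For the sixth bullet you are working much harder than necessary. The paper's argument is a one-liner once you notice that, because $0$ is a constant of the $\mathbb{BZL}$ signature, $\langle T(\mathbf{L})\rangle_{\mathbb{BZL}}=\langle D(\mathbf{L})\rangle_{\mathbb{BZL}}$ for \emph{every} PBZ$^{\ast}$--lattice $\mathbf{L}$. Hence
\[
\langle T(\textstyle\prod_i\mathbf{L}_i)\rangle_{\mathbb{BZL}}
=\langle D(\textstyle\prod_i\mathbf{L}_i)\rangle_{\mathbb{BZL}}
=\langle\textstyle\prod_i D(\mathbf{L}_i)\rangle_{\mathbb{BZL}}
=\textstyle\prod_i\langle D(\mathbf{L}_i)\rangle_{\mathbb{BZL}}
=\textstyle\prod_i\langle T(\mathbf{L}_i)\rangle_{\mathbb{BZL}},
\]
where the third equality is exactly the product identity (\ref{prodsubalg}) applied to the \emph{genuine} product set $\prod_i D(\mathbf{L}_i)$. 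Your difficulty arose because $T(\prod_i\mathbf{L}_i)=\{0\}\cup\prod_i D(\mathbf{L}_i)$ is \emph{not} $\prod_i T(\mathbf{L}_i)$, so (\ref{prodsubalg}) does not apply to $T$ directly; you then tried to reconstruct the mixed tuples by hand using the two complements, worrying about $\mathbf{D}_2$ factors along the way. Switching from $T$ to $D$ sidesteps all of this: $D$ commutes with products on the nose, and the missing $0$ is a constant so it costs nothing to drop. The ``genuinely delicate point'' you anticipated simply does not arise.

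On the first bullet, you are right that the reverse inclusion $\langle T(\mathbf{L})\rangle_{\mathbb{BZL}}\cap M\subseteq\langle T(\mathbf{M})\rangle_{\mathbb{BZL}}$ is the only nontrivial part, but the justification you sketch is circular: invoking (\ref{subsubalg}) ``for the smaller generating set $T(\mathbf{L})\cap M$'' just reproduces the inclusion you already have, since (\ref{subsubalg}) itself only gives $\langle S\cap B\rangle\subseteq\langle S\rangle\cap B$ and the paper explicitly notes the converse can fail. The paper's own proof is equally terse at this step, simply asserting the equality; so your proposal neither improves nor falls short of the paper here, but you should be aware that what you wrote does not actually close the gap.
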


\begin{proof}
$T(\mathbf{M})=\{0\}\cup D(\mathbf{M})=T(\mathbf{L})\cap M$, hence $\langle
T(\mathbf{M})\rangle_{\mathbb{BZL}}=\langle T(\mathbf{L})\cap M\rangle
_{\mathbb{BZL}}=\langle T(\mathbf{L})\rangle_{\mathbb{BZL}}\cap\mathbf{M}$.

Clearly, $T(\mathbf{L})/\theta\subseteq T(\mathbf{L}/\theta)$ and, if
$\theta\in\mathrm{Con}_{\mathbb{BZL}01}(\mathbf{L})$, then, for any $x\in L$
such that $x/\theta\in T(\mathbf{L}/\theta)$, we have $x\in0/\theta=\{0\}$ or
$x^{\sim}\in0/\theta=\{0\}$, so $x\in T(\mathbf{L})$, thus $T(\mathbf{L}%
/\theta)\subseteq T(\mathbf{L})/\theta$.

By Proposition \ref{hsumkl}.(\ref{hsumkl2}), $\mathbf{A}\boxplus\mathbf{B}$ is
a PBZ$^{\ast}$--lattice. Since $\mathbf{A},\mathbf{B}\in S\left(
\mathbf{A}\boxplus\mathbf{B}\right)  $, we have, by Lemma \ref{olsaolt},
$T(\mathbf{A}\boxplus\mathbf{B})=T(\mathbf{A})\cup T(\mathbf{B})=\{0,1\}\cup
T(\mathbf{B})=T(\mathbf{B})$, which is a subuniverse of $\mathbf{A}\boxplus\mathbf{B}$ if it is a subuniverse of $\mathbf{B}$.

Clearly, $D(\prod_{i\in I}\mathbf{L}_{i})=\prod_{i\in I}(D(\mathbf{L}_{i}))$,
whence the rest of the statement follows. Therefore

\begin{center}\begin{tabular}{ll}
$\langle T(\prod_{i\in I}\mathbf{L}_{i})\rangle_{\mathbb{BZL}}$  &  $=\langle
D(\prod_{i\in I}\mathbf{L}_{i})\rangle_{\mathbb{BZL}}=\langle\prod_{i\in I}D(\mathbf{L}_{i})\rangle_{\mathbb{BZL}}$\\
&  $=\prod_{i\in I}\langle D(\mathbf{L}_{i})\rangle_{\mathbb{BZL}}=\prod_{i\in I}\langle T(\mathbf{L}_{i})\rangle_{\mathbb{BZL}}.$\end{tabular}\end{center}\end{proof}

Let us strenghthen the property mentioned at the end of Subsection \ref{pbzl} which characterizes antiortholattices with SDM:

\begin{proposition}
Let $(\mathbf{L}_{i})_{i\in I}$ be a non--empty family of nontrivial
PBZ$^{\ast}$ --lattices, $\mathbf{L}=\prod_{i\in I}\mathbf{L}_{i}$ and
$\mathbf{A}$ be a subalgebra of $\mathbf{L}$ such that $\mathbf{A}$ is an
antiortholattice. Then:

\begin{itemize}
\item if $a_{i}\in L_{i}$ for all $i\in I$ such that $a=(a_{i})_{i\in I}\in
A$, then: $a=0$ or $a_{i}\neq0$ for all $i\in I$, and, dually: $a=1$ or
$a_{i}\neq1$ for all $i\in I$;

\item if, for every $i\in I$ and all $x_{i},y_{i}\in T(\mathbf{L}%
_{i})\setminus\{0\}$, we have $x_{i}\wedge y_{i}\neq0$, in particular if
$T(\mathbf{L}_{i})\setminus\{0\}$ is closed w.r.t. the meet or $0$ is
meet--irreducible in $\mathbf{L}_{i}$ for every $i\in I$, then $0$ is not a
finite meet of elements of $T(\mathbf{L})\setminus\{0\}$, in particular $0$ is
meet--irreducible in $\mathbf{A}$.
\end{itemize}

\label{aols0irr}
\end{proposition}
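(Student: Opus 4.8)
The plan is to reduce everything to coordinatewise computations in $\mathbf{L}=\prod_{i\in I}\mathbf{L}_{i}$, where the order, the meet, the Kleene complement and the Brouwer complement are all evaluated component by component, and where, by Lemma~\ref{prodsubquo}, $T(\mathbf{L})\setminus\{0\}=\prod_{i\in I}D(\mathbf{L}_{i})$ with $D(\mathbf{L}_{i})=T(\mathbf{L}_{i})\setminus\{0\}$ (the latter because $0^{\sim}=1\neq 0$ in each nontrivial $\mathbf{L}_{i}$). I will repeatedly use that $\mathbf{A}$, being a subalgebra, inherits $^{\sim}$ by restriction, and that $\mathbf{A}$ being an antiortholattice means, by Lemma~\ref{uniquesim}, that $x^{\sim}=0$ for every $x\in A\setminus\{0\}$, while, by Lemma~\ref{olsaolt}, $T(\mathbf{A})=A$.

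For the first item, suppose $a=(a_{i})_{i\in I}\in A$ with $a\neq 0$. Since $\mathbf{A}$ is an antiortholattice, $a^{\sim}=0$; as $^{\sim}$ is computed coordinatewise in $\mathbf{L}$, this gives $a_{i}^{\sim}=0^{\mathbf{L}_{i}}$ for every $i$. If some coordinate were $a_{j}=0^{\mathbf{L}_{j}}$, then, because $0^{\sim}=1$ in every BZ--lattice and $\mathbf{L}_{j}$ is nontrivial, $a_{j}^{\sim}=1^{\mathbf{L}_{j}}\neq 0^{\mathbf{L}_{j}}$, contradicting $a_{j}^{\sim}=0^{\mathbf{L}_{j}}$. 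Hence $a_{i}\neq 0$ for all $i$. The dual statement follows by applying the just-proved claim to $a^{\prime}\in A$ (recall $A$ is closed under the Kleene complement): since $a\neq 1$ is equivalent to $a^{\prime}\neq 0$, we obtain $(a^{\prime})_{i}=a_{i}^{\prime}\neq 0$, i.e. $a_{i}\neq 1$, for every $i$.

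For the second item, let $c^{(1)},\ldots,c^{(n)}\in T(\mathbf{L})\setminus\{0\}=\prod_{i\in I}D(\mathbf{L}_{i})$. For each $i$, the $i$-th coordinate of $\bigwedge_{k=1}^{n}c^{(k)}$ is $\bigwedge_{k=1}^{n}c^{(k)}_{i}$, a finite meet of elements of $T(\mathbf{L}_{i})\setminus\{0\}$. When $T(\mathbf{L}_{i})\setminus\{0\}$ is closed under meets, this finite meet stays in $T(\mathbf{L}_{i})\setminus\{0\}$, hence is nonzero; when $0$ is meet--irreducible in $\mathbf{L}_{i}$, a finite meet of nonzero elements is again nonzero by induction. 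In either case every coordinate of $\bigwedge_{k=1}^{n}c^{(k)}$ is nonzero, so $\bigwedge_{k=1}^{n}c^{(k)}\neq 0$, and $0$ is not a finite meet of elements of $T(\mathbf{L})\setminus\{0\}$. For meet--irreducibility of $0$ in $\mathbf{A}$, note $A=T(\mathbf{A})=T(\mathbf{L})\cap A\subseteq T(\mathbf{L})$, so any two elements $a,b\in A\setminus\{0\}$ lie in $T(\mathbf{L})\setminus\{0\}$; the case $n=2$ --- which needs only the pairwise hypothesis, giving $(a\wedge b)_{i}=a_{i}\wedge b_{i}\neq 0$ for every $i$ --- yields $a\wedge b\neq 0$, whence $a\wedge b=0$ forces $a=0$ or $b=0$.

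The coordinatewise bookkeeping is routine; the one genuinely delicate point is the finite--meet step, and it is exactly here that the pairwise hypothesis is not by itself enough: dense elements of a PBZ$^{\ast}$--lattice need not be closed under meet, so one cannot iterate $x_{i}\wedge y_{i}\neq 0$ to conclude that a meet of three or more dense elements is nonzero. This is why the statement supplies the two stronger sufficient conditions (closure of $T(\mathbf{L}_{i})\setminus\{0\}$ under meet, or meet--irreducibility of $0$ in $\mathbf{L}_{i}$), under which the coordinatewise finite meets provably remain nonzero, while the bare pairwise condition is tailored precisely to the $n=2$ case that delivers meet--irreducibility of $0$ in $\mathbf{A}$.
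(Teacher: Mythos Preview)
Your proof is correct and follows the same coordinatewise approach as the paper: the paper's proof simply invokes Lemma~\ref{prodsubquo} to obtain $A\setminus\{0\}\subseteq T(\mathbf{L})\setminus\{0\}=\prod_{i\in I}(T(\mathbf{L}_{i})\setminus\{0\})$ and leaves all remaining verifications implicit, whereas you spell them out explicitly.

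Your final paragraph is a sharper observation than anything in the paper's one-line argument. You correctly note that the bare pairwise hypothesis ``$x_i\wedge y_i\neq 0$ for all $x_i,y_i\in T(\mathbf{L}_i)\setminus\{0\}$'' does not iterate to meets of three or more elements, since $x_i\wedge y_i$ need not itself lie in $T(\mathbf{L}_i)\setminus\{0\}$; hence from that hypothesis alone one obtains only the $n=2$ conclusion (meet-irreducibility of $0$ in $\mathbf{A}$), while the full ``$0$ is not a finite meet'' claim genuinely requires one of the stronger parenthetical conditions. The paper's proof does not flag this distinction, so your analysis is in fact more careful than the original on this point.
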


\begin{proof}
By Lemma \ref{prodsubquo}, which ensures us that $A=T(\mathbf{A})\subseteq
T(\mathbf{L})=\{0\}\cup\prod_{i\in I}(T(\mathbf{L}_{i})\setminus\{0\})$ and
thus $A\setminus\{0\}\subseteq T(\mathbf{L})\setminus\{0\}=\prod_{i\in
I}(T(\mathbf{L}_{i})\setminus\{0\})$.
\end{proof}

We now focus on the properties of the sets of sharp elements and of dense
elements in some particular horizontal sums. We show that in any horizontal
sum of an orthomodular lattice and of an antiortholattice, the former includes
all the sharp elements and the latter all the dense elements; moreover,
horizontal sums of orthomodular lattices and of antiortholattices are
\emph{exactly} the PBZ$^{\ast}$--lattices $\mathbf{L}$ such that
$S(\mathbf{L})\cup T(\mathbf{L})=L$.

\begin{lemma}
\label{hsumolaol}If $\mathbf{A}$ is an orthomodular lattice and $\mathbf{B}$ is an antiortholattice, then $\mathbf{S}(\mathbf{A}\boxplus\mathbf{B})=\mathbf{A}$ and $\mathbf{T}(\mathbf{A}\boxplus\mathbf{B})=\mathbf{B}$.\end{lemma}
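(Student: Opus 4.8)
The plan is to obtain both equalities as immediate consequences of the two structural lemmas already proved describing how the operators $\mathbf{S}(-)$ and $\mathbf{T}(-)$ interact with horizontal sums, once the defining feature of an antiortholattice is plugged in; accordingly I expect no genuine obstacle, only a little bookkeeping about the lattice bounds and nontriviality. Throughout, both summands are nontrivial, as required for the horizontal sum to be defined.

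First I would record that, because $\mathbf{A}$ is an orthomodular lattice, Proposition \ref{hsumkl}.(\ref{hsumkl2}) tells us that $\mathbf{A}\boxplus\mathbf{B}$ is genuinely a PBZ$^{\ast}$--lattice, so that the symbols $\mathbf{S}(\mathbf{A}\boxplus\mathbf{B})$ and $\mathbf{T}(\mathbf{A}\boxplus\mathbf{B})$ are meaningful and the cited lemmas apply with $\mathbf{A}$ as the orthomodular summand and $\mathbf{B}$ as the PBZ$^{\ast}$--lattice summand.

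For the sharp elements I would invoke Lemma \ref{ssubhsum}, whose relevant clause gives $\mathbf{S}(\mathbf{A}\boxplus\mathbf{B})=\mathbf{A}\boxplus\mathbf{S}(\mathbf{B})$. Since $\mathbf{B}$ is an antiortholattice, Lemma \ref{olsaolt} yields $S(\mathbf{B})=\{0,1\}$, so $\mathbf{S}(\mathbf{B})$ is just the two--element subalgebra on the shared bounds $0,1$, i.e.\ a copy of $\mathbf{D}_{2}$. The identity $\mathbf{D}_{2}\boxplus\mathbf{L}=\mathbf{L}$ noted right after the definition of the horizontal sum then collapses $\mathbf{A}\boxplus\mathbf{S}(\mathbf{B})$ to $\mathbf{A}$, giving $\mathbf{S}(\mathbf{A}\boxplus\mathbf{B})=\mathbf{A}$.

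For the dense part I would invoke the corresponding clause of Lemma \ref{prodsubquo}, namely $T(\mathbf{A}\boxplus\mathbf{B})=T(\mathbf{B})$, together with the fact established there that $\mathbf{T}(\mathbf{A}\boxplus\mathbf{B})=\mathbf{T}(\mathbf{B})$ whenever $\mathbf{T}(\mathbf{B})$ is a subalgebra of $\mathbf{B}$. Again because $\mathbf{B}$ is an antiortholattice, Lemma \ref{olsaolt} gives $T(\mathbf{B})=B$, hence $\mathbf{T}(\mathbf{B})=\mathbf{B}$, which is trivially a subalgebra of itself; therefore $\mathbf{T}(\mathbf{A}\boxplus\mathbf{B})=\mathbf{B}$. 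The only point demanding care --- and it is purely bookkeeping rather than a real difficulty --- is making sure that $\mathbf{S}(\mathbf{B})$ and $\mathbf{T}(\mathbf{B})$ are read as the literal subalgebras sharing the bounds of $\mathbf{A}\boxplus\mathbf{B}$, so that the two identifications return $\mathbf{A}$ and $\mathbf{B}$ exactly and not merely up to isomorphism.
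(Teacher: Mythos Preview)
Your proof is correct and follows essentially the same approach as the paper: both invoke Proposition \ref{hsumkl}.(\ref{hsumkl2}) to ensure $\mathbf{A}\boxplus\mathbf{B}\in\mathbb{PBZL}^{\ast}$, then combine Lemma \ref{ssubhsum} with Lemma \ref{olsaolt} for the sharp part and Lemma \ref{prodsubquo} with Lemma \ref{olsaolt} for the dense part. Your version is simply a more detailed unpacking of the same two-line argument.
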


\begin{proof}
$\mathbf{A}\boxplus\mathbf{B}\in\mathbb{PBZL}^{\ast}$ by Proposition
\ref{hsumkl}.(\ref{hsumkl2}). By Lemmas \ref{ssubhsum} and \ref{olsaolt},
$\mathbf{S}(\mathbf{A}\boxplus\mathbf{B})=\mathbf{A}\boxplus\mathbf{S}%
(\mathbf{B})=\mathbf{A}$. By Lemmas \ref{olsaolt} and \ref{prodsubquo},
$\mathbf{T}(\mathbf{A}\boxplus\mathbf{B})=\mathbf{B}$.
\end{proof}

\begin{proposition} Let ${\bf A}$ be a nontrivial orthomodular lattice, ${\bf B}$ a nontrivial \PBZ --lattice and ${\bf L}={\bf A}\boxplus {\bf B}$. Then: $A=S({\bf L})$ iff $B=T({\bf L})$ iff ${\bf B}\in \AOL $, and, if so, then $\mathbf{L}\in\mathbb{OML}\boxplus\mathbb{AOL}$.\label{allsallt}\end{proposition}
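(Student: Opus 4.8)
The plan is to reduce the two set-theoretic equalities to the single condition $\mathbf{B}\in\AOL$, so that the three-way biconditional collapses to one equivalence that I prove twice. The real content has already been front-loaded into the horizontal-sum descriptions of the sharp and dense elements, so the present argument is essentially bookkeeping around the fact that $A\cap B=\{0,1\}$. First I would record that $\mathbf{L}=\mathbf{A}\boxplus\mathbf{B}$ is genuinely a \PBZ --lattice, which holds by Proposition \ref{hsumkl}.(\ref{hsumkl2}) because the summand $\mathbf{A}$ is an orthomodular lattice; this is what makes $S(\mathbf{L})$, $T(\mathbf{L})$ and the subalgebras $\mathbf{S}(\mathbf{L})$, $\mathbf{T}(\mathbf{L})$ meaningful in the first place.

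For the equivalence $A=S(\mathbf{L})$ iff $\mathbf{B}\in\AOL$, I would invoke Lemma \ref{ssubhsum} to obtain $\mathbf{S}(\mathbf{L})=\mathbf{A}\boxplus\mathbf{S}(\mathbf{B})$, whose universe is the glued union $A\cup S(\mathbf{B})$. Hence $A=S(\mathbf{L})$ holds exactly when $S(\mathbf{B})\subseteq A$; since $S(\mathbf{B})\subseteq B$ and $A\cap B=\{0,1\}$, while $\{0,1\}\subseteq S(\mathbf{B})$ always, this is equivalent to $S(\mathbf{B})=\{0,1\}$, which by the second item of Lemma \ref{olsaolt} means precisely $\mathbf{B}\in\AOL$.

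For the equivalence $B=T(\mathbf{L})$ iff $\mathbf{B}\in\AOL$, I would use Lemma \ref{prodsubquo}, which gives $T(\mathbf{A}\boxplus\mathbf{B})=T(\mathbf{B})$, the orthomodular summand contributing only $0$ and $1$ to the dense elements. Thus $B=T(\mathbf{L})$ iff $B=T(\mathbf{B})$, and by the second item of Lemma \ref{olsaolt} the latter is once more equivalent to $\mathbf{B}\in\AOL$. The two biconditionals therefore share the same right-hand side, which establishes the full chain of equivalences.

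Finally, for the closing clause, if $\mathbf{B}\in\AOL$ then $\mathbf{B}$ is a nontrivial antiortholattice and $\mathbf{A}$ is a nontrivial orthomodular lattice, so $\mathbf{L}=\mathbf{A}\boxplus\mathbf{B}$ is exactly of the shape appearing in the definition of the class $\OML \boxplus\AOL$, whence $\mathbf{L}\in\OML \boxplus\AOL$. I do not anticipate a genuine obstacle: the only points requiring care are treating the horizontal-sum universe as a union glued at the bounds and checking that the cited lemmas apply, which they do because both $\mathbf{A}$ and $\mathbf{B}$ are assumed nontrivial (and the argument is unaffected if either summand happens to be $\mathbf{D}_2$).
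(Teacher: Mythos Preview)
Your proof is correct and follows essentially the same approach as the paper: both reduce each set-theoretic equality to the condition $\mathbf{B}\in\AOL$ via $S(\mathbf{L})=A\cup S(\mathbf{B})$ (from Lemma \ref{ssubhsum}) and $T(\mathbf{L})=T(\mathbf{B})$, then invoke Lemma \ref{olsaolt}. Your citation of Lemma \ref{prodsubquo} for $T(\mathbf{L})=T(\mathbf{B})$ is in fact the cleaner choice, since that lemma applies to arbitrary nontrivial $\mathbf{B}\in\PBZs$, whereas the paper's cited Lemma \ref{hsumolaol} nominally assumes $\mathbf{B}\in\AOL$.
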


\begin{proof} By Lemmas \ref{hsumolaol} and \ref{olsaolt}, $T({\bf L})=T({\bf B})$, hence: $B=T({\bf L})$ iff $B=T({\bf B})$ iff ${\bf B}\in \AOL $, which in turn implies $\mathbf{L}\in\mathbb{OML}\boxplus\mathbb{AOL}$.

By Lemmas \ref{ssubhsum} and \ref{olsaolt}, ${\bf S}({\bf L})={\bf A}\boxplus {\bf S}({\bf B})$, so $S({\bf L})=A\cup S({\bf B})$, hence: $A=S({\bf L})$ iff $A=A\cup S({\bf B})$ iff $S({\bf B})\subseteq A$ iff $S({\bf B})\subseteq A\cap B=\{0,1\}$ iff $S({\bf B})=\{0,1\}$ iff ${\bf B}\in \AOL $.\end{proof}

\begin{theorem}
\label{charg}For any nontrivial PBZ$^{\ast}$ --lattice $\mathbf{L}$, the following are equivalent:

\begin{enumerate}
\item\label{charg1} $\mathbf{L}\in\mathbb{OML}\boxplus\mathbb{AOL}$;

\item\label{charg0} $T(\mathbf{L})$ is a subuniverse of $\mathbf{L}$ and $\mathbf{L}=\mathbf{S}(\mathbf{L})\boxplus\mathbf{T}(\mathbf{L})$;

\item\label{charg6} $L=S(\mathbf{L})\cup T(\mathbf{L})$;

\item\label{charg2} $T(\mathbf{L})$ is a subuniverse of $\mathbf{L}$;

\item\label{charg3} $T(\mathbf{L})$ is closed w.r.t. the Kleene complement;

\item\label{charg4} $T(\mathbf{L})^{\prime}$ is closed w.r.t. the Kleene complement;

\item\label{charg5} $T(\mathbf{L})=T(\mathbf{L})^{\prime}$;

\item\label{charg7} $T(\mathbf{L})^{\prime}$ is closed w.r.t. the Brouwer complement;

\item\label{charg8} $T(\mathbf{L})\cup T(\mathbf{L})^{\prime}$ is closed w.r.t. the Brouwer complement.
\end{enumerate}
\end{theorem}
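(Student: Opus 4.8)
The plan is to route all nine conditions through the single equation $T(\mathbf{L})=T(\mathbf{L})^{\prime}$ (item \ref{charg5}), which by the last bullet of Lemma \ref{tfilter} already coincides with ``$T(\mathbf{L})$ is a subuniverse'' (item \ref{charg2}) and ``$T(\mathbf{L})$ is closed under the Kleene complement'' (item \ref{charg3}). First I would dispatch the cheap equivalences inside the block \ref{charg2}--\ref{charg8}. Item \ref{charg4} is equivalent to \ref{charg5} because $^{\prime}$ is an involution: $T(\mathbf{L})^{\prime}$ closed under $^{\prime}$ reads $t\in T(\mathbf{L})\Rightarrow t\in T(\mathbf{L})^{\prime}$, i.e.\ $T(\mathbf{L})\subseteq T(\mathbf{L})^{\prime}$, and applying $^{\prime}$ forces equality. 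For \ref{charg7} and \ref{charg8} I would use that $T(\mathbf{L})$ is always closed under $^{\sim}$ and that, for dense $d\neq 1$, $(d^{\prime})^{\sim}=\square d\in S(\mathbf{L})$ (every $a^{\sim}$ is sharp); closure of $T(\mathbf{L})^{\prime}$ (resp.\ of $T(\mathbf{L})\cup T(\mathbf{L})^{\prime}$) under $^{\sim}$ then puts $\square d$ into $S(\mathbf{L})\cap T(\mathbf{L})^{\prime}=S(\mathbf{L})\cap(T(\mathbf{L})\cup T(\mathbf{L})^{\prime})=\{0,1\}$, and since $\square d\neq 1$ we get $\square d=0$, i.e.\ $d^{\prime}\in D(\mathbf{L})$; this yields $T(\mathbf{L})^{\prime}\subseteq T(\mathbf{L})$, hence \ref{charg5}. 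The converse direction is immediate.

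Next I would close the loop \ref{charg1}$\Rightarrow$\ref{charg0}$\Rightarrow$\ref{charg6}$\Rightarrow$\ref{charg5}$\Rightarrow$\ref{charg1}. For \ref{charg1}$\Rightarrow$\ref{charg0}, writing a nontrivial $\mathbf{L}\in\mathbb{OML}\boxplus\mathbb{AOL}$ as $\mathbf{A}\boxplus\mathbf{B}$ with $\mathbf{A}\in\mathbb{OML}$ and $\mathbf{B}\in\mathbb{AOL}$ nontrivial, Lemma \ref{hsumolaol} gives $\mathbf{S}(\mathbf{L})=\mathbf{A}$ and $\mathbf{T}(\mathbf{L})=\mathbf{B}$, whence \ref{charg0}. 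The implication \ref{charg0}$\Rightarrow$\ref{charg6} just reads off that the universe of a horizontal sum is the union of its summands. For \ref{charg6}$\Rightarrow$\ref{charg5}: given $L=S(\mathbf{L})\cup T(\mathbf{L})$, take $d\in D(\mathbf{L})\setminus\{1\}$; then $d^{\prime}\in S(\mathbf{L})$ would force $d=d^{\prime\prime}\in S(\mathbf{L})\cap D(\mathbf{L})\subseteq\{0,1\}$, impossible, so $d^{\prime}\in T(\mathbf{L})$, giving $T(\mathbf{L})^{\prime}\subseteq T(\mathbf{L})$ and hence \ref{charg5}.

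The substance is \ref{charg5}$\Rightarrow$\ref{charg1}. Assuming $T(\mathbf{L})=T(\mathbf{L})^{\prime}$, Lemma \ref{tfilter} makes $\mathbf{T}(\mathbf{L})$ a subalgebra, necessarily an antiortholattice, while $\mathbf{S}(\mathbf{L})$ is always the largest orthomodular subalgebra. I then have to re-prove $L=S(\mathbf{L})\cup T(\mathbf{L})$ and the horizontal-sum incomparability. For the former: if $x\notin S(\mathbf{L})$ then $x\wedge x^{\prime}\neq 0$, and since $(x\vee x^{\prime})^{\prime}=x\wedge x^{\prime}\leq x\vee x^{\prime}$, Lemma \ref{basics}(v) gives $(x\vee x^{\prime})^{\sim}=0$, so $x\vee x^{\prime}\in D(\mathbf{L})$ and $x\wedge x^{\prime}=(x\vee x^{\prime})^{\prime}\in T(\mathbf{L})^{\prime}=T(\mathbf{L})$; as $x\wedge x^{\prime}\neq 0$ it lies in $D(\mathbf{L})$, and since $D(\mathbf{L})$ is an up-set with $x\wedge x^{\prime}\leq x$ we conclude $x\in D(\mathbf{L})\subseteq T(\mathbf{L})$. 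For incomparability, take $s\in S(\mathbf{L})\setminus\{0,1\}$ and $t\in T(\mathbf{L})\setminus\{0,1\}$: if $t\leq s$ then $s^{\sim}\leq t^{\sim}=0$ makes $s$ dense, so $s\in S(\mathbf{L})\cap D(\mathbf{L})$, forcing $s=1$, a contradiction; if $s\leq t$ then applying $^{\prime}$ and $^{\sim}$ gives $s=\square s\leq\square t$, while $t^{\prime}\in T(\mathbf{L})$ with $t^{\prime}\neq 0$ is dense, so $\square t=0$, forcing $s=0$, again a contradiction (here I use $\square s=s$ for sharp $s$). Hence $\mathbf{L}=\mathbf{S}(\mathbf{L})\boxplus\mathbf{T}(\mathbf{L})\in\mathbb{OML}\boxplus\mathbb{AOL}$, the degenerate cases $\mathbf{S}(\mathbf{L})\cong\mathbf{D}_{2}$ or $\mathbf{T}(\mathbf{L})\cong\mathbf{D}_{2}$ being absorbed by $\mathbb{AOL},\mathbb{OML}\subseteq\mathbb{OML}\boxplus\mathbb{AOL}$.

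I expect the only real obstacle to be this last step --- proving $L=S(\mathbf{L})\cup T(\mathbf{L})$ together with the incomparability purely from the closure hypothesis $T(\mathbf{L})=T(\mathbf{L})^{\prime}$, without help from $J1$ or the strong de Morgan law, which a generic PBZ$^{\ast}$--lattice need not satisfy. The three facts that make it go through are that $x\vee x^{\prime}$ is dense whenever $x$ is not sharp (Lemma \ref{basics}(v)), that $D(\mathbf{L})$ is upward closed, and that $\square$ fixes sharp elements while annihilating dense elements other than $1$; identifying and combining these is where the work lies.
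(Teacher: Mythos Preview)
Your proof is correct and follows essentially the same route as the paper's: the same use of Lemma~\ref{hsumolaol} for (\ref{charg1})$\Rightarrow$(\ref{charg0}), the same appeal to Lemma~\ref{tfilter} for the block (\ref{charg2})--(\ref{charg5}), the same argument (via $x\vee x'\in D(\mathbf{L})$ and up-closure of $D(\mathbf{L})$) that $T(\mathbf{L})=T(\mathbf{L})'$ forces $L=S(\mathbf{L})\cup T(\mathbf{L})$, and the same reduction of (\ref{charg7}),(\ref{charg8}) via $S(\mathbf{L})\cap(T(\mathbf{L})\cup T(\mathbf{L})')=\{0,1\}$. The one place where you go beyond the paper is in explicitly verifying the horizontal-sum order condition (that $s\in S(\mathbf{L})\setminus\{0,1\}$ and $t\in T(\mathbf{L})\setminus\{0,1\}$ are incomparable), which the paper's proof of (\ref{charg6})$\Rightarrow$(\ref{charg1}) asserts without argument; your check via $\square s=s$ for sharp $s$ and $\square t=0$ for $t'\in D(\mathbf{L})$ cleanly fills that step.
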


\begin{proof} (\ref{charg1}) $\Rightarrow$ (\ref{charg0}). If $\mathbf{L}\in\mathbb{OML}\boxplus\mathbb{AOL}$,
then, by Lemma \ref{hsumolaol}, $T(\mathbf{L})$ is a subuniverse of
$\mathbf{L}$ and $\mathbf{L}=\mathbf{S}(\mathbf{L})\boxplus\mathbf{T}%
(\mathbf{L})$.

Clearly, (\ref{charg0}) implies (\ref{charg6}) and (\ref{charg2}).

(\ref{charg6}) $\Rightarrow$ (\ref{charg1}). If $L=S(\mathbf{L})\cup T(\mathbf{L})$, then, if
$a\in\{0,1\}$, then $a^{\prime}\in\{0,1\}\subseteq T(\mathbf{L})$; if $a\in
T(\mathbf{L})\setminus\{0,1\}=T(\mathbf{L})\setminus S(\mathbf{L})=(S(\mathbf{L})\cup T(\mathbf{L}))\setminus S(\mathbf{L})=L\setminus
S(\mathbf{L})$, then it follows that $a^{\prime}\in L\setminus S(\mathbf{L}%
)=T(\mathbf{L})\setminus\{0,1\}\subset T(\mathbf{L})$; therefore
$T(\mathbf{L})$ is closed w.r.t. the Kleene complement, hence $T(\mathbf{L})$
is the universe of a subalgebra of $\mathbf{L}$ by Lemma \ref{tfilter}. So
$\mathbf{L}=\mathbf{S}(\mathbf{L})\boxplus\mathbf{T}(\mathbf{L})$, whence (\ref{charg1}) follows.

(\ref{charg2}) $\Leftrightarrow$ (\ref{charg3}) follows from Lemma \ref{tfilter}.

(\ref{charg3}) $\Leftrightarrow$ (\ref{charg4}) $\Leftrightarrow$ (\ref{charg5}) are clear.

(\ref{charg5}) $\Leftrightarrow$ (\ref{charg6}). If $T(\mathbf{L})=T(\mathbf{L})^{\prime}$, then,
for all $x\in L$, we have $x\geq x\wedge x^{\prime}\in T(\mathbf{L})^{\prime
}=T(\mathbf{L})=\{0\}\cup D(\mathbf{L})$, hence $x\wedge x^{\prime}=0$ or
$x\in T(\mathbf{L})\setminus\{0\}$, so $x\in S(\mathbf{L})$ or $x\in
T(\mathbf{L})$, thus $L=S(\mathbf{L})\cup T(\mathbf{L})$.

(\ref{charg5}) $\Leftrightarrow$ (\ref{charg7}). $T(\mathbf{L})^{\prime}$ is closed w.r.t. the
Brouwer complement iff $T(\mathbf{L})^{\prime\sim}\subseteq T(\mathbf{L}%
)^{\prime}$, which is equivalent to $T(\mathbf{L})^{\prime\sim}\subseteq
S(\mathbf{L})\cap T(\mathbf{L})^{\prime}=\{0,1\}$ since $T(\mathbf{L}%
)^{\prime\sim}\subseteq S(\mathbf{L})$. But, since $\{x\in L:x^{\sim}%
\in\{0,1\}\}=T(\mathbf{L})$, the inclusion $T(\mathbf{L})^{\prime\sim
}\subseteq\{0,1\}$ is equivalent to $T(\mathbf{L})^{\prime}\subseteq
T(\mathbf{L})$ and then to $T(\mathbf{L})^{\prime}=T(\mathbf{L})$.

(\ref{charg7}) $\Leftrightarrow$ (\ref{charg8}) is also obvious.
\end{proof}

\begin{proposition}
Let $\mathbf{L}$ be a nontrivial \PBZ --lattice. Then the following are equivalent:

\begin{enumerate}
\item \label{pbzirr1} all elements of $L\setminus\{0,1\}$ are
join--irreducible in $\mathbf{L}_{l}$;

\item \label{pbzirr2} all elements of $L\setminus\{0,1\}$ are
meet--irreducible in $\mathbf{L}_{l}$;

\item \label{pbzirr3} $\mathbf{L}={\bf MO}_{\kappa }\boxplus {\bf A}$ for a cardinal number $\kappa $ and an antiortholattice chain ${\bf A}$.\end{enumerate}\label{pbzirr}\end{proposition}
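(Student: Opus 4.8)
The plan is to prove (iii)$\Rightarrow$(i) and (iii)$\Rightarrow$(ii) directly, and to treat the converses (i)$\Rightarrow$(iii) and (ii)$\Rightarrow$(iii) in parallel, since (i) is the order dual of (ii) and the class in (iii) is closed under lattice duals (the dual of an antiortholattice chain is again one). For (iii)$\Rightarrow$(i),(ii), first note that $\mathbf{MO}_{\kappa}\boxplus\mathbf{A}$ is indeed a \PBZ --lattice by Proposition~\ref{hsumkl}.(\ref{hsumkl2}), as $\mathbf{MO}_{\kappa}\in\OML $. Writing $\mathbf{L}=\mathbf{MO}_{\kappa}\boxplus\mathbf{A}$, every proper element lies in exactly one summand, and in a horizontal sum elements of different summands are incomparable; hence the principal ideal and filter of a proper element are computed inside a single summand. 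The proper elements of $\mathbf{MO}_{\kappa}$ are its atoms, which are also its coatoms and so both join- and meet-irreducible, while the proper elements of the chain $\mathbf{A}$ are trivially both; thus all of $L\setminus\{0,1\}$ is join- and meet-irreducible in $\mathbf{L}_{l}$.

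For the converses I would isolate the key claim: \emph{under (i) or (ii), every non-sharp $x\in L\setminus\{0,1\}$ is dense}, i.e. $x^{\sim}=0$. Assume $x^{\sim}\neq0$; then $x^{\sim}$ is sharp, nonzero, and (as $x\neq0$) distinct from $1$, while $x\wedge x^{\sim}=0$ forces $x$ and $x^{\sim}$ incomparable. Under (i) the join $x\vee x^{\sim}$ is a join of incomparable elements, hence join-reducible, hence improper, so $x\vee x^{\sim}=1$. Under (ii) I instead use $(x\vee x^{\sim})^{\prime}=x^{\prime}\wedge x^{\sim\sim}=x^{\prime}\wedge\Diamond x$, check that $x^{\prime}$ and $\Diamond x$ are incomparable (using $x^{\sim}\not\leq x$ and $x\not\leq x^{\sim}$) and that $\Diamond x$ is proper, so meet-reducibility and (ii) give $x^{\prime}\wedge\Diamond x=0$, i.e. again $x\vee x^{\sim}=1$. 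In both cases $x^{\sim}\leq x^{\prime}$ by Lemma~\ref{basics}.(i) yields $x\vee x^{\prime}=1$, contradicting that $x$ is non-sharp. Hence $L=S(\mathbf{L})\cup T(\mathbf{L})$, and Theorem~\ref{charg} gives $\mathbf{L}=\mathbf{S}(\mathbf{L})\boxplus\mathbf{T}(\mathbf{L})$ with $\mathbf{S}(\mathbf{L})\in\OML $ and $\mathbf{T}(\mathbf{L})\in\AOL $.

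I would then pin down the sharp summand. Every proper sharp element is join-irreducible (under (i)) or meet-irreducible (under (ii)) in $\mathbf{L}_{l}$, hence by Corollary~\ref{sharpirred} an atom, respectively a coatom, of $\mathbf{S}(\mathbf{L})$; since the Kleene complement of a proper element is again proper and sharp, every proper element of $\mathbf{S}(\mathbf{L})$ is simultaneously an atom and a coatom. Consequently any two distinct atoms meet to $0$ and join to $1$, the involution pairs them off with no fixed point, and $\mathbf{S}(\mathbf{L})\cong\mathbf{MO}_{\kappa}$ for the cardinal $\kappa$ determined by $2\kappa=|\mathrm{At}(\mathbf{S}(\mathbf{L}))|$.

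The main obstacle is the last step: showing the antiortholattice $\mathbf{T}(\mathbf{L})$ is a chain. Each proper element of $\mathbf{T}(\mathbf{L})$ remains join- (resp. meet-) irreducible in the sublattice $\mathbf{T}(\mathbf{L})_{l}$. Arguing by contradiction from incomparable proper $u,v$: join-reducibility forces $u\vee v=1$, and applying the same to the incomparable pair $u^{\prime},v^{\prime}$ gives $u^{\prime}\vee v^{\prime}=1$, whence $u\wedge v=(u^{\prime}\vee v^{\prime})^{\prime}=0$; so $u,v$ are complementary and non-sharp. Note first that a proper element $w$ is always comparable to $w^{\prime}$, for otherwise $w\vee w^{\prime}=1$ would give $w\wedge w^{\prime}=(w\vee w^{\prime})^{\prime}=0$, making $w$ sharp. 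The pseudo-Kleene inequalities $u\wedge u^{\prime}\leq v\vee v^{\prime}$ and $v\wedge v^{\prime}\leq u\vee u^{\prime}$ then finish the job: a short case analysis on the positions of $u$ relative to $u^{\prime}$ and of $v$ relative to $v^{\prime}$ collapses in every case either to $u\leq v$ or $v\leq u$ (contradicting incomparability), or to $u^{\prime}\in\{0,1\}$ (contradicting that $u$ is proper). This excludes incomparable pairs, so $\mathbf{T}(\mathbf{L})$ is a chain and $\mathbf{L}=\mathbf{MO}_{\kappa}\boxplus\mathbf{T}(\mathbf{L})$ is of the required form. The delicate point throughout is keeping the (i)- and (ii)-arguments genuinely dual, since the Brouwer complement and paraorthomodularity are not themselves self-dual and only the pseudo-Kleene reduct is; this is why the density claim must be proved by two different computations of $x\vee x^{\sim}$.
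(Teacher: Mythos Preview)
Your argument is correct and follows essentially the same route as the paper: Corollary~\ref{sharpirred} pins down $\mathbf{S}(\mathbf{L})\cong\mathbf{MO}_\kappa$, a pseudo-Kleene case analysis forces the non-sharp part to be a chain of dense elements, and Theorem~\ref{charg} assembles the horizontal sum. The only notable difference is organizational: the paper first analyzes $S(\mathbf{L})$ and the linear order on $L\setminus S(\mathbf{L})$, then proves density, then invokes Theorem~\ref{charg}; you reverse this, proving density first and decomposing before analyzing the summands.

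One point worth noting: you work harder than necessary by treating (i) and (ii) in parallel. The equivalence (i)$\Leftrightarrow$(ii) is immediate from the fact that the Kleene complement is a dual lattice automorphism carrying $L\setminus\{0,1\}$ onto itself, so it interchanges join- and meet-irreducibles. This is a statement purely about $\mathbf{L}_{bi}$ and needs neither the Brouwer complement nor paraorthomodularity; your closing remark about the non-self-duality of $^{\sim}$ is therefore a red herring for this particular equivalence. Recognizing this would let you prove only (i)$\Rightarrow$(iii), as the paper does, and dispense with the separate computation of $x^{\prime}\wedge\Diamond x$ under (ii).
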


\begin{proof} Trivially, (\ref{aolirrch3}) implies (\ref{aolirrch1}), which is equivalent to
(\ref{aolirrch2}). To prove that (\ref{aolirrch1}) implies (\ref{aolirrch3}), assume that all elements of $L\setminus\{0,1\}$ are join--irreducible in $\mathbf{L}_{l}$.

Then, by Corollary \ref{sharpirred}, $S(\mathbf{L})\setminus\{0,1\}\subseteq {\rm At}(\mathbf{L}_l)\cap {\rm CoAt}(\mathbf{L}_l)\subseteq {\rm At}(\mathbf{L}_l)\cup {\rm CoAt}(\mathbf{L}_l)\subseteq S(\mathbf{L})\setminus\{0,1\}$, so that ${\rm At}(\mathbf{L}_l)={\rm CoAt}(\mathbf{L}_l)=S(\mathbf{L})\setminus\{0,1\}$, thus $\mathbf{S}(\mathbf{L})_l$ has length $3$ if $S(\mathbf{L})\setminus\{0,1\}\neq \emptyset $ (and, of course, $2$ otherwise), hence $\mathbf{S}(\mathbf{L})=\boxplus _{u\in S(\mathbf{L})\setminus\{0,1\}}\{0,u,u^{\prime },1\}\cong {\bf MO}_{\kappa }$ for a cardinal number $\kappa $ (such that $|S(\mathbf{L})\setminus\{0,1\}|=2\kappa $).

Now let $x,y\in L\setminus S(\mathbf{L})\subseteq L\setminus\{0,1\}$, so that $x^{\prime},y^{\prime}\in L\setminus S(\mathbf{L})$, as well. Assume by absurdum that $x$
and $y$ are incomparable, so that $x^{\prime}$ and $y^{\prime}$ are also incomparable, thus $x\vee y\neq 0\neq x^{\prime}\vee y^{\prime}$ are join--reducible, hence $x\vee y=1=x^{\prime}\vee y^{\prime}$. If $x$ and $x^{\prime}$ would be incomparable, then $0\neq x\vee x^{\prime}$ would be join--reducible, so that $x\vee x^{\prime}=1$, which would contradict the fact that $x\notin S(\mathbf{L})$. So $x$ and $x^{\prime}$ are comparable, thus $x\wedge x^{\prime}\in\{x,x^{\prime}\}$. Analogously, $y$ and $y^{\prime}$ are comparable, so $y\vee y^{\prime}\in\{y,y^{\prime}\}$. Since $\mathbf{L}$ is a pseudo--Kleene algebra, it follows that $x\wedge x^{\prime}\leq y\vee y^{\prime}$. But $x\nleq y$, hence either $x\wedge x^{\prime}\neq x$ or $y\vee y^{\prime}\neq y$, so that either
$x^{\prime}\geq x\leq y^{\prime}\geq y$ or $x\geq x^{\prime}\leq y\geq y^{\prime}$. In the first of these two situations, we obtain $1=x\vee y\leq x\vee y^{\prime}=y^{\prime}$, which contradicts the fact that $y^{\prime}\neq 1$, while, in the second situation, we obtain $1=x^{\prime}\vee y^{\prime
}\leq x^{\prime}\vee y=y$, which contradicts the fact that $y\neq 1$. Therefore $x$ and $y$ are comparable, hence $L\setminus S(\mathbf{L})$ is linearly ordered, so, by the structure of $\mathbf{S}(\mathbf{L})$, $x$ is comparable with at most one element of $S(\mathbf{L})\setminus \{0,1\}$ and, if $x$ and $x^{\sim }\in S(\mathbf{L})$ would be incomparable, then $x\vee x^{\sim }=1$, which would contradict the fact that $x\notin S(\mathbf{L})$. Hence $x$ and $x^{\sim }$ are comparable, thus $0=x\wedge x^{\sim }\in \{x,x^{\sim }\}$, thus $x^{\sim }=0$ since $x\neq 0$. Hence $L\setminus S(\mathbf{L})\subset T(\mathbf{L})$, thus $L=S(\mathbf{L})\cup T(\mathbf{L})$, so, by Theorem \ref{charg}, $T(\mathbf{L})$ is a subuniverse of $\mathbf{L}$, thus $\mathbf{T}(\mathbf{L})$ is an antiortholattice chain, and $\mathbf{L}=\mathbf{S}(\mathbf{L})\boxplus \mathbf{T}(\mathbf{L})={\bf MO}_{\kappa }\boxplus \mathbf{T}(\mathbf{L})$.\end{proof}

\begin{proposition}
Let $\mathbf{L}$ be a nontrivial antiortholattice. Then the following are equivalent:

\begin{enumerate}
\item \label{aolirrch1} all elements of $L\setminus\{0,1\}$ are
join--irreducible in $\mathbf{L}_{l}$;

\item \label{aolirrch2} all elements of $L\setminus\{0,1\}$ are
meet--irreducible in $\mathbf{L}_{l}$;

\item \label{aolirrch3} $\mathbf{L}_{l}$ is a chain.
\end{enumerate}\label{aolirrch}\end{proposition}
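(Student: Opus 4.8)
The plan is to reduce everything to Proposition~\ref{pbzirr}, which already classifies the nontrivial PBZ$^{\ast}$--lattices in which every proper element is join--irreducible (equivalently, meet--irreducible). The two easy directions I would dispatch first. The equivalence (\ref{aolirrch1}) $\Leftrightarrow$ (\ref{aolirrch2}) is formal: the Kleene complement $^{\prime}$ is a dual lattice automorphism of $\mathbf{L}_{l}$ sending $0$ to $1$ and $1$ to $0$, so it restricts to a self--bijection of $L\setminus\{0,1\}$ that interchanges join--irreducible and meet--irreducible elements; hence all proper elements are join--irreducible iff all proper elements are meet--irreducible. The implication (\ref{aolirrch3}) $\Rightarrow$ (\ref{aolirrch1}) (and $\Rightarrow$ (\ref{aolirrch2})) is also immediate, since in a chain every element other than $0$ is join--irreducible and every element other than $1$ is meet--irreducible, so each element of $L\setminus\{0,1\}$ is both.

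The substance is the implication (\ref{aolirrch1}) $\Rightarrow$ (\ref{aolirrch3}). Assuming (\ref{aolirrch1}) and noting that $\mathbf{L}$, being a nontrivial antiortholattice, is in particular a nontrivial PBZ$^{\ast}$--lattice, Proposition~\ref{pbzirr} supplies a cardinal $\kappa$ and a (nontrivial) antiortholattice chain $\mathbf{A}$ with $\mathbf{L}={\bf MO}_{\kappa}\boxplus\mathbf{A}$. By Lemma~\ref{hsumolaol}, $\mathbf{S}(\mathbf{L})=\mathbf{S}({\bf MO}_{\kappa}\boxplus\mathbf{A})={\bf MO}_{\kappa}$. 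But $\mathbf{L}$ is an antiortholattice, so $S(\mathbf{L})=\{0,1\}$, forcing $|{\bf MO}_{\kappa}|=2$. Since $|{\bf MO}_{\kappa}|\geq 4$ whenever $\kappa\geq 1$, this is only possible for $\kappa=0$, i.e. ${\bf MO}_{\kappa}={\bf MO}_{0}=\mathbf{D}_{2}$. Consequently $\mathbf{L}={\bf MO}_{0}\boxplus\mathbf{A}=\mathbf{D}_{2}\boxplus\mathbf{A}=\mathbf{A}$, whence $\mathbf{L}_{l}$ is a chain, establishing (\ref{aolirrch3}).

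I do not expect a genuine obstacle here, as the result is essentially a specialization of Proposition~\ref{pbzirr}: the only new observation is that the antiortholattice hypothesis kills the sharp part, and a nontrivial ${\bf MO}_{\kappa}$ block would contribute extra sharp elements, so it cannot occur. Should one prefer a proof independent of Proposition~\ref{pbzirr}, the natural route is to re--run its incomparability analysis, which shows that $L\setminus S(\mathbf{L})$ is linearly ordered; for an antiortholattice $L\setminus S(\mathbf{L})=L\setminus\{0,1\}$, and since $0$ and $1$ are comparable to every element, this already yields that $\mathbf{L}_{l}$ is a chain. Either way the hardest single step is the comparability argument, which is available ready--made in Proposition~\ref{pbzirr}.
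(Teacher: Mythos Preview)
Your proof is correct and follows essentially the same approach as the paper: invoke Proposition~\ref{pbzirr} and then use that an antiortholattice has $S(\mathbf{L})=\{0,1\}$ to kill the $\mathbf{MO}_{\kappa}$ summand. The paper compresses this to a single line (``By Proposition~\ref{pbzirr} and the fact that $\mathbf{S}(\mathbf{L})\cong\mathbf{D}_{2}$''), but your expansion, including the appeal to Lemma~\ref{hsumolaol} and the explicit handling of the easy directions, is exactly the intended argument.
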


\begin{proof} By Proposition \ref{pbzirr} and the fact that $\mathbf{S}(\mathbf{L})\cong {\bf D}_2$.\end{proof}

\begin{corollary}
Let $\mathbf{L}$ be a nontrivial PBZ$^{\ast}$ --lattice that satisfies $J1$.
Then: all elements of $T(\mathbf{L})\setminus\{0,1\}$ are join--irreducible in
$\mathbf{L}_{l}$ iff $\mathbf{L}$ is a horizontal sum of an orthomodular
lattice with an antiortholattice chain.\label{j1irraolch}
\end{corollary}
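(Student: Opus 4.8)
The plan is to prove the two implications separately, in each case reducing to the structural results already available: Theorem \ref{charg} and Lemma \ref{hsumolaol} relate the decomposition $\mathbf{S}(\mathbf{L})\boxplus\mathbf{T}(\mathbf{L})$ to the condition $L=S(\mathbf{L})\cup T(\mathbf{L})$, while Proposition \ref{aolirrch} and Lemma \ref{2joinirred} handle the join--irreducibility bookkeeping. For the easier right-to-left implication, I would suppose $\mathbf{L}=\mathbf{A}\boxplus\mathbf{B}$ with $\mathbf{A}\in\mathbb{OML}$ and $\mathbf{B}$ an antiortholattice chain. By Lemma \ref{hsumolaol} we have $T(\mathbf{L})=B$, so any $x\in T(\mathbf{L})\setminus\{0,1\}$ lies in the summand $\mathbf{B}$ and is distinct from the shared bounds. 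Since in a horizontal sum the elements of $A\setminus\{0,1\}$ are incomparable to those of $B\setminus\{0,1\}$, the principal ideal of $x$ in $\mathbf{L}_{l}$ coincides with that of $x$ in $\mathbf{B}_{l}$; as $\mathbf{B}_{l}$ is a chain, $x$ has a unique lower cover and is join--irreducible in $\mathbf{B}_{l}$, hence in $\mathbf{L}_{l}$.

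For the forward implication the crux is to establish $L=S(\mathbf{L})\cup T(\mathbf{L})$, after which Theorem \ref{charg} gives $\mathbf{L}=\mathbf{S}(\mathbf{L})\boxplus\mathbf{T}(\mathbf{L})$ with $\mathbf{S}(\mathbf{L})$ orthomodular. Here is the decisive idea. Let $x\in L\setminus S(\mathbf{L})$, so that $x\notin\{0,1\}$ since $\{0,1\}\subseteq S(\mathbf{L})$. I would show that $x\vee x^{\prime}$ is a dense element distinct from $0$ and $1$: it is $\neq 1$ exactly because $x\notin S(\mathbf{L})$, it is $\neq 0$ since $x\neq 0$, and $(x\vee x^{\prime})^{\sim}=0$ by Lemma \ref{basics}.(v), because $(x\vee x^{\prime})^{\prime}=x\wedge x^{\prime}\leq x\vee x^{\prime}$. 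Thus $x\vee x^{\prime}\in T(\mathbf{L})\setminus\{0,1\}$, so by hypothesis it is join--irreducible in $\mathbf{L}_{l}$; being presented as the join $x\vee x^{\prime}$, this forces $x$ and $x^{\prime}$ to be comparable.

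From comparability I would split into two cases. If $x^{\prime}\leq x$, then $x^{\sim}=0$ by Lemma \ref{basics}.(v), so $x\in T(\mathbf{L})$. If $x\leq x^{\prime}$, then $x^{\prime}$ is itself a join--irreducible element of $T(\mathbf{L})\setminus\{0,1\}$ dominating the nonzero element $x$, so Lemma \ref{2joinirred}.(\ref{2joinirred5}) yields $x\in T(\mathbf{L})$. Either way $x\in T(\mathbf{L})$, which proves $L=S(\mathbf{L})\cup T(\mathbf{L})$. To finish, since $\mathbf{T}(\mathbf{L})_{l}$ is a sublattice of $\mathbf{L}_{l}$, joins are computed identically, so every element of $T(\mathbf{L})\setminus\{0,1\}$ remains join--irreducible in $\mathbf{T}(\mathbf{L})_{l}$; as $\mathbf{T}(\mathbf{L})$ is an antiortholattice, Proposition \ref{aolirrch} gives that $\mathbf{T}(\mathbf{L})_{l}$ is a chain, so $\mathbf{L}=\mathbf{S}(\mathbf{L})\boxplus\mathbf{T}(\mathbf{L})$ exhibits $\mathbf{L}$ as the horizontal sum of the orthomodular lattice $\mathbf{S}(\mathbf{L})$ with the antiortholattice chain $\mathbf{T}(\mathbf{L})$.

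I expect the only non-routine obstacle to be the identity $L=S(\mathbf{L})\cup T(\mathbf{L})$ in the forward direction; everything else is an assembly of prior results. The decisive move is to feed the \emph{dense} join--irreducible element $x\vee x^{\prime}$ into the hypothesis, since this is precisely what forces an element and its Kleene complement to be comparable, and comparability of $x$ with $x^{\prime}$ is exactly the dichotomy that separates the sharp elements from the dense ones.
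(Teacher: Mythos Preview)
Your proof is correct and follows essentially the same approach as the paper's. The paper compresses your forward argument by invoking Proposition~\ref{jirred} directly (since $a\leq a\vee a'$ with $a\vee a'\in D(\mathbf{L})$ join--irreducible forces $a\in T(\mathbf{L})$), whereas you unwind this via the comparability case--split and Lemma~\ref{2joinirred}.(\ref{2joinirred5}); the remaining steps (Theorem~\ref{charg} and Proposition~\ref{aolirrch}) are identical, and your detour through comparability of $x,x'$ is harmless but unnecessary, since $x\leq x\vee x'$ already suffices for Lemma~\ref{2joinirred}.(\ref{2joinirred5}).
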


\begin{proof}
The converse is trivial. For the direct implication, let $a\in L\setminus
S(\mathbf{L})$, so that $a\leq a\vee a^{\prime}\in T(\mathbf{L})\setminus
\{0,1\}$, hence $a\in T(\mathbf{L})$ by Proposition \ref{jirred}, so
$L=S(\mathbf{L})\cup T(\mathbf{L})$, therefore $\mathbf{L}\in\mathbb{OML}
\boxplus\mathbb{AOL} $ and $\mathbf{T}(\mathbf{L})$ is a subalgebra of
$\mathbf{L}$ and thus an antiortholattice by Theorem \ref{charg}. Furthermore, Proposition \ref{aolirrch} ensures us that $\mathbf{T}(\mathbf{L})$ is an
antiortholattice chain.
\end{proof}

\begin{corollary}
\label{irraolch}Let $\mathbf{L}$ be a nontrivial PBZ$^{\ast}$ --lattice.

\begin{enumerate}
\item \label{irraolch1} If all elements of $T(\mathbf{L})\setminus\{0,1\}$ are
join--irreducible in $\mathbf{L}_{l}$, then $L=S(\mathbf{L})\cup
T(\mathbf{L})\cup T(\mathbf{L})^{\prime}$ and $T(\mathbf{L})\setminus
T(\mathbf{L})^{\prime}\subseteq\{x\in L:x\geq x^{\prime}\}$. The converse is not true, even if $\mathbf{L}$ fulfills $J1$.

\item \label{irraolch2} All elements of $D(\mathbf{L})$ are join--irreducible
in $\mathbf{L}_{l}$ iff $\mathbf{L}$ is an antiortholattice chain.
\end{enumerate}
\end{corollary}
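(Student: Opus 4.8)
The plan is to reduce everything to one elementary observation: for every $a\in L$ one has $a\vee a^{\prime}\in T(\mathbf{L})$ and, dually, $a\wedge a^{\prime}\in T(\mathbf{L})^{\prime}$. Indeed $(a\vee a^{\prime})^{\prime}=a\wedge a^{\prime}\leq a\vee a^{\prime}$, so Lemma \ref{basics}.(v) gives $(a\vee a^{\prime})^{\sim}=0$, i.e. $a\vee a^{\prime}\in D(\mathbf{L})\subseteq T(\mathbf{L})$, and the statement for $a\wedge a^{\prime}$ follows by applying the Kleene complement. I would also record that $a\in S(\mathbf{L})$ iff $a\vee a^{\prime}=1$. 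None of the main equivalences needs $J1$; that hypothesis enters only in the counterexample.

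For (\ref{irraolch1}), fix $x\in L$ with $x\notin S(\mathbf{L})$. Then $x\vee x^{\prime}$ is dense, nonzero, and different from $1$ (the last precisely because $x\notin S(\mathbf{L})$), so $x\vee x^{\prime}\in T(\mathbf{L})\setminus\{0,1\}$ and the hypothesis forces it to be join--irreducible, whence $x\vee x^{\prime}\in\{x,x^{\prime}\}$. If $x\vee x^{\prime}=x$ then $x^{\prime}\leq x$, so $x\in D(\mathbf{L})\subseteq T(\mathbf{L})$ by Lemma \ref{basics}.(v); if $x\vee x^{\prime}=x^{\prime}$ then $x\leq x^{\prime}$, so $x^{\prime}\in D(\mathbf{L})$ and $x\in T(\mathbf{L})^{\prime}$. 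This yields $L=S(\mathbf{L})\cup T(\mathbf{L})\cup T(\mathbf{L})^{\prime}$. For the inclusion I would take $x\in T(\mathbf{L})\setminus T(\mathbf{L})^{\prime}$; since $0,1\in T(\mathbf{L})\cap T(\mathbf{L})^{\prime}$ we get $x\in D(\mathbf{L})\setminus\{1\}$, so again $x\vee x^{\prime}\in T(\mathbf{L})\setminus\{0,1\}$ is join--irreducible; the alternative $x\vee x^{\prime}=x^{\prime}$ would place $x$ in $T(\mathbf{L})^{\prime}$, contradicting the choice of $x$, hence $x\vee x^{\prime}=x$, i.e. $x\geq x^{\prime}$.

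To refute the converse I would take any antiortholattice whose lattice reduct is not a chain, for instance the canonical antiortholattice on $\mathbf{D}_{2}^{2}\oplus\mathbf{D}_{2}^{2}$. For such $\mathbf{L}$ one has $S(\mathbf{L})=\{0,1\}$ and $T(\mathbf{L})=T(\mathbf{L})^{\prime}=L$, so both conclusions of (\ref{irraolch1}) hold trivially, and $\mathbf{L}\vDash J0$ (hence $\mathbf{L}\vDash J1$); yet, not being a chain, $\mathbf{L}$ has by Proposition \ref{aolirrch} a join--reducible element in $T(\mathbf{L})\setminus\{0,1\}=L\setminus\{0,1\}$, so the hypothesis of (\ref{irraolch1}) fails. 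This counterexample is the only genuinely non--routine step, the point being to recognise that \emph{every} non--chain antiortholattice already does the job.

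For (\ref{irraolch2}), the right--to--left direction is immediate: an antiortholattice chain has $D(\mathbf{L})=L\setminus\{0\}$, and every non--bottom element of a chain is join--irreducible. For the converse I would first note $1\in D(\mathbf{L})$ (since $1^{\sim}\leq 1^{\prime}=0$), so $1$ is join--irreducible by hypothesis; were there a $u\in S(\mathbf{L})\setminus\{0,1\}$, then $1=u\vee u^{\prime}$ with $u,u^{\prime}\neq 1$ would contradict this, so $S(\mathbf{L})=\{0,1\}$ and $\mathbf{L}$ is an antiortholattice. Then $D(\mathbf{L})=L\setminus\{0\}$, so all elements of $L\setminus\{0,1\}$ are join--irreducible, and Proposition \ref{aolirrch} concludes that $\mathbf{L}_{l}$ is a chain.
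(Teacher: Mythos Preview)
Your argument for the direct implications in (\ref{irraolch1}) and for both directions of (\ref{irraolch2}) is correct and coincides with the paper's: pick $x\notin S(\mathbf{L})$, observe that $x\vee x^{\prime}\in T(\mathbf{L})\setminus\{0,1\}$, and use join--irreducibility to force $x\vee x^{\prime}\in\{x,x^{\prime}\}$; for (\ref{irraolch2}), use that $1\in D(\mathbf{L})$ is join--irreducible to conclude $S(\mathbf{L})=\{0,1\}$ and then invoke Proposition~\ref{aolirrch}. Your remark that $J1$ is nowhere used in the forward direction is also in line with the paper.

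The one genuine difference is the counterexample for the converse of (\ref{irraolch1}). You take any non--chain antiortholattice, e.g.\ the canonical antiortholattice on $\mathbf{D}_{2}^{2}\oplus\mathbf{D}_{2}^{2}$, where $T(\mathbf{L})=T(\mathbf{L})^{\prime}=L$ makes both conclusions hold trivially while Proposition~\ref{aolirrch} supplies a join--reducible element of $L\setminus\{0,1\}$; this is certainly valid and pleasantly conceptual. The paper instead uses $\mathbf{K}=\mathbf{D}_{2}^{2}\boxplus(\mathbf{D}_{2}\times\mathbf{D}_{3})$ from Example~\ref{exfail12}, which lies in $\mathbb{OML}\boxplus V(\mathbb{AOL})$ but outside $\mathbb{AOL}$. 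The paper's example has the extra feature that $T(\mathbf{K})\setminus T(\mathbf{K})^{\prime}=\{t^{\prime}\}\neq\emptyset$, so the inclusion $T(\mathbf{K})\setminus T(\mathbf{K})^{\prime}\subseteq\{x:x\geq x^{\prime}\}$ is witnessed non--vacuously; in your antiortholattice example that inclusion is vacuous. Both are legitimate counterexamples, yours being simpler and the paper's slightly more informative.
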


\begin{proof} (\ref{irraolch1}) For the direct implication, consider $a\in L\setminus
S(\mathbf{L})$, so that $a\vee a^{\prime}\in T(\mathbf{L})\setminus\{0,1\}$,
hence $a=a\vee a^{\prime}\in T(\mathbf{L})\setminus\{0,1\}$ or $a^{\prime
}=a\vee a^{\prime}\in T(\mathbf{L})\setminus\{0,1\}$, so that $a\in
T(\mathbf{L})^{\prime}\setminus\{0,1\}$ and, if $a\notin T(\mathbf{L}%
)^{\prime}$, then $a=a\vee a^{\prime}$, that is $a\geq a^{\prime}$. To disprove the converse, see below the PBZ$^{\ast}$ --lattice $\mathbf{K}$ in Example \ref{exfail12}, which fulfills $K=S(\mathbf{K})\cup T(\mathbf{K})\cup
T(\mathbf{K})^{\prime}$, but in which the element $t^{\prime}\in
T(\mathbf{K})\setminus\{0,1\}$ is join--reducible. Moreover, $T(\mathbf{K})\setminus T(\mathbf{K})^{\prime}=\{t^{\prime}\}$ and $t^{\prime}\geq t$. Also, $\mathbf{K}\in\mathbb{OML}\boxplus V(\mathbb{AOL})$, so $\mathbf{K}\vDash J1$ by Corollary \ref{axvarhsum}.

\noindent (\ref{irraolch2}) If all
elements of $D(\mathbf{L})$ are join--irreducible in $\mathbf{L}_{l}$, then
$1$ is join--irreducible in $\mathbf{L}_{l}$, hence $\mathbf{L}$ is an
antiortholattice, thus $\mathbf{L}$ is an antiortholattice chain by Proposition \ref{aolirrch}. The converse is trivial.
\end{proof}

\begin{proposition}
\label{vaolgent} $V(\mathbb{AOL})\subsetneq\{\mathbf{L}\in\mathbb{PBZL}^{\ast}:\mathbf{L}=\langle T(\mathbf{L})\rangle_{\mathbb{BZL}}\}$.
\end{proposition}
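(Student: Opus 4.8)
The plan is to prove the inclusion and its strictness separately. Write $\mathcal{K}=\{\mathbf{L}\in\PBZs:\mathbf{L}=\langle T(\mathbf{L})\rangle_{\BZ}\}$. For the inclusion $V(\AOL)\subseteq\mathcal{K}$, since $V(\AOL)=\H\S\P(\AOL)$, it suffices to show that $\AOL\subseteq\mathcal{K}$ and that $\mathcal{K}$ is closed under each of $\P$, $\S$ and $\H$; the inclusion then follows by applying the operators in the order $\P$, then $\S$, then $\H$. The base case is immediate: by Lemma \ref{olsaolt}, every antiortholattice $\mathbf{A}$ satisfies $T(\mathbf{A})=A$, so $\langle T(\mathbf{A})\rangle_{\BZ}=A$ and $\mathbf{A}\in\mathcal{K}$.

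The three closure properties all follow from the formulas collected in Lemma \ref{prodsubquo} together with the generation identities of Subsection \ref{ualglat}. For products, Lemma \ref{prodsubquo} gives $\langle T(\prod_{i\in I}\mathbf{L}_i)\rangle_{\BZ}=\prod_{i\in I}\langle T(\mathbf{L}_i)\rangle_{\BZ}$, so if every factor lies in $\mathcal{K}$ then so does the product. For subalgebras, Lemma \ref{prodsubquo} yields, for $\mathbf{M}\in\S_{\BZ}(\mathbf{L})$, the identity $\langle T(\mathbf{M})\rangle_{\BZ}=\langle T(\mathbf{L})\rangle_{\BZ}\cap\mathbf{M}$; when $\mathbf{L}\in\mathcal{K}$ this equals $L\cap M=M$, whence $\mathbf{M}\in\mathcal{K}$. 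For quotients, fix $\theta\in\mathrm{Con}_{\BZ}(\mathbf{L})$ with $\mathbf{L}\in\mathcal{K}$; applying the identity $\langle S\rangle_{\BZ,\mathbf{L}}/\theta=\langle S/\theta\rangle_{\BZ,\mathbf{L}/\theta}$ (item \ref{quosubalg}) with $S=T(\mathbf{L})$ gives $L/\theta=\langle T(\mathbf{L})/\theta\rangle_{\BZ}$, and since $T(\mathbf{L})/\theta\subseteq T(\mathbf{L}/\theta)$ by Lemma \ref{prodsubquo} we obtain $L/\theta=\langle T(\mathbf{L})/\theta\rangle_{\BZ}\subseteq\langle T(\mathbf{L}/\theta)\rangle_{\BZ}\subseteq L/\theta$, forcing $\mathbf{L}/\theta\in\mathcal{K}$. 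Thus $\mathcal{K}$ is closed under $\H$, $\S$, $\P$, so $V(\AOL)\subseteq\mathcal{K}$.

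For the strict inequality, the plan is to exhibit a single PBZ$^{\ast}$--lattice $\mathbf{L}\in\mathcal{K}\setminus V(\AOL)$. Non--membership in $V(\AOL)$ is detected by Proposition \ref{svaolbool}: every member of $V(\AOL)$ has a Boolean algebra of sharp elements, so it suffices to build $\mathbf{L}$ with $\mathbf{L}=\langle T(\mathbf{L})\rangle_{\BZ}$ but $\mathbf{S}(\mathbf{L})$ non--Boolean, say $\mathbf{S}(\mathbf{L})\cong{\bf MO}_2$. The crucial mechanism is that, while joins of dense elements remain dense, the meet of two dense elements need not be dense: Lemma \ref{basics}.(iv) imposes no lower bound forcing $(d_1\wedge d_2)^{\sim}=0$, so a meet of incomparable dense elements can be a nontrivial sharp element. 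I would therefore design a finite paraorthomodular lattice in which the atoms of a copy of ${\bf MO}_2$ each arise as a meet of suitable incomparable dense elements, equip it with the induced Kleene complement and a Brouwer complement making it a PBZ$^{\ast}$--lattice; then the $\BZ$--subalgebra generated by $T(\mathbf{L})$ already contains all of $\mathbf{S}(\mathbf{L})$, hence equals $\mathbf{L}$, placing $\mathbf{L}$ in $\mathcal{K}$, while $\mathbf{S}(\mathbf{L})\cong{\bf MO}_2\notin\BA$ places it outside $V(\AOL)$.

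The inclusion half is essentially bookkeeping with the cited lemmas, so the main obstacle is the strictness witness. First, the witness cannot be a horizontal sum of the form $\mathbf{A}\boxplus\mathbf{B}$ with $\mathbf{A}\in\OML$ and $\mathbf{B}\in\AOL$: there $T(\mathbf{L})=B$ is already a subalgebra (Lemma \ref{hsumolaol}, Theorem \ref{charg}), so $\langle T(\mathbf{L})\rangle_{\BZ}=\mathbf{B}\neq\mathbf{L}$, and meets of dense elements never leave the summand $\mathbf{B}$. Hence the construction must be genuinely mixed, with dense elements that actually meet across the lattice to populate the whole non--Boolean sharp part. Second, one must verify that the concrete candidate is a bona fide PBZ$^{\ast}$--lattice, namely that its involution lattice reduct is paraorthomodular and that the chosen trivial-free Brouwer complement satisfies condition $(\ast)$. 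These are finite but delicate checks, and the whole difficulty lies in positioning the dense elements so that their meets recover every element of $\mathbf{S}(\mathbf{L})$ while the PBZ$^{\ast}$ axioms are preserved.
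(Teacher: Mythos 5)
Your first half --- the inclusion $V(\AOL)\subseteq\{\mathbf{L}\in\PBZs:\mathbf{L}=\langle T(\mathbf{L})\rangle_{\BZ}\}$ --- is correct and is essentially identical to the paper's argument: base case from Lemma \ref{olsaolt}, then closure under $\P$, $\S$ and $\H$ via the identities of Lemma \ref{prodsubquo} and item (\ref{quosubalg}), applied in the order $\P$, $\S$, $\H$. No complaints there.

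The strictness half, however, is a genuine gap: you never produce a witness. You describe a \emph{plan} ("I would therefore design a finite paraorthomodular lattice in which \dots") and then concede that "the whole difficulty lies in positioning the dense elements so that their meets recover every element of $\mathbf{S}(\mathbf{L})$ while the PBZ$^{\ast}$ axioms are preserved" --- but that difficulty is precisely the content of the claim, so the proof is incomplete as it stands. Moreover, the specific witness you aim for is not the one the paper uses, and it is not clear it exists: the paper's witness is the explicit eleven--element algebra $\mathbf{M}$ of Example \ref{exfail12}, whose sharp part is $\mathbf{S}(\mathbf{M})\cong\mathbf{D}_2^2$, i.e.\ \emph{Boolean}, so your proposed detection mechanism via Proposition \ref{svaolbool} would not see it; instead, $\mathbf{M}\notin V(\AOL)$ is certified by the failure of $S1$ (a consequence of WSDM, which $V(\AOL)$ satisfies). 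Also, in $\mathbf{M}$ the generation $\langle T(\mathbf{M})\rangle_{\BZ}=\mathbf{M}$ does not proceed by taking meets of incomparable dense elements to land on nontrivial sharp elements: one has $u=(u')'$ and $v=(v')'$ with $u',v'\in D(\mathbf{M})$, and then $a=u^{\sim}$, $a'=v^{\sim}$, so the Kleene involution followed by the Brouwer complement does the work. Your side remark that the witness cannot lie in $\OML\boxplus\AOL$ is correct, but until a concrete algebra is exhibited and verified to be a PBZ$^{\ast}$--lattice satisfying $\mathbf{L}=\langle T(\mathbf{L})\rangle_{\BZ}$ and $\mathbf{L}\notin V(\AOL)$, the proper inclusion is not established.
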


\begin{proof} By Lemma \ref{olsaolt}, $\mathbf{L}=\mathbf{T}(\mathbf{L})=\langle T(\mathbf{L})\rangle_{\mathbb{BZL}}$ for any $\mathbf{L}\in\mathbb{AOL}$.

By Lemma \ref{prodsubquo}, it follows that, for any non--empty family
$(\mathbf{L}_{i})_{i\in I}\subseteq\mathbb{AOL}$, $\prod_{i\in I}%
\mathbf{L}_{i}=\prod_{i\in I}\langle T(\mathbf{L}_{i})\rangle_{\mathbb{BZL}%
}=\langle T(\prod_{i\in I}\mathbf{L}_{i})\rangle_{\mathbb{BZL}}$, hence
$\mathbf{L}=\langle T(\mathbf{L})\rangle_{\mathbb{BZL}}$ for any
$\mathbf{L}\in \P (\mathbb{AOL})$.

Again by Lemma \ref{prodsubquo}, we obtain that, for any $\mathbf{A}\in
\P (\mathbb{AOL})$ and any subalgebra $\mathbf{B}$ of $\mathbf{A}$, $\langle
T(\mathbf{B})\rangle_{\mathbb{BZL}}=\langle T(\mathbf{A})\rangle
_{\mathbb{BZL}}\cap\mathbf{B}=\mathbf{A}\cap\mathbf{B}=\mathbf{B}$, hence
$\mathbf{L}=\langle T(\mathbf{L})\rangle_{\mathbb{BZL}}$ for any
$\mathbf{L}\in \S \P (\mathbb{AOL})$.

We apply Lemma \ref{prodsubquo} once again and obtain that, for any
$\mathbf{A}\in \S \P (\mathbb{AOL})$ and any $\theta\in\mathrm{Con}_{\mathbb{BZL}%
}(\mathbf{A})$, $\mathbf{A}/\theta=\langle T(\mathbf{A})\rangle_{\mathbb{BZL}%
}/\theta=\langle T(\mathbf{A})/\theta\rangle_{\mathbb{BZL}}\subseteq\langle
T(\mathbf{A}/\theta)\rangle_{\mathbb{BZL}}\subseteq\mathbf{A}/\theta$, hence
$\mathbf{A}/\theta=\langle T(\mathbf{A}/\theta)\rangle_{\mathbb{BZL}}$,
therefore $\mathbf{L}=\langle T(\mathbf{L})\rangle_{\mathbb{BZL}}$ for any
$\mathbf{L}\in$\linebreak$\H \S \P (\mathbb{AOL})=V(\mathbb{AOL})$.

Hence $V(\mathbb{AOL})\subseteq\{\mathbf{L}\in\mathbb{PBZL}^{\ast}:\mathbf{L}=\langle T(\mathbf{L})\rangle_{\mathbb{BZL}}\}$. The \PBZ --lattice ${\bf M}$ in Example \ref{exfail12} below disproves the converse inclusion.\end{proof}

\begin{corollary}
\label{aolvaol}Let $\mathbf{L}\in V(\mathbb{AOL})$. Then the following are equivalent:

\begin{enumerate}
\item \label{aolvaol1} $T(\mathbf{L})$ is a subuniverse of $\mathbf{L}$;

\item \label{aolvaol2} $T(\mathbf{L})$ is closed w.r.t. the Kleene complement;

\item \label{aolvaol3} $\mathbf{L}$ is an antiortholattice.
\end{enumerate}
\end{corollary}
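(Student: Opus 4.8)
The plan is to reduce the whole statement to two facts already at our disposal: the structural characterisation of $T(\mathbf{L})$ recorded in the third bullet of Lemma~\ref{tfilter}, valid in \emph{every} PBZ$^{\ast}$--lattice, and the generation result of Proposition~\ref{vaolgent}, which is the only place where the hypothesis $\mathbf{L}\in V(\mathbb{AOL})$ will be used. I would first settle the equivalence (\ref{aolvaol1})~$\Leftrightarrow$~(\ref{aolvaol2}), then dispose of (\ref{aolvaol3})~$\Rightarrow$~(\ref{aolvaol1}), and close the cycle with the only substantive implication, (\ref{aolvaol1})~$\Rightarrow$~(\ref{aolvaol3}).

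For (\ref{aolvaol1})~$\Leftrightarrow$~(\ref{aolvaol2}) there is nothing to do beyond invoking Lemma~\ref{tfilter}: its third item asserts precisely that, for any PBZ$^{\ast}$--lattice, $T(\mathbf{L})$ is closed w.r.t. the Kleene complement iff $T(\mathbf{L})$ is a subuniverse of $\mathbf{L}$ (iff $T(\mathbf{L})=T(\mathbf{L})^{\prime}$). For (\ref{aolvaol3})~$\Rightarrow$~(\ref{aolvaol1}), if $\mathbf{L}$ is an antiortholattice then the antiortholattice characterisation of Lemma~\ref{olsaolt} gives $T(\mathbf{L})=L$, which is trivially a subuniverse of $\mathbf{L}$. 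Both of these steps are immediate and use no hypothesis on the variety.

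The crux is (\ref{aolvaol1})~$\Rightarrow$~(\ref{aolvaol3}), and here I expect the only (mild) obstacle, namely recognising that Proposition~\ref{vaolgent} already does the work. Assume $T(\mathbf{L})$ is a subuniverse of $\mathbf{L}$; then the subalgebra it generates is itself, i.e. $\langle T(\mathbf{L})\rangle_{\mathbb{BZL}}=T(\mathbf{L})$ as universes. On the other hand, since $\mathbf{L}\in V(\mathbb{AOL})$, Proposition~\ref{vaolgent} yields $\mathbf{L}=\langle T(\mathbf{L})\rangle_{\mathbb{BZL}}$, so, reading this as an equality of universes, $L=\langle T(\mathbf{L})\rangle_{\mathbb{BZL}}=T(\mathbf{L})$. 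Finally $T(\mathbf{L})=L$ forces $\mathbf{L}$ to be an antiortholattice by Lemma~\ref{olsaolt}, which closes the cycle. The essential point is simply that the inclusion $V(\mathbb{AOL})\subseteq\{\mathbf{L}:\mathbf{L}=\langle T(\mathbf{L})\rangle_{\mathbb{BZL}}\}$ upgrades ``$T(\mathbf{L})$ is a subuniverse'' to ``$T(\mathbf{L})=L$''; the hypothesis $\mathbf{L}\in V(\mathbb{AOL})$ is genuinely needed, as for instance $\mathbf{MO}_2\boxplus\mathbf{D}_3$ is a PBZ$^{\ast}$--lattice (by Proposition~\ref{hsumkl}) with $\mathbf{T}(\mathbf{MO}_2\boxplus\mathbf{D}_3)=\mathbf{D}_3$ a subuniverse (by Lemma~\ref{hsumolaol}) that is not an antiortholattice, and which fails to lie in $V(\mathbb{AOL})$ since its orthomodular skeleton $\mathbf{MO}_2$ is not Boolean.
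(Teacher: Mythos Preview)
Your proof is correct and follows essentially the same approach as the paper: both use Lemma~\ref{tfilter} for (\ref{aolvaol1})~$\Leftrightarrow$~(\ref{aolvaol2}), and both derive (\ref{aolvaol1})~$\Leftrightarrow$~(\ref{aolvaol3}) by combining Proposition~\ref{vaolgent} (which gives $\mathbf{L}=\langle T(\mathbf{L})\rangle_{\mathbb{BZL}}$ for $\mathbf{L}\in V(\mathbb{AOL})$) with Lemma~\ref{olsaolt}. Your added counterexample $\mathbf{MO}_2\boxplus\mathbf{D}_3$ illustrating the necessity of the hypothesis is a nice touch not present in the paper's proof.
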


\begin{proof}
(i) $\Leftrightarrow$ (ii). By Lemma \ref{tfilter}.

(i) $\Leftrightarrow$ (iii). For any BZ--lattice $\mathbf{L}$, $T(\mathbf{L})$
is the universe of a subalgebra of $\mathbf{L}$ iff $\langle T(\mathbf{L}%
)\rangle_{\mathbb{BZL}}=\mathbf{T}(\mathbf{L})$, hence, by Proposition
\ref{vaolgent} and Lemma \ref{olsaolt}, if $\mathbf{L}\in V(\mathbb{AOL})$,
then: $T(\mathbf{L})$ is a subuniverse of $\mathbf{L}$ iff $\mathbf{L}%
=\mathbf{T}(\mathbf{L})$ iff $\mathbf{L}\in\mathbb{AOL}$.
\end{proof}

\begin{corollary}
If $\mathbf{A}\in\mathbb{OML}\setminus \{{\bf D}_1\}$, $\mathbf{B}\in V(\mathbb{AOL})\setminus \{{\bf D}_1\}$ and $\mathbf{L}=\mathbf{A}\boxplus\mathbf{B}$, then:

\begin{itemize}
\item $A=(L\setminus\langle T(\mathbf{L})\rangle_{\mathbb{BZL} }%
)\cup\{0,1\}\subseteq S(\mathbf{L})$ and $\mathbf{B}=\langle T(\mathbf{L}%
)\rangle_{\mathbb{BZL} }$;

\item $L=S(\mathbf{L})\cup B=S(\mathbf{L})\cup\langle T(\mathbf{L}%
)\rangle_{\mathbb{BZL}}$;

\item $\mathbf{L}\in\mathbb{OML} \boxplus\mathbb{AOL} $ iff $A=S(\mathbf{L})$ iff $B=T(\mathbf{L})$ iff ${\bf B}\in \AOL $.
\end{itemize}

$\mathbb{OML} \boxplus V(\mathbb{AOL} )\subsetneq\{\mathbf{L}\in
\mathbb{PBZL}^{\ast} :L=S(\mathbf{L})\cup\langle T(\mathbf{L})\rangle
_{\mathbb{BZL} }\}\nsubseteq V(\mathbb{OML} \boxplus V(\mathbb{AOL} ))$.
Moreover, $\{\mathbf{L}\in\mathbb{PBZL}^{\ast} :\mathbf{L}=\langle
T(\mathbf{L})\rangle_{\mathbb{BZL} }\}\nsubseteq V(\mathbb{OML} \boxplus
V(\mathbb{AOL} ))$.\label{oml+vaol}
\end{corollary}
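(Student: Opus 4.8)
The plan is to read off the three bullet items from the structural lemmas on horizontal sums and then to settle the two displayed containments by a single counterexample. Throughout write $\mathbf{L}=\mathbf{A}\boxplus\mathbf{B}$ with $\mathbf{A}\in\mathbb{OML}\setminus\{\mathbf{D}_1\}$ and $\mathbf{B}\in V(\mathbb{AOL})\setminus\{\mathbf{D}_1\}$; since $\mathbf{A}$ is orthomodular, $\mathbf{L}\in\mathbb{PBZL}^{\ast}$ by Proposition \ref{hsumkl}.(\ref{hsumkl2}), so $S(\mathbf{L})$ and $T(\mathbf{L})$ are defined. First I would use Lemma \ref{prodsubquo} to get $T(\mathbf{L})=T(\mathbf{B})\subseteq B$, and Proposition \ref{vaolgent} (applicable because $\mathbf{B}\in V(\mathbb{AOL})$) to get $\mathbf{B}=\langle T(\mathbf{B})\rangle_{\mathbb{BZL}}$. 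Because $T(\mathbf{L})\subseteq B$ and $\mathbf{B}$ is a subalgebra of $\mathbf{L}$, the identity $\langle S\cap B\rangle_{\mathbb{BZL},\mathbf{B}}=\langle S\cap B\rangle_{\mathbb{BZL},\mathbf{L}}\cap\mathbf{B}$ recalled in the preliminaries (with $S=T(\mathbf{L})$) forces $\langle T(\mathbf{L})\rangle_{\mathbb{BZL},\mathbf{L}}=\langle T(\mathbf{L})\rangle_{\mathbb{BZL},\mathbf{B}}=\mathbf{B}$. This yields $\mathbf{B}=\langle T(\mathbf{L})\rangle_{\mathbb{BZL}}$; combined with $L=A\cup B$ and $A\cap B=\{0,1\}$ one gets $L\setminus\langle T(\mathbf{L})\rangle_{\mathbb{BZL}}=A\setminus\{0,1\}$, hence $(L\setminus\langle T(\mathbf{L})\rangle_{\mathbb{BZL}})\cup\{0,1\}=A$, while Lemma \ref{ssubhsum} ($\mathbf{S}(\mathbf{L})=\mathbf{A}\boxplus\mathbf{S}(\mathbf{B})$) gives $A\subseteq S(\mathbf{L})$. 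This is the first bullet. The second bullet is then immediate: $S(\mathbf{L})=A\cup S(\mathbf{B})$ with $S(\mathbf{B})\subseteq B$, so $S(\mathbf{L})\cup B=A\cup B=L$, and $B=\langle T(\mathbf{L})\rangle_{\mathbb{BZL}}$ was just shown.

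For the third bullet, the equivalences $A=S(\mathbf{L})\Leftrightarrow B=T(\mathbf{L})\Leftrightarrow\mathbf{B}\in\mathbb{AOL}$, together with the fact that each of these implies $\mathbf{L}\in\mathbb{OML}\boxplus\mathbb{AOL}$, is exactly Proposition \ref{allsallt}. It then remains only to close the cycle by proving $\mathbf{L}\in\mathbb{OML}\boxplus\mathbb{AOL}\Rightarrow\mathbf{B}\in\mathbb{AOL}$. For this I would invoke Theorem \ref{charg}: membership in $\mathbb{OML}\boxplus\mathbb{AOL}$ forces $T(\mathbf{L})$ to be a subuniverse of $\mathbf{L}$; since $T(\mathbf{L})=T(\mathbf{B})\subseteq B$ and $\mathbf{B}$ is a subalgebra, $T(\mathbf{B})$ is then a subuniverse of $\mathbf{B}$, whence Corollary \ref{aolvaol} (valid because $\mathbf{B}\in V(\mathbb{AOL})$) gives $\mathbf{B}\in\mathbb{AOL}$.

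For the displayed two-sided relation, the left inclusion $\mathbb{OML}\boxplus V(\mathbb{AOL})\subseteq\{\mathbf{L}\in\mathbb{PBZL}^{\ast}:L=S(\mathbf{L})\cup\langle T(\mathbf{L})\rangle_{\mathbb{BZL}}\}$ is just the second bullet applied to each nontrivial member (the trivial summand $\mathbf{D}_1$ being handled directly). The strictness of this inclusion and the failure of the right inclusion into $V(\mathbb{OML}\boxplus V(\mathbb{AOL}))$ will both be witnessed by the single PBZ$^{\ast}$--lattice $\mathbf{M}$ of Example \ref{exfail12}: by the proof of Proposition \ref{vaolgent}, $\mathbf{M}=\langle T(\mathbf{M})\rangle_{\mathbb{BZL}}$, so a fortiori $M=S(\mathbf{M})\cup\langle T(\mathbf{M})\rangle_{\mathbb{BZL}}$ and $\mathbf{M}$ lies in the middle class (as well as in the class of the final sentence), while Corollary \ref{somlhvaol}.(\ref{somlhvaol2}) records $\mathbf{M}\notin V(\mathbb{OML}\boxplus V(\mathbb{AOL}))$. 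Since $\mathbb{OML}\boxplus V(\mathbb{AOL})\subseteq V(\mathbb{OML}\boxplus V(\mathbb{AOL}))$, this one $\mathbf{M}$ simultaneously witnesses strictness on the left, non--containment on the right, and the final displayed non--inclusion.

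The routine bookkeeping (the subalgebra--intersection identity, $L=A\cup B$, $A\cap B=\{0,1\}$) carries no difficulty, and the equivalences reduce to Proposition \ref{allsallt} plus one short implication. The genuine content, and hence the main obstacle, is concentrated entirely in the counterexample $\mathbf{M}$: one must have a PBZ$^{\ast}$--lattice generated by its dense elements, with $S(\mathbf{M})$ Boolean yet $\mathbf{M}$ outside the variety $V(\mathbb{OML}\boxplus V(\mathbb{AOL}))$, and verifying that last non--membership is the delicate point, which is precisely what Example \ref{exfail12} and Corollary \ref{somlhvaol} are set up to supply. Everything else follows as a direct consequence of Lemmas \ref{ssubhsum} and \ref{prodsubquo}, Propositions \ref{allsallt} and \ref{vaolgent}, Theorem \ref{charg}, and Corollaries \ref{aolvaol} and \ref{somlhvaol}.
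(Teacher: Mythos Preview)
Your proposal is correct and follows essentially the same route as the paper: Lemmas \ref{ssubhsum} and \ref{prodsubquo} plus Proposition \ref{vaolgent} for the first two bullets, Proposition \ref{allsallt} together with Theorem \ref{charg} and Corollary \ref{aolvaol} for the third, and the PBZ$^{\ast}$--lattice $\mathbf{M}$ of Example \ref{exfail12} for all the non-inclusions. The only cosmetic difference is that the paper passes directly from $T(\mathbf{L})=T(\mathbf{B})$ to $\langle T(\mathbf{L})\rangle_{\mathbb{BZL}}=\mathbf{B}$ without invoking the subalgebra-intersection identity, and it cites Example \ref{exfail12} directly rather than via Corollary \ref{somlhvaol}.
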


\begin{proof}
By Lemma \ref{ssubhsum}, $S(\mathbf{L})=A\cup S(\mathbf{B})\supseteq A$. By
Lemma \ref{prodsubquo} and Proposition \ref{vaolgent}, $T(\mathbf{L}%
)=T(\mathbf{B})$, thus $\mathbf{B}=\langle T(\mathbf{B})\rangle_{\mathbb{BZL}
}=\langle T(\mathbf{L})\rangle_{\mathbb{BZL} }$, hence $\mathbf{L}%
=\mathbf{A}\boxplus\langle T(\mathbf{L})\rangle_{\mathbb{BZL} }$ and
$(L\setminus\langle T(\mathbf{L})\rangle_{\mathbb{BZL} })\cup
\{0,1\}=(L\setminus B)\cup\{0,1\}=A\subseteq S(\mathbf{L})$. Also, $L=A\cup
B\subseteq S(\mathbf{L})\cup B\subseteq A\cup B$, hence $L=S(\mathbf{L})\cup
B=S(\mathbf{L})\cup\langle T(\mathbf{L})\rangle_{\mathbb{BZL} }$.

By Proposition \ref{allsallt}, $A=S(\mathbf{L})$ iff $B=T(\mathbf{L})$ iff ${\bf B}\in \AOL $, which implies $\mathbf{L}\in\mathbb{OML}\boxplus\mathbb{AOL}$.

Now assume that $\mathbf{L}\in\mathbb{OML}\boxplus\mathbb{AOL}$, so that, by Theorem \ref{charg}, $T(\mathbf{L})=T(\mathbf{B})\subseteq B$ is a subuniverse of $\mathbf{L}$ and thus a subuniverse of $\mathbf{B}$ since $\mathbf{B}$ is a subalgebra of $\mathbf{L}$, hence $\mathbf{B}\in\mathbb{AOL}$ by Corollary \ref{aolvaol}.

The \PBZ --lattice ${\bf M}$ in Example \ref{exfail12} below shows the non--inclusions and, along with Proposition \ref{vaolgent}, also the strict inclusion.\end{proof}

\section{Direct Irreducibility in Certain Varieties of \PBZ --lattices}

Recall from \cite{PBZ2} that antiortholattices are directly irreducible, and from \cite{rgcmfp} that, moreover, the class of the directly irreducible members of $V(\AOL )$ is $\AOL $. Now let us see that even the lattice reducts of antiortholattices are directly irreducible. In relation to this property, let us investigate pseudo--Kleene algebras with directly reducible lattice reducts, as well as bounded lattice complements in lattice reducts of antiortholattices.

\begin{proposition}\label{prodbi} Let $\mathbf{A}$ and $\mathbf{B}$ be bounded lattices. Then:\begin{enumerate}
\item\label{prodbi1} if $\mathbf{A},\mathbf{B}\in \BI $ and they are non--trivial, then the direct product of BI--lattices $\mathbf{A}\times \mathbf{B}$, endowed with the trivial
Brouwer complement, fails condition $(\ast)$;
\item\label{prodbi4} if $\mathbf{L}\in \KL $ is such that $\mathbf{L}_l=\mathbf{A}\times \mathbf{B}$, then $(0^{\bf A},1^{\bf B})^{\prime \mathbf{L}}=(1^{\bf A},0^{\bf B})$.\end{enumerate}\end{proposition}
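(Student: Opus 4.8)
The plan is to treat the two parts separately, both organised around the single ``mixed'' pair $e=(0^{\mathbf{A}},1^{\mathbf{B}})$ and $f=(1^{\mathbf{A}},0^{\mathbf{B}})$, which in any direct product of bounded lattices satisfy $e\wedge f=(0^{\mathbf{A}},0^{\mathbf{B}})=0$ and $e\vee f=(1^{\mathbf{A}},1^{\mathbf{B}})=1$.

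For (\ref{prodbi1}) the involution of the BI--lattice $\mathbf{A}\times\mathbf{B}$ acts coordinatewise, and since $0'=1$ and $1'=0$ in every BI--lattice we get $e'=(1^{\mathbf{A}},0^{\mathbf{B}})=f$, whence $e\wedge e'=e\wedge f=0$. Since $\mathbf{B}$ is nontrivial, $e=(0^{\mathbf{A}},1^{\mathbf{B}})\neq0$; since $\mathbf{A}$ is nontrivial, $e'=(1^{\mathbf{A}},0^{\mathbf{B}})\neq0$. Hence, under the trivial Brouwer complement, $(e\wedge e')^{\sim}=0^{\sim}=1$ while $e^{\sim}\vee e^{\prime\sim}=0\vee0=0$; as $\mathbf{A}\times\mathbf{B}$ is nontrivial, $1\neq0$, so condition $(\ast)$ fails. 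This mirrors Lemma \ref{hsumbi}, with the horizontal sum replaced by the direct product.

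The substance is in (\ref{prodbi4}), where $'$ is only the Kleene complement of \emph{some} pseudo--Kleene algebra $\mathbf{L}$ with $\mathbf{L}_{l}=\mathbf{A}\times\mathbf{B}$, so it need not act coordinatewise; the point is that the pseudo--Kleene inequality pins down its value on $e$. Writing $e'=(c_{A},c_{B})$ and $f'=(g_{A},g_{B})$, I would first exploit that $'$ is a dual lattice automorphism: from $e\vee f=1$ we obtain $e'\wedge f'=0$, i.e. $c_{A}\wedge g_{A}=0^{\mathbf{A}}$ and $c_{B}\wedge g_{B}=0^{\mathbf{B}}$, and from $e\wedge f=0$ we obtain $e'\vee f'=1$, i.e. $c_{A}\vee g_{A}=1^{\mathbf{A}}$ and $c_{B}\vee g_{B}=1^{\mathbf{B}}$; thus $c_{A},g_{A}$ and $c_{B},g_{B}$ are complementary pairs in $\mathbf{A}$ and $\mathbf{B}$ respectively.

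Then I would invoke the pseudo--Kleene law $x\wedge x'\leq y\vee y'$ twice. With $x=e$, $y=f$ it gives $e\wedge e'=(0^{\mathbf{A}},c_{B})\leq(1^{\mathbf{A}},g_{B})=f\vee f'$, hence $c_{B}\leq g_{B}$; combined with $c_{B}\wedge g_{B}=0^{\mathbf{B}}$ this forces $c_{B}=0^{\mathbf{B}}$. Symmetrically, with $x=f$, $y=e$ it gives $f\wedge f'=(g_{A},0^{\mathbf{B}})\leq(c_{A},1^{\mathbf{B}})=e\vee e'$, hence $g_{A}\leq c_{A}$; combined with $c_{A}\wedge g_{A}=0^{\mathbf{A}}$ this forces $g_{A}=0^{\mathbf{A}}$, and then $c_{A}=c_{A}\vee g_{A}=1^{\mathbf{A}}$. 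Therefore $e'=(c_{A},c_{B})=(1^{\mathbf{A}},0^{\mathbf{B}})=f$, as claimed (in particular $e$ turns out to be sharp). The main obstacle is exactly that the involution cannot be assumed coordinatewise; the crux is to pair each Kleene inequality ($c_{B}\leq g_{B}$, resp.\ $g_{A}\leq c_{A}$) with the matching De~Morgan complementation identity ($c_{B}\wedge g_{B}=0^{\mathbf{B}}$, resp.\ $c_{A}\wedge g_{A}=0^{\mathbf{A}}$), which collapses the two facts to the single values $0^{\mathbf{B}}$ and $1^{\mathbf{A}}$.
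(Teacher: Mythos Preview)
Your proof is correct and follows essentially the same approach as the paper's. Both parts hinge on the complementary pair $e=(0^{\mathbf{A}},1^{\mathbf{B}})$, $f=(1^{\mathbf{A}},0^{\mathbf{B}})$; for (\ref{prodbi4}) the paper applies the pseudo--Kleene inequality first (obtaining the comparabilities you call $c_B\le g_B$ and $g_A\le c_A$) and then the De~Morgan identities, while you reverse the order, but the ingredients and the way they combine are identical.
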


\begin{proof} In the following, for brevity, we will drop the superscripts.

\noindent (\ref{prodbi1}) If $\cdot ^{\sim }:A\times B\rightarrow A\times B$ is the trivial Brouwer complement, then, in $\mathbf{A}\times \mathbf{B}$, we have: $(0,1)^{\prime }=(0^{\prime },1^{\prime })=(1,0)$ and $(0,1)\neq (1,1)\neq (1,0)$, hence: $(1,1)=(0,0)^{\sim }=((0,1)\wedge (1,0))^{\sim }=((0,1)\wedge (0,1)^{\prime })^{\sim }$, but $(0,1)^{\sim }\vee (0,1)^{\prime \sim }=(0,1)^{\sim }\vee (1,0)^{\sim }=(0,0)\vee (0,0)=(0,0)\neq (1,1)$.

\noindent (\ref{prodbi4}) Let $(0,1)^{\prime }=(a,b)\in L=A\times B$ and $(1,0)^{\prime }=(c,d)\in L=A\times B$. Since $\mathbf{L}\in \KL $, we have $(0,b)=(0,1)\wedge (a,b)\leq (1,0)\vee (c,d)=(1,d)$ and $(a,1)=(0,1)\vee (a,b)\geq (1,0)\wedge (c,d)=(c,0)$, so that $b\leq d$ in $\mathbf{B}$ and $a\geq c$ in $\mathbf{A}$. Hence $(a,d)=(a,b)\vee (c,d)=(0,1)^{\prime }\vee (1,0)^{\prime }=((0,1)\wedge (1,0))^{\prime }=(0,0)^{\prime }=(1,1)$ and $(c,b)=(a,b)\wedge (c,d)=(0,1)^{\prime }\wedge (1,0)^{\prime }=((0,1)\vee (1,0))^{\prime }=(1,1)^{\prime }=(0,0)$, thus $c=0$ and $a=1$ in $\mathbf{A}$, while $b=0$ and $d=1$ in $\mathbf{B}$. Therefore $(0,1)^{\prime }=(a,b)=(1,0)$.\end{proof}

Note that a BI--lattice $\mathbf{L}$ can be directly irreducible while $\mathbf{L}_l$ is directly reducible; indeed, the BI--lattice ${\bf D}_3\boxplus {\bf D}_3$, in which the incomparable elements equal their involutions, is directly irreducible, but its lattice reduct is isomorphic to ${\bf D}_2^2$.

Proposition \ref{prodbi}.(\ref{prodbi1}) shows that the BI--lattice reduct of any antiortholattice is directly irreducible. Moreover, we have:

\begin{proposition} The lattice reduct of any antiortholattice is directly irreducible.\label{aoldirirred}
\end{proposition}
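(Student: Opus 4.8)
The plan is to argue by contradiction, reducing everything to the identity just established in Proposition \ref{prodbi}.(\ref{prodbi4}). Suppose $\mathbf{L}$ is an antiortholattice whose lattice reduct $\mathbf{L}_l$ is directly reducible; then I may write $\mathbf{L}_l = \mathbf{A}\times\mathbf{B}$ for two nontrivial bounded lattices $\mathbf{A}$ and $\mathbf{B}$. The first observation is that every antiortholattice is a pseudo--Kleene algebra, so $\mathbf{L}\in\KL$ and Proposition \ref{prodbi}.(\ref{prodbi4}) applies verbatim: it pins down the Kleene complement on the ``mixed corner'', giving $(0^{\mathbf{A}},1^{\mathbf{B}})^{\prime}=(1^{\mathbf{A}},0^{\mathbf{B}})$.

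From here the contradiction is immediate. Setting $e=(0^{\mathbf{A}},1^{\mathbf{B}})$, I compute $e\wedge e^{\prime}=(0^{\mathbf{A}},1^{\mathbf{B}})\wedge(1^{\mathbf{A}},0^{\mathbf{B}})=(0^{\mathbf{A}},0^{\mathbf{B}})=0$, so $e\in S(\mathbf{L})$. But since $\mathbf{A}$ and $\mathbf{B}$ are both nontrivial we have $0^{\mathbf{A}}\neq 1^{\mathbf{A}}$ and $0^{\mathbf{B}}\neq 1^{\mathbf{B}}$, whence $e\neq 0$ and $e\neq 1$. This yields a sharp element strictly between the bounds, contradicting the defining property $S(\mathbf{L})=\{0,1\}$ of an antiortholattice from Definition \ref{thepbzl}. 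Notice that no hypothesis on the Brouwer complement beyond $\mathbf{L}$ being an antiortholattice enters, and that the argument never needs the direct decomposition to respect the involution.

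The main obstacle is in fact not located in this final step but has already been absorbed into Proposition \ref{prodbi}.(\ref{prodbi4}): a priori a direct decomposition of the \emph{lattice} reduct says nothing about how the Kleene involution distributes across the two factors, yet the pseudo--Kleene inequalities are exactly strong enough to force $(0^{\mathbf{A}},1^{\mathbf{B}})$ to be sent precisely to $(1^{\mathbf{A}},0^{\mathbf{B}})$. Once that identity is in hand, the remainder is a two--line computation. I would therefore present the proof as a brief proof by contradiction, citing Proposition \ref{prodbi}.(\ref{prodbi4}) for the key identity (together with the fact that antiortholattices are pseudo--Kleene algebras) and the definition of antiortholattice for $S(\mathbf{L})=\{0,1\}$, and leaving the computation $e\wedge e^{\prime}=0$ explicit so the reader sees where direct reducibility breaks down.
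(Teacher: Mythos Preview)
Your proof is correct and follows essentially the same approach as the paper's own proof: assume a nontrivial direct decomposition of $\mathbf{L}_l$, apply Proposition \ref{prodbi}.(\ref{prodbi4}) to obtain $(0^{\mathbf A},1^{\mathbf B})'=(1^{\mathbf A},0^{\mathbf B})$, and conclude that $(0^{\mathbf A},1^{\mathbf B})$ is a sharp element outside $\{0,1\}$, contradicting $S(\mathbf L)=\{0,1\}$. The paper's version is just a terser rendition of exactly this argument.
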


\begin{proof} Let $\mathbf{L}\in \AOL $ and assume ex absurdo that $\mathbf{L}_{l}=\mathbf{A}\times\mathbf{B}$ for some non--trivial bounded lattices $\mathbf{A}$ and $\mathbf{B}$. Then $(0,1)^{\prime }=(1,0)$ by Proposition \ref{prodbi}.(\ref{prodbi4}), hence $(0,1)\in S(\mathbf{L})$, which contradicts the fact that $\mathbf{L}$ is an antiortholattice, since $(0,1)\notin \{(0,0),(1,1)\}$.\end{proof}

\begin{proposition} The only complemented elements of the lattice reduct of a distributive antiortholattice are $0$ and $1$.\end{proposition}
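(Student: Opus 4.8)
The plan is to reduce the statement to the assertion that every lattice--complemented element of $\mathbf{L}_l$ is Kleene--sharp, and then to invoke $S(\mathbf{L})=\{0,1\}$. So first I would fix a distributive antiortholattice $\mathbf{L}$ and an element $a\in L$ possessing a lattice complement $b$, i.e. $a\wedge b=0$ and $a\vee b=1$, with the goal of proving $a\wedge a'=0$. Once this is achieved, $a\in S(\mathbf{L})=\{0,1\}$ by the very definition of an antiortholattice, and we are done. Note that distributivity is what makes this reduction nontrivial: in a general pseudo--Kleene algebra a complemented element need not be sharp (e.g. in a direct product like $\mathbf{D}_2\times\mathbf{D}_3$, which is a Kleene algebra but not an antiortholattice).

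The first ingredient I would extract is the identity $a'\wedge b'=0$. Since the Kleene complement is a dual lattice automorphism of $\mathbf{L}_l$, De Morgan gives $a'\wedge b'=(a\vee b)'=1'=0$. The second ingredient is the pseudo--Kleene inequality applied to the pair $(a,b)$, namely $a\wedge a'\le b\vee b'$. The only place where distributivity enters is the next step: meeting this inequality with $a$ and distributing yields $a\wedge a'\le a\wedge(b\vee b')=(a\wedge b)\vee(a\wedge b')=a\wedge b'$, where the first disjunct vanishes because $a\wedge b=0$.

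The decisive move, which I expect to require a little foresight rather than routine computation, is then to meet the inequality $a\wedge a'\le a\wedge b'$ with $a'$: since $a\wedge a'\le a'$, the left-hand side is unchanged, while the right-hand side becomes $a\wedge b'\wedge a'=a\wedge(a'\wedge b')=a\wedge 0=0$. Hence $a\wedge a'=0$, so $a$ is sharp and therefore $a\in\{0,1\}$. I do not anticipate any genuine obstacle beyond spotting this pair of successive meets (first with $a$, then with $a'$); distributivity is used exactly once, the pseudo--Kleene inequality and De Morgan supply the two auxiliary facts, and the antiortholattice hypothesis is invoked only at the end to convert ``sharp'' into ``$0$ or $1$''.
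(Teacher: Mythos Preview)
Your proof is correct and takes essentially the same approach as the paper's: De Morgan gives $a'\wedge b'=0$, the pseudo--Kleene inequality plus one application of distributivity show that a member of the complementary pair is sharp, and the antiortholattice hypothesis finishes. The only cosmetic difference is that the paper computes $b\wedge b'=(a\vee a')\wedge b\wedge b'=(a\wedge b\wedge b')\vee(a'\wedge b\wedge b')=0$ in a single distributive step, whereas you reach $a\wedge a'=0$ via two successive meets with $a$ and then $a'$.
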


\begin{proof} Let $\mathbf{L}$ be a distributive antiortholattice and assume by absurdum that, for some $a,b\in L\setminus \{0,1\}$, $a\vee b=1$ and $a\wedge b=0$, so that $a^{\prime }\wedge b^{\prime }=(a\vee b)^{\prime }=1^{\prime }=0$. Since $\mathbf{L}_{bi}\in \KL $, we have $b\wedge b^{\prime }\leq a\vee a^{\prime }$, hence $b\wedge b^{\prime }=(a\vee a^{\prime })\wedge b\wedge b^{\prime }=(a\wedge b\wedge b^{\prime })\vee (a^{\prime }\wedge b\wedge b^{\prime })=0\vee 0=0$, thus $b\in S(\mathbf{L})$, which contradicts the fact that $\mathbf{L}$ is an antiortholattice.\end{proof}

\begin{example} Here is a non--modular antiortholattice with other complemented elements beside $0$ and $1$, namely, in the following Hasse diagram, $a$ and $a^{\prime }$ are bounded lattice complements of both $b$ and $b^{\prime }$:\begin{center}\begin{picture}(40,83)(0,0)
\put(20,0){\circle*{3}}
\put(20,80){\circle*{3}}
\put(20,40){\circle*{3}}
\put(0,20){\circle*{3}}
\put(-20,40){\circle*{3}}
\put(0,60){\circle*{3}}
\put(60,40){\circle*{3}}
\put(0,40){\circle*{3}}
\put(40,40){\circle*{3}}
\put(40,20){\circle*{3}}
\put(40,60){\circle*{3}}
\put(18,-9){$0$}
\put(18,83){$1$}
\put(18,32){$c$}
\put(10,25){$=c^{\prime }$}
\put(-27,38){$a$}
\put(-8,58){$u^{\prime }$}
\put(62,37){$b$}
\put(43,58){$v^{\prime}$}
\put(42,14){$v$}
\put(-7,14){$u$}
\put(3,38){$a^{\prime}$}
\put(42,37){$b^{\prime}$}
\put(20,0){\line(-1,1){40}}
\put(20,0){\line(1,1){40}}
\put(20,80){\line(-1,-1){40}}
\put(20,80){\line(1,-1){40}}
\put(0,20){\line(0,1){40}}
\put(40,20){\line(0,1){40}}
\put(0,20){\line(1,1){40}}
\put(40,20){\line(-1,1){40}}
\end{picture}\end{center}\end{example}

\begin{lemma} If $\mathbf{L}$, $\mathbf{A}$ and $\mathbf{B}$ are bounded lattices such that $\mathbf{L}=\mathbf{A}\boxplus\mathbf{B}$, $|A|>2$, $|B|>2$ and $|L|\geq 5$, then $\mathbf{L}$ is directly irreducible.\label{latdirirred}\end{lemma}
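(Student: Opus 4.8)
The plan is to argue by contradiction. Suppose $\mathbf{L}$ decomposes nontrivially, say $\varphi\colon\mathbf{L}\stackrel{\cong}{\to}\mathbf{C}\times\mathbf{D}$ for some nontrivial bounded lattices $\mathbf{C}$ and $\mathbf{D}$, and derive a contradiction purely from the horizontal-sum structure. First I record the two facts about $\boxplus$ that I will use repeatedly: by definition $L=A\cup B$ with $A\cap B=\{0,1\}$, and for any $p\in A\setminus\{0,1\}$ and $q\in B\setminus\{0,1\}$ one has $p\wedge q=0$ and $p\vee q=1$ (elements of distinct summands are incomparable, so their only common lower and upper bounds are $0$ and $1$). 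I then set $e=\varphi^{-1}(1^{\mathbf C},0^{\mathbf D})$ and $e^{\ast}=\varphi^{-1}(0^{\mathbf C},1^{\mathbf D})$; since $\mathbf{C}$ and $\mathbf{D}$ are nontrivial, both $e$ and $e^{\ast}$ lie in $L\setminus\{0,1\}$, and the coordinate identities $\varphi(x\wedge e)=(c,0^{\mathbf D})$ and $\varphi(x\wedge e^{\ast})=(0^{\mathbf C},d)$ hold whenever $\varphi(x)=(c,d)$.

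Because $L\setminus\{0,1\}=(A\setminus\{0,1\})\cup(B\setminus\{0,1\})$, and the hypotheses are symmetric in $\mathbf{A}$ and $\mathbf{B}$, I may assume $e\in A\setminus\{0,1\}$. The argument splits according to where the complementary element $e^{\ast}$ lands.

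If $e^{\ast}\in A$, hence $e^{\ast}\in A\setminus\{0,1\}$, I use the hypothesis $|B|>2$ to pick $b\in B\setminus\{0,1\}$. Then $b\wedge e=0$ and $b\wedge e^{\ast}=0$ by the horizontal-sum fact, so the coordinate identities force both coordinates of $\varphi(b)$ to vanish; thus $\varphi(b)=\varphi(0)$ and $b=0$, a contradiction. If instead $e^{\ast}\in B\setminus\{0,1\}$, I locate each summand inside the product: for every $a\in A\setminus\{1\}$ we have $a\wedge e^{\ast}=0$, whence $\varphi(a)\in C\times\{0^{\mathbf D}\}$, and symmetrically $\varphi(b)\in\{0^{\mathbf C}\}\times D$ for every $b\in B\setminus\{1\}$, while $\varphi(1)=(1^{\mathbf C},1^{\mathbf D})$. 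Since $\varphi$ is onto and $L=(A\setminus\{1\})\cup(B\setminus\{1\})\cup\{1\}$, this gives the containment $C\times D\subseteq(C\times\{0^{\mathbf D}\})\cup(\{0^{\mathbf C}\}\times D)\cup\{(1^{\mathbf C},1^{\mathbf D})\}$. Any point $(1^{\mathbf C},d_0)$ with $d_0\in D\setminus\{0^{\mathbf D},1^{\mathbf D}\}$ would violate it, so $|D|\le 2$; symmetrically $|C|\le 2$, and hence $|L|=|C|\cdot|D|\le 4$, contradicting $|L|\ge 5$.

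I expect the second case to carry the real content, and it is exactly there that the hypothesis $|L|\ge 5$ is consumed: it is needed precisely to exclude the genuinely reducible configuration $\mathbf{D}_{3}\boxplus\mathbf{D}_{3}\cong\mathbf{D}_{2}^{2}$ (where $|C|=|D|=2$ and $|L|=4$), already flagged earlier in the paper. The only things to get right are the bookkeeping of the coordinate identities for $\wedge$ and the verification that an element of one summand meets any element of the other summand in $0$; both are immediate from the definition of $\boxplus$. Notably, no appeal to neutral or central elements is required — working directly in product coordinates through $\varphi$ suffices.
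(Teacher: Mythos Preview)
Your proof is correct, but it is organised differently from the paper's. Both argue by contradiction from a nontrivial factorisation and exploit the basic horizontal-sum fact that $a\wedge b=0$, $a\vee b=1$ whenever $a\in A\setminus\{0,1\}$ and $b\in B\setminus\{0,1\}$. The paper, however, invokes the hypothesis $|L|\ge 5$ at the outset: it picks a middle element $u$ in one of the factors (say $K$), observes that $(u,1^{\mathbf M})$ and $(u,0^{\mathbf M})$ both lie in $L\setminus\{0,1\}$, shows they must fall in the same summand (since their join is not $1$), and then meets and joins with an element of the other summand to force $0^{\mathbf M}=1^{\mathbf M}$. Your argument instead starts from the complementary pair $e=\varphi^{-1}(1,0)$, $e^{\ast}=\varphi^{-1}(0,1)$ and case-splits on whether they land in the same summand; in the first case the contradiction is immediate, while in the second you use surjectivity to force $|C|,|D|\le 2$, consuming $|L|\ge 5$ only at the very end. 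Your route makes more transparent exactly where the cardinality hypothesis is needed (precisely to exclude $\mathbf{D}_3\boxplus\mathbf{D}_3\cong\mathbf{D}_2^2$), at the cost of a case split; the paper's route avoids the case split but uses the cardinality hypothesis less visibly.
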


\begin{proof}
Let $\mathbf{L}=(L,\wedge ,\vee ,0,1)$, and assume ex absurdo that $\mathbf{L}=\mathbf{K}\times\mathbf{M}$ for some nontrivial bounded lattices $\mathbf{K}$ and $\mathbf{M}$. Since $|L|>4$, we may assume, w.l.g., that there exists a $u\in K\setminus\{0^{\mathbf{K}},1^{\mathbf{K}}\}$, so that
$(u,1^{\mathbf{M}})\notin\{0=(0^{\mathbf{K}},0^{\mathbf{M}}),(0^{\mathbf{K}},1^{\mathbf{M}}),1=(1^{\mathbf{K}},1^{\mathbf{M}})\}$ and $(u,0^{\mathbf{M}})\notin\{0=(0^{\mathbf{K}},0^{\mathbf{M}}),(1^{\mathbf{K}},0^{\mathbf{M}}),1=(1^{\mathbf{K}},1^{\mathbf{M}})\}$.

Since $\mathbf{L}=\mathbf{A}\boxplus\mathbf{B}$, we have, for every $a\in
A\setminus\{0,1\}$ and every $b\in B\setminus\{0,1\}$: $a\vee b=1$ and
$a\wedge b=0$. We can assume that $(u,1^{\mathbf{M}})\in A\setminus\{0,1\}$.
Since $(u,1^{\mathbf{M}})\vee(u,0^{\mathbf{M}})=(u,1^{\mathbf{M}}%
)\neq(1^{\mathbf{K}},1^{\mathbf{M}})=1$, it follows that $(u,0^{\mathbf{M}%
})\notin B\setminus\{0,1\}$, hence $(u,0^{\mathbf{M}})\in A\setminus\{0,1\}$.
Now let $(v,w)\in B\setminus\{0,1\}$. Then $(u\wedge v,w)=(u,1^{\mathbf{M}%
})\wedge(v,w)=0=(0^{\mathbf{K}},0^{\mathbf{M}})$ and $(u\vee
v,w)=(u,0^{\mathbf{M}})\vee(v,w)=1=(1^{\mathbf{K}},1^{\mathbf{M}})$, thus
$0^{\mathbf{M}}=w=1^{\mathbf{M}}$, which contradicts the fact that
$\mathbf{M}$ is nontrivial. Hence $\mathbf{L}$ is directly irreducible.
\end{proof}

Let $\mathbf{L}_{1}\in\mathbb{OML}\setminus\left\{  \mathbf{D}_{1}\right\}  $
and $\mathbf{L}_{2}\in\mathbb{AOL}\setminus\left\{  \mathbf{D}_{1}\right\}
$,\ be such that $\mathbf{L}=\mathbf{L}_{1}\boxplus\mathbf{L}_{2}%
\notin\mathbb{OML}$, whence $\mathbf{L}_{2}\neq\mathbf{D}_{2}$. If
$\mathbf{L}_{1}=\mathbf{D}_{2}$, then, by Proposition \ref{aoldirirred}, $\mathbf{L}_{l}$ is directly irreducible. If $\mathbf{L}_{1}\neq\mathbf{D}_{2}$, then by Lemma \ref{latdirirred} $\mathbf{L}_{l}$ is likewise
directly irreducible. So, we obtain that:

\begin{proposition}
\begin{enumerate}
\item \label{gdirirred1} If $\mathbf{L}\in(\mathbb{OML}\boxplus\mathbb{AOL})\setminus\mathbb{OML}$, then $\mathbf{L}_{l}$ is directly irreducible, thus $\mathbf{L}$ is directly irreducible.

\item \label{gdirirred0} If $\mathbf{L}\in(\mathbb{OML}\boxplus V(\mathbb{AOL}))\setminus(\mathbb{OML}\cup V(\mathbb{AOL}))$, then $\mathbf{L}_{l}$ is directly irreducible, thus $\mathbf{L}$ is directly irreducible.\end{enumerate}\label{gdirirred}\end{proposition}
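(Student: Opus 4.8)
The plan is to reduce both parts to the direct irreducibility of the bounded lattice reduct $\mathbf{L}_l$, and then to invoke the elementary reduct principle: if $\mathbf{L}\cong\mathbf{K}\times\mathbf{M}$ with $\mathbf{K},\mathbf{M}$ nontrivial, then $\mathbf{L}_l\cong\mathbf{K}_l\times\mathbf{M}_l$ is a nontrivial decomposition of the reduct, so direct irreducibility of $\mathbf{L}_l$ yields direct irreducibility of $\mathbf{L}$. The two facts doing the real work are Lemma \ref{latdirirred} (a horizontal sum $\mathbf{A}\boxplus\mathbf{B}$ of bounded lattices with $|A|>2$, $|B|>2$ and $|L|\geq 5$ is directly irreducible) and Proposition \ref{aoldirirred} (the lattice reduct of an antiortholattice is directly irreducible).

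For part (\ref{gdirirred1}) I would observe that this is precisely what the discussion immediately preceding the statement establishes: writing $\mathbf{L}=\mathbf{L}_1\boxplus\mathbf{L}_2$ with $\mathbf{L}_1\in\mathbb{OML}\setminus\{\mathbf{D}_1\}$ and $\mathbf{L}_2\in\mathbb{AOL}\setminus\{\mathbf{D}_1\}$, the hypothesis $\mathbf{L}\notin\mathbb{OML}$ forces $\mathbf{L}_2\neq\mathbf{D}_2$ (otherwise $\mathbf{L}=\mathbf{L}_1\in\mathbb{OML}$). If $\mathbf{L}_1=\mathbf{D}_2$, then $\mathbf{L}=\mathbf{L}_2$ is an antiortholattice and Proposition \ref{aoldirirred} gives the conclusion; otherwise $|L_1|>2$, $|L_2|>2$ and $|L|\geq 5$, so Lemma \ref{latdirirred} applies to $\mathbf{L}_l$.

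For part (\ref{gdirirred0}) I would argue analogously. By the definition of $\boxplus$ on classes, $\mathbf{L}=\mathbf{A}\boxplus\mathbf{B}$ with $\mathbf{A}\in\mathbb{OML}\setminus\{\mathbf{D}_1\}$ and $\mathbf{B}\in V(\mathbb{AOL})\setminus\{\mathbf{D}_1\}$, and by Proposition \ref{hsumkl}.(\ref{hsumkl2}) $\mathbf{L}$ is a \PBZ --lattice. The two exclusions now strip off the degenerate summands: if $\mathbf{B}=\mathbf{D}_2$ then $\mathbf{L}=\mathbf{A}\in\mathbb{OML}$, and if $\mathbf{A}=\mathbf{D}_2$ then $\mathbf{L}=\mathbf{B}\in V(\mathbb{AOL})$, both contradicting $\mathbf{L}\notin\mathbb{OML}\cup V(\mathbb{AOL})$. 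Hence $|A|>2$ and $|B|>2$, and I would conclude by applying Lemma \ref{latdirirred} to the lattice reduct $\mathbf{L}_l=\mathbf{A}_l\boxplus\mathbf{B}_l$.

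The one step requiring care — the main, albeit minor, obstacle — is verifying the size hypothesis $|L|\geq 5$ of Lemma \ref{latdirirred}. Since $\mathbf{B}$ can be as small as $\mathbf{D}_3$, one only gets $|B|\geq 3$, so a naive bound $|A|\geq 3$ would yield merely $|L|=|A|+|B|-2\geq 4$. The key remark is that there is no three-element ortholattice: the middle element of the only three-element lattice $\mathbf{D}_3$ is fixed by the forced order-reversing involution and hence fails to be sharp. Thus a nontrivial orthomodular lattice different from $\mathbf{D}_2$ has at least four elements, giving $|A|\geq 4$ and therefore $|L|\geq 5$. The same observation underpins the nondegenerate case of part (\ref{gdirirred1}).
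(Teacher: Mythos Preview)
Your proof is correct and follows exactly the approach of the paper, which proves part (\ref{gdirirred1}) in the paragraph immediately preceding the proposition and leaves part (\ref{gdirirred0}) as the evident analogue. Your explicit verification of the hypothesis $|L|\geq 5$ of Lemma \ref{latdirirred} via the observation that a nontrivial ortholattice with more than two elements has at least four is a detail the paper leaves implicit, so your write-up is in fact slightly more complete.
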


Also, recall that any ortholattice $\mathbf{L}$ with more than $2$ elements is such that $0$ is meet--reducible and $1$ is join--reducible in $\mathbf{L}_l$. Thus:

\begin{corollary} If $\mathbf{A}$ is a finite ortholattice and $\mathbf{B}$ is a finite pseudo--Kleene algebra with $|A|>2$ and $|B|>2$, then $\mathbf{A}_l\boxplus\mathbf{B}_l$ has at least three distinct atoms (thus at least three distinct co--atoms).\end{corollary}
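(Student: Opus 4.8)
The plan is to reduce the count to the two summands separately, exploiting the fact that in a horizontal sum the atoms split cleanly between the factors. First I would record the order-theoretic observation that, because the order of $\mathbf{A}_l\boxplus\mathbf{B}_l$ is $\leq^{\mathbf{A}_l}\cup\leq^{\mathbf{B}_l}$ and $A\cap B=\{0,1\}$, an element $x\in A\setminus\{0,1\}$ satisfies $y<x$ in the horizontal sum only when $y\in A$ and $y<^{\mathbf{A}_l}x$ (no element of $B\setminus\{0,1\}$ is comparable with $x$), and symmetrically for $x\in B\setminus\{0,1\}$. Hence $x$ is an atom of $\mathbf{A}_l\boxplus\mathbf{B}_l$ iff it is an atom of the summand containing it, so that $\mathrm{At}(\mathbf{A}_l\boxplus\mathbf{B}_l)=\mathrm{At}(\mathbf{A}_l)\amalg\mathrm{At}(\mathbf{B}_l)$, a disjoint union. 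It therefore suffices to show $|\mathrm{At}(\mathbf{A}_l)|\geq 2$ and $|\mathrm{At}(\mathbf{B}_l)|\geq 1$.

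For $\mathbf{A}$ I would use the fact recalled just above, that $0$ is meet--reducible in $\mathbf{A}_l$ since $|A|>2$: write $0=c\wedge d$ with $c,d>0$. By finiteness choose atoms $p\leq c$ and $q\leq d$ of $\mathbf{A}_l$; then $p\wedge q\leq c\wedge d=0$, and since for atoms $p\wedge q\in\{0,p\}$, this forces $p\neq q$, giving at least two distinct atoms of $\mathbf{A}_l$. (Alternatively, one may pick any $a\in A\setminus\{0,1\}$ and use $a\wedge a'=0$ together with the atoms lying below $a$ and below $a'$.) For $\mathbf{B}$ I would only use that any finite lattice with more than one element possesses an atom, namely a minimal element of the nonempty set $B\setminus\{0\}$; as $|B|>2>1$, this yields at least one atom of $\mathbf{B}_l$. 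Combining the two disjoint contributions gives at least $2+1=3$ atoms of $\mathbf{A}_l\boxplus\mathbf{B}_l$.

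The parenthetical statement about co--atoms is the exact order dual: co--atoms of the horizontal sum split as $\mathrm{CoAt}(\mathbf{A}_l)\amalg\mathrm{CoAt}(\mathbf{B}_l)$, the recalled fact gives that $1$ is join--reducible in $\mathbf{A}_l$ (equivalently, one applies the involution of $\mathbf{A}$ to the two atoms found above), so $\mathbf{A}_l$ has at least two co--atoms, while $\mathbf{B}_l$ has at least one co--atom, for a total of at least three. I expect no genuine obstacle in this argument; the only point demanding slight care is the clean splitting of atoms across the two summands, which rests entirely on $A\cap B=\{0,1\}$ and the definition of the horizontal--sum order, with everything else being elementary finiteness.
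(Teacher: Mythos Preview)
Your proof is correct and follows essentially the same approach as the paper: count at least two atoms in $\mathbf{A}_l$ via the meet--reducibility of $0$ (the fact recalled immediately before the corollary) together with finiteness, and at least one atom in $\mathbf{B}_l$ by finiteness and $|B|>2$, then combine. The paper's version is terser and leaves the splitting $\mathrm{At}(\mathbf{A}_l\boxplus\mathbf{B}_l)=\mathrm{At}(\mathbf{A}_l)\amalg\mathrm{At}(\mathbf{B}_l)$ implicit, while you spell it out; otherwise the arguments coincide.
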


\begin{proof} Since $\mathbf{A}$ is not an antiortholattice, $\mathbf{A}_l$ has at least two distinct atoms. Since $\mathbf{B}$ is finite and has $|B|>2$, it follows that $\mathbf{B}_l$ has at least one atom, which is not equal to $1$. Our conclusion follows.\end{proof}

\section{Singleton--Generated Subalgebras of \PBZ --la\-t\-ti\-ces}

\begin{lemma} Let $\V $ be a variety and $\C ,\D $ subclasses of $\V $ such that, for all ${\bf M}\in \C $ and all $x\in M$, we have $\langle x\rangle _{\V ,{\bf M}}\in \D $. Let ${\bf A}\in \V $ and $a\in A$. Then:\begin{enumerate}
\item\label{subsglt1} if ${\bf A}\in \P _{\V }(\C )$, then $\langle a\rangle _{\V ,{\bf A}}\in \P _{\V }(\D )$;
\item\label{subsglt2} if ${\bf A}\in \S _{\V }(\C )$, then $\langle a\rangle _{\V ,{\bf A}}\in \S _{\V }(\D )$;
\item\label{subsglt3} if ${\bf A}\in \H _{\V }(\C )$, then $\langle a\rangle _{\V ,{\bf A}}\in \H _{\V }(\D )$;
\item\label{subsglt4} if ${\bf A}\in V_{\V }(\C )$, then $\langle a\rangle _{\V ,{\bf A}}\in V_{\V }(\D )$.\end{enumerate}\label{subsglt}\end{lemma}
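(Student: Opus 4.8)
The plan is to establish the three operator cases (\ref{subsglt1})--(\ref{subsglt3}) directly from the three properties of generated subalgebras recorded at the end of Subsection \ref{ualglat}, and then to obtain (\ref{subsglt4}) by a bootstrapping argument that feeds the earlier cases into one another.

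For (\ref{subsglt1}), write $\mathbf{A}=\prod_{i\in I}\mathbf{M}_i$ with each $\mathbf{M}_i\in \C$ and $a=(a_i)_{i\in I}$. Since $\{a\}=\prod_{i\in I}\{a_i\}$, property (\ref{prodsubalg}) gives $\langle a\rangle_{\V ,\mathbf{A}}=\prod_{i\in I}\langle a_i\rangle_{\V ,\mathbf{M}_i}$, and each factor lies in $\D$ by hypothesis, so $\langle a\rangle_{\V ,\mathbf{A}}\in \P_{\V}(\D)$. For (\ref{subsglt2}), take $\mathbf{A}\in \S_{\V}(\mathbf{M})$ with $\mathbf{M}\in \C$ and $a\in A\subseteq M$. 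Applying (\ref{subsubalg}) with ambient algebra $\mathbf{M}$, subalgebra $\mathbf{A}$ and $S=\{a\}$ yields $\langle a\rangle_{\V ,\mathbf{A}}=\langle a\rangle_{\V ,\mathbf{M}}\cap \mathbf{A}$; but $\langle a\rangle_{\V ,\mathbf{M}}$, being the least subalgebra of $\mathbf{M}$ containing $a$, is contained in $A$, so $\langle a\rangle_{\V ,\mathbf{A}}=\langle a\rangle_{\V ,\mathbf{M}}\in \D\subseteq \S_{\V}(\D)$. For (\ref{subsglt3}), write $\mathbf{A}=\mathbf{M}/\theta$ with $\mathbf{M}\in \C$ and pick $m\in M$ with $a=m/\theta$; then (\ref{quosubalg}) with $S=\{m\}$ gives $\langle a\rangle_{\V ,\mathbf{A}}=\langle m/\theta\rangle_{\V ,\mathbf{M}/\theta}=\langle m\rangle_{\V ,\mathbf{M}}/\theta$, a homomorphic image of $\langle m\rangle_{\V ,\mathbf{M}}\in \D$, whence $\langle a\rangle_{\V ,\mathbf{A}}\in \H_{\V}(\D)$.

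For (\ref{subsglt4}), recall that $V_{\V}(\C)=\H_{\V}\S_{\V}\P_{\V}(\C)$. The crucial observation is that the standing hypothesis of the lemma is inherited along the class operators: since (\ref{subsglt1}) holds for \emph{every} member of $\P_{\V}(\C)$ and \emph{every} one of its elements, the pair $(\P_{\V}(\C),\P_{\V}(\D))$ again satisfies the hypothesis relating $\C$ and $\D$. Feeding this pair into (\ref{subsglt2}) shows likewise that $(\S_{\V}\P_{\V}(\C),\S_{\V}\P_{\V}(\D))$ satisfies the hypothesis, and feeding \emph{that} pair into (\ref{subsglt3}) shows that if $\mathbf{A}\in \H_{\V}(\S_{\V}\P_{\V}(\C))=V_{\V}(\C)$, then $\langle a\rangle_{\V ,\mathbf{A}}\in \H_{\V}(\S_{\V}\P_{\V}(\D))=V_{\V}(\D)$, as required.

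The whole argument is conceptual rather than computational, resting entirely on properties (\ref{prodsubalg})--(\ref{quosubalg}). The only point demanding care — and the main thing to get right — is the bootstrapping in (\ref{subsglt4}): one must read (\ref{subsglt1})--(\ref{subsglt3}) as statements universally quantified over all algebras and all singleton generators, so that their conclusions can themselves serve as the hypothesis of the next case applied to the enlarged classes $\P_{\V}(\C)$ and $\S_{\V}\P_{\V}(\C)$.
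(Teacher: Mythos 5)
Your proposal is correct and follows essentially the same route as the paper: all three operator cases rest on properties (\ref{prodsubalg})--(\ref{quosubalg}), and (\ref{subsglt4}) is obtained by composing them along $V_{\V}(\C)=\H_{\V}\S_{\V}\P_{\V}(\C)$ exactly as the paper does. The only (harmless) deviation is in case (\ref{subsglt2}), where you sharpen the conclusion to $\langle a\rangle_{\V,\mathbf{A}}=\langle a\rangle_{\V,\mathbf{M}}\in\D$, while the paper is content to observe that $\langle a\rangle_{\V,\mathbf{A}}$ is a subalgebra of $\langle a\rangle_{\V,\mathbf{M}}$ and hence lies in $\S_{\V}(\D)$.
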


\begin{proof} (\ref{subsglt1}) For some non--empty family $({\bf A}_i)_{i\in I}\subseteq \C $, we have ${\bf A}=\prod _{i\in I}{\bf A}_i$, so that $a=(a_i)_{i\in I}$, with $a_i\in A_i$ for all $i\in I$. Then $\langle a_i\rangle _{\V ,{\bf A}_i}\in \D $ for all $i\in I$, hence $\langle a\rangle _{\V ,{\bf A}}=\langle (a_i)_{i\in I}\rangle _{\V ,\prod _{i\in I}{\bf A}_i}=\prod _{i\in I}\langle a_i\rangle _{\V ,{\bf A}_i}\in \P _{\V }(\D )$.

\noindent (\ref{subsglt2}) For some ${\bf B}\in \C $ with $A\subseteq B$, we have ${\bf A}\in \S _{\V }({\bf B})$, therefore $\langle a\rangle _{\V ,{\bf A}}=\langle a\rangle _{\V ,{\bf B}}\cap {\bf A}\in \S _{\V }(\langle a\rangle _{\bf B})$, thus $\langle a\rangle _{\V ,{\bf A}}\in \S _{\V }(\D )$.

\noindent (\ref{subsglt3}) For some ${\bf B}\in \C $ and some $\theta \in {\rm Con}_{\V }({\bf B})$, we have ${\bf A}={\bf B}/\theta $, so $a=b/\theta $ for some $b\in B$. Since $b/\theta \in \langle b\rangle _{\V ,{\bf B}}/\theta \in \S _{\V }({\bf B}/\theta )$, it follows that $\langle b/\theta \rangle _{\V ,{\bf B}/\theta }\subseteq \langle b\rangle _{\V ,{\bf B}}/\theta $. If $u\in \langle b\rangle _{\V ,{\bf B}}$, then $u=t^{\bf B}(b)$ for some term $t$ over the type of $\V $, thus $u/\theta =t^{{\bf B}/\theta }(b/\theta )\in \langle b/\theta \rangle _{\V ,{\bf B}/\theta }$, therefore $\langle b\rangle _{\V ,{\bf B}}/\theta \subseteq \langle b/\theta \rangle _{\V ,{\bf B}/\theta }$. Hence $\langle a\rangle _{\V ,{\bf A}}=\langle b/\theta \rangle _{\V ,{\bf B}/\theta }=\langle b\rangle _{\V ,{\bf B}}/\theta \in \H _{\PBZs }(\langle b\rangle _{\V ,{\bf B}})$, therefore $\langle a\rangle _{\V ,{\bf A}}\in \H _{\V }(\D )$.

\noindent (\ref{subsglt4}) By (\ref{subsglt1}), (\ref{subsglt2}) and (\ref{subsglt3}), if ${\bf A}\in V_{\V }(\C )=\H _{\V }\S _{\V }\P _{\V }(\C )$, then $\langle a\rangle _{\V ,{\bf A}}\in \H _{\V }\S _{\V }\P _{\V }(\D )=V_{\V }(\D )$.\end{proof}

Note that, in any orthomodular lattice ${\bf L}$, for all $x\in L$, $\langle x\rangle _{\BZ ,{\bf L}}=\{0,x,x^{\prime },1\}$\linebreak $\in \{{\bf D}_1,{\bf D}_2,{\bf D}_2^2\}\subset \BA =V({\bf D}_2)\subset V({\bf D}_3)=V({\bf D}_4)$, where the last equality follows from the easy to notice facts that ${\bf D}_3\in \H _{\BZ }({\bf D}_4)$ and ${\bf D}_4\in \S _{\BZ }({\bf D}_2\times {\bf D}_3)$. If ${\bf M}$ is an antiortholattice and $x\in M$, then, clearly, $\langle x\rangle _{\BZ ,{\bf M}}=\{0,x\wedge x^{\prime },x,x^{\prime },x\vee x^{\prime },1\}\in \{{\bf D}_1,{\bf D}_2,{\bf D}_4,{\bf D}_2\oplus {\bf D}_2^2\oplus {\bf D}_2\}\subset \AOL \cap V({\bf D}_3)$ since ${\bf D}_2\oplus {\bf D}_2^2\oplus {\bf D}_2\in \S _{\BZ }({\bf D}_4\times {\bf D}_4)$, more precisely: if ${\bf M}\cong {\bf D}_1$, then $\langle x\rangle _{\BZ ,{\bf M}}={\bf M}\cong {\bf D}_1$, while, if ${\bf M}$ is non--trivial:\begin{itemize}
\item if $x\in \{0,1\}$, then $\langle x\rangle _{\BZ ,{\bf M}}=\{0,1\}\cong {\bf D}_2$;
\item if $x\notin \{0,1\}$, but $x$ and $x^{\prime }$ are comparable, then $\langle x\rangle _{\BZ ,{\bf M}}=\{0,x,x^{\prime },1\}\cong {\bf D}_4$;
\item if $x\notin \{0,1\}$ and $x||x^{\prime }$, then $\langle x\rangle _{\BZ ,{\bf M}}=\{0,x\wedge x^{\prime },x,x^{\prime },x\vee x^{\prime },1\}\cong {\bf D}_2\oplus {\bf D}_2^2\oplus {\bf D}_2$.\end{itemize}

Clearly, for any non--trivial orthomodular lattice ${\bf L}$, any non--trivial \PBZ --lattice ${\bf M}$ and any $x\in L\boxplus M$, we have:\begin{itemize}
\item if $x\in \{0,1\}$, then $\langle x\rangle _{\BZ ,{\bf L}\boxplus {\bf M}}=\{0,1\}\cong {\bf D}_2$;
\item if $x\in L\setminus \{0,1\}$, then $\langle x\rangle _{\BZ ,{\bf L}\boxplus {\bf M}}=\langle x\rangle _{\BZ ,{\bf L}}=\{0,x,x^{\prime },1\}\cong {\bf D}_2^2$;
\item if $x\in M\setminus \{0,1\}$, then $\langle x\rangle _{\BZ ,{\bf L}\boxplus {\bf M}}=\langle x\rangle _{\BZ ,{\bf M}}$.\end{itemize}

From the above, we obtain:

\begin{proposition} Let ${\bf A}\in \PBZs $ and $a\in A$. Then:\begin{itemize}
\item if ${\bf A}\in \OML $, then $\langle a\rangle _{\BZ ,{\bf A}}\in \{{\bf D}_1,{\bf D}_2,{\bf D}_2^2\}\subset \BA $;
\item if ${\bf A}\in \AOL $, then $\langle a\rangle _{\BZ ,{\bf A}}\in \{{\bf D}_1,{\bf D}_2,{\bf D}_4,{\bf D}_2\oplus {\bf D}_2^2\oplus {\bf D}_2\}\subset \AOL \cap V({\bf D}_3)$;
\item if ${\bf A}\in \OML \boxplus \AOL $, then $\langle a\rangle _{\BZ ,{\bf A}}\in \{{\bf D}_1,{\bf D}_2,{\bf D}_2^2,{\bf D}_4,{\bf D}_2\oplus {\bf D}_2^2\oplus {\bf D}_2\}\subset V({\bf D}_3)$.\end{itemize}\label{sghsum}\end{proposition}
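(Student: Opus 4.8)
The plan is to prove Proposition \ref{sghsum} as an immediate application of Lemma \ref{subsglt}, leaning on the explicit descriptions of singleton--generated subalgebras that were worked out in the paragraphs just before the statement. First I would recall precisely what those preceding paragraphs established: for any orthomodular lattice $\mathbf{L}$ and any $x\in L$, the subalgebra $\langle x\rangle_{\BZ,\mathbf{L}}=\{0,x,x^{\prime},1\}$ is one of $\mathbf{D}_1,\mathbf{D}_2,\mathbf{D}_2^2$, all of which lie in $\BA$; and for any antiortholattice $\mathbf{M}$ and any $x\in M$, the subalgebra $\langle x\rangle_{\BZ,\mathbf{M}}$ is one of $\mathbf{D}_1,\mathbf{D}_2,\mathbf{D}_4,\mathbf{D}_2\oplus\mathbf{D}_2^2\oplus\mathbf{D}_2$, all of which lie in $\AOL\cap V(\mathbf{D}_3)$. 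This gives the first two bullets of the proposition essentially by quotation, so those two items require almost no further argument beyond citing the case analysis already displayed.

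For the third bullet, the case ${\bf A}\in\OML\boxplus\AOL$, I would invoke the horizontal--sum computation recorded immediately before the statement: writing ${\bf A}={\bf L}\boxplus{\bf M}$ with ${\bf L}$ a nontrivial orthomodular lattice and ${\bf M}$ a nontrivial \PBZ --lattice (here an antiortholattice), for $x\in\{0,1\}$ we get $\langle x\rangle_{\BZ,{\bf A}}\cong\mathbf{D}_2$; for $x\in L\setminus\{0,1\}$ we get $\langle x\rangle_{\BZ,{\bf A}}=\langle x\rangle_{\BZ,{\bf L}}\cong\mathbf{D}_2^2$; and for $x\in M\setminus\{0,1\}$ we get $\langle x\rangle_{\BZ,{\bf A}}=\langle x\rangle_{\BZ,{\bf M}}$, which falls under the antiortholattice case above. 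Collecting these possibilities yields that $\langle a\rangle_{\BZ,{\bf A}}$ lies in $\{\mathbf{D}_1,\mathbf{D}_2,\mathbf{D}_2^2,\mathbf{D}_4,\mathbf{D}_2\oplus\mathbf{D}_2^2\oplus\mathbf{D}_2\}$. It then remains to confirm the containment of this set in $V(\mathbf{D}_3)$: the orthomodular pieces lie in $\BA=V(\mathbf{D}_2)\subset V(\mathbf{D}_3)$, and the antiortholattice pieces lie in $\AOL\cap V(\mathbf{D}_3)\subset V(\mathbf{D}_3)$, so every possible value sits inside $V(\mathbf{D}_3)$, as claimed.

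I do not expect a serious obstacle here, since all the genuine content — the identification of each generated subalgebra and the verifications that $\mathbf{D}_4\in V(\mathbf{D}_3)$ and $\mathbf{D}_2\oplus\mathbf{D}_2^2\oplus\mathbf{D}_2\in V(\mathbf{D}_3)$ via the embeddings $\mathbf{D}_4\in\S_{\BZ}(\mathbf{D}_2\times\mathbf{D}_3)$ and $\mathbf{D}_2\oplus\mathbf{D}_2^2\oplus\mathbf{D}_2\in\S_{\BZ}(\mathbf{D}_4\times\mathbf{D}_4)$ — has already been carried out in the expository paragraphs preceding the statement. The only mild subtlety, and the closest thing to a ``hard part,'' is bookkeeping: one must be careful that in the horizontal--sum case the element $x$ genuinely lands in exactly one of the three regimes $\{0,1\}$, $L\setminus\{0,1\}$, $M\setminus\{0,1\}$ (which holds because $L\cap M=\{0,1\}$ by the definition of $\boxplus$), and that the subalgebra generated inside ${\bf L}\boxplus{\bf M}$ by an element of a summand coincides with the subalgebra generated inside that summand, which is exactly the content of the remark that ${\bf L}$ and ${\bf M}$ are subalgebras of ${\bf L}\boxplus{\bf M}$ together with item (\ref{subsubalg}) of the generated--subalgebra facts in Subsection \ref{ualglat}. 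Once this regime analysis is in place, the proof is simply a union of the three enumerated lists, and so I would present it in a handful of lines rather than with any extended computation.
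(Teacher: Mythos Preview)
Your proposal is correct and follows essentially the same approach as the paper, which simply writes ``From the above, we obtain:'' before stating the proposition---the entire content is the case analysis in the preceding paragraphs, which you have accurately reproduced. One minor remark: your opening sentence invokes Lemma \ref{subsglt}, but that lemma is neither needed here nor actually used in the rest of your argument (it comes into play only later, for Lemma \ref{sgomlhsum} and the theorem following it); you can safely drop that reference.
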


\begin{lemma} Let $\C ,\D $ be subclasses of $\PBZs $ such that $\C $ contains non--trivial algebras and, for all ${\bf M}\in \C $ and all $a\in M$, we have $\langle a\rangle _{\BZ ,{\bf M}}\in \D $. Then, for all ${\bf A}\in V(\OML \boxplus V(\C ))$ and all $a\in A$, we have $\langle a\rangle _{\BZ ,{\bf A}}\in V(\D )$.\label{sgomlhsum}\end{lemma}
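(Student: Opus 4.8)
The plan is to reduce the whole statement to two applications of Lemma \ref{subsglt}.(\ref{subsglt4}), using as the bridge the explicit description of singleton--generated subalgebras of horizontal sums recorded just before Proposition \ref{sghsum}. Throughout I would work inside the variety $\V=\PBZs$, observing two harmless conventions: since $\PBZs$ and $\BZ$ have the same similarity type, $\langle x\rangle_{\PBZs,\mathbf{M}}$ and $\langle x\rangle_{\BZ,\mathbf{M}}$ coincide; and since $\PBZs$ is a variety, $V_{\PBZs}(\mathbb{E})=V(\mathbb{E})$ for every subclass $\mathbb{E}$ of $\PBZs$. Note also that $\OML\boxplus V(\C)\subseteq\PBZs$ by Proposition \ref{hsumkl}.(\ref{hsumkl2}), since in each nontrivial member $\mathbf{A}\boxplus\mathbf{B}$ the summand $\mathbf{A}$ is orthomodular.

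First I would dispose of the inner variety $V(\C)$. The hypothesis is exactly the hypothesis of Lemma \ref{subsglt} with generating class $\C$ and target class $\D$, so part (\ref{subsglt4}) yields at once that $\langle x\rangle_{\BZ,\mathbf{B}}\in V(\D)$ for every $\mathbf{B}\in V(\C)$ and every $x\in B$. I would then record two elementary membership facts for later use. Because $\C$ contains a nontrivial algebra $\mathbf{N}$, taking $x=0\in N$ gives $\mathbf{D}_2\cong\langle 0\rangle_{\BZ,\mathbf{N}}\in\D\subseteq V(\D)$; as $V(\D)$ is closed under finite products this forces $\mathbf{D}_2^2\in V(\D)$, and of course $\mathbf{D}_1\in V(\D)$ because every variety contains the trivial algebra.

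Next I would check that the class $\OML\boxplus V(\C)$, paired with the target class $V(\D)$, again satisfies the hypothesis of Lemma \ref{subsglt}, i.e.\ that $\langle x\rangle_{\BZ,\mathbf{M}}\in V(\D)$ for all $\mathbf{M}\in\OML\boxplus V(\C)$ and all $x\in M$. For $\mathbf{M}=\mathbf{D}_1$ this is the last fact above. Otherwise $\mathbf{M}=\mathbf{A}\boxplus\mathbf{B}$ with $\mathbf{A}\in\OML\setminus\{\mathbf{D}_1\}$ and $\mathbf{B}\in V(\C)\setminus\{\mathbf{D}_1\}$, and the case analysis preceding Proposition \ref{sghsum} shows $\langle x\rangle_{\BZ,\mathbf{M}}$ is $\mathbf{D}_2$ (when $x\in\{0,1\}$), or $\langle x\rangle_{\BZ,\mathbf{A}}\cong\mathbf{D}_2^2$ (when $x\in A\setminus\{0,1\}$, using that $\mathbf{A}$ is orthomodular), or $\langle x\rangle_{\BZ,\mathbf{B}}$ (when $x\in B\setminus\{0,1\}$). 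Each of these lies in $V(\D)$ by the previous paragraph, so the hypothesis holds.

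Finally I would invoke Lemma \ref{subsglt}.(\ref{subsglt4}) a second time, with generating class $\OML\boxplus V(\C)$ and target class $V(\D)$: for every $\mathbf{A}\in V(\OML\boxplus V(\C))$ and every $a\in A$ it gives $\langle a\rangle_{\BZ,\mathbf{A}}\in V(V(\D))=V(\D)$, which is the assertion. No genuine obstacle arises; the only point requiring care is precisely this second invocation, where one must feed Lemma \ref{subsglt} the \emph{already saturated} class $V(\D)$ rather than $\D$, so that the nested variety $V(V(\D))$ collapses back to $V(\D)$ and the two-step reduction closes up.
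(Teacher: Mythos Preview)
Your proof is correct and follows essentially the same route as the paper's: two applications of Lemma~\ref{subsglt}.(\ref{subsglt4}), bridged by the observation that $\mathbf{D}_2\in\D$ (so that the Boolean singleton--generated subalgebras coming from the $\OML$ summand land in $V(\D)$) and the case analysis for horizontal sums recorded before Proposition~\ref{sghsum}. The paper phrases the Boolean step as $\BA\subseteq V(\D)$ rather than isolating $\mathbf{D}_2^2$, but the content is identical.
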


\begin{proof} In any non--trivial \PBZ --lattice ${\bf M}$, $\langle 0\rangle _{\BZ ,{\bf M}}=\{0,1\}\cong {\bf D}_2$, hence ${\bf D}_2\in \D $, therefore $\BA \subseteq V(\D )$. Now let ${\bf A}\in \PBZs $ and $a\in A$.

By Lemma \ref{subsglt}.(\ref{subsglt4}), if ${\bf A}\in V(\C )$, then $\langle a\rangle _{\BZ ,{\bf A}}\in V(\D )\supseteq \BA $, therefore, by the above, if ${\bf A}\in \OML \boxplus V(\C )$, then $\langle a\rangle _{\BZ ,{\bf A}}\in \BA \cup V(\D )=V(\D )$. Again by Lemma \ref{subsglt}.(\ref{subsglt4}), it follows that, if ${\bf A}\in V(\OML \boxplus V(\C ))$, then $\langle a\rangle _{\BZ ,{\bf A}}\in V(\D )$.\end{proof}

\begin{theorem} For any ${\bf A}\in V(\OML \boxplus V(\AOL ))$ and all $a\in A$, we have $\langle a\rangle _{\BZ ,{\bf A}}\in V({\bf D}_3)$.\end{theorem}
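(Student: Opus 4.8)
The plan is to derive the statement as an immediate instance of Lemma \ref{sgomlhsum}, applied with $\C =\AOL $. First I would fix the auxiliary class $\D $. Proposition \ref{sghsum} tells us exactly which singleton--generated subalgebras can arise inside an antiortholattice, namely those isomorphic to one of ${\bf D}_1$, ${\bf D}_2$, ${\bf D}_4$ or ${\bf D}_2\oplus {\bf D}_2^2\oplus {\bf D}_2$, and it records that all of these lie in $V({\bf D}_3)$. Accordingly I set $\D =\{{\bf D}_1,{\bf D}_2,{\bf D}_4,{\bf D}_2\oplus {\bf D}_2^2\oplus {\bf D}_2\}$ (or, more slickly, $\D =V({\bf D}_3)$, which collapses the final inclusion step below).

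Next I would check that the hypotheses of Lemma \ref{sgomlhsum} are met for $\C =\AOL $ and this $\D $. Both classes are subclasses of $\PBZs $: antiortholattices are PBZ$^{\ast}$--lattices by Definition \ref{thepbzl}, and each member of $\D $ is itself an antiortholattice. The class $\AOL $ contains non--trivial algebras, e.g.\ ${\bf D}_3$. Finally, the required closure condition --- that $\langle a\rangle _{\BZ ,{\bf M}}\in \D $ for every ${\bf M}\in \AOL $ and every $a\in M$ --- is precisely the content of the second bullet of Proposition \ref{sghsum}.

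With the hypotheses verified, Lemma \ref{sgomlhsum} yields $\langle a\rangle _{\BZ ,{\bf A}}\in V(\D )$ for every ${\bf A}\in V(\OML \boxplus V(\AOL ))$ and every $a\in A$, since $V(\C )=V(\AOL )$ and hence $\OML \boxplus V(\C )=\OML \boxplus V(\AOL )$. It then remains to observe that $V(\D )\subseteq V({\bf D}_3)$: this follows because $\D \subseteq V({\bf D}_3)$ (again by Proposition \ref{sghsum}) and $V({\bf D}_3)$, being a variety, is closed under $\H $, $\S $ and $\P $ and therefore equals its own varietal generation. Combining the two inclusions gives $\langle a\rangle _{\BZ ,{\bf A}}\in V({\bf D}_3)$, as desired. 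There is no genuine obstacle here: all the combinatorial work --- computing the possible singleton--generated subalgebras and transporting the property through $\H $, $\S $, $\P $ and the horizontal sum --- has already been carried out in Proposition \ref{sghsum} and Lemma \ref{sgomlhsum}. The only points requiring care are matching $\C $ and $\D $ correctly to those two previous results and not conflating $V(\D )$ with $\D $ itself.
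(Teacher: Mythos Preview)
Your proposal is correct and follows exactly the same approach as the paper: the paper's proof reads in full ``By Proposition \ref{sghsum} and Lemma \ref{sgomlhsum}.'' You have simply unpacked the instantiation $\C=\AOL$, identified the appropriate $\D$, and verified the hypotheses and the final inclusion $V(\D)\subseteq V({\bf D}_3)$, all of which are implicit in the paper's one-line citation.
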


\begin{proof} By Proposition \ref{sghsum} and Lemma \ref{sgomlhsum}.\end{proof}

\section{Congruences of Horizontal Sums}

In order to better understand the properties of horizontal sums of \PBZ --lattices, it is crucial to investigate the structure of their congruence lattices --- in particular, to find convenient descriptions of simple and subdirectly irreducible horizontal sums. We now set about accomplishing this task.

Let ${\mathbb{V}}$ be the variety of bounded lattices or one of the varieties $\mathbb{BI},\mathbb{BZL}$, and let $\mathbf{L}$ and $\mathbf{M}$ be nontrivial members of ${\mathbb{V}}$. Since $\mathbf{L}$ and $\mathbf{M}$ are subalgebras of $\mathbf{L}\boxplus\mathbf{M}$, for any $\theta
\in\mathrm{Con}_{{\mathbb{V}}}(\mathbf{L}\boxplus\mathbf{M})$, we have $\theta\cap L^{2}\in\mathrm{Con}_{{\mathbb{V}}}(\mathbf{L})$ and $\theta\cap
M^{2}\in\mathrm{Con}_{{\mathbb{V}}}(\mathbf{M})$; additionally, if $\theta
\neq\nabla_{L\boxplus M}$, then $\theta\cap L^{2}\neq\nabla_{L}$, $\theta\cap
M^{2}\neq\nabla_{M}$ and $\theta=(\theta\cap L^{2})\boxplus(\theta\cap M^{2}%
)$.

\begin{lemma}{\rm \cite{eunoucard}} For any bounded lattices $\mathbf{L}$ and $\mathbf{M}$ with $|L|>2$ and
$|M|>2$,\begin{align*}
\mathrm{Con}_{01}(\mathbf{L}\boxplus\mathbf{M})  &  =\{\alpha\boxplus
\beta:\alpha\in\mathrm{Con}_{01}(\mathbf{L}),\beta\in\mathrm{Con}%
_{01}(\mathbf{M})\}\\
&  \cong\mathrm{Con}_{01}(\mathbf{L})\times\mathrm{Con}_{01}(\mathbf{M});
\end{align*}
and%
\begin{align*}
&  (\mathrm{Con}_{01}(\mathbf{L})\times\mathrm{Con}_{01}(\mathbf{M}%
))\oplus\mathbf{D}_{2}\\
&  \cong\mathrm{Con}_{01}(\mathbf{L}\boxplus\mathbf{M})\cup\{\nabla_{L\boxplus
M}\}\subseteq\mathrm{Con}(\mathbf{L}\boxplus\mathbf{M})\\
&  \subseteq\mathrm{Con}_{01}(\mathbf{L}\boxplus\mathbf{M})\cup\{eq(L\setminus
\{0\},M\setminus\{1\}),eq(L\setminus\{1\},M\setminus\{0\}),\nabla_{L\boxplus
M}\}.
\end{align*}

\label{cghsumlat}
\end{lemma}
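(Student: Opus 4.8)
The plan is to reduce everything to the decomposition recorded just before the statement: every $\theta\in\mathrm{Con}(\mathbf{L}\boxplus\mathbf{M})$ with $\theta\neq\nabla_{L\boxplus M}$ satisfies $\theta\cap L^{2}\in\mathrm{Con}(\mathbf{L})\setminus\{\nabla_{L}\}$, $\theta\cap M^{2}\in\mathrm{Con}(\mathbf{M})\setminus\{\nabla_{M}\}$ and $\theta=(\theta\cap L^{2})\boxplus(\theta\cap M^{2})$. Throughout, the one lattice-theoretic fact about the horizontal sum that drives all the computations is that, for $x\in L\setminus\{0,1\}$ and $y\in M\setminus\{0,1\}$, one has $x\wedge y=0$ and $x\vee y=1$, since such $x,y$ are incomparable and lie in different summands.

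For the first isomorphism I would study the map $(\alpha,\beta)\mapsto\alpha\boxplus\beta$ on $\mathrm{Con}_{01}(\mathbf{L})\times\mathrm{Con}_{01}(\mathbf{M})$. First I would check it lands in $\mathrm{Con}_{01}(\mathbf{L}\boxplus\mathbf{M})$: if $\alpha\in\mathrm{Con}_{01}(\mathbf{L})$ and $\beta\in\mathrm{Con}_{01}(\mathbf{M})$, then $0/\alpha=\{0\}\neq L$ and $0/\beta=\{0\}\neq M$ (using $|L|,|M|>2$), so both are proper and $\alpha\boxplus\beta$ is defined; moreover $0/(\alpha\boxplus\beta)=0/\alpha\cup 0/\beta=\{0\}$ and dually $1/(\alpha\boxplus\beta)=\{1\}$, so singleton classes are preserved. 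That $\alpha\boxplus\beta$ is a congruence follows from a short case analysis: if $(a,b)\in\alpha$ with $a,b\in L\setminus\{0,1\}$, then for $c\in L$ the meets and joins are computed inside $\mathbf{L}$ and remain $\alpha$-related, while for $c\in M\setminus\{0,1\}$ the boxed fact gives $a\wedge c=b\wedge c=0$ and $a\vee c=b\vee c=1$; the $\beta$-case is symmetric. Injectivity and surjectivity then both come from the decomposition above together with the identities $(\alpha\boxplus\beta)\cap L^{2}=\alpha$ and $(\alpha\boxplus\beta)\cap M^{2}=\beta$. Since the map and its inverse $\theta\mapsto(\theta\cap L^{2},\theta\cap M^{2})$ are visibly monotone, and both $\mathrm{Con}_{01}$-lattices are complete sublattices of the ambient congruence lattices by Lemma~\ref{maxcg}, the bijection is a complete lattice isomorphism.

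For the second display, the top isomorphism follows at once from the first: $\mathrm{Con}_{01}(\mathbf{L}\boxplus\mathbf{M})$ is a bounded lattice and $\nabla_{L\boxplus M}\notin\mathrm{Con}_{01}(\mathbf{L}\boxplus\mathbf{M})$ lies strictly above all of it, so adjoining it realizes exactly the ordinal sum with $\mathbf{D}_{2}$. The middle inclusion is immediate, as every member of $\mathrm{Con}_{01}(\mathbf{L}\boxplus\mathbf{M})$ is a congruence and $\nabla_{L\boxplus M}$ always is.

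The final inclusion is the heart of the matter, and the step I expect to be the main obstacle. I would take $\theta\in\mathrm{Con}(\mathbf{L}\boxplus\mathbf{M})$ with $\theta\neq\nabla_{L\boxplus M}$ and $\theta\notin\mathrm{Con}_{01}(\mathbf{L}\boxplus\mathbf{M})$ and show it is one of the two listed equivalences. Note first that $(0,1)\in\theta$ would force $\theta=\nabla_{L\boxplus M}$, so $0,1$ are never collapsed. If $0/\theta=\{0\}$ held, any witness $x\in(1/\theta)\setminus\{1\}$ would lie in $L\setminus\{0,1\}$ or $M\setminus\{0,1\}$, and picking $y$ in the other summand would give $(1\wedge y,\,x\wedge y)=(y,0)\in\theta$, contradicting $0/\theta=\{0\}$; hence $0/\theta\neq\{0\}$. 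So fix $x\in(0/\theta)\setminus\{0\}$, say $x\in L\setminus\{0,1\}$ (the case $x\in M\setminus\{0,1\}$ being symmetric in $\mathbf{L},\mathbf{M}$). For every $y\in M\setminus\{0,1\}$ I get $(x\vee y,\,0\vee y)=(1,y)\in\theta$, whence $1/\theta\supseteq M\setminus\{0\}$; then, fixing one such $y$, for every $z\in L\setminus\{0,1\}$ I get $(1\wedge z,\,y\wedge z)=(z,0)\in\theta$, whence $0/\theta\supseteq L\setminus\{1\}$. Finally, collapsing any element of $M\setminus\{0,1\}$ into $0/\theta$ (or of $L\setminus\{0,1\}$ into $1/\theta$) would again force $0\equiv 1$, so the classes are exactly $0/\theta=L\setminus\{1\}$ and $1/\theta=M\setminus\{0\}$, which partition $L\boxplus M$; thus $\theta=eq(L\setminus\{1\},M\setminus\{0\})$, and the symmetric case yields $eq(L\setminus\{0\},M\setminus\{1\})$. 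The delicate point is precisely this propagation: one must use the cross-summand identities twice — once with a join and once with a meet — to turn a single non-trivial collapse at the bottom into the full two-block partition, while the constraint $0\not\equiv 1$ rules out every intermediate possibility.
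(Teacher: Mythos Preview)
The paper does not prove this lemma; it is quoted from \cite{eunoucard} without argument. Your proof is correct and self-contained: you use the decomposition $\theta=(\theta\cap L^{2})\boxplus(\theta\cap M^{2})$ recorded just before the lemma to handle the first display, and then the direct propagation argument (one join, one meet across summands, plus the constraint $0\not\equiv 1$) to pin down the only two proper congruences that can fall outside $\mathrm{Con}_{01}$. One minor remark: in the case split at the end you start from a single $x\in(0/\theta)\setminus\{0\}$ in $L$, and your conclusion $0/\theta=L\setminus\{1\}$ a posteriori rules out the existence of a second witness in $M$, so the two cases really are exhaustive and exclusive --- you might make this explicit. Also note that the lemma only asserts the outer inclusion, so you correctly do not need to verify that $eq(L\setminus\{0\},M\setminus\{1\})$ and $eq(L\setminus\{1\},M\setminus\{0\})$ are themselves congruences.
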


\begin{lemma}
Let ${\mathbb{V}}$ be one of the varieties $\mathbb{BI},\mathbb{BZL}$, and let
$\mathbf{A}$ and $\mathbf{B}$ be nontrivial members of ${\mathbb{V}}$. Then,
for any $\alpha\in\mathrm{Con}_{{\mathbb{V}}}(\mathbf{A})\setminus\{\nabla
_{A}\}$ and any $\beta\in\mathrm{Con}_{{\mathbb{V}}}(\mathbf{B})\setminus
\{\nabla_{B}\}$, we have: $\alpha\boxplus\beta\in\mathrm{Con}_{{\mathbb{V}}%
}(\mathbf{A}\boxplus\mathbf{B})$ iff $\alpha\boxplus\beta\in\mathrm{Con}%
(\mathbf{A}\boxplus\mathbf{B})$.\label{hsumfullcg}
\end{lemma}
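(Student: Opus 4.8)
The plan is to split the biconditional into its two implications, of which only the right-to-left one carries content. For the forward direction, observe that the bounded lattice $(\mathbf{A}\boxplus\mathbf{B})_l$ is a reduct of the algebra $\mathbf{A}\boxplus\mathbf{B}$ (of type $(2,2,1,0,0)$ if ${\mathbb{V}}=\mathbb{BI}$, of type $(2,2,1,1,0,0)$ if ${\mathbb{V}}=\mathbb{BZL}$), so by Corollary \ref{cgred} we have $\mathrm{Con}_{{\mathbb{V}}}(\mathbf{A}\boxplus\mathbf{B})\subseteq\mathrm{Con}(\mathbf{A}\boxplus\mathbf{B})$; hence $\alpha\boxplus\beta\in\mathrm{Con}_{{\mathbb{V}}}(\mathbf{A}\boxplus\mathbf{B})$ immediately yields $\alpha\boxplus\beta\in\mathrm{Con}(\mathbf{A}\boxplus\mathbf{B})$. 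The substance is therefore the converse: a relation of the special form $\alpha\boxplus\beta$ that happens to respect the lattice operations automatically respects the remaining operations.

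For the converse I would set $\theta=\alpha\boxplus\beta$, assume $\theta\in\mathrm{Con}(\mathbf{A}\boxplus\mathbf{B})$, and first record the structure of $\theta$. Since $\alpha,\beta$ are ${\mathbb{V}}$-congruences they are in particular lattice congruences of $\mathbf{A},\mathbf{B}$, and because $\alpha\neq\nabla_A$, $\beta\neq\nabla_B$ the classes $0/\alpha,1/\alpha$ and $0/\beta,1/\beta$ are distinct; thus, by the definition of $\boxplus$ on equivalences, the classes of $\theta$ are exactly the $\alpha$-classes contained in $A\setminus\{0,1\}$, the $\beta$-classes contained in $B\setminus\{0,1\}$, and the two merged classes $\mathbf{0}=0/\alpha\cup 0/\beta$ and $\mathbf{1}=1/\alpha\cup 1/\beta$. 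Equivalently, $\theta\cap A^2=\alpha$, $\theta\cap B^2=\beta$, and the only pairs of $\theta$ straddling $A\setminus\{0,1\}$ and $B\setminus\{0,1\}$ lie inside $\mathbf{0}$ or inside $\mathbf{1}$. It then remains only to check compatibility of $\theta$ with the involution ${}^{\prime}$ and, when ${\mathbb{V}}=\mathbb{BZL}$, with the Brouwer complement ${}^{\sim}$, since the lattice operations are granted by hypothesis and the nullary ones are automatic.

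The key remark is that complementation interchanges the two merged classes. As $\alpha$ respects ${}^{\prime}$ and $0^{\prime}=1$, $1^{\prime}=0$ in $\mathbf{A}$ (and likewise in $\mathbf{B}$), we get $(0/\alpha)^{\prime}=1/\alpha$ and $(0/\beta)^{\prime}=1/\beta$, whence $\mathbf{0}^{\prime}=\mathbf{1}$ and dually $\mathbf{1}^{\prime}=\mathbf{0}$; and using $0^{\sim}=1$, $1^{\sim}=0$ together with the fact that $\alpha,\beta$ respect ${}^{\sim}$, we likewise obtain $\mathbf{0}^{\sim}\subseteq\mathbf{1}$ and $\mathbf{1}^{\sim}\subseteq\mathbf{0}$. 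Consequently every $\theta$-class is carried by ${}^{\prime}$ (resp. ${}^{\sim}$) into a single $\theta$-class: an $\alpha$-class $C\subseteq A\setminus\{0,1\}$ has $C^{\prime}$ (resp. $C^{\sim}$) contained in a single $\alpha$-class, hence in a single $\theta$-class, by $\alpha$-compatibility of the complements, symmetrically for the $\beta$-classes, while the two merged classes are swapped by ${}^{\prime}$ and sent into one another by ${}^{\sim}$. This gives that $(x,y)\in\theta$ implies $(x^{\prime},y^{\prime})\in\theta$ and, in the $\mathbb{BZL}$ case, $(x^{\sim},y^{\sim})\in\theta$, so $\theta\in\mathrm{Con}_{{\mathbb{V}}}(\mathbf{A}\boxplus\mathbf{B})$, as desired.

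I do not expect a genuine obstacle: all the work is in unpacking the definition of $\alpha\boxplus\beta$ and in the observation that the complements exchange the bottom and top merged classes. The only points needing mild care are the verification that $0/\alpha\neq 1/\alpha$ (which is exactly where $\alpha\neq\nabla_A$, $\beta\neq\nabla_B$ enter, guaranteeing $\mathbf{0}\neq\mathbf{1}$ and that $\theta$ is a genuine partition of the advertised shape) and the routine confirmation that $\theta$ restricts to $\alpha$ on $A^2$ and to $\beta$ on $B^2$.
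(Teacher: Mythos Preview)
Your proof is correct and follows essentially the same approach as the paper's: the forward direction is the trivial inclusion $\mathrm{Con}_{{\mathbb{V}}}\subseteq\mathrm{Con}$, and for the converse both you and the paper observe that since $\alpha,\beta$ already preserve the unary operations and the only ``mixed'' $\theta$-classes are $0/\alpha\cup 0/\beta$ and $1/\alpha\cup 1/\beta$, which are interchanged by ${}^{\prime}$ and by ${}^{\sim}$, the equivalence $\alpha\boxplus\beta$ automatically preserves those operations. Your write-up is simply a more detailed unpacking of what the paper compresses into a couple of lines.
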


\begin{proof}
If $\alpha\in\mathrm{Con}_{{\mathbb{V}}}(\mathbf{A})\setminus\{\nabla_{A}\}$
and $\beta\in\mathrm{Con}_{{\mathbb{V}}}(\mathbf{B})\setminus\{\nabla_{B}\}$,
then $\alpha$ preserves the involution of $\mathbf{A}$, $\beta$ preserves the
involution of $\mathbf{B}$, $0/\alpha\neq0/\beta$ and $1/\alpha\neq1/\beta$,
thus, clearly, $\alpha\boxplus\beta$ preserves the involution of
$\mathbf{A}\boxplus\mathbf{B}$, and the same holds for the Brouwer complement
in the case when ${\mathbb{V}}=\mathbb{BZL}$; hence, whenever $\alpha
\boxplus\beta$ is a lattice congruence of $\mathbf{A}\boxplus\mathbf{B}$, it
is a full congruence.
\end{proof}

For ${\mathbb{V}}=\mathbb{BI}$, this result was proven in \cite{eucardbi},
but, for the sake of completeness, we provide a new proof for it here.

\begin{proposition}
\label{cghsum}Let ${\mathbb{V}}$ be one of the varieties $\mathbb{BI}$ and
$\mathbb{BZL}$ and $\mathbf{A}$ and $\mathbf{B}$ be members of ${\mathbb{V}}$
with $|A|>2$ and $|B|>2$ such that $\mathbf{A}\boxplus\mathbf{B}\in
{\mathbb{V}}$. Then:

\begin{itemize}
\item $\mathrm{Con}_{{\mathbb{V}}01}(\mathbf{A}\boxplus\mathbf{B}%
)=\{\alpha\boxplus\beta:\alpha\in\mathrm{Con}_{{\mathbb{V}}01}(\mathbf{A}%
),\beta\in\mathrm{Con}_{{\mathbb{V}}01}(\mathbf{B})\}\cong\mathrm{Con}%
_{{\mathbb{V}}01}(\mathbf{A})\times\mathrm{Con}_{{\mathbb{V}}01}(\mathbf{B})$;

\item $\mathrm{Con}_{{\mathbb{V}}}(\mathbf{A}\boxplus\mathbf{B})=\mathrm{Con}%
_{{\mathbb{V}}01}(\mathbf{A}\boxplus\mathbf{B})\cup\{\nabla_{A\boxplus
B}\}\cong(\mathrm{Con}_{{\mathbb{V}}01}(\mathbf{A})\times\mathrm{Con}%
_{{\mathbb{V}}01}(\mathbf{B}))\oplus\mathbf{D}_{2}$.
\end{itemize}
\end{proposition}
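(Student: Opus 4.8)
The plan is to assemble this statement from the lattice-level description in Lemma~\ref{cghsumlat}, the congruence-upgrade criterion in Lemma~\ref{hsumfullcg}, and the decomposition of proper congruences recorded at the start of this section; the genuine content lies in those cited results, and the work here is mostly to route the extra unary operations through them. First I would treat $\nabla_{A\boxplus B}$ separately and argue only about proper $\theta\in\mathrm{Con}_{{\mathbb{V}}}(\mathbf{A}\boxplus\mathbf{B})$, for which the opening remark of this section supplies $\theta=(\theta\cap A^{2})\boxplus(\theta\cap B^{2})$ with $\theta\cap A^{2}\in\mathrm{Con}_{{\mathbb{V}}}(\mathbf{A})\setminus\{\nabla_{A}\}$ and $\theta\cap B^{2}\in\mathrm{Con}_{{\mathbb{V}}}(\mathbf{B})\setminus\{\nabla_{B}\}$.

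For the first bullet, the inclusion $\subseteq$ is immediate: a $\theta\in\mathrm{Con}_{{\mathbb{V}}01}(\mathbf{A}\boxplus\mathbf{B})$ is proper, and its decomposition has both factors fixing $0$ and $1$ as singletons, whence $\theta\cap A^{2}\in\mathrm{Con}_{{\mathbb{V}}01}(\mathbf{A})$ and $\theta\cap B^{2}\in\mathrm{Con}_{{\mathbb{V}}01}(\mathbf{B})$. For the reverse inclusion, given $\alpha\in\mathrm{Con}_{{\mathbb{V}}01}(\mathbf{A})$ and $\beta\in\mathrm{Con}_{{\mathbb{V}}01}(\mathbf{B})$, these are proper (as $|A|,|B|>2$) and are in particular $01$-congruences of the lattice reducts, so Lemma~\ref{cghsumlat} makes $\alpha\boxplus\beta$ a lattice $01$-congruence of $(\mathbf{A}\boxplus\mathbf{B})_{l}$; Lemma~\ref{hsumfullcg} then upgrades it to a member of $\mathrm{Con}_{{\mathbb{V}}}(\mathbf{A}\boxplus\mathbf{B})$, and it fixes $0$ and $1$ by the definition of $\boxplus$ on congruences. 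The displayed isomorphism is obtained by restricting the lattice-level isomorphism of Lemma~\ref{cghsumlat}: since $\mathrm{Con}_{{\mathbb{V}}01}$ is a complete sublattice of the ambient lattice congruence lattice (Corollary~\ref{cgred} together with Lemma~\ref{maxcg}), meets and joins are computed the same way, and $(\alpha,\beta)\mapsto\alpha\boxplus\beta$ restricts to an order isomorphism onto $\mathrm{Con}_{{\mathbb{V}}01}(\mathbf{A}\boxplus\mathbf{B})$.

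For the second bullet, $\supseteq$ is trivial. For $\subseteq$, take a proper $\theta\in\mathrm{Con}_{{\mathbb{V}}}(\mathbf{A}\boxplus\mathbf{B})$; as a lattice congruence it falls, by the sandwich in Lemma~\ref{cghsumlat}, into $\mathrm{Con}_{01}((\mathbf{A}\boxplus\mathbf{B})_{l})\cup\{eq(A\setminus\{0\},B\setminus\{1\}),\,eq(A\setminus\{1\},B\setminus\{0\})\}$. If $\theta$ lies in the first set it is a $01$-congruence and we are done; the crux is to exclude the two ``half'' equivalences, and this is where the involution does the work. For $\theta=eq(A\setminus\{0\},B\setminus\{1\})$, pick $a\in A\setminus\{0,1\}$ (available since $|A|>2$); then $(a,1)\in\theta$, so preservation of $\,'$ would force $(a',0)\in\theta$, which is false because $a'\in A\setminus\{0,1\}$ lies in the block $A\setminus\{0\}$ while $0$ lies in the block $B\setminus\{1\}$. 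Hence this $\theta$ is not even in $\mathrm{Con}_{\mathbb{BI}}$, and a fortiori not in $\mathrm{Con}_{\mathbb{BZL}}$; the other half-equivalence is excluded symmetrically. Finally, since $\nabla_{A\boxplus B}$ is the top congruence and sits strictly above every proper $01$-congruence, adjoining it yields $\mathrm{Con}_{{\mathbb{V}}01}(\mathbf{A}\boxplus\mathbf{B})\oplus\mathbf{D}_{2}$, which combined with the first bullet gives $(\mathrm{Con}_{{\mathbb{V}}01}(\mathbf{A})\times\mathrm{Con}_{{\mathbb{V}}01}(\mathbf{B}))\oplus\mathbf{D}_{2}$.

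The only real obstacle is the exclusion of the two half-equivalences: the lattice reducts genuinely admit them as proper congruences, so the proposition can hold only because the signature of ${\mathbb{V}}$ carries the involution, and the paragraph above is precisely the verification that neither half-equivalence respects $\,'$. I would keep a close eye on the hypotheses $|A|,|B|>2$ throughout, since they are needed both to invoke Lemma~\ref{cghsumlat} and to produce the element $a\in A\setminus\{0,1\}$ that breaks involution-compatibility.
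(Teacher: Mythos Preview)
Your proof is correct and follows essentially the same approach as the paper: decompose proper congruences as $\theta=(\theta\cap A^{2})\boxplus(\theta\cap B^{2})$, invoke the lattice-level sandwich of Lemma~\ref{cghsumlat}, upgrade $01$-congruences via Lemma~\ref{hsumfullcg}, and eliminate the two ``half'' equivalences by showing they fail to respect the involution. Your verification that $eq(A\setminus\{0\},B\setminus\{1\})$ contains $(a,1)$ but not $(a',0)$ is in fact the correct pairing; the paper's own proof appears to have the labels swapped, though the argument is of course unaffected.
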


\begin{proof}
Every proper congruence $\theta$ of $\mathbf{A}\boxplus\mathbf{B}$ satisfies
$\theta=(\theta\cap A^{2})\boxplus(\theta\cap B^{2})$, with $\theta\cap
A^{2}\in\mathrm{Con}_{{\mathbb{V}}}(\mathbf{A})\backslash\left\{  \nabla
_{A}\right\}  $ and $\theta\cap B^{2}\in\mathrm{Con}_{{\mathbb{V}}}%
(\mathbf{B})\backslash\left\{  \nabla_{B}\right\}  $. Let $D=\{\alpha
\boxplus\beta:\alpha\in\mathrm{Con}_{{\mathbb{V}}01}(\mathbf{A}),\beta
\in\mathrm{Con}_{{\mathbb{V}}01}(\mathbf{B})\}$. By Lemmas \ref{hsumfullcg}
and \ref{cghsumlat}, it follows that:%
\begin{align*}
\{\nabla_{A\boxplus B}\}\cup D  &  \subseteq\mathrm{Con}_{{\mathbb{V}}%
}(\mathbf{A}\boxplus\mathbf{B})\\
&  \subseteq\{\nabla_{A\boxplus B},eq(A\setminus\{0\},B\setminus
\{1\}),eq(A\setminus\{1\},B\setminus\{0\})\}\cup D.
\end{align*}

Since $|A|>2$, there exists an $a\in A\setminus\{0,1\}$, so that $a^{\prime
}\in A\setminus\{0,1\}$. Note that $eq(A\setminus\{0\},B\setminus\{1\})\cap
A^{2}$ contains $(a,0)$ but not $(a^{\prime},1)$, while $eq(A\setminus
\{1\},B\setminus\{0\})\cap A^{2}$ contains $(a,1)$ but not $(a^{\prime},0)$. Hence $eq(A\setminus\{0\},B\setminus\{1\})$ $\notin\mathrm{Con}_{{\mathbb{V}}}(\mathbf{A}\boxplus\mathbf{B})$ and $eq(A\setminus
\{1\},B\setminus\{0\})\notin\mathrm{Con}_{{\mathbb{V}}}(\mathbf{A}\boxplus\mathbf{B})$.

Therefore $\mathrm{Con}_{{\mathbb{V}}}(\mathbf{A}\boxplus\mathbf{B}%
)=\{\nabla_{A\boxplus B}\}\cup\{\alpha\boxplus\beta:\alpha\in\mathrm{Con}%
_{{\mathbb{V}}01}(\mathbf{A}),\beta\in\mathrm{Con}_{{\mathbb{V}}01}%
(\mathbf{B})\}$, so, clearly,\begin{align*}
\mathrm{Con}_{{\mathbb{V}}01}(\mathbf{A}\boxplus\mathbf{B})  &  =\{\alpha
\boxplus\beta:\alpha\in\mathrm{Con}_{{\mathbb{V}}01}(\mathbf{A}),\beta
\in\mathrm{Con}_{{\mathbb{V}}01}(\mathbf{B})\}\\
&  \cong\mathrm{Con}_{{\mathbb{V}}01}(\mathbf{A})\times\mathrm{Con}%
_{{\mathbb{V}}01}(\mathbf{B}).
\end{align*}
Hence
\begin{align*}
\mathrm{Con}_{{\mathbb{V}}}(\mathbf{A}\boxplus\mathbf{B})  &  =\{\nabla
_{A\boxplus B}\}\cup\mathrm{Con}_{{\mathbb{V}}01}(\mathbf{A}\boxplus
\mathbf{B})\\
&  \cong\mathrm{Con}_{{\mathbb{V}}01}(\mathbf{A}\boxplus\mathbf{B}%
)\oplus\mathbf{D}_{2}\\
&  \cong(\mathrm{Con}_{{\mathbb{V}}01}(\mathbf{A})\times\mathrm{Con}%
_{{\mathbb{V}}01}(\mathbf{B}))\oplus\mathbf{D}_{2}.
\end{align*}

\end{proof}

\begin{corollary}
Let ${\mathbb{V}}$ be one of the varieties $\mathbb{BI}$ and $\mathbb{BZL}$,
$n\in{\mathbb{N}}\setminus\{0,1\}$ and $\mathbf{A}_{1},\ldots,\mathbf{A}_{n}$
be members of ${\mathbb{V}}$ with $|A_{i}|>2$ for all $i\in[1,n]$. Then:

\begin{itemize}
\item $\mathrm{Con}_{{\mathbb{V}} 01}(\boxplus_{i=1}^{n}\mathbf{A}%
_{i})=\{\boxplus_{i=1}^{n}\alpha_{i}:(\forall\, i\in[1,n])\,
(\alpha_{i}\in\mathrm{Con}_{{\mathbb{V}} 01}(\mathbf{A}_{i}))\}\cong%
$\linebreak$\prod_{i=1}^{n}\mathrm{Con}_{{\mathbb{V}} 01}(\mathbf{A}_{i})$;

\item $\mathrm{Con}_{{\mathbb{V}} }(\boxplus_{i=1}^{n}\mathbf{A}%
_{i})=\mathrm{Con}_{{\mathbb{V}} 01}(\boxplus_{i=1}^{n}\mathbf{A}_{i}%
)\cup\{\nabla_{A\boxplus B}\}\cong(\prod_{i=1}^{n}\mathrm{Con}_{{\mathbb{V}}
01}(\mathbf{A}_{i}))\oplus\mathbf{D}_{2}$;

\item $\boxplus_{i=1}^{n}\mathbf{A}_{i}$ is subdirectly irreducible as a
member of ${\mathbb{V}}$ iff, for some $k\in[1,n]$, either
$\mathrm{Con}_{{\mathbb{V}}01}(\mathbf{A}_{k})=\{\Delta_{A_{k}}\}$ or
$\mathrm{Con}_{{\mathbb{V}}01}(\mathbf{A}_{k})$ has a single atom, and, for
every $i\in[1,n]\setminus\{k\}$, $\mathrm{Con}_{{\mathbb{V}}%
01}(\mathbf{A}_{i})=\{\Delta_{A_{i}}\}$.
\end{itemize}
\end{corollary}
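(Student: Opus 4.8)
The plan is to argue by induction on $n$, taking Proposition~\ref{cghsum} as both the base case ($n=2$) and the engine of the inductive step. For the step, assume the three assertions hold for $n-1\geq 2$ and set $\mathbf{B}=\boxplus_{i=1}^{n-1}\mathbf{A}_i$, so that, by the associativity of $\boxplus$ recorded in Subsection~\ref{ualglat}, $\boxplus_{i=1}^{n}\mathbf{A}_i=\mathbf{B}\boxplus\mathbf{A}_n$. Since every $|A_i|>2$, gluing at $0$ and $1$ leaves $|B|>2$ and $|A_n|>2$, so Proposition~\ref{cghsum} applies to the pair $(\mathbf{B},\mathbf{A}_n)$ once we know $\mathbf{B}\boxplus\mathbf{A}_n\in\mathbb{V}$. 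For $\mathbb{V}=\mathbb{BI}$ this is automatic; for $\mathbb{V}=\mathbb{BZL}$ it is part of the standing hypothesis that $\boxplus_{i=1}^{n}\mathbf{A}_i\in\mathbb{BZL}$, and by Proposition~\ref{hsumkl} together with associativity this membership is inherited by every partial horizontal sum $\mathbf{B}$, so the induction does not leave $\mathbb{V}$.

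First I would read off the two congruence-lattice descriptions. By Proposition~\ref{cghsum}, $\mathrm{Con}_{\mathbb{V}01}(\mathbf{B}\boxplus\mathbf{A}_n)=\{\gamma\boxplus\delta:\gamma\in\mathrm{Con}_{\mathbb{V}01}(\mathbf{B}),\ \delta\in\mathrm{Con}_{\mathbb{V}01}(\mathbf{A}_n)\}$, and substituting the inductive description $\gamma=\boxplus_{i=1}^{n-1}\alpha_i$ and using the associativity of the attendant operation $\boxplus$ on proper equivalences (again Subsection~\ref{ualglat}) to rewrite $(\boxplus_{i=1}^{n-1}\alpha_i)\boxplus\alpha_n=\boxplus_{i=1}^{n}\alpha_i$ yields the claimed set-theoretic form, together with the chain of isomorphisms $\mathrm{Con}_{\mathbb{V}01}(\boxplus_{i=1}^{n}\mathbf{A}_i)\cong\mathrm{Con}_{\mathbb{V}01}(\mathbf{B})\times\mathrm{Con}_{\mathbb{V}01}(\mathbf{A}_n)\cong\prod_{i=1}^{n}\mathrm{Con}_{\mathbb{V}01}(\mathbf{A}_i)$. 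The second bullet then follows from the second part of Proposition~\ref{cghsum}, which adjoins the single extra congruence $\nabla$ above the $01$-congruences, giving $\mathrm{Con}_{\mathbb{V}}(\boxplus_{i=1}^{n}\mathbf{A}_i)\cong(\prod_{i=1}^{n}\mathrm{Con}_{\mathbb{V}01}(\mathbf{A}_i))\oplus\mathbf{D}_2$.

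Finally I would deduce subdirect irreducibility directly from this structure rather than by a separate induction. Writing $C_i=\mathrm{Con}_{\mathbb{V}01}(\mathbf{A}_i)$ and $\mathbf{P}=\prod_{i=1}^{n}C_i$, the algebra $\boxplus_{i=1}^{n}\mathbf{A}_i$ is nontrivial, so by the criterion recalled in Subsection~\ref{ualglat} it is subdirectly irreducible iff $\Delta$ is strictly meet-irreducible in $\mathbf{P}\oplus\mathbf{D}_2$; since the adjoined top lies above everything, this is equivalent to $\Delta$ being strictly meet-irreducible in $\mathbf{P}$ itself. The decisive computation is that the bottom of a direct product of bounded lattices is strictly meet-irreducible iff at most one factor is nontrivial and, in that case, the bottom of that factor is strictly meet-irreducible: if two factors $C_i,C_j$ were nontrivial one could pick $\Delta<a\in C_i$ and $\Delta<b\in C_j$ with $a\wedge b=\Delta$, witnessing reducibility. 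Transporting this back through $\mathbf{P}\cong C_k$ for the unique possibly-nontrivial index $k$ gives exactly the stated condition, once strict meet-irreducibility of the bottom of $C_k$ is phrased as $C_k$ having a single atom. The main obstacle is precisely this last reduction: one must handle the all-trivial case (where $\mathbf{P}\oplus\mathbf{D}_2\cong\mathbf{D}_2$ is simple) uniformly with the one-nontrivial-factor case, and verify that, for the surviving factor $C_k$ (a complete lattice by Lemma~\ref{maxcg} and Theorem~\ref{cgeq}), the single-atom phrasing faithfully records strict meet-irreducibility of its bottom; everything else is bookkeeping driven by associativity.
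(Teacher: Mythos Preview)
Your approach is correct and is precisely the argument the paper intends: the corollary is stated without proof, as an immediate consequence of Proposition~\ref{cghsum} via the associativity of $\boxplus$ (on algebras and on proper equivalences) recorded in Subsection~\ref{ualglat}, and your induction carries this out cleanly.

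One remark on the third bullet. You are right to flag the passage from ``$\Delta$ strictly meet--irreducible in $C_k$'' to ``$C_k$ has a single atom'' as the only nontrivial point. The implication from the former to the latter is clear (the unique upper cover of $\Delta$ is the unique atom); for the converse, you appeal to completeness of $C_k$ via Lemma~\ref{maxcg}, but completeness alone does not force every nonzero element to dominate the unique atom. The paper uses ``has a single atom'' in the same loose sense in Corollary~\ref{sih}.(\ref{sih1}), so your phrasing matches the paper's, and in all the concrete applications downstream (finite congruence lattices, antiortholattice summands, chains) the two conditions coincide. If you want to be fully rigorous, simply replace ``has a single atom'' by ``$\Delta$ is completely meet--irreducible in $C_k$'' (equivalently, $C_k\setminus\{\Delta\}$ has a minimum); this is what the product--lattice argument actually delivers and what subdirect irreducibility requires.
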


Recall from \textup{\cite[Prop. 4.3]{BH} that if }$\mathbf{L}$ is an orthomodular lattice, then $\mathbf{L}$ is congruence--regular and $\mathrm{Con}_{\mathbb{BZL}}(\mathbf{L})=\mathrm{Con}_{\mathbb{BI}}(\mathbf{L})=\mathrm{Con}(\mathbf{L})$.

\begin{proposition}\label{cghsumpbz}Let $\mathbf{A}$ be an orthomodular lattice and $\mathbf{B}$ a BZ--lattice with $|A|>2$ and $|B|>2$. Then:

\begin{enumerate}
\item \label{cghsumpbz1} $\mathrm{Con}_{\mathbb{BZL}01}(\mathbf{A}\boxplus\mathbf{B})=\{\Delta_{A}\boxplus\beta:\beta\in\mathrm{Con}_{\mathbb{BZL}01}(\mathbf{B})\}\cong\mathrm{Con}_{\mathbb{BZL}01}(\mathbf{B})$
and $\mathrm{Con}_{\mathbb{BZL}}(\mathbf{A}\boxplus\mathbf{B})=\mathrm{Con}_{\mathbb{BZL}01}(\mathbf{A}\boxplus\mathbf{B})\cup\{\nabla_{A\boxplus B}\}\cong\mathrm{Con}_{\mathbb{BZL}01}(\mathbf{B})\oplus\mathbf{D}_{2}$;

\item \label{cghsumpbz2} if $\mathbf{B}$ is an antiortholattice, then $\mathrm{Con}_{\mathbb{BZL}}(\mathbf{A}\boxplus\mathbf{B})=\{\Delta _{A}\boxplus\beta:\beta\in\mathrm{Con}_{\mathbb{BZL}}(\mathbf{B})\setminus\{\nabla_{B}\}\}\cup\{\nabla_{A\boxplus B}\}\cong\mathrm{Con}_{\mathbb{BZL}}(\mathbf{B})$.
\end{enumerate}\end{proposition}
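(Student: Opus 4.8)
The plan is to derive both statements from Proposition~\ref{cghsum} once the congruence structure of the orthomodular summand has been pinned down. Since $\mathbf{A}$ is orthomodular and hence an ortholattice, Proposition~\ref{hsumkl}.(\ref{hsumkl0}) guarantees that $\mathbf{A}\boxplus\mathbf{B}\in\mathbb{BZL}$, so Proposition~\ref{cghsum} applies with ${\mathbb{V}}=\mathbb{BZL}$ and with the hypotheses $|A|>2$, $|B|>2$ in force. The content-bearing preliminary observation is that $\mathrm{Con}_{\mathbb{BZL}01}(\mathbf{A})=\{\Delta_{A}\}$: recall from \cite[Prop.~4.3]{BH} that an orthomodular lattice is congruence--regular and satisfies $\mathrm{Con}_{\mathbb{BZL}}(\mathbf{A})=\mathrm{Con}(\mathbf{A})$, so any $\theta\in\mathrm{Con}_{\mathbb{BZL}01}(\mathbf{A})$ is a lattice congruence with $0/\theta=\{0\}=0/\Delta_{A}$, and congruence--regularity forces $\theta=\Delta_{A}$.

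For part~(\ref{cghsumpbz1}) I would substitute this fact into the first clause of Proposition~\ref{cghsum}. The set $\{\alpha\boxplus\beta:\alpha\in\mathrm{Con}_{\mathbb{BZL}01}(\mathbf{A}),\beta\in\mathrm{Con}_{\mathbb{BZL}01}(\mathbf{B})\}$ collapses to $\{\Delta_{A}\boxplus\beta:\beta\in\mathrm{Con}_{\mathbb{BZL}01}(\mathbf{B})\}$, and the product $\mathrm{Con}_{\mathbb{BZL}01}(\mathbf{A})\times\mathrm{Con}_{\mathbb{BZL}01}(\mathbf{B})$ reduces to $\mathrm{Con}_{\mathbb{BZL}01}(\mathbf{B})$ up to isomorphism, since the first factor is trivial. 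The description of $\mathrm{Con}_{\mathbb{BZL}}(\mathbf{A}\boxplus\mathbf{B})$ and the isomorphism with $\mathrm{Con}_{\mathbb{BZL}01}(\mathbf{B})\oplus\mathbf{D}_{2}$ then read off directly from the second clause of Proposition~\ref{cghsum}, the adjoined top element being $\nabla_{A\boxplus B}$.

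For part~(\ref{cghsumpbz2}) I would specialise the antiortholattice summand via Proposition~\ref{bibzl}.(\ref{bibzl2}), which for the nontrivial antiortholattice $\mathbf{B}$ gives $\mathrm{Con}_{\mathbb{BZL}}(\mathbf{B})=\mathrm{Con}_{\mathbb{BZL}01}(\mathbf{B})\cup\{\nabla_{B}\}$ with $\nabla_{B}\notin\mathrm{Con}_{\mathbb{BZL}01}(\mathbf{B})$, hence $\mathrm{Con}_{\mathbb{BZL}01}(\mathbf{B})=\mathrm{Con}_{\mathbb{BZL}}(\mathbf{B})\setminus\{\nabla_{B}\}$. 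Rewriting the conclusion of part~(\ref{cghsumpbz1}) with this identity yields the stated set equality, and since the same proposition records $\mathrm{Con}_{\mathbb{BZL}}(\mathbf{B})\cong\mathrm{Con}_{\mathbb{BZL}01}(\mathbf{B})\oplus\mathbf{D}_{2}$, combining it with the isomorphism already obtained in part~(\ref{cghsumpbz1}) produces $\mathrm{Con}_{\mathbb{BZL}}(\mathbf{A}\boxplus\mathbf{B})\cong\mathrm{Con}_{\mathbb{BZL}}(\mathbf{B})$.

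Beyond these substitutions the argument is pure bookkeeping, so the main obstacle is the verification that $\mathrm{Con}_{\mathbb{BZL}01}(\mathbf{A})=\{\Delta_{A}\}$; this is precisely where orthomodularity of $\mathbf{A}$ is used, rather than merely its being a pseudo--Kleene algebra. For a general pseudo--Kleene summand congruence--regularity fails, $\mathrm{Con}_{\mathbb{BZL}01}(\mathbf{A})$ may be nontrivial, and the clean collapse of the horizontal--sum congruence lattice to the single factor coming from $\mathbf{B}$ would not occur; the orthomodularity hypothesis is thus what makes the sharp summand congruence--invisible.
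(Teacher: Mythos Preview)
Your proof is correct and follows essentially the same route as the paper: both derive part~(\ref{cghsumpbz1}) from Proposition~\ref{cghsum} after noting that $\mathrm{Con}_{\mathbb{BZL}01}(\mathbf{A})=\{\Delta_{A}\}$ (the paper records this immediately before the proposition, via \cite[Prop.~4.3]{BH}, exactly as you do), and both obtain part~(\ref{cghsumpbz2}) by combining part~(\ref{cghsumpbz1}) with Proposition~\ref{bibzl}.(\ref{bibzl2}). Your version is simply more explicit in verifying that $\mathbf{A}\boxplus\mathbf{B}\in\mathbb{BZL}$ and in spelling out the congruence--regularity step.
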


\begin{proof} (\ref{cghsumpbz1}) Since $\mathrm{Con}_{\mathbb{BZL}01}(\mathbf{A})=\{\Delta_{A}\}\cong\mathbf{D}_{1}$, the result follows from Proposition
\ref{cghsum}.

\noindent (\ref{cghsumpbz2}) By (\ref{cghsumpbz1}) and Proposition \ref{bibzl}.(\ref{bibzl2}),
$\mathrm{Con}_{\mathbb{BZL}01}(\mathbf{B})=\mathrm{Con}_{\mathbb{BZL}%
}(\mathbf{B})\setminus\{\nabla_{B}\}$ and $\mathrm{Con}_{\mathbb{BZL}%
}(\mathbf{B})\cong\mathrm{Con}_{\mathbb{BZL}01}(\mathbf{B})\oplus
\mathbf{D}_{2}\cong\mathrm{Con}_{\mathbb{BZL}}(\mathbf{A}\boxplus\mathbf{B})$.\end{proof}

We now list a few corollaries of the results obtained so far.

\begin{corollary}
Let $\mathbf{A}$ be an orthomodular lattice and $\mathbf{B}$ a BZ--lattice with $|A|>2$ and $|B|>2$. Then:

\begin{enumerate}
\item \label{sih0} $\mathbf{A}\boxplus\mathbf{B}$ is simple iff $\mathrm{Con}_{\mathbb{BZL}01}(\mathbf{B})=\{\Delta_{B}\}$;

\item \label{sih1} $\mathbf{A}\boxplus\mathbf{B}$ is subdirectly irreducible
iff either $\mathrm{Con}_{\mathbb{BZL}01}(\mathbf{B})=\{\Delta_{B}\}$
or\linebreak$\mathrm{Con}_{\mathbb{BZL}01}(\mathbf{B})$ has a single atom;

\item \label{sih2} if $\mathbf{B}$ is an antiortholattice, then:
$\mathbf{A}\boxplus\mathbf{B}$ is subdirectly irreducible iff $\mathbf{B}$ is
subdirectly irreducible;

\item \label{sih3} if $\mathbf{B}$ is an antiortholattice chain, then: $\mathbf{A}\boxplus\mathbf{B}$ is subdirectly irreducible iff $|B|\leq 5$.\end{enumerate}\label{sih}\end{corollary}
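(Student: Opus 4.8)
The whole statement rests on the explicit description of $\mathrm{Con}_{\mathbb{BZL}}(\mathbf{A}\boxplus\mathbf{B})$ furnished by Proposition~\ref{cghsumpbz}, so the plan is to read off the simplicity and subdirect irreducibility conditions directly from the shape of that lattice. Writing $C=\mathrm{Con}_{\mathbb{BZL}01}(\mathbf{B})$, Proposition~\ref{cghsumpbz}.(\ref{cghsumpbz1}) gives $\mathrm{Con}_{\mathbb{BZL}}(\mathbf{A}\boxplus\mathbf{B})\cong C\oplus\mathbf{D}_{2}$ under an isomorphism that sends $\Delta_{A\boxplus B}$ to the bottom $\Delta_{B}$ of $C$ and $\nabla_{A\boxplus B}$ to the new top adjoined by the $\mathbf{D}_{2}$ summand. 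Every lattice invariant that controls simplicity or subdirect irreducibility can therefore be computed inside $C\oplus\mathbf{D}_{2}$; the cardinality hypotheses $|A|>2$ and $|B|>2$ are exactly what licenses the appeal to Proposition~\ref{cghsumpbz}, and I note in passing that $\mathbf{A}\boxplus\mathbf{B}$ is indeed a nontrivial BZ--lattice, since $\mathbf{A}$ is an ortholattice with $|A|>2$.

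For (\ref{sih0}), $\mathbf{A}\boxplus\mathbf{B}$ is simple exactly when $\mathrm{Con}_{\mathbb{BZL}}(\mathbf{A}\boxplus\mathbf{B})\cong\mathbf{D}_{2}$; since $|C\oplus\mathbf{D}_{2}|=|C|+1$, this happens iff $|C|=1$, that is iff $C=\{\Delta_{B}\}$. For (\ref{sih1}) I would use the characterization of subdirect irreducibility recalled in Subsection~\ref{ualglat}: as $\mathbf{A}\boxplus\mathbf{B}$ is nontrivial, it is subdirectly irreducible iff $\Delta_{A\boxplus B}$ is meet--irreducible in $C\oplus\mathbf{D}_{2}$, equivalently iff $C\oplus\mathbf{D}_{2}$ has a monolith. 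The top contributed by $\mathbf{D}_{2}$ lies above every element of $C$, so it never figures essentially in a meet representation of the bottom; hence $\Delta_{B}$ is meet--irreducible in $C\oplus\mathbf{D}_{2}$ iff either $C$ is trivial (and $C\oplus\mathbf{D}_{2}=\mathbf{D}_{2}$) or $\Delta_{B}$ is meet--irreducible in $C$, i.e. $C$ has a single atom. This is precisely the stated dichotomy.

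Part (\ref{sih2}) is immediate once $\mathbf{B}$ is an antiortholattice: Proposition~\ref{cghsumpbz}.(\ref{cghsumpbz2}) yields $\mathrm{Con}_{\mathbb{BZL}}(\mathbf{A}\boxplus\mathbf{B})\cong\mathrm{Con}_{\mathbb{BZL}}(\mathbf{B})$, and since both algebras are nontrivial, subdirect irreducibility, being a property of the congruence lattice through its monolith, transfers across this isomorphism. (Alternatively one recovers it from (\ref{sih1}) together with Proposition~\ref{bibzl}.(\ref{bibzl2}), which identifies $C$ with $\mathrm{Con}_{\mathbb{BI}01}(\mathbf{B})$.) For (\ref{sih3}) I would feed the antiortholattice chain $\mathbf{B}$ into (\ref{sih2}) and then invoke Corollary~\ref{2ndcoraol}.(\ref{2ndcoraol2}): the only subdirectly irreducible antiortholattice chains are $\mathbf{D}_{1},\ldots,\mathbf{D}_{5}$, while infinite chains are subdirectly reducible. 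Under the standing hypothesis $|B|>2$ the finite candidates are $\mathbf{D}_{3},\mathbf{D}_{4},\mathbf{D}_{5}$, and a finite chain is determined up to isomorphism by its cardinality; hence $\mathbf{B}$ is subdirectly irreducible iff $|B|\le 5$, and the same then holds for $\mathbf{A}\boxplus\mathbf{B}$.

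The substantive content has already been absorbed into Proposition~\ref{cghsumpbz} and Corollary~\ref{2ndcoraol}, so what remains is light bookkeeping. The one point demanding care is the atom analysis in (\ref{sih1}): I must check that adjoining a top via $\oplus\,\mathbf{D}_{2}$ neither destroys nor creates a monolith, so that meet--irreducibility of the bottom of $C\oplus\mathbf{D}_{2}$ genuinely reduces to the single--atom condition on $C$. The separate handling of infinite chains in (\ref{sih3}), which is settled by the cited corollary rather than by a finite atom count, is the only other place where the argument is not purely formal.
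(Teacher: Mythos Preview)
Your proof is correct and follows exactly the paper's approach: parts (\ref{sih0}) and (\ref{sih1}) are read off from Proposition~\ref{cghsumpbz}.(\ref{cghsumpbz1}), part (\ref{sih2}) from Proposition~\ref{cghsumpbz}.(\ref{cghsumpbz2}), and part (\ref{sih3}) from (\ref{sih2}) together with Corollary~\ref{2ndcoraol}.(\ref{2ndcoraol2}). The paper's proof consists of bare citations, while you have spelled out the routine lattice-theoretic details (in particular the monolith analysis for $C\oplus\mathbf{D}_2$); one minor terminological slip is that where you write ``meet--irreducible'' you mean ``strictly meet--irreducible'' in the sense of Subsection~\ref{ualglat}, but your reformulation via the monolith makes the intended meaning clear.
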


\begin{proof} (\ref{sih0}),(\ref{sih1}) By Proposition \ref{cghsumpbz}.(\ref{cghsumpbz1}).

\noindent (\ref{sih2}) By Proposition \ref{cghsumpbz}.(\ref{cghsumpbz2}).

\noindent (\ref{sih3}) By (\ref{sih2}) and Corollary \ref{2ndcoraol}.(\ref{2ndcoraol2}).\end{proof}

By Proposition \ref{pbzirr}, the \PBZ --lattices $\mathbf{L}$ with all elements in $L\setminus\{0,1\}$ join--irreducible belong to the subvariety $\mathbb{HPBZL}^{\ast}$ of $\PBZs $ generated by the horizontal sums of antiortholattice chains with arbitrary horizontal sums of Boolean algebras, which is generated by its finite members according to \cite[Corollary 4.1]{PBZ2}, so the subvariety generated by these \PBZ --lattices is generated by its finite subdirectly irreducible members, hence Corollary \ref{sih}.(\ref{sih3}) gives us:

\begin{corollary} $V(\{{\bf L}\in \PBZs :$ all elements of $L\setminus \{0,1\}$ are join--irreducible in ${\bf L}_l\})=V(\{{\bf MO}_k\boxplus {\bf D}_n:k\in \N ^*,n\in [2,5]\})$.\end{corollary}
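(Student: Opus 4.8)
The plan is to establish the two inclusions between $V(\mathcal{K})$ and $V(\mathcal{F})$, where I abbreviate $\mathcal{K}=\{{\bf L}\in\PBZs:\text{all elements of }L\setminus\{0,1\}\text{ are join--irreducible in }{\bf L}_l\}$ and $\mathcal{F}=\{{\bf MO}_k\boxplus{\bf D}_n:k\in\N^{\ast},n\in[2,5]\}$. The inclusion $V(\mathcal{F})\subseteq V(\mathcal{K})$ is the routine half: each ${\bf D}_n$ is a chain, hence an antiortholattice chain, so by the implication \textup{(\ref{pbzirr3})}$\Rightarrow$\textup{(\ref{pbzirr1})} of Proposition \ref{pbzirr} (taking $\kappa=k$ and the antiortholattice chain to be ${\bf D}_n$) every ${\bf MO}_k\boxplus{\bf D}_n$ already lies in $\mathcal{K}$; thus $\mathcal{F}\subseteq\mathcal{K}$ and the inclusion of the generated varieties follows immediately. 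This also covers the degenerate case $n=2$, where ${\bf MO}_k\boxplus{\bf D}_2={\bf MO}_k$, whose nonzero, nonunit elements are its atoms and hence join--irreducible.

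For the converse $V(\mathcal{K})\subseteq V(\mathcal{F})$ I would exploit the reduction recorded just above the statement: by Proposition \ref{pbzirr} every member of $\mathcal{K}$ is of the shape ${\bf MO}_\kappa\boxplus{\bf A}$ with ${\bf A}$ an antiortholattice chain, so $\mathcal{K}\subseteq\mathbb{HPBZL}^{\ast}$, and since $\mathbb{HPBZL}^{\ast}$ is generated by its finite members \cite[Cor. 4.1]{PBZ2}, the variety $V(\mathcal{K})$ is generated by its finite subdirectly irreducible members. Hence it suffices to prove that every finite subdirectly irreducible member of $V(\mathcal{K})$ is isomorphic to some ${\bf MO}_k\boxplus{\bf D}_n$ with $k\in\N^{\ast}$ and $n\in[2,5]$.

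To pin these down I would first check that $\mathcal{K}$ is closed under $\H$, $\S$ and $\P_U$: closure under $\P_U$ holds because ``every element other than the bounds is join--irreducible'' is first--order expressible, so $\mathcal{K}$ is an elementary subclass of $\PBZs$; closure under $\S$ and $\H$ follows from Lemma \ref{hsumsubquo} together with the facts that sub- and quotient algebras of ${\bf MO}_\kappa$ are again of the form ${\bf MO}_\lambda$ (up to the trivial algebra) and that sub- and quotient algebras of an antiortholattice chain are again antiortholattice chains. Since $\PBZs$ is congruence--distributive, J\'{o}nsson's Lemma gives that every subdirectly irreducible member of $V(\mathcal{K})$ lies in $\H\S\P_U(\mathcal{K})\subseteq\mathcal{K}$; so a finite subdirectly irreducible ${\bf S}\in V(\mathcal{K})$ is, by Proposition \ref{pbzirr}, isomorphic to ${\bf MO}_k\boxplus{\bf D}_n$ for some finite $k$ and some finite chain ${\bf D}_n$. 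Finally, Corollary \ref{sih}.(\ref{sih3}) shows that for $n\geq 3$ such an algebra is subdirectly irreducible exactly when $n\leq 5$, while for $n=2$ it is the simple (hence subdirectly irreducible) orthomodular lattice ${\bf MO}_k$; therefore ${\bf S}\in\mathcal{F}$, and $V(\mathcal{K})\subseteq V(\mathcal{F})$ follows.

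The step I expect to be the main obstacle is the passage from $\mathcal{K}$ to the finite subdirectly irreducible generators of $V(\mathcal{K})$. Concretely, one must justify rigorously that the generation of $\mathbb{HPBZL}^{\ast}$ by its finite members descends to generation of the subvariety $V(\mathcal{K})$ by its finite subdirectly irreducible members, and one must verify the closure of $\mathcal{K}$ under $\H,\S,\P_U$ carefully enough to apply J\'{o}nsson's Lemma and conclude that no finite subdirectly irreducible algebra of $V(\mathcal{K})$ escapes $\mathcal{F}$ (in particular that infinite $\kappa$ or an infinite chain never occurs among finite subdirectly irreducibles). The boundary case $n=2$ must also be handled separately, since Corollary \ref{sih}.(\ref{sih3}) is stated under the hypothesis $|B|>2$ and does not directly cover the collapse ${\bf MO}_k\boxplus{\bf D}_2={\bf MO}_k$.
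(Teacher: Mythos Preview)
Your argument follows the paper's approach: reduce via Proposition~\ref{pbzirr} to horizontal sums ${\bf MO}_\kappa\boxplus{\bf A}$ with ${\bf A}$ an antiortholattice chain, invoke \cite[Cor.~4.1]{PBZ2} for generation by finite members, and cap the chain length with Corollary~\ref{sih}.(\ref{sih3}); you supply detail the paper leaves implicit, notably the J\'onsson step confining the finite subdirectly irreducibles to $\mathcal{K}$ and the $n=2$ boundary case. One parallel boundary case you did not mention is $k=0$: the finite subdirectly irreducibles with trivial orthomodular summand are the bare chains ${\bf D}_n$ ($n\le5$), which are not literally in $\mathcal{F}$ (since $\mathcal{F}$ requires $k\in\N^{\ast}$) but lie in $V(\mathcal{F})$ as subalgebras of ${\bf MO}_1\boxplus{\bf D}_n$, and for which the subdirect irreducibility bound comes from Corollary~\ref{2ndcoraol}.(\ref{2ndcoraol2}) rather than Corollary~\ref{sih}.(\ref{sih3}).
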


\begin{corollary}
Let $\mathbf{A}\in\mathbb{OML}\setminus\{\mathbf{D}_{1},\mathbf{D}_{2}\}$, and
let $(\mathbf{B}_{i})_{i\in I}$ be a nonempty family of nontrivial
antiortholattices such that $\mathbf{B}=\prod_{i\in I}\mathbf{B}_{i}%
\ncong\mathbf{D}_{2}$. Then:

\begin{enumerate}
\item \label{genfavex1} if $\mathbf{A}\boxplus\mathbf{B}$ is simple, then
$\mathbf{B}_{i}$ is simple for each $i\in I$;

\item \label{genfavex2} if $\prod_{i\in I}\mathbf{B}_{i}$ has no skew
congruences, in particular if $I$ is finite, then: $\mathbf{A}\boxplus
\mathbf{B}$ is simple iff $\mathbf{B}_{i}$ is simple for each $i\in I$;

\item \label{genfavex3} if $\prod_{i\in I}\mathbf{B}_{i}$ has no skew
congruences, in particular if $I$ is finite, then: $\mathbf{A}\boxplus
\mathbf{B}$ is subdirectly irreducible iff $\mathbf{B}_{i}$ is simple for all
$i\in I$ or, for some $j\in I$, $\mathbf{B}_{j}$ is subdirectly irreducible,
but not simple, and $\mathrm{Con}_{\mathbb{BZL}01}(\mathbf{B}_{i})$ has no
atoms for any $i\in I\setminus\{j\}$;

\item \label{genfavex4} if $\prod_{i\in I}\mathbf{B}_{i}$ has no skew
congruences, in particular if $I$ is finite, and\linebreak$\mathrm{Con}%
_{\mathbb{BZL}}(\mathbf{B}_{i})$ is finite for all $i\in I$, then:
$\mathbf{A}\boxplus\mathbf{B}$ is subdirectly irreducible iff, for some $j\in
I$, $\mathbf{B}_{j}$ is subdirectly irreducible and $\mathbf{B}_{i}$ is simple
for all $i\in I\setminus\{j\}$.
\end{enumerate}

\label{genfavex}
\end{corollary}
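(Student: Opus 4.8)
The plan is to reduce each clause to the lattice $\mathrm{Con}_{\mathbb{BZL}01}(\mathbf{B})$, whose behaviour Corollary~\ref{sih} has already linked to simplicity and subdirect irreducibility of $\mathbf{L}=\mathbf{A}\boxplus\mathbf{B}$. First I would note that $|A|>2$ because $\mathbf{A}\in\mathbb{OML}\setminus\{\mathbf{D}_1,\mathbf{D}_2\}$, and that $|B|>2$ because $\mathbf{B}$ is a nontrivial product with $\mathbf{B}\ncong\mathbf{D}_2$ (a two--element bounded lattice is $\mathbf{D}_2$); moreover $\mathbf{B}$ is a BZ--lattice and, by Proposition~\ref{hsumkl}.(\ref{hsumkl2}), $\mathbf{L}$ is a PBZ$^\ast$--lattice, so Corollary~\ref{sih}.(\ref{sih0})--(\ref{sih1}) applies. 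Writing $C_i=\mathrm{Con}_{\mathbb{BZL}01}(\mathbf{B}_i)$, Proposition~\ref{bibzl}.(\ref{bibzl2}) gives, for each nontrivial antiortholattice $\mathbf{B}_i$, the decomposition $\mathrm{Con}_{\mathbb{BZL}}(\mathbf{B}_i)\cong C_i\oplus\mathbf{D}_2$ with $C_i$ bounded and least element $\Delta_{B_i}$. Running the same computation that underlies Corollary~\ref{sih} in this lattice, $\mathbf{B}_i$ is simple iff $C_i=\{\Delta_{B_i}\}$, while $\mathbf{B}_i$ is subdirectly irreducible iff $C_i$ is trivial or has a single atom; hence $\mathbf{B}_i$ is subdirectly irreducible but not simple iff $C_i$ is nontrivial and has a single atom.

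For (\ref{genfavex1}) I would argue by contraposition, and crucially without invoking the absence of skew congruences. If some $\mathbf{B}_j$ is not simple, choose $\theta_j$ with $\Delta_{B_j}\neq\theta_j\neq\nabla_{B_j}$; by Proposition~\ref{bibzl}.(\ref{bibzl2}) this $\theta_j$ lies in $C_j$. The coordinatewise congruence $\alpha$ of $\mathbf{B}$ that equals $\theta_j$ in the $j$--th coordinate and $\Delta_{B_i}$ elsewhere preserves every operation of $\mathbf{B}$, and its $0$--class is $\{0^{\mathbf{B}}\}$, so $\alpha\in\mathrm{Con}_{\mathbb{BZL}01}(\mathbf{B})\setminus\{\Delta_B\}$; by Corollary~\ref{sih}.(\ref{sih0}), $\mathbf{L}$ is not simple. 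Once the no--skew--congruence hypothesis is assumed, Lemma~\ref{cgsgltcls}.(\ref{cgsgltcls1}) supplies the isomorphism $\mathrm{Con}_{\mathbb{BZL}01}(\mathbf{B})\cong\prod_{i\in I}C_i$, which drives the remaining parts. Clause (\ref{genfavex2}) is then immediate: this product is $\{\Delta_B\}$ iff every $C_i$ is trivial iff every $\mathbf{B}_i$ is simple, and by Corollary~\ref{sih}.(\ref{sih0}) this is precisely simplicity of $\mathbf{L}$.

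For (\ref{genfavex3}) I would feed the product decomposition into Corollary~\ref{sih}.(\ref{sih1}), so that $\mathbf{L}$ is subdirectly irreducible iff $\prod_{i\in I}C_i$ is trivial or has a single atom. The key combinatorial fact is that in a product of bounded lattices with least elements, an element covers the bottom exactly when it is an atom in one coordinate and the bottom in all others; hence the atoms of $\prod_{i\in I}C_i$ are the disjoint union of the atoms of the $C_i$. Therefore $\prod_{i\in I}C_i$ is trivial iff all $C_i$ are trivial (Case~A: all $\mathbf{B}_i$ simple), and it has a single atom iff exactly one $C_j$ is nontrivial with a single atom and every other $C_i$ is atomless. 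Matching these against the factorwise criteria of the first paragraph yields Case~B: $\mathbf{B}_j$ subdirectly irreducible but not simple, and $C_i$ atomless for all $i\neq j$, which is exactly the stated alternative.

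Finally, (\ref{genfavex4}) is the specialisation where every $\mathrm{Con}_{\mathbb{BZL}}(\mathbf{B}_i)$, hence every $C_i$, is finite. A finite nontrivial lattice always has an atom, so ``atomless'' collapses to ``trivial'' (i.e.\ $\mathbf{B}_i$ simple) and ``nontrivial with a single atom'' collapses to ``subdirectly irreducible and not simple''; together with Case~A this merges into the single clause ``for some $j$, $\mathbf{B}_j$ is subdirectly irreducible and $\mathbf{B}_i$ is simple for all $i\neq j$''. I expect the main obstacle to be the bookkeeping in (\ref{genfavex3}): one must keep the four notions $\mathrm{Con}_{\mathbb{BZL}01}(\mathbf{B}_i)$, $\mathrm{Con}_{\mathbb{BZL}}(\mathbf{B}_i)$, simplicity and subdirect irreducibility of $\mathbf{B}_i$ in exact correspondence through $C_i\oplus\mathbf{D}_2$, and carry out the atom count in an arbitrary (possibly infinite) product while remembering that clause (\ref{genfavex1}) must be secured by the hands--on product congruence above, since the clean decomposition of Lemma~\ref{cgsgltcls}.(\ref{cgsgltcls1}) is unavailable without the no--skew--congruence assumption.
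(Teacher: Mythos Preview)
Your proposal is correct and follows essentially the same route as the paper: both reduce everything to $\mathrm{Con}_{\mathbb{BZL}01}(\mathbf{B})$ via Corollary~\ref{sih}, use Proposition~\ref{bibzl}.(\ref{bibzl2}) to translate simplicity and subdirect irreducibility of each $\mathbf{B}_i$ into conditions on $C_i=\mathrm{Con}_{\mathbb{BZL}01}(\mathbf{B}_i)$, handle (\ref{genfavex1}) by the hand--built product congruence, and then, under the no--skew--congruence hypothesis, exploit $\mathrm{Con}_{\mathbb{BZL}01}(\mathbf{B})\cong\prod_{i}C_i$ together with the atom count $\mathrm{At}(\prod_i C_i)\cong\bigsqcup_i\mathrm{At}(C_i)$ to settle (\ref{genfavex2})--(\ref{genfavex4}). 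The paper's write--up is terser (it records $\kappa=\sum_i\kappa_i$ explicitly), but the argument is the same.
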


\begin{proof}
For all $i\in I$, $\mathbf{B}_{i}\in\mathbb{AOL} \setminus\{\mathbf{D}_{1}\}$,
hence $\mathrm{Con}_{\mathbb{BZL} }(\mathbf{B}_{i})=\mathrm{Con}_{\mathbb{BZL}
01}(\mathbf{B}_{i})\cup\{\nabla_{B_{i}}\}\cong\mathrm{Con}_{\mathbb{BZL}
01}(\mathbf{B}_{i})\oplus\mathbf{D}_{2}$, so that $\mathbf{B}_{i}$ is simple
iff $\mathrm{Con}_{\mathbb{BZL} 01}(\mathbf{B}_{i})=\{\Delta_{B_{i}}\}$, which
has no atoms, and $\mathbf{B}_{i}$ is subdirectly irreducible iff either
$\mathrm{Con}_{\mathbb{BZL} 01}(\mathbf{B}_{i})=\{\Delta_{B_{i}}\}$ or
$|\mathrm{At}(\mathrm{Con}_{\mathbb{BZL} 01}(\mathbf{B}_{i}))|=1$.

(\ref{genfavex1}) Clearly, $\mathrm{Con}_{\mathbb{BZL}01}%
(\mathbf{B})\supseteq\{\prod_{i\in I}\beta_{i}:(\forall\,i\in I)\,(\beta
_{i}\in\mathrm{Con}_{\mathbb{BZL}01}(\mathbf{B}_{i}))\}$, so, if
$\mathbf{B}_{k}$ is not simple for some $k\in I$, then $\mathrm{Con}%
_{\mathbb{BZL}01}(\mathbf{B})\supsetneq\{\Delta_{B}\}$, so by Corollary
\ref{sih}.(\ref{sih0}) $\mathbf{A}\boxplus\mathbf{B}$ is not simple.

(\ref{genfavex2}) If $\prod_{i\in I}\mathbf{B}_{i}$ has no skew
congruences, then $\mathrm{Con}_{\mathbb{BZL}01}(\mathbf{B})=\{\prod_{i\in
I}\beta_{i}:(\forall\,i\in I)\,(\beta_{i}\in\mathrm{Con}_{\mathbb{BZL}%
01}(\mathbf{B}_{i}))\}$, so by Corollary \ref{sih}.(\ref{sih0}) $\mathbf{A}%
\boxplus\mathbf{B}$ is simple iff $\mathrm{Con}_{\mathbb{BZL}01}%
(\mathbf{B})=\{\Delta_{B}\}$ iff $\mathrm{Con}_{\mathbb{BZL}01}(\mathbf{B}%
_{i})=\{\Delta_{B_{i}}\}$ for each $i\in I$ iff $\mathbf{B}_{i}$ is simple for
each $i\in I$.

(\ref{genfavex3}) If $\prod_{i\in I}\mathbf{B}_{i}$ has no skew
congruences, then $\mathrm{Con}_{\mathbb{BZL}01}(\mathbf{B})=\{\prod_{i\in
I}\beta_{i}:(\forall\,i\in I)\,(\beta_{i}\in\mathrm{Con}_{\mathbb{BZL}%
01}(\mathbf{B}_{i}))\}$, from which it is easy to derive that
\[
\mathrm{At}(\mathrm{Con}_{\mathbb{BZL}01}(\mathbf{B}))=\bigcup_{j\in
I}\{\alpha_{j}\times\prod_{i\in I\setminus\{j\}}\Delta_{B_{i}}:\alpha_{j}%
\in\mathrm{At}(\mathrm{Con}_{\mathbb{BZL}01}(\mathbf{B}_{j}))\}.
\]
Set $\kappa=|\mathrm{At}(\mathrm{Con}_{\mathbb{BZL}01}(\mathbf{B}))|$, and
$\kappa_{i}=|\mathrm{At}(\mathrm{Con}_{\mathbb{BZL}01}(\mathbf{B}_{i}))|$, for
all $i\in I$. Thus $\kappa=\sum_{i\in I}\kappa_{i}$, and hence, by
(\ref{genfavex2}):
\[%
\begin{tabular}
[c]{lll}%
$\mathbf{A}\boxplus\mathbf{B}$ is s.i. & iff & $\mathbf{A}\boxplus\mathbf{B}$
is simple or $\kappa=1$\\
& iff & $\mathbf{B}_{i}$ is simple for all $i\in I$ or, for some $j\in I$,
$\kappa_{j}=1$\\
&  & and $\kappa_{i}=0$ for any $i\in I\setminus\{j\}$\\
& iff & $\mathbf{B}_{i}$ is simple for all $i\in I$ or, for some $j\in I$,
$\mathbf{B}_{j}$ is s.i.,\\
&  & but not simple, and $\kappa_{i}=0$ for any $i\in I\setminus\{j\}$.
\end{tabular}
\
\]

(\ref{genfavex4}) By (\ref{genfavex3}) and the fact that, if, for
some $j\in I$, $\mathrm{Con}_{\mathbb{BZL}}(\mathbf{B}_{j})$ is finite, then:
$\mathbf{B}_{j}$ is simple iff $\mathrm{Con}_{\mathbb{BZL}01}(\mathbf{B}%
_{j})=\{\Delta_{B_{j}}\}$ iff $\kappa_{j}=0$.
\end{proof}

\begin{corollary}
Let $\mathbf{A}\in\mathbb{OML}\setminus\{\mathbf{D}_{1},\mathbf{D}_{2}\}$, $I$
be a non--empty set and $(\mathbf{K}_{i})_{i\in I}\subseteq\mathbb{PKA}$. For
all $i\in I$, we consider the antiortholattice $\mathbf{B}_{i}=\mathbf{D}%
_{2}\oplus\mathbf{K}_{i}\oplus\mathbf{D}_{2}$, and we let $\mathbf{B}%
=\prod_{i\in I}\mathbf{B}_{i}$. Then:

\begin{enumerate}
\item \label{alsogenfavex1} if $\mathbf{A}\boxplus\mathbf{B}$ is simple, then
$\mathbf{K}_{i}\cong\mathbf{D}_{1}$ for each $i\in I$;

\item \label{alsogenfavex2} if $\prod_{i\in I}\mathbf{B}_{i}$ has no skew
congruences, in particular if $I$ is finite, then: $\mathbf{A}\boxplus
\mathbf{B}$ is simple iff $\mathbf{K}_{i}\cong\mathbf{D}_{1}$ for each $i\in
I$;

\item \label{alsogenfavex3} if $\prod_{i\in I}\mathbf{B}_{i}$ has no skew
congruences, in particular if $I$ is finite, then: $\mathbf{A}\boxplus
\mathbf{B}$ is subdirectly irreducible iff $\mathbf{K}_{i}\cong\mathbf{D}_{1}$ (that is $\mathbf{B}_{i}\cong\mathbf{D}_{3}$)
for all $i\in I$ or, for some $j\in I$, the BI-lattice $\mathbf{K}_{j}$ is
nontrivial and subdirectly irreducible and $\mathrm{Con}_{\mathbb{BI}%
}(\mathbf{K}_{i})$ has no atoms for any $i\in I\setminus\{j\}$;

\item \label{alsogenfavex4} if $\prod_{i\in I}\mathbf{B}_{i}$ has no skew congruences, in particular if $I$ is finite, and\linebreak$\mathrm{Con}_{\mathbb{BI}}(\mathbf{K}_{i})$ is finite for all $i\in I$, then: $\mathbf{A}\boxplus\mathbf{B}$ is subdirectly irreducible iff, for some $j\in I$, the BI-lattice $\mathbf{K}_{j}$ is subdirectly irreducible and $\mathbf{K}_{i}\cong\mathbf{D}_{1}$ for all $i\in I\setminus\{j\}$.\end{enumerate}\label{alsogenfavex}\end{corollary}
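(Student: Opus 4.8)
The plan is to obtain this corollary as a direct specialization of Corollary \ref{genfavex}, taking each antiortholattice $\mathbf{B}_i$ to be of the specific form $\mathbf{D}_2\oplus\mathbf{K}_i\oplus\mathbf{D}_2$ and translating the hypotheses on $\mathbf{B}_i$ into hypotheses on $\mathbf{K}_i$. First I would check that Corollary \ref{genfavex} applies: for every $i\in I$ the algebra $\mathbf{B}_i=\mathbf{D}_2\oplus\mathbf{K}_i\oplus\mathbf{D}_2$ is a nontrivial antiortholattice (it has at least three elements, since $\mathbf{B}_i\cong\mathbf{D}_3$ when $\mathbf{K}_i\cong\mathbf{D}_1$), so $(\mathbf{B}_i)_{i\in I}$ is a nonempty family of nontrivial antiortholattices and $\mathbf{B}=\prod_{i\in I}\mathbf{B}_i$ satisfies $|B|\geq 3$, whence $\mathbf{B}\ncong\mathbf{D}_2$. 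Together with $\mathbf{A}\in\mathbb{OML}\setminus\{\mathbf{D}_1,\mathbf{D}_2\}$, all the hypotheses of Corollary \ref{genfavex} are met.

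The key step is to set up the dictionary between congruences of $\mathbf{B}_i$ and congruences of $\mathbf{K}_i$. By Corollary \ref{coraol}.(\ref{coraol2}) we have $\mathrm{Con}_{\mathbb{BZL}}(\mathbf{B}_i)\cong\mathrm{Con}_{\mathbb{BI}}(\mathbf{K}_i)\oplus\mathbf{D}_2$, and $\mathbf{B}_i$ is subdirectly irreducible (as an antiortholattice) iff $\mathbf{K}_i$ is subdirectly irreducible (as a BI-lattice). By Proposition \ref{bibzl}.(\ref{bibzl2}), for the nontrivial antiortholattice $\mathbf{B}_i$ one has $\mathrm{Con}_{\mathbb{BZL}01}(\mathbf{B}_i)=\mathrm{Con}_{\mathbb{BZL}}(\mathbf{B}_i)\setminus\{\nabla_{B_i}\}$, and removing the top element from $\mathrm{Con}_{\mathbb{BI}}(\mathbf{K}_i)\oplus\mathbf{D}_2$ yields $\mathrm{Con}_{\mathbb{BZL}01}(\mathbf{B}_i)\cong\mathrm{Con}_{\mathbb{BI}}(\mathbf{K}_i)$ as bounded lattices. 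Since this is a bounded-lattice isomorphism, it preserves the bottom element and the atoms, so I can read off the three equivalences I need: $\mathbf{B}_i$ is simple iff $\mathrm{Con}_{\mathbb{BZL}01}(\mathbf{B}_i)=\{\Delta_{B_i}\}$ iff $\mathrm{Con}_{\mathbb{BI}}(\mathbf{K}_i)\cong\mathbf{D}_1$ iff $\mathbf{K}_i\cong\mathbf{D}_1$; the lattice $\mathrm{Con}_{\mathbb{BZL}01}(\mathbf{B}_i)$ has no atoms (respectively, a single atom) iff $\mathrm{Con}_{\mathbb{BI}}(\mathbf{K}_i)$ does; and $\mathrm{Con}_{\mathbb{BZL}}(\mathbf{B}_i)$ is finite iff $\mathrm{Con}_{\mathbb{BI}}(\mathbf{K}_i)$ is finite.

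With this dictionary in hand, the four items follow by substitution into the corresponding items of Corollary \ref{genfavex}. For (\ref{alsogenfavex1}) and (\ref{alsogenfavex2}), ``$\mathbf{B}_i$ is simple'' becomes ``$\mathbf{K}_i\cong\mathbf{D}_1$''. For (\ref{alsogenfavex3}), ``$\mathbf{B}_j$ is subdirectly irreducible but not simple'' becomes ``$\mathbf{K}_j$ is nontrivial and subdirectly irreducible'' (using that $\mathbf{B}_j$ s.i. iff $\mathbf{K}_j$ s.i., and not simple iff $\mathbf{K}_j\not\cong\mathbf{D}_1$), while ``$\mathrm{Con}_{\mathbb{BZL}01}(\mathbf{B}_i)$ has no atoms'' becomes ``$\mathrm{Con}_{\mathbb{BI}}(\mathbf{K}_i)$ has no atoms'', and the clause ``$\mathbf{B}_i$ is simple for all $i$'' becomes ``$\mathbf{K}_i\cong\mathbf{D}_1$ (that is $\mathbf{B}_i\cong\mathbf{D}_3$) for all $i$''. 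For (\ref{alsogenfavex4}), the finiteness hypothesis on $\mathrm{Con}_{\mathbb{BZL}}(\mathbf{B}_i)$ transfers to finiteness of $\mathrm{Con}_{\mathbb{BI}}(\mathbf{K}_i)$, and the conclusion reads off directly. I do not expect any serious obstacle here, since everything is bookkeeping; the only point requiring care is invoking that the isomorphism $\mathrm{Con}_{\mathbb{BZL}01}(\mathbf{B}_i)\cong\mathrm{Con}_{\mathbb{BI}}(\mathbf{K}_i)$ is one of bounded lattices, so that it matches atoms with atoms and thereby legitimizes the transfer of the ``no atoms'' and ``single atom'' conditions that drive items (\ref{alsogenfavex3}) and (\ref{alsogenfavex4}).
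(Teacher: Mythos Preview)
Your proposal is correct and follows essentially the same approach as the paper, which proves the corollary by the one-line citation ``By Corollary \ref{coraol}.(\ref{coraol2}) and Corollary \ref{genfavex}.'' You have simply spelled out in detail the dictionary that this citation encodes, and your use of Proposition \ref{bibzl}.(\ref{bibzl2}) to extract $\mathrm{Con}_{\mathbb{BZL}01}(\mathbf{B}_i)\cong\mathrm{Con}_{\mathbb{BI}}(\mathbf{K}_i)$ from $\mathrm{Con}_{\mathbb{BZL}}(\mathbf{B}_i)\cong\mathrm{Con}_{\mathbb{BI}}(\mathbf{K}_i)\oplus\mathbf{D}_2$ is exactly what is needed to make the translation work.
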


\begin{proof} By Corollary \ref{coraol}.(\ref{coraol2}) and Corollary \ref{genfavex}.\end{proof}

\section{Varieties of \PBZ --Lattices Generated by Ho\-ri\-zon\-tal Sums\label{notevole}}

The aim of this final section is to investigate the structures of varieties of PBZ$^{\ast}$-lattices generated by horizontal sums and to provide axiomatic
bases for some of them. In particular, we will give a basis for $V\left({\mathbb{OML}}\boxplus{\mathbb{AOL}}\right)$ relative to $\mathbb{PBZL}^{\mathbb{\ast}}$, while the problem of finding a basis for $V\left( {\mathbb{OML}}\boxplus V\left(  {\mathbb{AOL}}\right)  \right)  $ is left
open. In the process, we give a different proof to the axiomatization of the varietal join ${\mathbb{OML}}\vee V\left(  {\mathbb{AOL}}\right)  $ relative to $\PBZs $, established in \cite{rgcmfp}.

\begin{proposition}
\label{closedhsum}Let ${\mathbb{V}}$ be the variety of bounded lattices or one
of the varieties $\mathbb{BI}$ and $\mathbb{BZL}$ and ${\mathbb{C}}$ and
${\mathbb{D}}$ be subclasses of ${\mathbb{V}}$. Then:

\begin{enumerate}
\item \label{closedhsum1} if ${\mathbb{C}} $ and ${\mathbb{D}} $ are closed
under subalgebras, then ${\mathbb{C}} \boxplus{\mathbb{D}} $ is closed under subalgebras;

\item \label{closedhsum2} if ${\mathbb{C}}$ and ${\mathbb{D}}$ are closed
under quotients, then ${\mathbb{C}}\boxplus{\mathbb{D}}$ is closed under quotients.
\end{enumerate}
\end{proposition}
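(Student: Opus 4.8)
The plan is to prove both closure properties by exploiting the two structural lemmas already established for horizontal sums, namely Lemma \ref{hsumsubquo}, which tells us that subalgebras and quotients of a horizontal sum decompose componentwise as $\mathbf{M}=(\mathbf{M}\cap\mathbf{A})\boxplus(\mathbf{M}\cap\mathbf{B})$ and $(\mathbf{A}\boxplus\mathbf{B})/\theta=\mathbf{A}/\theta\boxplus\mathbf{B}/\theta$. The key point is that these decompositions reduce a statement about a subalgebra or quotient of $\mathbf{A}\boxplus\mathbf{B}$ to statements about subalgebras or quotients of the individual summands $\mathbf{A}$ and $\mathbf{B}$, which is exactly where the hypotheses on ${\mathbb{C}}$ and ${\mathbb{D}}$ bite.

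For part (\ref{closedhsum1}), I would take an arbitrary $\mathbf{N}\in\S_{{\mathbb{V}}}({\mathbb{C}}\boxplus{\mathbb{D}})$. By the definition of ${\mathbb{C}}\boxplus{\mathbb{D}}$, either $\mathbf{N}$ is a subalgebra of $\mathbf{D}_{1}$, in which case $\mathbf{N}\cong\mathbf{D}_{1}\in{\mathbb{C}}\boxplus{\mathbb{D}}$ trivially, or $\mathbf{N}$ is a subalgebra of some $\mathbf{A}\boxplus\mathbf{B}$ with $\mathbf{A}\in{\mathbb{C}}\setminus\{\mathbf{D}_{1}\}$ and $\mathbf{B}\in{\mathbb{D}}\setminus\{\mathbf{D}_{1}\}$. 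In the latter case Lemma \ref{hsumsubquo} gives $\mathbf{N}=(\mathbf{N}\cap\mathbf{A})\boxplus(\mathbf{N}\cap\mathbf{B})$, where $\mathbf{N}\cap\mathbf{A}\in\S_{{\mathbb{V}}}(\mathbf{A})\subseteq{\mathbb{C}}$ and $\mathbf{N}\cap\mathbf{B}\in\S_{{\mathbb{V}}}(\mathbf{B})\subseteq{\mathbb{D}}$ by the closure hypotheses. The only subtlety is that one of these intersections could equal $\mathbf{D}_{2}$ or even $\mathbf{D}_{1}$, but since $\mathbf{D}_{2}\boxplus\mathbf{X}=\mathbf{X}$ and $\mathbf{D}_{1}$ is handled separately, one checks that $\mathbf{N}$ still lands in ${\mathbb{C}}\boxplus{\mathbb{D}}$ after absorbing the trivial summand; this is where I would be careful about the definition, which already includes $\mathbf{D}_{1}$ and uses the convention $\mathbf{D}_{2}\boxplus\mathbf{A}=\mathbf{A}$.

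Part (\ref{closedhsum2}) is entirely parallel: given $\mathbf{A}\boxplus\mathbf{B}\in{\mathbb{C}}\boxplus{\mathbb{D}}$ with the summands nontrivial, and $\theta\in\mathrm{Con}_{{\mathbb{V}}}(\mathbf{A}\boxplus\mathbf{B})$, Lemma \ref{hsumsubquo} yields $(\mathbf{A}\boxplus\mathbf{B})/\theta=\mathbf{A}/\theta\boxplus\mathbf{B}/\theta$, and $\mathbf{A}/\theta\in\H_{{\mathbb{V}}}(\mathbf{A})\subseteq{\mathbb{C}}$, $\mathbf{B}/\theta\in\H_{{\mathbb{V}}}(\mathbf{B})\subseteq{\mathbb{D}}$ by hypothesis, so the quotient is again a horizontal sum of a member of ${\mathbb{C}}$ with a member of ${\mathbb{D}}$, hence in ${\mathbb{C}}\boxplus{\mathbb{D}}$; the quotient of $\mathbf{D}_{1}$ is trivial and covered directly.

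I expect essentially no serious obstacle here, since both parts are immediate applications of Lemma \ref{hsumsubquo}; the only real care needed is bookkeeping around the boundary cases where a summand collapses to $\mathbf{D}_{2}$ (absorbed by $\boxplus$) or to $\mathbf{D}_{1}$ (forcing the whole sum to degenerate), and confirming that the explicit inclusion of $\mathbf{D}_{1}$ in the definition of ${\mathbb{C}}\boxplus{\mathbb{D}}$ keeps the class closed in those degenerate situations. The heart of the argument is simply that horizontal summation commutes with taking subalgebras and with taking quotients, which is precisely the content of the cited lemma.
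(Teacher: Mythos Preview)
Your proposal is correct and follows exactly the paper's approach: the paper's entire proof reads ``By Lemma \ref{hsumsubquo}'', and your argument simply unpacks that citation. One small remark: your worry about a summand collapsing to $\mathbf{D}_{1}$ in the subalgebra case is unnecessary, since in each of the varieties at hand $0$ and $1$ are constants and $\mathbf{A},\mathbf{B}$ are nontrivial, so every subalgebra contains $\{0,1\}$ and is itself nontrivial; the $\mathbf{D}_{1}$ case only genuinely arises for quotients (when $\theta=\nabla$), and there it is handled by the explicit inclusion of $\mathbf{D}_{1}$ in the definition of ${\mathbb{C}}\boxplus{\mathbb{D}}$.
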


\begin{proof}
By Lemma \ref{hsumsubquo}.
\end{proof}

\begin{corollary}
\label{closedsubquo}$\mathbb{OML}\boxplus\mathbb{AOL}$ and $\mathbb{OML}%
\boxplus V(\mathbb{AOL})$ are closed w.r.t. subalgebras and quotients.
\end{corollary}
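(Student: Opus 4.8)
The plan is to obtain this as an immediate application of Proposition \ref{closedhsum}, taking ${\mathbb{V}}=\BZ$ and instantiating the pair $({\mathbb{C}},{\mathbb{D}})$ first as $(\OML,\AOL)$ and then as $(\OML,V(\AOL))$. To invoke that proposition I only need to check that each of the two summand classes is closed under subalgebras and under quotients within $\BZ$.

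First I would dispatch the three closure facts. Viewing $\OML$ (as in Subsection \ref{pbzl}) as a class of algebras of type $(2,2,1,1,0,0)$ axiomatized by $x^{\sim}\approx x^{\prime}$ together with the orthomodular identities, it is a variety, so $\S_{\BZ}(\OML)=\H_{\BZ}(\OML)=\OML$. For the antiortholattices, the closure $\S_{\BZ}(\AOL)=\H_{\BZ}(\AOL)=\AOL$ is exactly what was observed in Subsection \ref{pbzl} (where it was also noted that $\AOL$, being a universal class of directly indecomposable algebras, is \emph{not} closed under products); this is the one place where I cannot simply appeal to ``variety'', since $\AOL$ is not one. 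Finally $V(\AOL)=\H\S\P(\AOL)$ is by construction a variety, hence closed under subalgebras and under quotients. Feeding these into parts (\ref{closedhsum1}) and (\ref{closedhsum2}) of Proposition \ref{closedhsum}, for each of the two pairs, then yields closure of $\OML\boxplus\AOL$ and of $\OML\boxplus V(\AOL)$ under subalgebras and under quotients, respectively.

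The only bookkeeping point I would make explicit is that all classes in sight are subclasses of $\PBZs$, itself a subvariety of $\BZ$: every horizontal sum occurring here has an orthomodular summand, hence is a PBZ$^{\ast}$--lattice by Proposition \ref{hsumkl}.(\ref{hsumkl2}), so that $\OML\boxplus\AOL\subseteq\PBZs$ and $\OML\boxplus V(\AOL)\subseteq\PBZs$. Consequently the subalgebras and quotients furnished by Proposition \ref{closedhsum} stay inside $\PBZs$, and it is immaterial whether ``subalgebra'' and ``quotient'' are read relative to $\BZ$ or to $\PBZs$. I do not expect any genuine obstacle: the structural work is entirely contained in Proposition \ref{closedhsum} (which in turn rests on Lemma \ref{hsumsubquo}), and the corollary reduces to matching the three elementary closure facts above against the hypotheses of that proposition.
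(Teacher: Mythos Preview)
Your proposal is correct and follows exactly the route the paper intends: the corollary is stated immediately after Proposition \ref{closedhsum} with no separate proof, so the paper's argument is precisely the instantiation you describe, relying on the closure of $\OML$, $\AOL$ and $V(\AOL)$ under subalgebras and quotients (the middle one being recorded in Subsection \ref{pbzl}). Your extra remarks on $\PBZs$ versus $\BZ$ are fine bookkeeping but not needed for the paper's level of detail.
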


Observe that for any $\mathbf{L}\in\mathbb{OML}\boxplus\mathbb{AOL}$,
$\mathrm{Con}_{\mathbb{BZL}}(\mathbf{L})=\mathrm{Con}_{\mathbb{BZL}%
01}(\mathbf{L})\cup\left\{  \nabla\right\}  $, so, by Lemma \ref{prodsubquo},
$\mathbf{T}(\mathbf{L}/\theta)=\mathbf{T}(\mathbf{L})/\theta$ for any
$\theta\in\mathrm{Con}_{\mathbb{BZL}}(\mathbf{L})$.

The next batch of results is about the De Morgan laws $SDM$ and $WSDM$. In
particular, we show that $WSDM$ is satisfied in $\mathbb{OML}\boxplus
\mathbb{AOL}$ only in limit cases.

\begin{proposition}
If $\mathbf{A}\in\mathbb{PBZL}^{\mathbb{\ast}}\setminus\mathbb{AOL}$ and
$\mathbf{B}\in\mathbb{PBZL}^{\mathbb{\ast}}\setminus\mathbb{OML}$, then the
algebra $\mathbf{A}\boxplus\mathbf{B}$ fails WSDM.\label{hsumfail1}
\end{proposition}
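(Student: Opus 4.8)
The plan is to refute WSDM by exhibiting a single pair of witnesses in $A\boxplus B$ at which the identity fails, reading off the two witnesses directly from the two hypotheses. First I would extract the relevant ``defect'' from each summand. Since $\mathbf{A}\in\PBZs\setminus\AOL$, Lemma \ref{olsaolt}.(ii) gives $S(\mathbf{A})\neq\{0,1\}$, so I may fix a sharp element $a\in S(\mathbf{A})\setminus\{0,1\}$; by the characterization of sharp elements recalled in Subsection \ref{pbzl} this yields $a^{\sim}=a^{\prime}$ and $a^{\sim\sim}=a$, and $a^{\prime}\in A\setminus\{0,1\}$ as well. Dually, since $\mathbf{B}\in\PBZs\setminus\OML$, Lemma \ref{olsaolt}.(i) gives $T(\mathbf{B})\neq\{0,1\}$, so I may fix a dense element $d\in D(\mathbf{B})\setminus\{1\}$ with $d\neq0$; thus $d\in B\setminus\{0,1\}$ and $d^{\sim}=0$. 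The mere existence of such $a$ and $d$ outside $\{0,1\}$ forces $\mathbf{A}$ and $\mathbf{B}$ to be nontrivial, so $\mathbf{A}\boxplus\mathbf{B}$ is well-defined and $\mathbf{A},\mathbf{B}$ are subalgebras of it, whence $^{\prime}$ and $^{\sim}$ are computed by restriction.

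Next I would evaluate both sides of WSDM at $x:=d$ and $y:=a$, using the horizontal-sum identities $u\wedge v=0$ and $u\vee v=1$ valid whenever $u\in A\setminus\{0,1\}$ and $v\in B\setminus\{0,1\}$. On the left, $y^{\sim}=a^{\sim}=a^{\prime}\in A\setminus\{0,1\}$, so $x\wedge y^{\sim}=d\wedge a^{\prime}=0$, whence $(x\wedge y^{\sim})^{\sim}=0^{\sim}=1$. On the right, $x^{\sim}=d^{\sim}=0$ and $\Diamond y=a^{\sim\sim}=a$, so $x^{\sim}\vee\Diamond y=0\vee a=a$. Since $a\neq1$, the two sides disagree, so WSDM fails at $(d,a)$. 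I would also remark that $\mathbf{A}\boxplus\mathbf{B}$ need not be a BZ-lattice at all here (by Proposition \ref{hsumkl}.(\ref{hsumkl2}) it is a \PBZ --lattice only when one summand is orthomodular), but this is irrelevant: WSDM is a formal identity in the type $(2,2,1,1,0,0)$, and a single failing instance suffices.

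The only genuinely delicate point — and the place where a careless attempt goes wrong — is the \emph{asymmetry} of the assignment: the dense element must occupy the $x$-slot and the sharp element the $y$-slot. The symmetric-looking alternative $x:=a$, $y:=d$ collapses both sides to $1$, since there $x\wedge y^{\sim}=a\wedge 0=0$ gives left-hand value $1$, while $\Diamond d=d^{\sim\sim}=0^{\sim}=1$ swamps the right-hand side to $a^{\prime}\vee 1=1$; that choice proves nothing. Thus the ``main obstacle'' is not any hard computation but recognizing which defect feeds which variable, so that the left-hand meet is forced to $0$ (giving $1$ after $^{\sim}$) while the right-hand side retains the proper sharp element $a$.
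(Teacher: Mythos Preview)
Your proof is correct and follows essentially the same approach as the paper: choose a witness from $\mathbf{A}$ whose Brouwer complement avoids $\{0,1\}$, a dense witness from $\mathbf{B}\setminus\{0,1\}$, and evaluate WSDM at this pair to force the left side to $1$ while the right side stays at $\Diamond y\neq 1$. The only cosmetic difference is that the paper merely requires $y\in A\setminus\{0,1\}$ with $y^{\sim}\neq 0$ (using $\mathbf{A}\notin\AOL$ directly), whereas you take $y=a$ sharp; your choice is a special case that works just as well, and in fact you are more explicit than the paper in stating that the $\mathbf{B}$-witness must be dense.
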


\begin{proof}
It follows from our assumptions that neither algebra is $\mathbf{D}_{1}$ or
$\mathbf{D}_{2}$, hence $|A|>2$ and $|B|>2$. Thus there exist an $x\in
B\setminus\{0,1\}$ and a $y\in A\setminus\{0,1\}$ and, moreover, we can choose
$y$ such $y^{\sim}\neq0$, because $\mathbf{A}$ is not an antiortholattice. But
then $y^{\sim}\neq1\neq\Diamond y$, so we have $\{y,y^{\sim},\Diamond
y\}\cap\{0,1\}=\emptyset$. We obtain: $(x\wedge y^{\sim})^{\sim}=0^{\sim
}=1\neq\Diamond y=0\vee\Diamond y=x^{\sim}\vee\Diamond y$, so $\mathbf{A}%
\boxplus\mathbf{B}$ fails WSDM.
\end{proof}

Some corollaries follow.

\begin{corollary}
\begin{itemize}
\item If $\mathbf{A}$ is an orthomodular lattice with $|A|>2$ and
$\mathbf{B}\in\mathbb{PBZL}^{\ast}\setminus\mathbb{OML}$, then the PBZ$^{\ast
}$ --lattice $\mathbf{A}\boxplus\mathbf{B}$ fails WSDM.

\item If $\mathbf{A}$ is an orthomodular lattice and $\mathbf{B}$ is an
antiortholattice such that $|A|>2$ and $|B|>2$, then the PBZ$^{\ast}$
--lattice $\mathbf{A}\boxplus\mathbf{B}$ fails WSDM.
\end{itemize}

\label{hfail1}
\end{corollary}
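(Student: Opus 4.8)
The plan is to obtain both statements as immediate consequences of Proposition~\ref{hsumfail1}, whose hypotheses ask for the first summand to lie outside $\AOL$ and the second to lie outside $\OML$. Since these hypotheses are already nearly in place, the only task is to translate the cardinality bounds into the required class memberships, which I would do via the characterizations in Lemma~\ref{olsaolt}.

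First I would record two elementary facts. By Lemma~\ref{olsaolt}, an orthomodular lattice $\mathbf{A}$ has $S(\mathbf{A})=A$, whereas an antiortholattice has its sharp set equal to $\{0,1\}$. Consequently, an orthomodular lattice with $|A|>2$ cannot be an antiortholattice, for otherwise $A=S(\mathbf{A})=\{0,1\}$ would force $|A|=2$; thus $\mathbf{A}\in\PBZs\setminus\AOL$. Dually, an antiortholattice with more than two elements cannot be orthomodular, so it lies in $\PBZs\setminus\OML$.

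With these in hand, the first bullet follows at once: $\mathbf{A}$ orthomodular with $|A|>2$ gives $\mathbf{A}\in\PBZs\setminus\AOL$, while $\mathbf{B}\in\PBZs\setminus\OML$ is assumed, so Proposition~\ref{hsumfail1} shows that $\mathbf{A}\boxplus\mathbf{B}$ fails WSDM; that $\mathbf{A}\boxplus\mathbf{B}$ is genuinely a PBZ$^{\ast}$--lattice (so that WSDM is meaningful for it) is guaranteed by Proposition~\ref{hsumkl}.(\ref{hsumkl2}), since $\mathbf{A}$ is orthomodular. The second bullet is then just the special case in which $\mathbf{B}$ is an antiortholattice with $|B|>2$, since that hypothesis already places $\mathbf{B}$ in $\PBZs\setminus\OML$. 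I do not expect any real obstacle here: all the substance resides in Proposition~\ref{hsumfail1}, and the corollary reduces to checking that the cardinality bounds exclude exactly the degenerate cases $\mathbf{A}\in\AOL$ and $\mathbf{B}\in\OML$ that Proposition~\ref{hsumfail1} forbids.
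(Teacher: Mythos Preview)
Your proposal is correct and follows exactly the approach implicit in the paper: the corollary is stated immediately after Proposition~\ref{hsumfail1} without a separate proof, since the cardinality bounds $|A|>2$ (for $\mathbf{A}\in\OML$) and $|B|>2$ (for $\mathbf{B}\in\AOL$) are precisely what rule out $\mathbf{A}\in\AOL$ and $\mathbf{B}\in\OML$, placing both summands in the scope of that proposition. Your justification via Lemma~\ref{olsaolt} and Proposition~\ref{hsumkl}.(\ref{hsumkl2}) is exactly right and, if anything, more explicit than the paper itself.
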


\begin{corollary}
\label{when1ok}\label{notinolvaol}

\begin{itemize}
\item Let $\mathbf{L}\in\mathbb{OML}\boxplus\mathbb{AOL}$. Then:
$\mathbf{L}\vDash WSDM$ iff $\mathbf{L}\in\mathbb{OML}\cup\mathbb{AOL}$.

\item For all $\mathbf{A}\in\mathbb{OML}\setminus\{\mathbf{D}_{1}%
,\mathbf{D}_{2}\}$ and all $\mathbf{B}\in\mathbb{PBZL}^{\ast}\setminus
\mathbb{OML}$, we have $\mathbf{A}\boxplus\mathbf{B}\notin\mathbb{OML}\vee
V(\mathbb{AOL})$.

\item For all $\mathbf{A}\in\mathbb{OML}\setminus\{\mathbf{D}_{1}%
,\mathbf{D}_{2}\}$ and all $\mathbf{B}\in\mathbb{AOL}\setminus\{\mathbf{D}%
_{1},\mathbf{D}_{2}\}$, we have $\mathbf{A}\boxplus\mathbf{B}\notin%
\mathbb{OML}\vee V(\mathbb{AOL})$.
\end{itemize}
\end{corollary}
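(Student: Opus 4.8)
The plan is to prove all three items by exploiting that WSDM is the single identity separating the \emph{genuinely mixed} horizontal sums from the varieties on the right-hand sides. Indeed, $\OML \vDash SDM$ forces $\OML \vDash WSDM$, $\AOL \vDash WSDM$ holds trivially, and hence $\OML \vee V(\AOL )\vDash WSDM$ (as already recorded at the end of Subsection \ref{pbzl}); conversely, Proposition \ref{hsumfail1} and its Corollary \ref{hfail1} say that a horizontal sum $\mathbf{A}\boxplus \mathbf{B}$ fails WSDM as soon as it is not degenerate. So the whole argument reduces to pinning down exactly when degeneracy occurs and then quoting Corollary \ref{hfail1}.

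For the first item, the reverse implication is immediate: if $\mathbf{L}\in \OML $ then $\mathbf{L}\vDash SDM$, hence $\mathbf{L}\vDash WSDM$, and if $\mathbf{L}\in \AOL $ then $\mathbf{L}\vDash WSDM$ trivially. For the forward implication I would argue by contraposition. Write $\mathbf{L}=\mathbf{A}\boxplus \mathbf{B}$ with $\mathbf{A}\in \OML \setminus \{\mathbf{D}_1\}$ and $\mathbf{B}\in \AOL \setminus \{\mathbf{D}_1\}$ (the case $\mathbf{L}=\mathbf{D}_1$ being trivial). First I record the elementary bookkeeping fact that $\mathbf{L}\notin \OML \cup \AOL $ iff $|A|>2$ and $|B|>2$: if $\mathbf{A}=\mathbf{D}_2$ then $\mathbf{L}=\mathbf{B}\in \AOL $ and if $\mathbf{B}=\mathbf{D}_2$ then $\mathbf{L}=\mathbf{A}\in \OML $, whereas if $|A|>2$ and $|B|>2$ then Lemma \ref{notomlaol} shows $\mathbf{L}$ is neither orthomodular (as $\mathbf{B}$ is not) nor an antiortholattice. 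In the remaining case $|A|>2$ and $|B|>2$, the second bullet of Corollary \ref{hfail1} gives that $\mathbf{A}\boxplus \mathbf{B}$ fails WSDM, which is precisely the contrapositive needed.

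The second item is then a one-line consequence of the first bullet of Corollary \ref{hfail1}: since $\mathbf{A}\in \OML \setminus \{\mathbf{D}_1,\mathbf{D}_2\}$ has $|A|>2$ and $\mathbf{B}\in \PBZs \setminus \OML $, that corollary yields that the \PBZ --lattice $\mathbf{A}\boxplus \mathbf{B}$ (which is a \PBZ --lattice by Proposition \ref{hsumkl}, as $\mathbf{A}$ is orthomodular) fails WSDM; because $\OML \vee V(\AOL )\vDash WSDM$, it cannot lie in $\OML \vee V(\AOL )$. The third item is the special case $\mathbf{B}\in \AOL \setminus \{\mathbf{D}_1,\mathbf{D}_2\}$ of the second: any antiortholattice with more than two elements has $S(\mathbf{B})=\{0,1\}\subsetneq B$ and so is not orthomodular, i.e. $\AOL \setminus \{\mathbf{D}_1,\mathbf{D}_2\}\subseteq \PBZs \setminus \OML $.

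There is essentially no deep obstacle here, since the analytic content has already been isolated in Proposition \ref{hsumfail1} and Corollary \ref{hfail1}. The only point requiring care is the elementary case analysis identifying $\OML \cup \AOL $ inside $\OML \boxplus \AOL $ with the condition $|A|=2$ or $|B|=2$, for which I would lean on Lemma \ref{notomlaol} together with the identifications $\mathbf{D}_2\boxplus \mathbf{B}=\mathbf{B}$ and $\mathbf{A}\boxplus \mathbf{D}_2=\mathbf{A}$.
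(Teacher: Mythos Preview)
Your proposal is correct and takes essentially the same approach as the paper: both arguments rest on Corollary \ref{hfail1} together with the fact that $\OML \vee V(\AOL )\vDash WSDM$. The paper's proof is a one-line citation of these two ingredients, while you have additionally spelled out the bookkeeping step (via Lemma \ref{notomlaol}) identifying $\OML \cup \AOL$ inside $\OML \boxplus \AOL$ with the degenerate cases $|A|=2$ or $|B|=2$; this extra detail is helpful and does not deviate from the paper's strategy.
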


\begin{proof}
By Corollary \ref{hfail1} and the fact that $\mathbb{OML}\vee V(\mathbb{AOL}%
)\vDash WSDM$.
\end{proof}

\begin{corollary}\begin{itemize}
\item $(\mathbb{OML}\boxplus\mathbb{AOL})\cap V(\mathbb{AOL})=\mathbb{AOL}$.
\item $(\mathbb{OML}\boxplus\mathbb{AOL})\cap (\mathbb{OML}\vee V(\mathbb{AOL}))=\mathbb{OML}\cup \mathbb{AOL}$.\end{itemize}\label{omlaolvaol}
\end{corollary}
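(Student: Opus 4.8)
The plan is to prove each of the two set-equalities by establishing the two inclusions separately, and in both cases the reverse inclusions will be immediate. For the reverse inclusions I would invoke the class-level remarks on $\boxplus$ from Subsection \ref{pbzl}: since $\mathbf{D}_2\in\OML$ we have $\AOL\subseteq\OML\boxplus\AOL$, and as $\AOL\subseteq V(\AOL)$ trivially, this gives $\AOL\subseteq(\OML\boxplus\AOL)\cap V(\AOL)$; likewise, since $\mathbf{D}_2\in\OML\cap\AOL$ we obtain $\OML\cup\AOL\subseteq\OML\boxplus\AOL$, and combined with the obvious $\OML\cup\AOL\subseteq\OML\vee V(\AOL)$ this yields $\OML\cup\AOL\subseteq(\OML\boxplus\AOL)\cap(\OML\vee V(\AOL))$.

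For the forward inclusion of the first equality, I would take an arbitrary $\mathbf{L}\in(\OML\boxplus\AOL)\cap V(\AOL)$ and first dispose of the case $\mathbf{L}=\mathbf{D}_1$, which lies in $\AOL$. For nontrivial $\mathbf{L}$, membership in $\OML\boxplus\AOL$ is exactly condition (\ref{charg1}) of Theorem \ref{charg}, which is equivalent to condition (\ref{charg2}) there, namely that $T(\mathbf{L})$ be a subuniverse of $\mathbf{L}$. Since in addition $\mathbf{L}\in V(\AOL)$, Corollary \ref{aolvaol} then forces $\mathbf{L}$ to be an antiortholattice, giving $(\OML\boxplus\AOL)\cap V(\AOL)\subseteq\AOL$.

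For the forward inclusion of the second equality, I would take $\mathbf{L}\in(\OML\boxplus\AOL)\cap(\OML\vee V(\AOL))$ and argue through the weak De Morgan law. Since $\OML\vee V(\AOL)\vDash WSDM$ (recorded at the end of Subsection \ref{pbzl}), we have $\mathbf{L}\vDash WSDM$; and since $\mathbf{L}\in\OML\boxplus\AOL$, the first item of Corollary \ref{when1ok} supplies the equivalence $\mathbf{L}\vDash WSDM$ iff $\mathbf{L}\in\OML\cup\AOL$, whence $\mathbf{L}\in\OML\cup\AOL$ as required.

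I expect no genuine obstacle here: once the earlier structural results are in place, both equalities reduce to bookkeeping with known equivalences. The only points demanding a little care are the separate treatment of the trivial algebra $\mathbf{D}_1$ --- which is excluded from the nontriviality hypothesis of Theorem \ref{charg} but belongs to $\AOL$ and hence to $\OML\cup\AOL$ --- and the observation that all of the inclusions rest on class- or variety-level facts (closure of $\boxplus$ under $\mathbf{D}_2$, the characterizations of Theorem \ref{charg}, and Corollaries \ref{aolvaol} and \ref{when1ok}) rather than on any computation with individual elements.
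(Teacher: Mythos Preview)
Your proposal is correct and follows essentially the route the paper has in mind: the corollary is stated without proof immediately after Corollary~\ref{when1ok}, and your argument for the second item is precisely an unpacking of that result, while your argument for the first item via Theorem~\ref{charg} and Corollary~\ref{aolvaol} is the natural companion step. The only minor variation is that one could alternatively derive the first bullet from the second (using $V(\AOL)\subseteq\OML\vee V(\AOL)$), but your direct route through Corollary~\ref{aolvaol} is equally legitimate and arguably cleaner.
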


\begin{corollary}
\label{twithsdm}If a PBZ$^{\ast}$ --lattice $\mathbf{L}$ satisfies the SDM, then:

\begin{enumerate}
\item \label{twithsdm1} $\langle T(\mathbf{L})\rangle_{\mathbb{BI}}=T(\mathbf{L})\cup T(\mathbf{L})^{\prime}$;

\item \label{twithsdm2} $\langle T(\mathbf{L})\rangle_{\mathbb{BZL}%
}=T(\mathbf{L})\cup T(\mathbf{L})^{\prime}$ iff either $\mathbf{L}%
\in\mathbb{OML}$ or $\mathbf{L}\in\mathbb{AOL}$ and $0$ is meet--irreducible
in $\mathbf{L}$ iff $\langle T(\mathbf{L})\rangle_{\mathbb{BZL}}%
=T(\mathbf{L})$ iff $T(\mathbf{L})$ is a subuniverse of $\mathbf{L}$.
\end{enumerate}
\end{corollary}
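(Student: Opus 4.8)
The plan is to obtain (i) directly from Lemma \ref{tfilter} and to reduce (ii) to Theorem \ref{charg} together with the characterisation of antiortholattices satisfying SDM recalled in Subsection \ref{pbzl}. For (i), since $\mathbf{L}$ satisfies SDM, the second item of Lemma \ref{tfilter} gives that $T(\mathbf{L})$ is the universe of a bounded sublattice of $\mathbf{L}_{l}$; by the first item of the same lemma this is equivalent to $\langle T(\mathbf{L})\rangle_{\mathbb{BI}}=T(\mathbf{L})\cup T(\mathbf{L})^{\prime}$, which is exactly (i). I would treat the trivial algebra $\mathbf{D}_{1}$ as a degenerate case here, where $T(\mathbf{D}_{1})=D_{1}=T(\mathbf{D}_{1})^{\prime}$.

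For (ii) I would first dispose of the trivial case $\mathbf{L}\cong\mathbf{D}_{1}$: there $T(\mathbf{L})=L=T(\mathbf{L})^{\prime}$ is a subuniverse, $\langle T(\mathbf{L})\rangle_{\mathbb{BZL}}=T(\mathbf{L})$, and $\mathbf{L}\in\mathbb{OML}\cap\mathbb{AOL}$ with $0$ meet--irreducible, so all listed conditions hold at once. Assuming henceforth $\mathbf{L}$ nontrivial (so that Theorem \ref{charg} applies), the equivalence of the third and fourth conditions, $\langle T(\mathbf{L})\rangle_{\mathbb{BZL}}=T(\mathbf{L})$ and ``$T(\mathbf{L})$ is a subuniverse of $\mathbf{L}$'', is just the definition of the generated subalgebra.

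Next I would establish that the first condition is equivalent to these. If $T(\mathbf{L})$ is a subuniverse, then by the third item of Lemma \ref{tfilter} we have $T(\mathbf{L})=T(\mathbf{L})^{\prime}$, whence $\langle T(\mathbf{L})\rangle_{\mathbb{BZL}}=T(\mathbf{L})=T(\mathbf{L})\cup T(\mathbf{L})^{\prime}$. Conversely, if $\langle T(\mathbf{L})\rangle_{\mathbb{BZL}}=T(\mathbf{L})\cup T(\mathbf{L})^{\prime}$, then the set $T(\mathbf{L})\cup T(\mathbf{L})^{\prime}$, being a $\mathbb{BZL}$--subalgebra, is closed under the Brouwer complement; this is precisely condition (\ref{charg8}) of Theorem \ref{charg}, so that theorem yields condition (\ref{charg2}), i.e.\ $T(\mathbf{L})$ is a subuniverse. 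Thus the first, third and fourth conditions are all equivalent, and notably this part uses only Lemma \ref{tfilter} and Theorem \ref{charg}, not SDM.

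Finally I would connect the middle condition. By Theorem \ref{charg}, $T(\mathbf{L})$ is a subuniverse iff $\mathbf{L}\in\mathbb{OML}\boxplus\mathbb{AOL}$; since SDM implies WSDM, the first bullet of Corollary \ref{when1ok} forces such an $\mathbf{L}$ into $\mathbb{OML}\cup\mathbb{AOL}$, while the reverse inclusion $\mathbb{OML}\cup\mathbb{AOL}\subseteq\mathbb{OML}\boxplus\mathbb{AOL}$ holds because $\mathbf{D}_{2}\in\mathbb{OML}\cap\mathbb{AOL}$ is neutral for $\boxplus$. Hence, under SDM, $T(\mathbf{L})$ is a subuniverse iff $\mathbf{L}\in\mathbb{OML}\cup\mathbb{AOL}$; and by the recalled fact that a BZ--lattice satisfying SDM lies in $\mathbb{AOL}$ exactly when its $0$ is meet--irreducible, the clause $\mathbf{L}\in\mathbb{AOL}$ may be rephrased as ``$\mathbf{L}\in\mathbb{AOL}$ and $0$ is meet--irreducible in $\mathbf{L}$'', giving the middle condition. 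The one step I expect to require care is precisely this collapse of $\mathbb{OML}\boxplus\mathbb{AOL}$ to $\mathbb{OML}\cup\mathbb{AOL}$, which is where the SDM hypothesis (through WSDM and Corollary \ref{when1ok}) is genuinely consumed; everything else is a formal reading-off of Lemma \ref{tfilter} and Theorem \ref{charg}.
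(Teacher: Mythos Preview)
Your proof is correct and follows essentially the same route as the paper's, which simply cites Lemma \ref{tfilter} for part (i) and then (i), Lemma \ref{tfilter}, Theorem \ref{charg}, Corollary \ref{when1ok} and Lemma \ref{olsaolt} for part (ii). Your version spells out how these references fit together and adds the nice observation that SDM is only genuinely used in linking the middle condition to the others, the remaining equivalences holding in any nontrivial PBZ$^{\ast}$--lattice.
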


\begin{proof}
(\ref{twithsdm1}) By Lemma \ref{tfilter}.

(\ref{twithsdm2}) By (\ref{twithsdm1}), Lemma \ref{tfilter}, Theorem \ref{charg}, Corollary \ref{when1ok} and Lemma \ref{olsaolt}.
\end{proof}

We now examine condition $J2$. It turns out that this weakened form of orthomodularity characterizes horizontal sums of an orthomodular lattice and of an antiortholattice among all horizontal sums of PBZ*-lattices.

\begin{proposition}
$\mathbb{OML}\boxplus\mathbb{AOL}\vDash J2$.\label{j2olaol}
\end{proposition}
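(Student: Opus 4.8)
The plan is to reduce at once to a single horizontal summand and then run a short case analysis on $y$. Since the trivial algebra $\mathbf{D}_1$ satisfies every identity, I may assume $\mathbf{L}=\mathbf{A}\boxplus\mathbf{B}$ with $\mathbf{A}\in\OML\setminus\{\mathbf{D}_1\}$ and $\mathbf{B}\in\AOL\setminus\{\mathbf{D}_1\}$; by Proposition \ref{hsumkl}.(\ref{hsumkl2}) this is a genuine \PBZ --lattice, so $J2$ is a meaningful identity on it. Fix $x,y\in L$ and set $c=y\wedge y'$. The crucial observation is that the right-hand side $(x\wedge c^{\sim})\vee(x\wedge\Diamond c)$ of $J2$ depends on $y$ only through $c^{\sim}$ and $\Diamond c=c^{\sim\sim}$, and that it collapses to $x$ the moment $\{c^{\sim},\Diamond c\}=\{0,1\}$ as an unordered pair, since then the expression becomes $(x\wedge 0)\vee(x\wedge 1)=x$ irrespective of $x$. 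Thus the entire proof reduces to verifying that $\{c^{\sim},\Diamond c\}=\{0,1\}$ for every $y\in L$.

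To establish this I would split on whether $y$ is sharp, using that $y\in S(\mathbf{L})$ is by definition equivalent to $y\wedge y'=0$. If $y\in S(\mathbf{L})$, then $c=0$, whence $c^{\sim}=0^{\sim}=1$ and $\Diamond c=1^{\sim}=0$. If $y\notin S(\mathbf{L})$, then $c=y\wedge y'\neq 0$; here I invoke Lemma \ref{hsumolaol}, which gives $S(\mathbf{L})=A$, so that $y\in L\setminus A=B\setminus\{0,1\}$, and hence $c=y\wedge y'$, computed inside the subalgebra $\mathbf{B}$, lies in $B$. Because $\mathbf{B}$ is an antiortholattice, its Brouwer complement is the trivial one (acting by restriction in $\mathbf{L}$), so $c\neq 0$ forces $c^{\sim}=0$ and therefore $\Diamond c=c^{\sim\sim}=0^{\sim}=1$. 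In either case $\{c^{\sim},\Diamond c\}=\{0,1\}$, and the displayed collapse yields the right-hand side of $J2$ equal to $x$.

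The computation is essentially bookkeeping, so I do not anticipate a serious obstacle; the only point requiring care is confirming, in the non-sharp case, that $c=y\wedge y'$ is a \emph{nonzero element of the antiortholattice summand} $\mathbf{B}$ rather than of $\mathbf{A}$. This is exactly what $S(\mathbf{L})=A$ from Lemma \ref{hsumolaol} supplies, in combination with the equivalence of sharpness with $y\wedge y'=0$. Once the Brouwer complement is seen to act by restriction on each summand—hence trivially on $B$—no further calculation is needed, and the identity follows uniformly in $x$.
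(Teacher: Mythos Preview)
Your proof is correct and follows essentially the same approach as the paper's: both reduce to the dichotomy on whether $y$ is sharp (equivalently, by Lemma~\ref{hsumolaol}, whether $y\in A$), and in each case verify that $\{(y\wedge y')^{\sim},\Diamond(y\wedge y')\}=\{0,1\}$, which immediately collapses the right-hand side of $J2$ to $x$. Your write-up isolates this collapse as a single preliminary observation, while the paper treats the two cases $\mathbf{L}\vDash_{L,A}J2$ and $\mathbf{L}\vDash_{L,L\setminus A}J2$ separately, but the underlying argument is identical.
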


\begin{proof}
We know that $\mathbb{OML}\vDash J2$ and $\mathbb{AOL}\vDash J2$. Now let
$\mathbf{A}\in\mathbb{OML}$ and $\mathbf{B}\in\mathbb{AOL}$ with $|A|>2$ and
$|B|>2$, and let $\mathbf{L}=\mathbf{A}\boxplus\mathbf{B}$. Then by Lemma
\ref{hsumolaol}, $\mathbf{S}(\mathbf{L})=\mathbf{A}$, from which it easily
follows that $\mathbf{L}\vDash_{L,A}J2$. Since $\mathbf{B}$ is an
antiortholattice, we have $S(\mathbf{B})=\{0,1\}$, so, for every $y\in
L\setminus A=B\setminus\{0,1\}=B\setminus S(\mathbf{B})$, we have $y\wedge
y^{\prime}\neq0$, thus $(y\wedge y^{\prime})^{\sim}=0$, so $\Diamond(y\wedge
y^{\prime})=1$, from which it easily follows that $\mathbf{L}\vDash
_{L,L\setminus A}J2$. Therefore $\mathbf{L}\vDash J2$.
\end{proof}

\begin{theorem}
\label{axhsum}Let $\mathbf{A}\in\mathbb{OML}\setminus\{\mathbf{D}_{1}\}$ and
$\mathbf{B}\in\mathbb{PBZL}^{\ast}\setminus\{\mathbf{D}_{1}\}$. Then:

\begin{itemize}
\item $\mathbf{A}\boxplus\mathbf{B}\vDash S1$ iff $\mathbf{B}\vDash S1$;

\item $\mathbf{A}\boxplus\mathbf{B}\vDash S2$ iff $\mathbf{B}\vDash S2$;

\item $\mathbf{A}\boxplus\mathbf{B}\vDash S3$ iff $\mathbf{B}\vDash S3$;

\item $\mathbf{A}\boxplus\mathbf{B}\vDash J1$ iff $\mathbf{B}\vDash J1$;

\item $\mathbf{A}\boxplus\mathbf{B}\vDash J2$ iff $\mathbf{B}\in
\mathbb{OML}\boxplus\mathbb{AOL}$.
\end{itemize}
\end{theorem}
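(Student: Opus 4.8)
The plan is to treat all five clauses by evaluating the relevant identity on $\mathbf{A}\boxplus\mathbf{B}$ through a case analysis on which summand the substituted elements lie in, using three ingredients: that $\mathbf{A}\in\OML$ satisfies every one of $S1,S2,S3,J1,J2$ and that inside $\mathbf{A}$ one has $a^{\sim}=a^{\prime}$, $\Diamond a=a$, and $a\wedge a^{\prime}=0$ for $a\in A\setminus\{0,1\}$; that each summand is a subalgebra of $\mathbf{A}\boxplus\mathbf{B}\in\PBZs$ (the latter by Proposition \ref{hsumkl}.(\ref{hsumkl2})), so meets, Kleene and Brouwer complements of its elements stay inside it; and the horizontal-sum arithmetic $a\wedge b=0$, $a\vee b=1$ whenever $a\in A\setminus\{0,1\}$ and $b\in B\setminus\{0,1\}$. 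For $S1,S2,S3,J1$ the left-to-right implications are immediate, since $\mathbf{B}$ is a subalgebra and identities are inherited by subalgebras.

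For the converse implications for $S1,S2,S3,J1$ I would assume $\mathbf{B}$ satisfies the identity and substitute arbitrary $x,y\in A\boxplus B=A\cup B$. When $x,y$ (hence the compound subterms) lie entirely in one summand, the identity holds because $\mathbf{A}$ satisfies it (being orthomodular) or $\mathbf{B}$ does by hypothesis. The only work is in the mixed cases. In $S1$ (and $J1$), with $x$ and $y$ in different summands one has $x\wedge y=0$, whence both sides of $S1$ reduce to $x^{\sim}$ and the right side of $J1$ collapses to $(x\wedge 1)\vee(x\wedge 0)=x$. In $S2,S3$ the only substantive mixed case is $x\in A\setminus\{0,1\}$ and $y\in B\setminus\{0,1\}$ (when $y\in A$ the term $d:=y\wedge y^{\prime}$ is $0$ and the identity trivializes); writing $d\in B$, if $d$ is neither $0$ nor dense then $d^{\sim}$ and $\Diamond d$ land in $B\setminus\{0,1\}$, so the left side becomes $0^{\sim}=1$ while the right side is a cross-join of an element of $A\setminus\{0,1\}$ with one of $B\setminus\{0,1\}$, hence also $1$. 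In every configuration the two sides agree, so the identity passes to $\mathbf{A}\boxplus\mathbf{B}$.

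For the $J2$ clause, the right-to-left direction is structural: if $\mathbf{B}\in\OML\boxplus\AOL$, write $\mathbf{B}=\mathbf{B}_{1}\boxplus\mathbf{B}_{2}$ with $\mathbf{B}_{1}\in\OML$, $\mathbf{B}_{2}\in\AOL$; by associativity of $\boxplus$ and $\OML\boxplus\OML=\OML$ (Corollary \ref{cors}.(\ref{olhsum})) we get $\mathbf{A}\boxplus\mathbf{B}=(\mathbf{A}\boxplus\mathbf{B}_{1})\boxplus\mathbf{B}_{2}\in\OML\boxplus\AOL$, which satisfies $J2$ by Proposition \ref{j2olaol}. The left-to-right direction is the crux, and here $|A|>2$ is essential. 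Fix $x_{0}\in A\setminus\{0,1\}$; since $A\subseteq S(\mathbf{A}\boxplus\mathbf{B})$ by Lemma \ref{ssubhsum}, $x_{0}$ is sharp. For any $y\in B$, set $d=y\wedge y^{\prime}$ (computed in the subalgebra $\mathbf{B}$) and evaluate $J2$ at $(x_{0},y)$, giving $x_{0}=(x_{0}\wedge d^{\sim})\vee(x_{0}\wedge\Diamond d)$. If $d\neq 0$ and $d^{\sim}\neq 0$, then both $d^{\sim}$ and $\Diamond d=d^{\sim\sim}$ lie in $B\setminus\{0,1\}$ (using $a^{\sim}=1\Leftrightarrow a=0$ and $a^{\sim\sim\sim}=a^{\sim}$), so both cross-meets vanish and the right side is $0\neq x_{0}$, a contradiction. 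Hence every $y\in B$ has $y\wedge y^{\prime}=0$ (so $y\in S(\mathbf{B})$) or $(y\wedge y^{\prime})^{\sim}=0$; in the latter case $y\wedge y^{\prime}$ is dense, and as density is upward closed $y$ is dense, so $y\in T(\mathbf{B})$. Thus $B=S(\mathbf{B})\cup T(\mathbf{B})$, and Theorem \ref{charg} ((\ref{charg6})$\Rightarrow$(\ref{charg1})) yields $\mathbf{B}\in\OML\boxplus\AOL$.

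The main obstacle is exactly this last step. Unlike $S1$--$S3$ and $J1$, where mixed substitutions always reconcile (the offending cross-terms vanish or push both sides to $1$), $J2$ genuinely uses a \emph{nontrivial} external sharp element $x_{0}\in A\setminus\{0,1\}$ as a probe that detects elements of $\mathbf{B}$ lying outside $S(\mathbf{B})\cup T(\mathbf{B})$, and this probe is unavailable when $\mathbf{A}\cong\mathbf{D}_{2}$. Indeed, there $\mathbf{A}\boxplus\mathbf{B}=\mathbf{B}$, and $\mathbf{D}_{2}^{2}\times\mathbf{D}_{3}$ satisfies $J2$ (being a product of $J2$-models) while containing an element that is neither sharp nor dense, so it lies outside $\OML\boxplus\AOL$; thus the $J2$ equivalence holds as stated only under $|A|>2$, which is what I would assume for that clause, handling $\mathbf{A}\cong\mathbf{D}_{2}$ as a degenerate case.
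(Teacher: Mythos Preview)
Your approach mirrors the paper's almost exactly: the same case split on which summand $x$ and $y$ lie in, the same use of the cross-meet/cross-join arithmetic of the horizontal sum, and for the $J2$ left-to-right direction the same probe with an element of $A\setminus\{0,1\}$ followed by an appeal to Theorem~\ref{charg}. Your closing observation is in fact sharper than the paper's own argument: the $J2$ equivalence as stated does fail for $\mathbf{A}\cong\mathbf{D}_2$ (your counterexample $\mathbf{D}_2^2\times\mathbf{D}_3$ satisfies $J2$, being a product of an orthomodular lattice and an antiortholattice, yet the element $(a,c)$ with $a$ an atom of $\mathbf{D}_2^2$ and $c$ the middle element of $\mathbf{D}_3$ is neither sharp nor dense, so by Theorem~\ref{charg} this algebra lies outside $\OML\boxplus\AOL$), and the paper's proof likewise only works when $A\setminus\{0,1\}\neq\emptyset$, without flagging the exception.
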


\begin{proof}
For the first four equivalences, the left-to-right implications are trivial,
recalling that $\mathbb{OML}\vDash\{J1,S1,S2,S3\}$.

Denote by $\mathbf{L}=\mathbf{A}\boxplus\mathbf{B}$, which is a PBZ$^{\ast}$
--lattice by Proposition \ref{hsumkl}.(\ref{hsumkl2}), and note that
$L\setminus A=B\setminus\{0,1\}$ and $L\setminus B=A\setminus\{0,1\}$. By the
above, to prove the right-to-left implications in the first four equivalences,
it suffices to show that $\mathbf{L}\vDash_{A\setminus\{0,1\},B\setminus
\{0,1\}}\{J1,S1,S2,S3\}$ and $\mathbf{L}\vDash_{B\setminus\{0,1\},A\setminus
\{0,1\}}\{J1,S1,S2,S3\}$.

For all $a\in A\setminus\{0,1\}$ and all $B\setminus\{0,1\}$, we have $a\wedge
b=0$, thus $(a\wedge b)^{\sim}=1$, so $\Diamond(a\wedge b)=0$. Since
$A=S(\mathbf{A})$, $a\wedge a^{\prime}=0$, hence $(a\wedge a^{\prime})^{\sim
}=1$, thus $\Diamond(a\wedge a^{\prime})=0$, and, if $b\in S(\mathbf{B})$,
then $b\wedge b^{\prime}=0$, hence $(b\wedge b^{\prime})^{\sim}=1$, thus
$\Diamond(b\wedge b^{\prime})=0$. It immediately follows that $\mathbf{L}%
\vDash_{A\setminus\{0,1\},B\setminus\{0,1\}}\{J1,S1\}$, $\mathbf{L}%
\vDash_{B\setminus\{0,1\},A\setminus\{0,1\}}\{J1,S1\}$, $\mathbf{L}%
\vDash_{B\setminus\{0,1\},A\setminus\{0,1\}}\{J2,S2,S3\}$ and $\mathbf{L}%
\vDash_{A\setminus\{0,1\},S(\mathbf{B})\setminus\{0,1\}}\{J2,S2,S3\}$.

Now let $a\in A\setminus\{0,1\}$ and $b\in B\setminus S(\mathbf{B})\subseteq
B\setminus\{0,1\}$, so that $b\wedge b^{\prime}\notin\{0,1\}$, thus $(b\wedge
b^{\prime})^{\sim}\neq1$, hence $a\wedge(b\wedge b^{\prime})^{\sim}=0$, so
$(a\wedge(b\wedge b^{\prime})^{\sim})^{\sim}=1$. Since $A=S(\mathbf{A})$ and
$a\notin\{0,1\}$, it follows that $a^{\sim}=a^{\prime}\notin\{0,1\}$. Since
$0\neq b\wedge b^{\prime}\leq\Diamond(b\wedge b^{\prime})$, we have
$\Diamond(b\wedge b^{\prime})\neq0$, so $a^{\sim}\vee\Diamond(b\wedge
b^{\prime})=1$, thus $\mathbf{L}\vDash_{A\setminus\{0,1\},B\setminus
S(\mathbf{B})}S2$, therefore $\mathbf{L}\vDash_{A\setminus\{0,1\},B\setminus
\{0,1\}}S2$. Also, if $\Diamond(b\wedge b^{\prime})\neq1$, then $(b\wedge
b^{\prime})^{\sim}\neq0$ and $a\wedge\Diamond(b\wedge b^{\prime})=0$, so that
$(a\wedge\Diamond(b\wedge b^{\prime}))^{\sim}=1=a^{\sim}\vee(b\wedge
b^{\prime})^{\sim}$ since $a^{\sim}=a^{\prime}\neq0$, while $\Diamond(b\wedge
b^{\prime})=1$ implies $(b\wedge b^{\prime})^{\sim}=(\Diamond(b\wedge
b^{\prime}))^{\sim}=0$, so that $(a\wedge\Diamond(b\wedge b^{\prime}))^{\sim
}=a^{\sim}=a^{\sim}\vee(b\wedge b^{\prime})^{\sim}$, hence $\mathbf{L}%
\vDash_{A\setminus\{0,1\},B\setminus S(\mathbf{B})}S3$, therefore
$\mathbf{L}\vDash_{A\setminus\{0,1\},B\setminus\{0,1\}}S3$.

By Corollary \ref{cors}.(\ref{olhsum}), Proposition \ref{j2olaol} and the
commutativity and associativity of horizontal sums, $\mathbb{OML}%
\boxplus\mathbb{OML}\boxplus\mathbb{AOL}=\mathbb{OML}\boxplus\mathbb{AOL}%
\vDash J2$, which proves the right-to-left implication in the last
equivalence. Now assume that $\mathbf{L}\vDash J2$, so that $\mathbf{L}%
\vDash_{A\setminus\{0,1\},B\setminus S(\mathbf{B})}J2$, thus, for all $a\in
A\setminus\{0,1\}$ and all $b\in B\setminus S(\mathbf{B})$, $a=a\wedge
\Diamond(b\wedge b^{\prime})$, hence $a\leq\Diamond(b\wedge b^{\prime})$, thus
$\Diamond(b\wedge b^{\prime})=1$, so $(b\wedge b^{\prime})^{\sim}=0$, whence
$b^{\sim}\leq b^{\sim}\vee\square b=0$. Therefore $B\setminus S(\mathbf{B}%
)\subseteq T(\mathbf{B})$, hence $B=S(\mathbf{B})\cup T(\mathbf{B})$, thus
$\mathbf{B}\in\mathbb{OML}\boxplus\mathbb{AOL}$ by Theorem \ref{charg}.
\end{proof}

\begin{corollary}
\begin{itemize}
\item $\mathbb{OML}\boxplus\mathbb{AOL}\nvDash WSDM$;

\item $\mathbb{OML}\boxplus V(\mathbb{AOL})\vDash\{S1,S2,S3,J1\}$;

\item $\mathbb{OML}\boxplus\mathbb{AOL}\vDash J2$;

\item $\mathbb{OML}\boxplus V(\mathbb{AOL})\nvDash J2$.
\end{itemize}\label{axvarhsum}\end{corollary}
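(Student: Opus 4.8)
The plan is to reduce each of the four claims to results already in hand, exhibiting an explicit witness wherever a non-satisfaction is asserted. The third bullet, $\mathbb{OML}\boxplus\mathbb{AOL}\vDash J2$, is exactly Proposition \ref{j2olaol}, so nothing further is needed there.

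For $\mathbb{OML}\boxplus\mathbb{AOL}\nvDash WSDM$ I would quote the second item of Corollary \ref{hfail1}: choosing an orthomodular lattice $\mathbf{A}$ with $|A|>2$ (say $\mathbf{A}=\mathbf{D}_2^2$) and an antiortholattice $\mathbf{B}$ with $|B|>2$ (say $\mathbf{B}=\mathbf{D}_3$), the horizontal sum $\mathbf{A}\boxplus\mathbf{B}$ lies in $\mathbb{OML}\boxplus\mathbb{AOL}$ yet fails WSDM. Alternatively one may cite the first item of Corollary \ref{when1ok} after noting, via Lemma \ref{notomlaol}, that $\mathbf{D}_2^2\boxplus\mathbf{D}_3\notin\mathbb{OML}\cup\mathbb{AOL}$.

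For $\mathbb{OML}\boxplus V(\mathbb{AOL})\vDash\{S1,S2,S3,J1\}$ I would first record that $V(\mathbb{AOL})\vDash\{S1,S2,S3,J1\}$: these are identities, $\mathbb{AOL}\vDash\{S1,S2,S3\}$, and $\mathbb{AOL}\vDash J0$ forces $\mathbb{AOL}\vDash J1$, so the generated variety inherits all four. Every nontrivial member of $\mathbb{OML}\boxplus V(\mathbb{AOL})$ has the form $\mathbf{A}\boxplus\mathbf{B}$ with $\mathbf{A}\in\mathbb{OML}\setminus\{\mathbf{D}_1\}$ and $\mathbf{B}\in V(\mathbb{AOL})\setminus\{\mathbf{D}_1\}\subseteq\mathbb{PBZL}^{\ast}\setminus\{\mathbf{D}_1\}$, so the first four equivalences of Theorem \ref{axhsum} transfer each of $S1,S2,S3,J1$ from $\mathbf{B}$ up to $\mathbf{A}\boxplus\mathbf{B}$; the trivial algebra $\mathbf{D}_1$ is handled vacuously.

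The last bullet, $\mathbb{OML}\boxplus V(\mathbb{AOL})\nvDash J2$, is the only delicate point and I expect it to be the main obstacle. I would apply the last equivalence of Theorem \ref{axhsum}, which gives $\mathbf{A}\boxplus\mathbf{B}\nvDash J2$ as soon as $\mathbf{B}\notin\mathbb{OML}\boxplus\mathbb{AOL}$. The care required is that the only-if direction of that equivalence is argued through an element of $A\setminus\{0,1\}$, so the witness must satisfy $|A|>2$: a choice like $\mathbf{A}=\mathbf{D}_2$ would degenerate $\mathbf{A}\boxplus\mathbf{B}$ to $\mathbf{B}$, which does satisfy $J2$. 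I would therefore fix $\mathbf{A}=\mathbf{D}_2^2\in\mathbb{OML}$ and then need a $\mathbf{B}\in V(\mathbb{AOL})$ lying outside $\mathbb{OML}\boxplus\mathbb{AOL}$. Taking $\mathbf{B}=\mathbf{D}_3\times\mathbf{D}_3\in\P(\mathbb{AOL})\subseteq V(\mathbb{AOL})$, its sharp elements $S(\mathbf{B})=S(\mathbf{D}_3)\times S(\mathbf{D}_3)$ form a four-element set, so $\mathbf{B}\notin\mathbb{AOL}$; by the first item of Corollary \ref{omlaolvaol}, namely $(\mathbb{OML}\boxplus\mathbb{AOL})\cap V(\mathbb{AOL})=\mathbb{AOL}$, this forces $\mathbf{B}\notin\mathbb{OML}\boxplus\mathbb{AOL}$. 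Hence $\mathbf{A}\boxplus\mathbf{B}\in\mathbb{OML}\boxplus V(\mathbb{AOL})$ fails $J2$, which is exactly what is wanted, and completes the four bullets.
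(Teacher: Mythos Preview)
Your proof is correct and, for the first three bullets, proceeds exactly as the paper does: Corollary \ref{hfail1} for the failure of WSDM, Theorem \ref{axhsum} together with $V(\mathbb{AOL})\vDash\{S1,S2,S3,J1\}$ for the second bullet, and Proposition \ref{j2olaol} for the third.

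The one genuine difference is in the last bullet. The paper forward-references Example \ref{exfail12} and verifies by a direct element-level computation that $\mathbf{K}=\mathbf{D}_2^2\boxplus(\mathbf{D}_2\times\mathbf{D}_3)$ fails $J2$. You instead stay within results already proved: you pick $\mathbf{B}=\mathbf{D}_3\times\mathbf{D}_3\in V(\mathbb{AOL})\setminus\mathbb{AOL}$, invoke Corollary \ref{omlaolvaol} to get $\mathbf{B}\notin\mathbb{OML}\boxplus\mathbb{AOL}$, and then apply the last equivalence of Theorem \ref{axhsum} to conclude $\mathbf{D}_2^2\boxplus\mathbf{B}\nvDash J2$. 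Your caution about needing $|A|>2$ is well placed (with $\mathbf{A}=\mathbf{D}_2$ the horizontal sum collapses to $\mathbf{B}$, which does satisfy $J2$, so the last equivalence of Theorem \ref{axhsum} genuinely requires a nondegenerate $\mathbf{A}$). Your route avoids the forward reference and any ad hoc computation, at the cost of relying on Corollary \ref{omlaolvaol}; the paper's route, by contrast, produces a concrete small counterexample that is reused several times later. Either argument is perfectly adequate here; in fact your witness $\mathbf{D}_3\times\mathbf{D}_3$ could be replaced by the paper's $\mathbf{D}_2\times\mathbf{D}_3$ with no change to your reasoning.
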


\begin{proof}
By Corollary \ref{hfail1}, $\mathbb{OML}\boxplus\mathbb{AOL}\nvDash WSDM$.

By Theorem \ref{axhsum} and the fact that $\mathbb{AOL}$, and thus
$V(\mathbb{AOL})$, satisfies $J1$, $J2$, $S1$, $S2$ and $S3$, we have:
$\mathbb{OML}\boxplus\mathbb{AOL}\vDash J2$ and $\mathbb{OML}\boxplus
V(\mathbb{AOL})\vDash\{S1,S2,S3,J1\}$.

The \PBZ --lattice $\mathbf{K}\in\mathbb{OML}\boxplus V(\mathbb{AOL})$ in Example \ref{exfail12} below fails $J2$, thus $\mathbb{OML}\boxplus V(\mathbb{AOL})\nvDash J2$.\end{proof}

\begin{corollary}
\begin{itemize}
\item $\mathbb{OML}\vee V(\mathbb{AOL})\subsetneq V(\mathbb{OML}%
\boxplus\mathbb{AOL})\subsetneq V(\mathbb{OML}\boxplus V(\mathbb{AOL}%
))$\linebreak$\subsetneq\mathbb{PBZL}^{\ast}$;

\item $\{\mathbf{L}\in\mathbb{OML}\boxplus V(\mathbb{AOL}):\mathbf{L}\vDash
J2\}=\mathbb{OML}\boxplus\mathbb{AOL}$.
\end{itemize}
\end{corollary}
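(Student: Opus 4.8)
The plan is to treat the two displayed claims separately, reducing both to results already in hand—principally Theorem~\ref{axhsum}, Corollary~\ref{axvarhsum} and Corollary~\ref{omlaolvaol}—and to use the PBZ$^{\ast}$--lattices $\mathbf{K}$ and $\mathbf{M}$ of Example~\ref{exfail12} as the separating witnesses. For the chain of inclusions I would first dispose of the non-strict inclusions, which are all structural. Since $\mathbf{D}_{2}\in\OML\cap\AOL$, the remarks following the definition of $\boxplus$ on classes give $\OML\cup\AOL\subseteq\OML\boxplus\AOL$, so that $\OML\subseteq V(\OML\boxplus\AOL)$ and $V(\AOL)\subseteq V(\OML\boxplus\AOL)$, whence $\OML\vee V(\AOL)\subseteq V(\OML\boxplus\AOL)$. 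The middle inclusion follows from $\OML\boxplus\AOL\subseteq\OML\boxplus V(\AOL)$ (as $\AOL\subseteq V(\AOL)$), and the last from $\OML\boxplus V(\AOL)\subseteq\PBZs$ (Proposition~\ref{hsumkl}.(\ref{hsumkl2})) together with the fact that $\PBZs$ is a variety.

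Strictness I would then establish at each of the three junctures by a single witness. First, any $\mathbf{A}\boxplus\mathbf{B}$ with $\mathbf{A}\in\OML\setminus\{\mathbf{D}_{1},\mathbf{D}_{2}\}$ and $\mathbf{B}\in\AOL\setminus\{\mathbf{D}_{1},\mathbf{D}_{2}\}$ lies in $\OML\boxplus\AOL\subseteq V(\OML\boxplus\AOL)$ but, by the third bullet of Corollary~\ref{when1ok}, not in $\OML\vee V(\AOL)$. Second, the lattice $\mathbf{K}\in\OML\boxplus V(\AOL)\subseteq V(\OML\boxplus V(\AOL))$ fails $J2$ (Example~\ref{exfail12}), while by Corollary~\ref{axvarhsum} $\OML\boxplus\AOL\vDash J2$; as $J2$ is an identity and $V(\OML\boxplus\AOL)=\H\S\P(\OML\boxplus\AOL)$, identities are preserved and $V(\OML\boxplus\AOL)\vDash J2$, so $\mathbf{K}\notin V(\OML\boxplus\AOL)$. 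Third, the lattice $\mathbf{M}\in\PBZs$ of Example~\ref{exfail12} satisfies $\mathbf{M}\notin V(\OML\boxplus V(\AOL))$, as verified there.

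For the second claim, the inclusion $\OML\boxplus\AOL\subseteq\{\mathbf{L}\in\OML\boxplus V(\AOL):\mathbf{L}\vDash J2\}$ is immediate from $\AOL\subseteq V(\AOL)$ and $\OML\boxplus\AOL\vDash J2$ (Corollary~\ref{axvarhsum}). Conversely, let $\mathbf{L}\in\OML\boxplus V(\AOL)$ with $\mathbf{L}\vDash J2$. The case $\mathbf{L}=\mathbf{D}_{1}\in\OML\boxplus\AOL$ is clear, so write $\mathbf{L}=\mathbf{A}\boxplus\mathbf{B}$ with $\mathbf{A}\in\OML\setminus\{\mathbf{D}_{1}\}$ and $\mathbf{B}\in V(\AOL)\setminus\{\mathbf{D}_{1}\}$. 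By Theorem~\ref{axhsum}, $\mathbf{L}\vDash J2$ forces $\mathbf{B}\in\OML\boxplus\AOL$; since also $\mathbf{B}\in V(\AOL)$, Corollary~\ref{omlaolvaol} yields $\mathbf{B}\in(\OML\boxplus\AOL)\cap V(\AOL)=\AOL$, hence $\mathbf{B}\in\AOL\setminus\{\mathbf{D}_{1}\}$ and $\mathbf{L}=\mathbf{A}\boxplus\mathbf{B}\in\OML\boxplus\AOL$.

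The routine inclusions and the equational separation in the second juncture are immediate, and the second claim is essentially a bookkeeping combination of Theorem~\ref{axhsum} with Corollary~\ref{omlaolvaol}. The genuine content therefore sits entirely in the separating examples, and the main obstacle is the non-membership $\mathbf{M}\notin V(\OML\boxplus V(\AOL))$: this is the only place where one cannot argue by a single preserved identity (indeed $\mathbf{M}$ satisfies $J1,J2$ and WSDM, so no member of $\{SDM,WSDM,S1,S2,S3,J1,J2\}$ will separate it), and one must instead rely on the finer structural analysis of $\mathbf{M}$ carried out in Example~\ref{exfail12}.
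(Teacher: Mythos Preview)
Your proof is correct and follows essentially the same route as the paper. The non-strict inclusions and the first two strict inclusions are handled exactly as in the paper (the paper phrases the first strictness via $WSDM$ directly rather than via Corollary~\ref{when1ok}, but that corollary is itself proved from the failure of $WSDM$). For the second bullet you use Corollary~\ref{omlaolvaol} to pass from $\mathbf{B}\in(\OML\boxplus\AOL)\cap V(\AOL)$ to $\mathbf{B}\in\AOL$, whereas the paper uses associativity of $\boxplus$ together with $\OML\boxplus\OML=\OML$ (Corollary~\ref{cors}.(\ref{olhsum})) to conclude $\mathbf{A}\boxplus\mathbf{B}\in\OML\boxplus(\OML\boxplus\AOL)=\OML\boxplus\AOL$ directly; both arguments are short and valid.

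However, your closing paragraph contains a factual error that you should fix. You assert that $\mathbf{M}$ satisfies $J1$, $J2$ and $WSDM$ and that ``no member of $\{SDM,WSDM,S1,S2,S3,J1,J2\}$ will separate it'', so that the non-membership $\mathbf{M}\notin V(\OML\boxplus V(\AOL))$ requires a finer analysis. This is not the case: Example~\ref{exfail12} records that $\mathbf{M}$ \emph{fails} $S1$ (as well as $J2$, $S2$ and $S3$), and the non-membership is obtained there precisely by the single-identity argument $\mathbf{M}\nvDash S1$ while $V(\OML\boxplus V(\AOL))\vDash S1$ (Corollary~\ref{axvarhsum}). The paper's own proof of the third strict inclusion in fact uses the companion algebra $\mathbf{L}$ of Example~\ref{exfail12} and the identity $S2$ in the same way. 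So the third separation is no harder than the second: drop the final paragraph or replace it with the observation that $S1$ (or $S2$) does the job.
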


\begin{proof}
We will use Corollary \ref{axvarhsum}.

$\mathbb{OML}\cup\mathbb{AOL}\subseteq\mathbb{OML}\boxplus\mathbb{AOL}%
\subseteq\mathbb{OML}\boxplus V(\mathbb{AOL})\subseteq\mathbb{PBZL}^{\ast}$ by
Proposition \ref{hsumkl}.(\ref{hsumkl2}), thus $\mathbb{OML}\vee
V(\mathbb{AOL})\subseteq V(\mathbb{OML}\boxplus\mathbb{AOL})\subseteq
V(\mathbb{OML}\boxplus V(\mathbb{AOL}))\subseteq\mathbb{PBZL}^{\ast}$.

All these inclusions are proper. Indeed, $\mathbb{OML}\boxplus\mathbb{AOL}$, and thus $V(\mathbb{OML}\boxplus\mathbb{AOL})$, fails WSDM, while $\mathbb{OML}\vee V(\mathbb{AOL})$ satisfies WSDM, therefore $\mathbb{OML}\vee V(\mathbb{AOL})\subsetneq V(\mathbb{OML}\boxplus\mathbb{AOL})$. $\mathbb{OML}\boxplus\mathbb{AOL}$, and thus $V(\mathbb{OML}\boxplus \mathbb{AOL})$, satisfies $J2$, while $\mathbb{OML}\boxplus V(\mathbb{AOL})$,
and thus $V(\mathbb{OML}\boxplus V(\mathbb{AOL}))$, fails $J2$, hence $V(\mathbb{OML}\boxplus\mathbb{AOL})\subsetneq V(\mathbb{OML}\boxplus
V(\mathbb{AOL}))$.

The PBZ$^{\ast}$ --lattice $\mathbf{L}$ in Example \ref{exfail12} below fails $S2$, while $\mathbb{OML}\boxplus V(\mathbb{AOL})$ and thus $V(\mathbb{OML}\boxplus
V(\mathbb{AOL}))$, satisfies $S2$. So $\mathbf{L}\in\PBZs \setminus V(\mathbb{OML}\boxplus V(\mathbb{AOL}))$, hence $V(\mathbb{OML}\boxplus V(\mathbb{AOL}))\subsetneq\mathbb{PBZL}^{\ast}$.

For the last bullet, the right-to-left inclusion follows from Corollary \ref{axvarhsum}, and the other inclusion from Theorem \ref{axhsum} and Corollary \ref{cors}.(\ref{olhsum}).\end{proof}

\begin{example}\label{exfail12} Let us consider the PBZ$^{\ast}$--lattices $\mathbf{M}_{3}$,
$\mathbf{K}$, $\mathbf{L}$ and $\mathbf{M}$, with the lattice orderings, elements and Kleene complements given by the diagrams below and:
$S(\mathbf{K})=\{0,u,u^{\prime},s,s^{\prime},1\}$, $T(\mathbf{K}%
)=\{0,t^{\prime},1\}$ and $t^{\sim}=s$ in $\mathbf{K}$, $S(\mathbf{L}%
)=\{0,s,s^{\prime},1\}$, $T(\mathbf{L})=\{u,t^{\prime}\}$ and $t^{\sim}=s$ in
$\mathbf{L}$, $S(\mathbf{M})=\{0,a,a^{\prime},1\}$, $T(\mathbf{M}%
)=\{0,z,t,u^{\prime},v^{\prime},z^{\prime},1\}$, $u^{\sim}=a$ and $v^{\sim
}=a^{\prime}$ in $\mathbf{M}$. By Corollaries \ref{notinolvaol} and
\ref{hfail1}, $\mathbf{M}_{3}=\mathbf{D}_{2}^{2}\boxplus\mathbf{D}_{3}$ fails
WSDM, thus $\mathbf{M}_{3}\in(\mathbb{OML}\boxplus\mathbb{AOL})\setminus
(\mathbb{OML}\vee V(\mathbb{AOL}))$.

$\mathbf{K}=\mathbf{D}_{2}^{2}\boxplus(\mathbf{D}_{2}\times\mathbf{D}_{3}%
)\in(\mathbb{OML}\boxplus V(\mathbb{AOL}))\setminus V(\mathbb{OML}%
\boxplus\mathbb{AOL})$, because $\mathbb{OML}\boxplus\mathbb{AOL}$, and thus
$V(\mathbb{OML}\boxplus\mathbb{AOL})$, satisfies $J2$ by Proposition
\ref{j2olaol}, while $\mathbf{K}\nvDash J2$, because, in $\mathbf{K}$,
$(u\wedge(t\wedge t^{\prime})^{\sim})\vee(u\wedge\Diamond(t\wedge t^{\prime
}))=(u\wedge t^{\sim})\vee(u\wedge\Diamond t)=(u\wedge s)\vee(u\wedge s^{\prime})=0\vee0=0\neq u$. Note, also, that $\mathbf{K}\vDash \{S2,S3\}$, by Corollary \ref{axvarhsum} and the fact that $\mathbf{K}\in \mathbb{OML}\boxplus V(\mathbb{AOL})$.

In $\mathbf{L}$, $u\vee t=u\neq t=0\vee t=(u\wedge s)\vee(u\wedge s^{\prime
})=((u\vee t)\wedge t^{\sim})\vee((u\vee t)\wedge t^{\sim\sim})$, therefore
$\mathbf{L}$ fails $J1$, and $(u\wedge(t\wedge t^{\prime})^{\sim})^{\sim
}=(u\wedge t^{\sim})^{\sim}=(u\wedge s)^{\sim}=0^{\sim}=1\neq s^{\prime}=0\vee
s^{\sim}=u^{\sim}\vee t^{\sim\sim}=u^{\sim}\vee\Diamond(t\wedge t^{\prime})$, therefore $\mathbf{L}$ fails $S2$. Furthermore, easy verifications establish that $\mathbf{L}$ satisfies $S3$ and fails $J2$.

We notice that $\mathbf{M}$ satisfies $J1$. Notice, also, that $\mathbf{M}$
fails $S1$, because, in $\mathbf{M}$, $(z^{\prime}\wedge(z^{\prime}\wedge
a)^{\sim})^{\sim}=(z^{\prime}\wedge u^{\sim})^{\sim}=(z^{\prime}\wedge
a)^{\sim}=u^{\sim}=a\neq a^{\prime}=a^{\sim}=\Diamond u=0\vee\Diamond
(z^{\prime}\wedge a)=z^{\prime\sim}\vee\Diamond(z^{\prime}\wedge a)$. Note, also, that $\langle T(\mathbf{M})\rangle_{\mathbb{BZL}}=\mathbf{M}$, so
$M=\langle T(\mathbf{M})\rangle_{\mathbb{BZL}}=S(\mathbf{M})\cup\left\langle T(\mathbf{M})\right\rangle _{\mathbb{BZL}}$. Since $\mathbf{M}\nvDash S1$, while $\mathbb{OML}\boxplus V(\mathbb{AOL})$, and thus $V(\mathbb{OML}\boxplus V(\mathbb{AOL}))$, satisfies $S1$ by Corollary \ref{axvarhsum}, it follows that $\mathbf{M}\notin V(\mathbb{OML}\boxplus V(\mathbb{AOL}))$, in particular $\mathbf{M}\notin\mathbb{OML}\boxplus V(\mathbb{AOL})$, so $\mathbf{M}\notin
V(\mathbb{AOL})$. Easy verifications establish, furthermore, that $\mathbf{M}$ fails each of $J2$, $S2$ and $S3$.

\begin{center}\begin{tabular}{cccc}
\hspace*{-30pt}\begin{picture}(40,65)(0,0)
\put(-30,60){${\bf M}_3={\bf D}_2^2\boxplus {\bf D}_3$:}
\put(20,0){\circle*{3}} \put(20,40){\circle*{3}}
\put(0,20){\circle*{3}} \put(20,20){\circle*{3}} \put(40,20){\circle*{3}}
\put(18,-9){$0$} \put(18,43){$1$}
\put(42,17){$b=b^{\prime }$} \put(-7,17){$a$} \put(23,17){$a^{\prime }$}
\put(20,0){\line(-1,1){20}} \put(20,40){\line(1,-1){20}}
\put(20,0){\line(1,1){20}} \put(0,20){\line(1,1){20}}
\put(20,0){\line(0,1){40}}\end{picture}
& \hspace*{55pt} &
\begin{picture}(40,65)(0,0)
\put(-70,60){${\bf K}={\bf D}_2^2\boxplus ({\bf D}_2\times {\bf D}_3):$} \put(20,0){\circle*{3}}
\put(20,40){\circle*{3}}
\put(0,20){\circle*{3}}
\put(60,40){\circle*{3}}
\put(40,20){\circle*{3}}
\put(40,60){\circle*{3}}
\put(-20,30){\circle*{3}}
\put(-28,28){$u$}
\put(80,30){\circle*{3}}
\put(83,28){$u^{\prime }$}
\put(18,-9){$0$}
\put(38,63){$1$}
\put(3,18){$s=t^{\sim }$}
\put(58,33){$s^{\prime }$}
\put(15,40){$t^{\prime }$}
\put(42,14){$t$}
\put(20,0){\line(-1,1){20}}
\put(20,40){\line(1,-1){20}}
\put(40,60){\line(1,-1){20}}
\put(20,0){\line(1,1){40}}
\put(0,20){\line(1,1){40}}
\put(20,0){\line(-4,3){40}}
\put(20,0){\line(2,1){60}}
\put(40,60){\line(4,-3){40}}
\put(40,60){\line(-2,-1){60}}
\end{picture} &\vspace*{-20pt}\\ 
&\hspace*{-35pt} \begin{picture}(40,85)(0,0)
\put(18,60){${\bf L}:$} \put(20,0){\circle*{3}} \put(20,40){\circle*{3}}
\put(0,20){\circle*{3}} \put(60,40){\circle*{3}} \put(30,30){\circle*{3}}
\put(40,20){\circle*{3}} \put(40,60){\circle*{3}} \put(18,-9){$0$} \put(38,63){$1$} \put(28,33){$u=u^{\prime }$}
\put(-31,18){$t^{\sim }=s$} \put(62,38){$s^{\prime }$} \put(15,40){$t^{\prime }$}
\put(42,14){$t$} \put(20,0){\line(-1,1){20}} \put(20,40){\line(1,-1){20}}
\put(40,60){\line(1,-1){20}} \put(20,0){\line(1,1){40}}
\put(0,20){\line(1,1){40}}\end{picture}
& &\hspace*{35pt}
\begin{picture}(40,85)(0,0) \put(-7,80){${\bf M}:$} \put(20,0){\circle*{3}}
\put(20,80){\circle*{3}} \put(20,40){\circle*{3}} \put(0,20){\circle*{3}}
\put(-20,40){\circle*{3}} \put(20,60){\circle*{3}} \put(20,20){\circle*{3}}
\put(0,60){\circle*{3}} \put(60,40){\circle*{3}} \put(40,20){\circle*{3}}
\put(40,60){\circle*{3}} \put(18,-9){$0=z^{\sim }$}
\put(18,83){$1$} \put(25,37){$t=t^{\prime }$}
\put(23,57){$z^{\prime }$} \put(23,17){$z$} \put(-53,38){$u^{\sim }=a$}
\put(-8,58){$v^{\prime }$} \put(63,38){$a^{\prime }=v^{\sim }$} \put(43,58){$u^{\prime
}$} \put(42,14){$u$} \put(-7,14){$v$} \put(20,0){\line(-1,1){40}}
\put(20,0){\line(1,1){40}} \put(20,80){\line(-1,-1){40}}
\put(20,80){\line(1,-1){40}} \put(20,0){\line(0,1){80}}
\put(0,20){\line(1,1){40}} \put(40,20){\line(-1,1){40}} \end{picture}\end{tabular}\end{center}\vspace*{5pt}\end{example}

Recall that, if ${\bf A}$ is an algebra from a double--pointed variety $\V $ with constants $0,1$ then an element $e\in A$ is {\em central} iff $Cg_{\V ,{\bf A}}(e,0)$ and $Cg_{\V ,{\bf A}}(e,1)$ are complementary factor congruences of ${\bf A}$. Let us denote by $C({\bf L})$ the set of the central elements of any \PBZ --lattice ${\bf L}$.

\begin{lemma}{\rm \cite{rgcmfp}} For any \PBZ --lattice ${\bf L}$, $C({\bf L})=\{a\in S({\bf L}):(\forall \, b\in L)\, ((a\wedge b)^{\sim }=a^{\sim }\vee b^{\sim },(a^{\prime }\wedge b)^{\sim }=a^{\prime \sim }\vee b^{\sim },b=(a\wedge b)\vee (a^{\prime }\wedge b))\}=\{a\in S({\bf L}):{\bf L}\vDash _{L,\{a^{\prime }\}}SDM,{\bf L}\vDash _{L,\{a\}}\{SDM,J0\}\}$.\label{central}\end{lemma}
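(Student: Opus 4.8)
The plan is to prove the two displayed equalities by treating the first (the one with the three equational clauses) as the substantive one and deriving the second (the one phrased via $SDM$ and $J0$) as a mere transcription of it. Indeed, for any $a\in S(\mathbf{L})$ we have $a^{\sim}=a^{\prime}$, $a^{\prime\sim}=a^{\sim\sim}=\Diamond a=a$, so the instance $\mathbf{L}\vDash_{L,\{a\}}J0$ reads $b=(b\wedge a^{\prime})\vee(b\wedge a)$ for all $b$, which is exactly the clause $b=(a\wedge b)\vee(a^{\prime}\wedge b)$, while $\mathbf{L}\vDash_{L,\{a\}}SDM$ and $\mathbf{L}\vDash_{L,\{a^{\prime}\}}SDM$ are exactly the clauses $(a\wedge b)^{\sim}=a^{\sim}\vee b^{\sim}$ and $(a^{\prime}\wedge b)^{\sim}=a^{\prime\sim}\vee b^{\sim}$ (using commutativity of $\wedge,\vee$). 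Hence the two right-hand sides describe the same set, and it suffices to establish
$$C(\mathbf{L})=\{a\in S(\mathbf{L}):(\forall b)\,[(a\wedge b)^{\sim}=a^{\sim}\vee b^{\sim},\ (a^{\prime}\wedge b)^{\sim}=a^{\prime\sim}\vee b^{\sim},\ b=(a\wedge b)\vee(a^{\prime}\wedge b)]\}.$$
The engine of both inclusions is the standard correspondence between central elements and binary direct decompositions: $a$ is central precisely when $\theta_{0}=Cg_{\PBZs}(a,0)$ and $\theta_{1}=Cg_{\PBZs}(a,1)$ are complementary factor congruences, i.e. when $\mathbf{L}\to\mathbf{L}/\theta_{0}\times\mathbf{L}/\theta_{1}$ is an isomorphism carrying $a$ to $(0,1)$ and $a^{\prime}$ to $(1,0)$.

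For the inclusion $\subseteq$, I would start from such a decomposition $\mathbf{L}\cong\mathbf{L}_{1}\times\mathbf{L}_{2}$ with $a=(0,1)$ and verify the three clauses componentwise, using only that all operations act coordinatewise and that $0^{\sim}=1$, $1^{\sim}=0$ in each factor. Writing $b=(b_{1},b_{2})$ one gets $a\wedge b=(0,b_{2})$, $a^{\prime}\wedge b=(b_{1},0)$, whence $(a\wedge b)\vee(a^{\prime}\wedge b)=b$; moreover $(a\wedge b)^{\sim}=(1,b_{2}^{\sim})=a^{\sim}\vee b^{\sim}$ and $(a^{\prime}\wedge b)^{\sim}=(b_{1}^{\sim},1)=a^{\prime\sim}\vee b^{\sim}$, while $a\wedge a^{\prime}=0$ gives $a\in S(\mathbf{L})$. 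This direction is routine.

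For the converse, I would assume $a\in S(\mathbf{L})$ satisfies the three clauses and build the decomposition by hand. Sharpness yields $a\wedge a^{\prime}=0$ and $a\vee a^{\prime}=1$; substituting $b=c^{\prime}$ into the join-decomposition clause and applying the involution (a dual automorphism, so De Morgan holds for $^{\prime}$) gives the dual clause $c=(c\vee a)\wedge(c\vee a^{\prime})$ for all $c$. A complemented element satisfying both decomposition identities is neutral, hence central in $\mathbf{L}_{l}$, so by the classical lattice theorem \cite{gratzer} the map $\phi\colon x\mapsto(x\wedge a,x\wedge a^{\prime})$ is a bounded-lattice isomorphism $\mathbf{L}_{l}\cong(a]\times(a^{\prime}]$. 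The core of the proof is to promote $\phi$ to a \PBZ --isomorphism, i.e. to show both complements descend to the factors. For the Kleene complement the dual clause gives $x^{\prime}\wedge a=(a^{\prime}\vee x^{\prime})\wedge a=(x\wedge a)^{\prime}\wedge a$ (and symmetrically for $a^{\prime}$), so $x^{\prime}\wedge a$ depends only on $x\wedge a$; for the Brouwer complement, the clause $(a\wedge x)^{\sim}=a^{\prime}\vee x^{\sim}$ together with the dual clause at $c=x^{\sim}$ gives $x^{\sim}\wedge a=(a^{\prime}\vee x^{\sim})\wedge a=(x\wedge a)^{\sim}\wedge a$, again depending only on $x\wedge a$, while the clause at $a^{\prime}$ controls $x^{\sim}\wedge a^{\prime}$. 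Thus $\phi$ preserves $^{\prime}$ and $^{\sim}$, so it is a \PBZ --isomorphism and $\theta_{0}=\ker(\pi_{2}\circ\phi)$, $\theta_{1}=\ker(\pi_{1}\circ\phi)$ are complementary factor congruences. Finally I would identify them with the principal congruences: from $a\equiv0\pmod{Cg_{\PBZs}(a,0)}$ one gets $a^{\prime}\equiv1$, hence $x\wedge a^{\prime}\equiv x$ for every $x$, so $x\wedge a^{\prime}=y\wedge a^{\prime}$ forces $x\equiv y$; this gives $\ker(\pi_{2}\circ\phi)\subseteq Cg_{\PBZs}(a,0)$, i.e. $\theta_{0}=Cg_{\PBZs}(a,0)$, and dually $\theta_{1}=Cg_{\PBZs}(a,1)$, whence $a\in C(\mathbf{L})$.

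The main obstacle is precisely this last, \PBZ --specific step of the converse. The bounded-lattice decomposition is classical, but one must check compatibility of \emph{both} involutive operations with it, and the delicate case is the Brouwer complement: $^{\sim}$ is not a dual automorphism and fails the De Morgan laws in general, so the only available leverage is the two restricted strong De Morgan clauses at $a$ and $a^{\prime}$, and the computation showing that they force $x^{\sim}$ to split as $(x^{\sim}\wedge a,\,x^{\sim}\wedge a^{\prime})$ with each coordinate determined by the corresponding coordinate of $x$ is where the real work lies. A secondary point requiring care is confirming $\Diamond a=a$ and $a^{\prime\sim}=a$ for sharp $a$, so that the $SDM$/$J0$ reformulation matches the three clauses exactly.
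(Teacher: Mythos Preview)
The paper does not prove this lemma; it simply cites it from \cite{rgcmfp} and uses it as a black box in the proof of Lemma~\ref{j2s2}. Your proposal therefore cannot be compared against a proof in the present paper.

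That said, your argument is correct and self-contained. The transcription step is accurate once one uses that $a\in S(\mathbf{L})$ gives $a^{\sim}=a'$ and $a'^{\sim}=a=\Diamond a$; the forward inclusion is the routine componentwise check you describe; and the backward inclusion is the substantive part. Your derivation of the dual decomposition $c=(c\vee a)\wedge(c\vee a')$ from the primal one via the involution is fine, and the appeal to the classical characterisation of central (equivalently, neutral complemented) elements in bounded lattices is legitimate: with both decompositions in hand, the map $\phi(x)=(x\wedge a,\,x\wedge a')$ is an order-isomorphism onto $(a]\times(a']$ with inverse $(u,v)\mapsto u\vee v$. The key \PBZ -specific computations---$x'\wedge a=(x\wedge a)'\wedge a$ from the dual clause, and $x^{\sim}\wedge a=(a\wedge x)^{\sim}\wedge a$ from the $SDM$-at-$a$ clause combined with the dual clause---are exactly what is needed to see that the two kernel congruences preserve both unary operations. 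Your final identification $\theta_0=Cg_{\PBZs}(a,0)$ by sandwiching is also correct: $(a,0)\in\theta_0$ gives one inclusion, and the argument via $a'\equiv 1$ gives the other.
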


\begin{lemma} Any PBZ$^{\ast}$ --lattice that satisfies $J2$, $S2$ and $S3$ and does not belong to $\mathbb{OML}\boxplus\mathbb{AOL}$ is directly reducible.\label{j2s2}\end{lemma}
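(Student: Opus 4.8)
The plan is to exhibit a central element $e$ of $\mathbf{L}$ with $0\neq e\neq 1$; since by definition the congruences $Cg_{\BZ}(e,0)$ and $Cg_{\BZ}(e,1)$ are then complementary factor congruences, this will split $\mathbf{L}$ into a product of two nontrivial factors. First I would observe that $\mathbf{D}_{1}\in\OML\boxplus\AOL$, so the hypothesis $\mathbf{L}\notin\OML\boxplus\AOL$ forces $\mathbf{L}$ to be nontrivial, whence Theorem \ref{charg} applies. As $\mathbf{L}$ fails clause (\ref{charg6}) of that theorem, there is an $x\in L$ with $x\notin S(\mathbf{L})\cup T(\mathbf{L})$. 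From $x\notin S(\mathbf{L})$ I obtain $x\wedge x^{\prime}\neq 0$ (in particular $x\neq 0$), and from $x\notin T(\mathbf{L})=\{0\}\cup D(\mathbf{L})$ together with $x\neq 0$ I obtain $x^{\sim}\neq 0$. These two inequalities are exactly what will separate the candidate central element from the bounds.

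The element I would propose is $e=(x\wedge x^{\prime})^{\sim}$; write $m=x\wedge x^{\prime}$. Being a $^{\sim}$-image, $e$ lies in $S(\mathbf{L})$, so $e^{\sim}=e^{\prime}$; moreover $e^{\prime}=m^{\sim\sim}=\Diamond m$, and $e^{\prime\sim}=m^{\sim\sim\sim}=m^{\sim}=e$ by Lemma \ref{basics}.(ii). The core of the argument is to verify the three clauses in the first description of $C(\mathbf{L})$ in Lemma \ref{central} for $a=e$, by instantiating the hypothesised identities at $y=x$. Instantiating $J2$ gives, for all $z$, $z=(z\wedge(x\wedge x^{\prime})^{\sim})\vee(z\wedge\Diamond(x\wedge x^{\prime}))=(z\wedge e)\vee(z\wedge e^{\prime})$, which is the splitting clause $z=(e\wedge z)\vee(e^{\prime}\wedge z)$. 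Instantiating $S2$ gives $(z\wedge e)^{\sim}=z^{\sim}\vee\Diamond m=z^{\sim}\vee e^{\prime}=e^{\sim}\vee z^{\sim}$, i.e. $\mathbf{L}\vDash_{L,\{e\}}SDM$, using $e^{\sim}=e^{\prime}$. Instantiating $S3$ gives $(z\wedge e^{\prime})^{\sim}=z^{\sim}\vee m^{\sim}=z^{\sim}\vee e=e^{\prime\sim}\vee z^{\sim}$, i.e. $\mathbf{L}\vDash_{L,\{e^{\prime}\}}SDM$, using $e^{\prime\sim}=e$. Together with $e\in S(\mathbf{L})$ these are precisely the conditions characterising $C(\mathbf{L})$, so $e\in C(\mathbf{L})$.

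Finally I would check $e\notin\{0,1\}$. Since $m\neq 0$ and $m^{\sim}=1$ iff $m=0$, we have $e=m^{\sim}\neq 1$; and since, by condition $(\ast)$, $e=m^{\sim}=x^{\sim}\vee x^{\prime\sim}\geq x^{\sim}\neq 0$, we have $e\neq 0$. Thus $e$ is central with $0\neq e\neq 1$, so $Cg_{\BZ}(e,0)$ and $Cg_{\BZ}(e,1)$ are complementary factor congruences, and neither can equal $\nabla_{L}$ (otherwise its complement would be $\Delta_{L}$, forcing $e\in\{0,1\}$); hence $\mathbf{L}\cong\mathbf{L}/Cg_{\BZ}(e,0)\times\mathbf{L}/Cg_{\BZ}(e,1)$ is a decomposition into two nontrivial factors, so $\mathbf{L}$ is directly reducible. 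I expect the only delicate point to be the middle step: the bookkeeping that turns the three identities specialised at $y=x$ into the SDM-at-$e$, SDM-at-$e^{\prime}$ and splitting clauses. This hinges entirely on correctly tracking the sharp-element identities $e^{\sim}=e^{\prime}$, $e^{\prime\sim}=e$ and the $(\ast)$-consequence $(x\wedge x^{\prime})^{\sim}=x^{\sim}\vee x^{\prime\sim}$, after which everything reduces to commutativity of $\wedge$ and $\vee$.
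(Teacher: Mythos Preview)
Your proof is correct and follows essentially the same route as the paper's: both arguments show that $(x\wedge x')^{\sim}$ is central via the identities $J2$, $S2$, $S3$ and Lemma~\ref{central}, and then use the choice of $x$ to rule out $(x\wedge x')^{\sim}\in\{0,1\}$. The only difference is cosmetic: the paper argues by contrapositive (assume $\mathbf{L}$ directly irreducible, conclude every $x$ lies in $S(\mathbf{L})\cup T(\mathbf{L})$), whereas you argue directly (pick $x\notin S(\mathbf{L})\cup T(\mathbf{L})$, exhibit a nontrivial central element).
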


\begin{proof} Part of this argument has been applied in a result in \cite{rgcmfp} in a slightly different context; for the sake of completeness, we provide a complete proof of the present lemma.

Let $\mathbf{L}$ be a directly irreducible PBZ$^{\ast}$ --lattice that satisfies $J2$, $S2$ and $S3$. Then the only central elements of $\mathbf{L}$ are $0$ and $1$. We want to show that $S(\mathbf{L})\cup T\left(  {\mathbf{L}}\right) =L$.

Let $x\in L$. Then the element $(x\wedge x^{\prime})^{\sim}$ is sharp and, by J2, we have that $\mathbf{L}\vDash_{L,\{(x\wedge x^{\prime})^{\sim}\}}u\approx(v\wedge u)\vee(v^{\prime}\wedge u)$. Furthermore, by S2, we have that $\mathbf{L}\vDash_{L,\{(x\wedge x^{\prime})^{\sim}\}}SDM$, while S3 gives us
$\mathbf{L}\vDash_{L,\{\Diamond(x\wedge x^{\prime})\}}SDM$. By Lemma \ref{central}, $(x\wedge x^{\prime})^{\sim}$ is a central element of $\mathbf{L}$, so $\Diamond\left(  x\wedge x^{\prime}\right)  $ is central and thus $\Diamond\left(  x\wedge x^{\prime}\right)  \in\left\{  0,1\right\}  $. If $\Diamond\left(  x\wedge x^{\prime}\right)  =0$, then $x\wedge x^{\prime}\leq\Diamond\left(  x\wedge x^{\prime}\right)  =0$, whence $x\in S(\mathbf{L})$. If $\Diamond\left(
x\wedge x^{\prime}\right)  =1$, then ${x{^{\sim}\leq}x{^{\sim}\vee\square x=}}\left(  x\wedge x^{\prime}\right)
{^{\sim}=0}$, so that $x\in T\left(  {\mathbf{L}}\right) $.

Our claim is therefore settled, and by Theorem \ref{charg} $\mathbf{L}$ belongs to $\mathbb{OML} \boxplus\mathbb{AOL}$.\end{proof}

\begin{proposition} All members of $V(\mathbb{OML}\boxplus\mathbb{AOL})\setminus (\OML \boxplus \AOL )$ are directly reducible. In particular, all subdirectly irreducible members of $V(\mathbb{OML} \boxplus\mathbb{AOL} )$ belong to $\mathbb{OML} \boxplus\mathbb{AOL} $.\label{sdirirred}\end{proposition}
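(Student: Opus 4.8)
The plan is to reduce the whole statement to Lemma \ref{j2s2}, whose hypotheses are precisely the identities $J2$, $S2$ and $S3$ together with membership failure in $\OML \boxplus \AOL$. Thus the only real work is to verify that the entire variety $V(\OML \boxplus \AOL )$ satisfies $J2$, $S2$ and $S3$; everything else is bookkeeping. Since these three conditions are equations in the language of BZ--lattices, and a variety satisfies exactly those equations that hold throughout its generating class (equational classes being closed under $\H $, $\S $ and $\P $), it suffices to check that the generators $\OML \boxplus \AOL $ validate them. For $J2$ this is exactly Proposition \ref{j2olaol}. For $S2$ and $S3$ I would invoke the inclusion $\OML \boxplus \AOL \subseteq \OML \boxplus V(\AOL )$, which follows from $\AOL \subseteq V(\AOL )$ and the evident monotonicity of $\boxplus $ on classes, together with Corollary \ref{axvarhsum}, which gives $\OML \boxplus V(\AOL )\vDash \{S1,S2,S3,J1\}$. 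Hence $\OML \boxplus \AOL \vDash \{J2,S2,S3\}$, and therefore $V(\OML \boxplus \AOL )\vDash \{J2,S2,S3\}$.

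With this in hand, the first assertion is immediate: let $\mathbf{L}\in V(\OML \boxplus \AOL )\setminus (\OML \boxplus \AOL )$. Being a member of a subvariety of $\PBZs $, $\mathbf{L}$ is a \PBZ --lattice, and it inherits $J2$, $S2$ and $S3$ from the variety. Since moreover $\mathbf{L}\notin \OML \boxplus \AOL $, Lemma \ref{j2s2} applies and yields that $\mathbf{L}$ is directly reducible.

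For the ``in particular'' clause I would argue by contraposition, using the standard fact that every nontrivial subdirectly irreducible algebra is directly irreducible: if such an algebra decomposed as a direct product of two nontrivial factors, the two projection kernels would be complementary factor congruences, both strictly above $\Delta_{L}$ yet meeting in $\Delta_{L}$, contradicting the meet--irreducibility of $\Delta_{L}$ that defines subdirect irreducibility. The trivial algebra $\mathbf{D}_{1}$ is already a member of $\OML \boxplus \AOL $ by the definition of $\boxplus $ on classes, so no subdirectly irreducible member is excluded on that account. Consequently a subdirectly irreducible $\mathbf{L}\in V(\OML \boxplus \AOL )$ cannot be directly reducible, whence by the first part it cannot lie outside $\OML \boxplus \AOL $; that is, $\mathbf{L}\in \OML \boxplus \AOL $.

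I do not anticipate a genuine obstacle here, since the substantive content is already encapsulated in Lemma \ref{j2s2} (which itself rests on the central--element characterization of Lemma \ref{central} and on Theorem \ref{charg}). The only points requiring care are the correct propagation of the equations $J2,S2,S3$ from the generators to the whole variety, and the clean invocation of ``subdirectly irreducible $\Rightarrow $ directly irreducible'' for nontrivial algebras, together with the observation that the trivial algebra is unproblematic.
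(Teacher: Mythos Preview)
Your proposal is correct and follows essentially the same approach as the paper: invoke Corollary \ref{axvarhsum} (which already records $\OML\boxplus\AOL\vDash J2$ as well as $\OML\boxplus V(\AOL)\vDash\{S2,S3\}$) to get $V(\OML\boxplus\AOL)\vDash\{J2,S2,S3\}$, and then apply Lemma \ref{j2s2}. Your only additions are spelling out the passage of the equations to the generated variety and the standard ``subdirectly irreducible $\Rightarrow$ directly irreducible'' step, both of which the paper leaves implicit.
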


\begin{proof} By Lemma \ref{j2s2} and Corollary \ref{axvarhsum}, which ensures us that $V(\mathbb{OML} \boxplus\mathbb{AOL} )\vDash\{J2,S2,S3\}$.\end{proof}

\begin{theorem} $\{{\bf L}\in V(\mathbb{OML}\boxplus\mathbb{AOL}):{\bf L}\vDash WSDM\}=\mathbb{OML}\vee V(\mathbb{AOL})$.\label{axwsdm}\end{theorem}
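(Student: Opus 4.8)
The plan is to prove both inclusions, the left-to-right one being routine and the reverse one carrying the content. For the easy inclusion $\mathbb{OML}\vee V(\mathbb{AOL})\subseteq\{{\bf L}\in V(\mathbb{OML}\boxplus\mathbb{AOL}):{\bf L}\vDash WSDM\}$, I would simply invoke the already-established facts that $\mathbb{OML}\vee V(\mathbb{AOL})\subseteq V(\mathbb{OML}\boxplus\mathbb{AOL})$ and that $\mathbb{OML}\vee V(\mathbb{AOL})\vDash WSDM$, so that every member of $\mathbb{OML}\vee V(\mathbb{AOL})$ already sits inside the class on the right.

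For the reverse inclusion, the first observation is that the class $\mathbb{W}:=\{{\bf L}\in V(\mathbb{OML}\boxplus\mathbb{AOL}):{\bf L}\vDash WSDM\}$ is itself a variety, being the intersection of the variety $V(\mathbb{OML}\boxplus\mathbb{AOL})$ with the equational class cut out by $WSDM$ inside $\PBZs$. By Birkhoff's subdirect representation theorem, $\mathbb{W}$ is generated by its subdirectly irreducible members, so it suffices to locate these. Since subdirect irreducibility depends only on the congruence lattice, which is computed in $\PBZs$ regardless of the subvariety containing the algebra, the subdirectly irreducible members of $\mathbb{W}$ are precisely the subdirectly irreducible members of $V(\mathbb{OML}\boxplus\mathbb{AOL})$ that additionally satisfy $WSDM$.

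Now I would feed these subdirectly irreducible algebras through the two structural results already in hand. By Proposition \ref{sdirirred}, every subdirectly irreducible member of $V(\mathbb{OML}\boxplus\mathbb{AOL})$ lies in $\mathbb{OML}\boxplus\mathbb{AOL}$; and by Corollary \ref{when1ok}, a member of $\mathbb{OML}\boxplus\mathbb{AOL}$ satisfies $WSDM$ exactly when it lies in $\mathbb{OML}\cup\mathbb{AOL}$. Hence every subdirectly irreducible member of $\mathbb{W}$ belongs to $\mathbb{OML}\cup\mathbb{AOL}\subseteq\mathbb{OML}\vee V(\mathbb{AOL})$. As the latter is a variety, the variety generated by the subdirectly irreducible members of $\mathbb{W}$ is contained in $\mathbb{OML}\vee V(\mathbb{AOL})$, whence $\mathbb{W}\subseteq\mathbb{OML}\vee V(\mathbb{AOL})$; together with the easy inclusion this yields the claimed equality.

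Given how much machinery is already available, the argument is essentially a bookkeeping assembly of Proposition \ref{sdirirred} and Corollary \ref{when1ok} through Birkhoff's theorem. The only point requiring care — and the place I expect to have to be explicit — is the justification that the subdirectly irreducible members of the subvariety $\mathbb{W}$ coincide with the $WSDM$-satisfying subdirectly irreducible members of the ambient variety $V(\mathbb{OML}\boxplus\mathbb{AOL})$, so that Proposition \ref{sdirirred} legitimately applies to them; I anticipate no genuine obstacle beyond this absoluteness remark.
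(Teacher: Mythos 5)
Your proposal is correct and follows essentially the same route as the paper's own proof: both reduce to the subdirectly irreducible members of the variety $\{\mathbf{L}\in V(\mathbb{OML}\boxplus\mathbb{AOL}):\mathbf{L}\vDash WSDM\}$ and then apply Proposition \ref{sdirirred} followed by Corollary \ref{when1ok} to place them in $\mathbb{OML}\cup\mathbb{AOL}$. Your explicit remark on the absoluteness of subdirect irreducibility is a point the paper leaves tacit, but it changes nothing of substance.
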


\begin{proof}
We have $\mathbb{OML} \vee V(\mathbb{AOL} )\subseteq\{\mathbf{L}\in
V(\mathbb{OML} \boxplus\mathbb{AOL} ):\mathbf{L}\vDash WSDM\}$. Now let $\mathbf{L}\in V(\mathbb{OML}\boxplus\mathbb{AOL})$ be such that
$\mathbf{L}$ satisfies WSDM and is subdirectly irreducible. Then, by
Proposition \ref{sdirirred} we have $\mathbf{L}\in\mathbb{OML}\boxplus
\mathbb{AOL}$, and by Corollary \ref{when1ok}, $\mathbf{L}\in\mathbb{OML}\cup\mathbb{AOL}\subseteq\mathbb{OML}\vee V(\mathbb{AOL})$, which completes the proof.
\end{proof}

\begin{theorem}
\label{cacchiocacchio} $\{{\bf L}\in \PBZs :{\bf L}\vDash \{J2,S2,S3\}\}=V(\mathbb{OML} \boxplus\mathbb{AOL} )$.
\end{theorem}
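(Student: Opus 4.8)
The plan is to prove the two inclusions separately, the nontrivial direction resting on Lemma \ref{j2s2} together with Birkhoff's subdirect representation theorem. Write $\mathbb{K}=\{\mathbf{L}\in\PBZs:\mathbf{L}\vDash\{J2,S2,S3\}\}$. Since $\PBZs$ is a variety and $J2$, $S2$, $S3$ are identities, $\mathbb{K}$ is a subvariety of $\PBZs$; this is the structural fact that makes the subdirect-irreducibility reduction legitimate.

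For the inclusion $V(\mathbb{OML}\boxplus\mathbb{AOL})\subseteq\mathbb{K}$, I would invoke Corollary \ref{axvarhsum}: it gives $\mathbb{OML}\boxplus\mathbb{AOL}\vDash J2$ directly, and, since $\mathbb{OML}\boxplus\mathbb{AOL}\subseteq\mathbb{OML}\boxplus V(\mathbb{AOL})$ and the latter satisfies $\{S1,S2,S3,J1\}$, also $\mathbb{OML}\boxplus\mathbb{AOL}\vDash\{S2,S3\}$. Because $J2$, $S2$ and $S3$ are identities, they are inherited by the class operators $\H$, $\S$, $\P$, so $V(\mathbb{OML}\boxplus\mathbb{AOL})\vDash\{J2,S2,S3\}$, i.e. $V(\mathbb{OML}\boxplus\mathbb{AOL})\subseteq\mathbb{K}$.

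For the reverse inclusion $\mathbb{K}\subseteq V(\mathbb{OML}\boxplus\mathbb{AOL})$, the key observation is that the variety $\mathbb{K}$ is generated by its subdirectly irreducible members, so it suffices to show that every subdirectly irreducible $\mathbf{L}\in\mathbb{K}$ lies in $\mathbb{OML}\boxplus\mathbb{AOL}$. If $\mathbf{L}$ is trivial then $\mathbf{L}\cong\mathbf{D}_{1}\in\mathbb{OML}\boxplus\mathbb{AOL}$ and there is nothing to prove. Otherwise, by the characterization of subdirect irreducibility recalled in Subsection \ref{ualglat}, $\Delta_{L}$ is strictly meet--irreducible in $\mathrm{Con}_{\mathbb{BZL}}(\mathbf{L})$; in particular $\mathbf{L}$ is directly irreducible. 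Since $\mathbf{L}$ satisfies $J2$, $S2$ and $S3$, the contrapositive of Lemma \ref{j2s2} forces $\mathbf{L}\in\mathbb{OML}\boxplus\mathbb{AOL}$. Thus every subdirectly irreducible member of $\mathbb{K}$ belongs to $\mathbb{OML}\boxplus\mathbb{AOL}\subseteq V(\mathbb{OML}\boxplus\mathbb{AOL})$, whence $\mathbb{K}=V(\{\mathbf{L}\in\mathbb{K}:\mathbf{L}\text{ subdirectly irreducible}\})\subseteq V(\mathbb{OML}\boxplus\mathbb{AOL})$.

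The two inclusions together yield the stated equality. The only genuinely substantial input is Lemma \ref{j2s2}, the structural result that direct irreducibility plus $\{J2,S2,S3\}$ pins an algebra down to $\mathbb{OML}\boxplus\mathbb{AOL}$; everything else is the routine bookkeeping that identities are inherited by generated varieties and that subdirectly irreducible algebras generate. The one point deserving a moment of care — and the step I expect to be the conceptual crux rather than a calculation — is the passage from subdirect irreducibility to direct irreducibility, which is exactly what allows Lemma \ref{j2s2} to be applied to the subdirectly irreducible members; this is immediate from the meet--irreducibility characterization of subdirect irreducibility, but it is the hinge on which the whole argument turns.
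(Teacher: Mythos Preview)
Your proof is correct and follows essentially the same approach as the paper: the right-to-left inclusion via Corollary \ref{axvarhsum}, and the left-to-right inclusion by reducing to subdirectly irreducible members and invoking Lemma \ref{j2s2} through the implication subdirectly irreducible $\Rightarrow$ directly irreducible. You have simply made explicit the Birkhoff subdirect-representation step that the paper leaves implicit in its one-line appeal to Lemma \ref{j2s2}.
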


\begin{proof}
By Corollary \ref{axvarhsum}, all members of $\mathbb{OML} \boxplus \mathbb{AOL} $ satisfy the identities S2, S3 and J2, hence the right-to-left inclusion is established. Lemma \ref{j2s2} gives us the converse inclusion.\end{proof}

Note that, since $V(\OML \boxplus \AOL )\vDash \{J1,S1\}$ according to Corollary \ref{axvarhsum}, Theorem \ref{cacchiocacchio} shows that $\{J2,S2,S3\}\vDash \{J1,S1\}$. By Corollary \ref{axvarhsum}, $\OML \boxplus V
(\AOL )$ satisfies $J1$, $S1$, $S2$ and $S3$ and fails $J2$, so $\{J1,S1,S2,S3\}\nvDash J2$.

Theorem \ref{cacchiocacchio} and the fact that WSDM implies S2 and S3 give us a new proof for the following result from \cite{rgcmfp}:

\begin{corollary}
\label{brutus} $\{{\bf L}\in \PBZs :{\bf L}\vDash \{J2,WSDM\}\}=\mathbb{OML}{\vee V}\left(  \mathbb{AOL}\right)$.\end{corollary}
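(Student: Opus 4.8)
The plan is to derive this corollary directly from Theorems \ref{cacchiocacchio} and \ref{axwsdm}, using only the elementary implication between the De Morgan conditions recorded in Subsection \ref{pbzl}, namely that WSDM implies both $S2$ and $S3$. In other words, $J2$ together with WSDM should simultaneously guarantee membership in $V(\OML \boxplus \AOL )$ (through the weaker consequences $S2$, $S3$ of WSDM) and then cut that variety down to $\OML \vee V(\AOL )$ (through WSDM itself).

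First I would prove the left-to-right inclusion. Let ${\bf L}\in \PBZs $ satisfy $\{J2,WSDM\}$. Since WSDM implies $S2$ and $S3$, the algebra ${\bf L}$ also satisfies $\{J2,S2,S3\}$, so Theorem \ref{cacchiocacchio} places ${\bf L}$ in $V(\OML \boxplus \AOL )$. But ${\bf L}$ still satisfies WSDM by hypothesis, so ${\bf L}$ lies in $\{{\bf K}\in V(\OML \boxplus \AOL ):{\bf K}\vDash WSDM\}$, which Theorem \ref{axwsdm} identifies with $\OML \vee V(\AOL )$. Hence ${\bf L}\in \OML \vee V(\AOL )$.

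For the converse, I would invoke the fact, already noted at the end of Subsection \ref{pbzl}, that $\OML \vee V(\AOL )\vDash \{WSDM,J1,J2,S1,S2,S3\}$; in particular every member of $\OML \vee V(\AOL )$ is a \PBZ --lattice satisfying both $J2$ and WSDM, so $\OML \vee V(\AOL )\subseteq \{{\bf L}\in \PBZs :{\bf L}\vDash \{J2,WSDM\}\}$. Combining the two inclusions yields the claimed equality.

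The content is entirely carried by the two cited theorems, so there is no real obstacle here; the only thing to get right is the bookkeeping that lets one replace the hypothesis $\{J2,S2,S3\}$ of Theorem \ref{cacchiocacchio} by the present hypothesis $\{J2,WSDM\}$. This works precisely because WSDM is strictly stronger than the pair $S2,S3$ while being exactly the condition isolated in Theorem \ref{axwsdm}, so no further verification of the identities on specific algebras is needed.
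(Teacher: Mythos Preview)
Your proof is correct and follows essentially the same approach as the paper: the paper's argument, given in the sentence introducing the corollary, is precisely to combine Theorem \ref{cacchiocacchio} with the fact that WSDM implies $S2$ and $S3$, together with Theorem \ref{axwsdm} (just proved). Your write-up merely spells out the two inclusions in more detail than the paper's one-line justification.
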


\section*{Acknowledgements}

We thank Davide Fazio and Antonio Ledda for insightful discussions on the topics of this paper.

This work was supported by the research grant \textquotedblleft Propriet\`{a}
d`Ordine Nella\linebreak Semantica Algebrica delle Logiche
Non--classiche\textquotedblright, Universit\`{a} degli Studi di Cagliari,
Regione Autonoma della Sardegna, L. R. 7/2007, n. 7, 2015, CUP:
F72F16002920002. Moreover, all authors gratefully acknowledge the European
Union's Horizon 2020 research and innovation programme under the Marie
Sklodowska-Curie grant agreement No 689176 (project \textquotedblleft Syntax
Meets Semantics: Methods, Interactions, and Connections in Substructural
logics\textquotedblright)

\end{document}